\newtheorem{thm}{Theorem}[section]
\newtheorem{lem}[thm]{Lemma}
\newtheorem{cor}[thm]{Corollary}
\newtheorem{prop}[thm]{Proposition}
\newtheorem{defn}{Definition}[section]
\newtheorem{assump}{Assumption}[section]
\newtheorem{rem}{Remark}[section]
\numberwithin{equation}{section}
\renewcommand{\a}{\alpha}
\renewcommand{\b}{\beta}
\newcommand{\e}{\varepsilon}
\newcommand{\de}{\delta}
\newcommand{\fa}{\varphi}
\newcommand{\ga}{\gamma}
\renewcommand{\k}{\kappa}
\newcommand{\la}{\lambda}
\renewcommand{\th}{\theta}
\newcommand{\si}{\sigma}
\renewcommand{\t}{\tau}
\newcommand{\om}{\omega}
\newcommand{\De}{\Delta}
\newcommand{\Ga}{\Gamma}
\newcommand{\La}{\Lambda}
\newcommand{\Om}{\Omega}
\newcommand{\lan}{\langle}
\newcommand{\ran}{\rangle}
\newcommand{\na}{\nabla}
\def\R{{\mathbb{R}}}
\def\N{{\mathbb{N}}}
\def\Z{{\mathbb{Z}}}
\newcommand{\CerrH}{C_{err}G_\e}
\title{Sharp interface limit for stochastically perturbed \\
mass conserving Allen-Cahn equation}
\author{Tadahisa Funaki$^{1)}$ and Satoshi Yokoyama$^{2)}$}
\date{\today}
\begin{document}
\maketitle

\begin{abstract}
\noindent
This paper studies the sharp interface limit for a mass conserving Allen-Cahn 
equation added an external noise and derives a stochastically 
perturbed mass conserving mean curvature flow in the limit.
The stochastic term destroys the precise conservation law, instead the 
total mass changes like a Brownian motion in time.  For our equation,
the comparison argument does not work, so that to study the limit
we adopt the asymptotic expansion method, which extends
that for deterministic equations used in Chen et al.\ \cite{CHL}.  
Differently from the deterministic case, each term except the leading term
appearing in the expansion of the solution in a small 
parameter $\e$ diverges as  $\e$ tends to $0$, since our equation
contains the noise which converges to a white noise  and the products or the
powers of the white noise diverge.  To derive the error estimate for our
asymptotic expansion, we need to establish the Schauder estimate for
a diffusion operator with coefficients determined from higher order derivatives
of the noise and their powers.  We show that one can choose the noise
sufficiently mild in such
a manner that it converges to the white noise and at the same time its 
diverging speed is slow enough for establishing a necessary error estimate.
\footnote{
\hskip -6mm
$^{1),2)}$
Graduate School of Mathematical Sciences,
The University of Tokyo, Komaba, Tokyo 153-8914, Japan.
e-mails: funaki@ms.u-tokyo.ac.jp, satoshi2@ms.u-tokyo.ac.jp}
\footnote{
\hskip -6mm
\textit{Keywords: Sharp interface limit, Allen-Cahn equation,
Mean curvature flow, Asymptotic expansion, Mass conservation law,
Stochastic perturbation}}
\footnote{
\hskip -6mm
\textit{Abbreviated title $($running head$)$: Sharp interface limit 
for stochastic mass conserving Allen-Cahn equation}}
\footnote{
\hskip -6mm
\textit{2010MSC: primary 60H15, secondary 35K93, 74A50.}}
\footnote{
\hskip -6mm
\textit{The author$^{1)}$ is supported in part by the JSPS 
KAKENHI Grant Numbers $($B$)$ 26287014 and 26610019.}}
\end{abstract}

\section{Introduction and main results}

First, we introduce the stochastic mass conserving Allen-Cahn equation.
Our goal is to discuss its sharp interface limit.  The mass conservation law
forces us to introduce a scaling for the noise different from the stochastic 
Allen-Cahn equation and prevents the comparison argument, which was 
useful for the stochastic Allen-Cahn equation.  Our method is the 
asymptotic expansion, in which powers of derivatives of the noise
appear repeatedly, and our main effort is devoted to controlling
these diverging terms.  The evolutional law of the limit hypersurface
is described by the mass conserving mean curvature flow with
a nonlocal multiplicative white noise term.

\subsection{Stochastic mass conserving Allen-Cahn equation and
its background}

We consider the solution $u=u^\e(t,x)$ of the following stochastic 
partial differential equation \eqref{eq:1}
in a bounded domain $D$ in $\R^n$ having a smooth boundary $\partial D$:
\begin{equation}\label{eq:1}
\left\{
\begin{aligned}
& \frac{\partial u^\e}{\partial t} = \De u^\e + \e^{-2} \left( f(u^\e) - 
- \hspace{-3.9mm}\int_D f(u^\e) \right) + \a \dot{w}^\e(t),
\quad \text{ in } D \times \R_+, \\
& \frac{\partial u^\e}{\partial \nu} = 0, 
\hspace{66mm}\quad \text{ on } \partial D \times \R_+, \\
& u^\e(0,\cdot) = g^\e(\cdot),\hspace{56mm}\quad \text{ in } D,
\end{aligned}
\right.
\end{equation}
where $\e>0$ is a small parameter,
$\a>0$, $\nu$ is the inward normal vector on $\partial D$,
$\R_+ = [0,\infty)$,
\begin{align*}
- \hspace{-3.9mm}\int_D f(u^\e) 
= \frac1{|D|} \int_D f(u^\e(t,x))dx,
\end{align*}
$g^\e$ are continuous functions having the property \eqref{eq:g} stated below,
or more precisely, satisfying the conditions 
\eqref{eq:TH assump 1}--\eqref{eq:TH assump 3}.  The noise
$\dot{w}^\e(t)$ is the derivative of $w^\e(t)\equiv w^\e(t,\om) \in C^\infty(\R_+)$
in $t$ defined on a certain probability space
$(\Om,\mathcal{F},P)$ such that $w^\e(t)$ converges to 
a one-dimensional  standard Brownian motion $w(t)$ as $\e\downarrow 0$
in a suitable sense; see \eqref{eq:definition 2 of g} and Assumption
\ref{ass:1} below.  We assume that the reaction term $f\in C^\infty(\R)$ is 
bistable and satisfies the following three conditions:
\begin{align*}
\text{(i)}&\quad f(\pm 1)=0, \, f'(\pm 1)<0, \, \int_{-1}^1 f(u) du =0,
\\
\text{(ii)}&\quad f \text{ has only three zeros }\pm1 \text{ and one another between }\pm1,
\\
\text{(iii)}&\quad \text{there exists } \bar{c}_1 >0 \, \text{ such that } 
f^\prime(u) \leq \bar{c}_1 \text{ for every }u\in\R.
\end{align*}
The last equality in (i) is called the balance condition.
A typical example is $f(u)=u-u^3$. 
From (i) and (ii), it follows that there exists a unique increasing solution $m$: 
$\R\longrightarrow (-1,1)$ of
\begin{align}\label{eq:m standing wave}
m^{\prime \prime}+f(m)=0\quad\text{on }\R,\quad m(\pm\infty)=\pm1,\quad m(0)=0,
\end{align}
and $m$ is called the traveling wave (or standing wave) solution.
The function $m=m(\rho), \rho\in \R,$ satisfies 
\begin{align}\label{eq:decay of m}
\partial_\rho^k (m(\rho)\mp1)=O(e^{-\zeta|\rho|}),\quad \text{as }
\rho\rightarrow\pm \infty,
\end{align}
for $k=0,1,2,\cdots,$ where $\zeta=\min\{\sqrt{-f^\prime(-1)},\sqrt{-f^\prime(1)}\}>0$.

For a mass conserving Allen-Cahn equation without noise, that is \eqref{eq:1} with $\a=0$,
the existence and uniqueness results are established by \cite{EG}, \cite{ES}, \cite{H} and
its sharp interface limit as $\e\downarrow0$ is studied by Chen et al.\ \cite{CHL}.
On the other hand, for a stochastic Allen-Cahn equation, that is \eqref{eq:1} 
without the averaged reaction term, the sharp interface limit is studied by 
Funaki \cite{F95}, \cite{F99}, Lions and Souganidis \cite{LS98-2} and 
Weber \cite{We}; see also \cite{F03}, \cite{F-SB} which give a brief survey.
The study of the sharp interface limit of the stochastic mass conserving 
Allen-Cahn equation \eqref{eq:1} is a natural extension 
and combination of these two problems.
Note that the noise term considered in \cite{F99}, \cite{We} was
scaled as $\e^{-1}\a \dot{w}^\e(t)$.  This is very different from
$\a \dot{w}^\e(t)$ in \eqref{eq:1}.
In other words, our dynamics is more sensitive to the noise.
We will give a heuristic explanation for this difference in Section \ref{sec:2.1}.
Note also that the comparison argument used in \cite{F99}, \cite{We} does not
work for the equation \eqref{eq:1}.  This is the main technical difference between
our equation and the stochastic Allen-Cahn equation.

\subsection{Limit dynamics, conservation law and related results}

Our goal is to show that the solution $u^\e(t,x)$ of \eqref{eq:1} converges as $\e\downarrow 0$
to $\chi_{\ga_t}(x)$ with certain hypersurface $\ga_t$ in $D$,
where $\chi_{\ga}(x) = +1$ or $-1$ according to the outside or inside of the hypersurface $\ga$,
if this holds for the initial data $g^\e$ with a certain $\ga_0$, 
and the time evolution of $\ga_t$ is governed by 
\begin{equation}\label{eq:2}
V = \k - - \hspace{-3.9mm}\int_{\ga_t} \k + \frac{\a|D|}{2|\ga_t|}\circ\dot{w}(t),\quad t \in [0,\sigma],
\end{equation}
up to a certain stopping time $\sigma>0$ (a.s.), 
where $V$ is the inward normal velocity of $\ga_t$, 
$\k$ represents the mean curvature of $\ga_t$ multiplied by $n-1$,
$- \hspace{-3.9mm}\int_{\ga_t} \k = \frac1{|\ga_t|} \int_{\ga_t} \k d \bar s$,
$\dot{w}(t)$ is the white noise process and $\circ$ means 
the Stratonovich stochastic integral.
When $\a=0$, the equation \eqref{eq:2} coincides with the limit of the mass conserving 
Allen-Cahn equation studied in \cite{CHL}.  Note that, in \cite{CHL}, the sign
of $V$ is taken opposite.  When $\a=0$, the mass of the solution $u^\e$ of \eqref{eq:1} 
is conserved, namely,
\begin{align}\label{eq:mass cons 1}
&\frac{1}{|D|}\int_D u^\e(t,x) dx = C, %\quad t\in[0,T],
\end{align}
holds for some constant $C\in\R$.
On the other hand, in the case where the fluctuation caused by $\a w^\e(t)$ is added, 
the rigid mass conservation law is destroyed and in place of \eqref{eq:mass cons 1},
we have the conservation law in a stochastic sense
\begin{align}\label{eq:mass cons 2}
&\frac{1}{|D|}\int_D u^\e(t,x) dx = C+ \a w^\e(t), \quad t\in \R_+,
\end{align}
which can be derived by integrating \eqref{eq:1} over $D$:
\begin{align}\label{eq:mass cons 3}
\frac1{|D|}\int_D \Bigl(\partial_tu^\e(t,x) - \a \dot{w}^\e(t) \Bigr)dx = 0.
\end{align}
The equation \eqref{eq:mass cons 2} implies that the total mass per volume 
behaves like a Brownian motion multiplied by $\a$ as $\e$ tends to $0$.

The equation \eqref{eq:1} with $\a=0$ and without the averaged reaction
term is called the Allen-Cahn equation.  It is well-known that the mean curvature
flow $V=\k$ appears in the limit for this equation; cf., \cite{B13}, \cite{F-SB}.
For the stochastic Allen-Cahn equation, i.e., \eqref{eq:1} without the 
averaged reaction term and with the noise differently scaled as we 
mentioned above, the limit dynamics is given by $V=\k + \bar\si \a \dot{w}(t)$,
where $\bar\si$ is the inverse surface tension defined below, see \cite{F99}, (1.5).
In this case, a simple additive noise appears in the limit, while in our case
the limit dynamics \eqref{eq:2} has a multiplicative noise and its coefficient contains a
nonlocal term $|\ga_t|^{-1}$.  This is due to the effect of the conservation law.

\subsection{Formulation of main results}

We take an integer $K$ satisfying $K>\max(n+2,6)$ and fix it throughout the paper.
This will be necessary for the proof of Theorem \ref{thm:err} later.
Let $w^\e=w^\e(t)\equiv w^\e(t,\omega)$, $0<\e\le 1$, $t\in\R_+$, 
$\omega\in\Omega$ be a family of $(\mathcal{F}_t)$-adapted stochastic processes
defined on a probability space $(\Omega,\mathcal{F},P)$ equipped with 
the filtration $(\mathcal{F}_t)_{t\ge 0}$, which satisfy that $w^\e(0)=0$,
$w^\e(\cdot)\in C^\infty(\R_+)$ in $t$ a.s.\ $\om$ and
\begin{align}\label{eq:definition 2 of g}
&\lim_{\e \downarrow 0}\|w^\e-w\|_{C^{\th}([0,T])}=0,\quad \text{a.s.},
\end{align}
for every $T>0$ and some $\th\in(0,\frac12)$, where
$w(t)$ is an $(\mathcal{F}_t)$-Brownian motion satisfying $w(0)=0$ and
\begin{align}
&\|u\|_{C^{\th}([0,T])}=\sup_{t\in[0,T]}|u(t)|+
\sup_{\substack{0\leq s,t \leq T \\ s\neq t}}\frac{|u(t)-u(s)|}{|t-s|^{\th}}.
\end{align}

To prove our main theorem, we need two assumptions formulated as follows:
\begin{assump}  \label{ass:1}
For every $T > 0$, there exists $H_\e \ge 1, 0<\e\le 1,$ such that
\begin{align}\label{eq:definition of g}
&\sup_{t\in[0,T],\,\omega\in\Omega}|\frac{d^k}{dt^k}w^\e(t,\omega)|\leq H_\e,
\quad k=1,2,\cdots,n_1(K)+1,\\
&\lim _{\e\downarrow 0}H_\e=\infty,\quad 
\lim _{\e\downarrow 0} \frac{H_\e^{2n_1(K)}}{\log\log|\log\e|}=0,
\label{eq:definition of g(2)}
\end{align}
where $n_1(K)\in\N$ is the number determined from $K$ by Proposition
\ref{prop:Lemma order of e} below.
\end{assump}
Two examples of our mild noise $w^\e$ satisfying \eqref{eq:definition 2 of g} 
and Assumption \ref{ass:1} will be given in Section 4.1. 

\begin{assump}  \label{ass:2}
There exist stopping times $\sigma^\e$ and $\sigma$ such that
$V^{\a\dot{w}^\e}$ {\rm(}resp.\ $V${\rm)},
the solution of \eqref{eq:2-e} below with $v=\a\dot{w}^\e$
{\rm(}resp.\ \eqref{eq:2}{\rm)}, exists uniquely in $[0,\sigma^\e]$
 {\rm(}resp.\ $[0,\sigma]${\rm)}.
In addition, $\sigma^\e>0$ and $\sigma>0$ hold {\rm a.s.}
Furthermore, for every $T>0$ and $m\in \N$, the joint variable
$(\sigma^{\e}, d^\e(t\wedge\sigma^\e))$ 
$\in \R_+\times C([0,T],C^m(\mathcal{O}))$
converges in this space to $(\sigma, d(t\wedge\sigma))$
as $\e \downarrow 0$ in {\rm a.s.}-sense,
where $d^\e(t)$ {\rm(}resp.\ $d(t)${\rm)} is the signed distance determined by the 
hypersurface $\gamma^{\a\dot{w}^\e}_t$ {\rm(}resp.\ $\gamma_t${\rm)},
which is negative inside $\gamma^{\a\dot{w}^\e}_t$ {\rm(}resp.\ $\gamma_t${\rm)},
and $\mathcal{O}$ is an open neighborhood of $\ga_0$; see Theorem \ref{thm:Th1}
and \cite{We}.
\end{assump}

We will show in Section 5 that Assumption \ref{ass:2} holds in law sense
under a two-dimensional setting as long as the limit curve $\ga_t$ is convex. 
Applying Skorohod's representation theorem for joint variables
$(w^\e,\si^\e,d^\e(t\wedge \si^\e))$, by changing the probability space
$(\Om,\mathcal{F},P)$ if necessary, one can realize the convergence in a.s.-sense as in 
\eqref{eq:definition 2 of g} and Assumption 1.2.

Assumption 1.2 implies uniform bounds on spatial derivatives of the distance functions
in $\e>0$ locally in time, see Section 3.3.1.
We also need bounds on $t$-derivatives of the hypersurface $\ga_t^{\a\dot{w}^\e}$ 
by means of a certain norm of the noise $\dot{w}^\e$.
This will be formulated precisely as Assumption \ref{ass:3} in Section 3.3.2 and
shown under two-dimensional settings in Section 5.

The aim of this paper is to prove the following theorem.
\begin{thm}\label{thm:Th1}
Let $\gamma_0$ be a smooth hypersurface in $D$ without boundary
with finitely many connected components and it has the form 
$\gamma_0= \partial D_0$ with a smooth domain $D_0$ such that
$\overline{D}_0\subset D$.  Suppose that a local solution
$\Gamma = \cup _{0\leq t < \sigma}(\gamma_t \times \{t\})$ of \eqref{eq:2} 
up to the stopping time $\si >0$ {\rm(}a.s.{\rm)} satisfying
$\gamma_t \subset D$ for all $t\in[0,\sigma]$ uniquely exists {\rm(}a.s.{\rm)}.
Furthermore, let us assume three Assumptions \ref{ass:1}, \ref{ass:2} and \ref{ass:3}. 
Then, one can find a family of continuous functions $\{g^\e(\cdot)\}_{\e\in(0,1]}$ satisfying
\begin{align}\label{eq:g}
\lim_{\e\downarrow 0}g^\e(x)=\chi_{\ga_0},
\end{align}
such that $u^\e(t\wedge \sigma^\e\wedge \t,\cdot)$
converges to $\chi_{\gamma_{t\wedge \sigma\wedge \t}}(\cdot)$ 
in $C(\R_+,L^2(D))$ as $\e \downarrow 0$
in {\rm a.s.}-sense, where $u^\e$ is the
solution of \eqref{eq:1} with initial value $g^\e$ and $\t=\t(\om)>0$ is that
given below Assumption 3.1.
\end{thm}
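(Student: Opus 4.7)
The plan is to follow the asymptotic expansion strategy pioneered in \cite{CHL} for the deterministic mass-conserving Allen--Cahn equation, adapted to accommodate the noise-dependent divergences that Assumption~\ref{ass:1} is specifically designed to tame. First, I would work in a tubular neighborhood $\mathcal{N}$ of the driven interface $\ga^{\a\dot{w}^\e}_t$, whose signed distance $d^\e$ is controlled in every $C^m$-norm uniformly in $\e$ locally in time by Assumption~\ref{ass:2}, and introduce the stretched variable $\rho = d^\e(t,x)/\e$. The approximate solution is sought in the form
\begin{equation*}
u^\e_{\mathrm{app}}(t,x) = \sum_{k=0}^{K} \e^k \, U_k\!\bigl(t,x,d^\e(t,x)/\e\bigr),
\end{equation*}
with $U_0(t,x,\rho) = m(\rho)$, matched via the exponential decay \eqref{eq:decay of m} to a constant outer expansion $\pm 1$, modified near $\partial D$ to satisfy the Neumann boundary condition and corrected globally so as to respect the $\e$-level mass balance \eqref{eq:mass cons 3}. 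Substituting this ansatz into \eqref{eq:1} and equating coefficients of successive powers of $\e$ produces a hierarchy of Fredholm equations $(\partial_\rho^2 + f'(m))U_k = F_k$; imposing the solvability condition $\int_\R F_k\,m'\,d\rho = 0$ fixes the interface velocity as the $\e$-level law \eqref{eq:2-e} with $v=\a\dot{w}^\e$, thereby matching the limit \eqref{eq:2}. Once this is done, the $U_k$'s are determined inductively and each is polynomial in $\dot{w}^\e$ and its derivatives; by Assumption~\ref{ass:1} their supremum norms are bounded by $H_\e^{n_1(K)} = o(\sqrt{\log\log|\log\e|})$, which is beaten by any positive power of $\e$.

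I would then set $g^\e(\cdot) := u^\e_{\mathrm{app}}(0,\cdot)$, so that \eqref{eq:g} holds, and analyze the remainder $R^\e := u^\e - u^\e_{\mathrm{app}}$. It satisfies a parabolic equation of the schematic form
\begin{equation*}
\partial_t R^\e = \De R^\e + \e^{-2} f'(u^\e_{\mathrm{app}}) R^\e + O\!\bigl(\e^{-2}(R^\e)^2\bigr) - \e^{-2}\,(\text{nonlocal average}) + \mathcal{E}^\e,
\end{equation*}
with vanishing initial and Neumann data, where by construction the consistency residue obeys $\|\mathcal{E}^\e\|_\infty \le C\,\e^{K-1} H_\e^{n_1(K)}$. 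The central tool is the Schauder / $L^\infty$-type estimate of Proposition~\ref{prop:Lemma order of e}, whose derivation rests on the spectral gap of the linearized Allen--Cahn operator $\partial_\rho^2 + f'(m(\rho))$ around its zero mode $m'(\rho)$, combined with a nonlocal correction absorbing this zero mode through the mass constraint. Running this estimate with careful tracking of the $\e$- and $H_\e$-dependence, a bootstrap then yields $\|R^\e\|_{L^\infty(D\times[0,\t\wedge\si^\e])}\to 0$ as $\e\downarrow 0$. From this, the convergence of $u^\e(\cdot\wedge\t\wedge\si^\e,\cdot)$ to $\chi_{\ga_{\cdot\wedge\t\wedge\si}}$ in $C(\R_+,L^2(D))$ follows by splitting the spatial integral into $\mathcal{N}$ and its complement, using $m(\pm\infty)=\pm 1$ together with the a.s.\ convergence $d^\e\to d$ from Assumption~\ref{ass:2}.

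The main obstacle is the Schauder estimate itself. The coefficients of the linearized operator involve high derivatives of $\dot{w}^\e$ through the $U_k$'s, and the $\e^{-2}$ prefactor would naively amplify every factor of $H_\e$ catastrophically. Assumption~\ref{ass:1} is tuned so that $H_\e$ diverges at the slowest admissible rate, and the choice $K>\max(n+2,6)$ ensures that the consistency gain $\e^{K-1}$ dominates the noise-induced loss $H_\e^{n_1(K)}$ throughout; balancing these two scales in every intermediate estimate --- in particular in the Schauder estimate on the nonlocal, divergent-coefficient parabolic operator --- is the quantitative heart of the argument. Assumption~\ref{ass:3} on the $t$-regularity of $\ga^{\a\dot{w}^\e}_t$ is what allows this balance to close uniformly up to the stopping time $\t$.
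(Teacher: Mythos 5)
Your high-level blueprint coincides with the paper's: build an inner/outer asymptotic expansion as in Chen--Hilhorst--Logak, impose solvability conditions to recover the interface law \eqref{eq:2-e}, cut off at order $K>\max(n+2,6)$, and control the remainder $R^\e$ via a lower bound for the linearized Allen--Cahn operator combined with Gronwall. However, there is a genuine quantitative gap that makes your argument collapse at the heart of the proof.

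You assert that each coefficient $u_k$ is ``polynomial in $\dot{w}^\e$ and its derivatives'' with sup-norm bounded by $H_\e^{n_1(K)}$. This is false, and, if it were true, it would trivialize Assumption~\ref{ass:1}: a merely polynomial loss in $H_\e$ could be beaten by any power of $\e$ whenever $H_\e=o(\e^{-\delta})$, which is far weaker than the required $H_\e^{2n_1(K)}=o(\log\log|\log\e|)$. The reason the assumption is so restrictive is that the coefficients $h_k$ are obtained by solving the nonlocal parabolic equation \eqref{eq:equation for h_k} on $\mathcal{S}$ whose diffusion matrix, drift, and zeroth-order term are themselves polynomial in derivatives of $\dot{w}^\e$. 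The parametrix construction (Lemma~\ref{lem:lem 1-1-1}) for the fundamental solution of this operator necessarily produces a factor $\exp\big(C(1+|v|_N)^{N/(1-\mu)}T\big)$ in the heat-kernel bound, and the contraction/iteration argument needed to handle the additional nonlocal operator $\mathcal{L}_t$ (Lemma~\ref{lem:lem 034}) exponentiates again, yielding a bound of the form $(C_{13}K_3)^{C_{13}K_3}$ with $K_3$ already exponential in $(1+|v|_N)^{1+p}$. Feeding this into the recursion for $u_k$ (Proposition~\ref{prop:Lemma order of e}) gives a bound that is \emph{doubly} exponential in $H_\e^{n_1(K)}$, captured by the auxiliary quantity $G_\e$ defined by $\log\log G_\e = H_\e^{2n_1(K)}$. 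The whole point of Assumption~\ref{ass:1} is to make $G_\e = o(|\log\e|)$, which is exactly what is needed later in \eqref{eq:assump eq 1 in TH} and in the Gronwall step of Theorem~\ref{thm:err}. Your polynomial estimate skips the entire Schauder analysis with diverging coefficients (Section~3.3.3) --- which the paper itself identifies as its main technical contribution --- and so your bootstrap for $R^\e$ has no actual input to work with.

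Two secondary issues. First, you invoke a ``spectral gap of the linearized Allen--Cahn operator'' in explaining Proposition~\ref{prop:Lemma order of e}, but that proposition is entirely a statement about the parabolic operator $L_t+\mathcal{L}_t$ on $\mathcal{S}$; the Allen--Cahn spectral estimate only enters later, in Section~4.3, for the remainder. Second, your remainder equation flags a ``nonlocal average'' term but gives no mechanism to absorb it. The paper replaces the bare approximation $u_K^\e$ by the mass-corrected $v_K^\e$ of \eqref{eq:v_K-1} precisely so that $\int_D R\,dx=0$ (Lemma~\ref{lem:mass of R = 0}); without this cancellation the interpolation inequality \eqref{eq:CHL - Lemma1} in Lemma~\ref{lem:CHL lemma 2} is unavailable and the nonlinear term $\e^{-2}|R|^{2+p}$ cannot be controlled by $\|\nabla R\|_{L^2}^2$. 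Simply taking $g^\e=u^\e_{\mathrm{app}}(0,\cdot)$ does not by itself guarantee this orthogonality; the initial condition must also obey $\int_D\phi^\e\,dx=0$ as in \eqref{eq:TH assump 3}.
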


If Assumption 1.2 holds in law sense, by the observation we gave above,
Theorem \ref{thm:Th1} holds also in law sense.  More precise conditions
for $g^\e$ are formulated in \eqref{eq:TH assump 1}--\eqref{eq:TH assump 3}.

The right hand side of the first equation in \eqref{eq:1} contains the averaged reaction term, 
and hence, as we already pointed out,
one cannot directly apply the comparison principle to estimate its solution. 
This is a difference from the stochastic Allen-Cahn equation treated in \cite{F99}, \cite{We}.
Our method for the proof of Theorem \ref{thm:Th1} is an extension of
the asymptotic expansion used in \cite{CHL}.   Recall that $w(t)$ is not in $C^1$-class 
in $t$ and therefore for any smooth sequence $\{w^\e(t)\}_{0<\e\le 1}$ converging toward $w(t)$,
the products or the powers of its time derivative $\{\dot{w}^\e(t)\}_{0<\e\le 1}$ 
and higher order time derivatives always diverge as 
$\e\downarrow0$.  Due to this, each term in inner and outer solutions constructed by 
the asymptotic expansion except its leading term explodes as $\e\downarrow0$ 
because it contains some powers of the time derivatives of $w^\e$.
However, by choosing the sequence $\dot{w}^\e(t)$ in a suitable manner that
its divergent speed is slow enough as in Assumption \ref{ass:1}, one can control the diverging terms. 
Indeed, the speed of divergence of each $k$th 
inner and outer solutions having prefactor $\e^k$ can be controlled
once we can make the divergent speed of the powers of time derivatives of  $w^\e(t)$
slower than $\e^{-k}$.  This is one of the key points in the proof of Theorem \ref{thm:Th1}.

The paper is organized as follows.
In Section 2, we first give a heuristic derivation of the evolutional law \eqref{eq:2} of $\ga_t$,
as a result of the combination of \eqref{eq:lambda-0} and \eqref{eq:V + K}.
Then, we introduce the asymptotic expansion of the solution $u^\e$ of 
the equation \eqref{eq:1} in $\e$ in details.  It turns out that one needs to analyze 
the asymptotic expansion up to the $K$th order term with $K>\max(n+2,6)$; 
cf.\ Lemma 2 of \cite{CHL} and Theorem \ref{thm:err} below.
In Section 3, we define an approximate solution and show the estimates on each term
in the asymptotic expansion.  This is accomplished by carefully studying the
Schauder estimate for a diffusion operator with diverging coefficients.
Then in Section 4, we give the proof of Theorem \ref{thm:Th1}.
In Section 5, we discuss the stochastic partial differential equation (SPDE)
corresponding to \eqref{eq:2} under the situation that $n=2$ and the interfaces
stay convex and show that the SPDE corresponding to \eqref{eq:2} has a 
unique local solution in such case.  Assumption 1.2 in law sense and Assumption 3.1
are shown in this situation.  

\begin{rem}
Weber \cite{We} established the sharp interface 
limit for the stochastic Allen-Cahn equation in a.s.-sense with 
the choice of $\dot{w}^\e(t)$ such as the first example given in 
Section 4.1.  In his argument, it was essential that the SPDE 
describing the dynamics of the limit hypersurface
$\ga_t$ in terms of the signed distance $d$ has an additive noise.
This is not the case for our limit dynamics \eqref{eq:2}, so that
his argument doesn't work for showing Assumption 1.2 under 
a more general setting than we discuss in Section 5.
\end{rem}

\section{Asymptotic expansion of the solution of \eqref{eq:1}}

\subsection{Heuristic argument for the derivation of \eqref{eq:2}}  \label{sec:2.1}

Before starting the proof of Theorem \ref{thm:Th1},
it might be worthy to give a heuristic derivation of 
the evolutional law \eqref{eq:2} of the limit hypersurface $\ga_t$
from the stochastically perturbed mass-conserving
Allen-Cahn equation \eqref{eq:1}.

Note that the scaling for the noise term in \eqref{eq:1}
is essentially different from 
that for the stochastic Allen-Cahn equation.  In fact, without the
term $- - \hspace{-4mm}\int_D f(u^\e)$ in \eqref{eq:1}, the
proper scaling for the noise term was $\e^{-1} \a \dot{w}^\e(t)$
rather than $\a \dot{w}^\e(t)$, see \cite{F99}.  We give a heuristic
argument to explain the reason for this difference.  In particular,
we will see that the averaged reaction term behaves as
$- \hspace{-3.9mm}\int_D f(u^\e) = O(\e)$ as $\e\downarrow 0$ so that
$\e^{-2} - \hspace{-3.9mm}\int_D f(u^\e) \approx \e^{-1} \la_0(t)$
with a certain $\la_0(t)$, and the evolution of $\la_0(t)$ is governed
by the noise $\a \dot{w}^\e(t)$ of $O(1)$.

Our basic ansatz is that $u^\e(t,x) \approx \pm 1$ as $\e\downarrow 0$.  
We actually assume this at $t=0$ as in \eqref{eq:g}.  This implies 
$f(u^\e)\approx 0$ so that $a^\e(t) := - \hspace{-3.9mm}\int_D f(u^\e)$ should be small;
see Remark \ref{rem:2-a} below.
Conversely, if $a^\e(t)$ is small, the main term of the reaction term becomes
$\e^{-2}f(u^\e)$ so that the solution $u^\e$ is pushed toward $\pm 1$ and we can expect
our ansatz should be true.  Anyway, this observation suggests that, instead of $m$ 
defined by \eqref{eq:m standing wave}, it might be better to consider
the perturbed traveling wave solutions with the reaction term $f$ replaced by 
$f-a^\e(t)+\e^2 v(t)$ and $v(t) = \a \dot{w}^\e(t)$.  We denote 
 $v(t)$ for $\a \dot{w}^\e(t)$, since this term could be regarded as $O(1)$ 
as $\e\downarrow 0$; see \eqref{eq:1-v} below.  More precisely,
for $a\in \R$ with small $|a|$, define the traveling wave 
solution $m=m(\rho;a), \rho\in \R$ and its speed $c=c(a)$ by
\begin{equation}\label{eq:d}
\left\{\begin{aligned}
m'' + c m' + \{f(m)-a\} = 0,& \quad  \rho \in \R, \\
m(\pm \infty) = m_\pm^*, \quad m(0)& =0,   \end{aligned}
\right. 
\end{equation}
where $m_\pm^* \equiv m_\pm^*(a) = \pm 1 + O(a) \, 
(a\to 0) \,$ are solutions of $f(m_\pm^*) - a =0$.  
It is easy to see that $c_0:= c'(0) = 2/\int_\R m'(\rho)^2 d\rho$,
see below.  Another expression of $c_0$ is also known: $c_0= 
\frac{\sqrt{2}}{\int_{-1}^1 \sqrt{V(u)} du}$
where $V(u) = \int_u^1 f(v)dv$, see \cite{F99}. 

Our guess for the behavior of the solution $u^\e$ of \eqref{eq:1} is the following:
\begin{equation} \label{eq:assume}
u^\e(t,x) = m\Big( \frac{d(t,x)}\e; a^\e(t)-\e^2 v(t)\Big) + O(\e),
\end{equation}
as $\e\downarrow 0$, where $d(t,x)$ is the signed distance between $x\in D$ 
and the limit hypersurface $\ga_t$.
Then, denoting $m(\cdot; a^\e(t)-\e^2 v(t))$ simply by $m$, we have
\begin{align*}
0 &= \int_D \Big\{\Delta u^\e +\frac{1}{\e^2}\bigl(f(u^\e)-- \hspace{-3.9mm}\int_D f(u^\e)\bigr)\Big\}dx \\
&\approx \frac1{\e^2} \int_D \Big\{m''(\tfrac d\e)+\e m'(\tfrac d\e)\Delta d + f\Big(m(\tfrac d\e)\Big)
-a^\e(t)+ \e^2 v(t)\Big\}dx - |D| v(t) \\
&= \frac1{\e^2} \int_D \Big\{-c\big(a^\e(t)-\e^2 v(t)\big)m'(\tfrac d\e) + \e m'(\tfrac d\e)\Delta d \Big\} dx - |D| v(t) \\
&\approx \frac1{\e^2}\int_D m'(\tfrac d\e) \{ -c_0 a^\e(t) + \e\Delta d \} dx - |D| v(t)\\
&\approx\frac1{\e^2}\int_\R \int_{\ga_t} m'(\tfrac{r}{\e}) \{ -c_0 a^\e(t) + \e\k(t,\bar s)+\e rb(t,\bar s)\}dr d\bar s- |D| v(t) \\
&\approx\int_\R \int_{\ga_t} m'(\rho) 
\Big\{ -\frac{1}{\e}c_0 a^\e(t) + \k(t,\bar s) + \e \rho b(t,\bar s)\Big\} d\rho d\bar s- |D| v(t) \\
&= (m_+^*-m_-^*)  \int_{\ga_t}
\Big\{ -c_0 \frac{a^\e(t)}{\e} + \k(t,\bar s) \Big\}d\bar s - |D| v(t) + O(\e).
\end{align*}
Here, the first line is a consequence of $\int_D\De u^\e dx=0$ which holds under
the Neumann condition at $\partial D$, we apply \eqref{eq:assume} for the second line
recalling \eqref{eq:change variable 02} below for $\De u^\e$,
the third line follows from \eqref{eq:d} with $a= a^\e(t)-\e^2 v(t)$,
the fourth line by $c(a^\e-\e^2 v) = c_0 a^\e +O((a^\e)^2)+O(\e^2)$ and $c(0)=0$,
the fifth line from (30), (40) of \cite{CHL}, that is, 
$\De d = \k(t,\bar s)+rb(t,\bar s)+O(\e)$ with $b(t,\bar s)=-\na d\cdot\na\De d$ for $\bar s\in \ga_t$ denoting the volume element of $\ga_t$ by $d\bar s$,
and the sixth line follows by the change of variables $r=\e\rho$.
The above computation implies that $a^\e(t)$ should be of order $O(\e)$ and, defining $\la_0(t)$ as
\begin{equation} \label{eq:lambda}
\frac1\e a^\e(t) \equiv \frac1\e  - \hspace{-3.9mm}\int_D f(u^\e(t,\cdot)) 
= \la_0(t) + O(\e),
\end{equation}
since $m_\pm^*= \pm 1 + O(a^\e(t)-\e^2 v(t)) = \pm 1+ O(\e)$, 
we obtain
\begin{align} \label{eq:lambda-0}
2 c_0 \la_0(t) |\ga_t| = 2 \int_{\ga_t}\k d\bar s - |D| v(t),
\end{align}
in the limit $\e\downarrow 0$.

It will be clear that this condition is necessary for the first term of
\eqref{eq:eq of mass con 1} to vanish and used in \eqref{eq:lambda 0} to
determine $\la_0(t)$; note that
$c_0= \bar\si :=2 (\int_\R m'(\rho)^2 d\rho)^{-1}$ holds as we will see
below, where $\bar\si$ is called the inverse surface tension.
In particular, \eqref{eq:assume} and \eqref{eq:lambda} suggest
$$
u^\e(t,x) \underset{x=(\bar s,r), r\to\pm\infty}{\sim}
m_\pm^*(a^\e(t)-\e^2 v(t)) \sim \pm 1 + \e \frac{\la_0(t)}{f'(\pm 1)},
$$
since $0=f(m_\pm^*) - a \sim f'(\pm 1) (m_\pm^* \mp 1) - a$, which
implies $m_\pm^* \sim \pm 1 + \frac{a}{f'(\pm 1)}$.  This exactly coincides with
the formula \eqref{eq:u0 outer}
for the asymptotic behavior of the outer solutions.

Once \eqref{eq:lambda-0} is obtained, \eqref{eq:2} could be derived from
\begin{equation*}
V= \k -\bar\si \la_0(t) \quad \text{ on }\ga_t,
\end{equation*}
which is obtained as a solvability condition for $u_0$ appearing
in the expansion of $u^\e$, see \eqref{eq:V + K} below.
Later, we will consider the expansion of $u^\e$ based on $m(\frac{d(t,x)}\e;0)$
with $a=0$ rather than that introduced in \eqref{eq:assume},
since the leading orders are same.

We finally comment on the identity $c_0=\bar\si$.
The smoothness of $c(a)$ in $a$ is shown 
in the Appendix of \cite{FH}.  We compute for $m=m(\cdot;a)$ as
\begin{align*}
0&= - \int_\R \frac12\Big\{(m')^2\Big\}' d\rho
= - \int_\R m''\cdot m' d\rho \\
&= \int_\R \{ c(a) m' + f(m) -a\} m'd\rho \\
& = c(a) \int_\R (m')^2d\rho + \int_{m_-^*}^{m_+^*} f(s)ds 
- a (m_+^*-m_-^*).
\end{align*}
Take the derivative of both sides in $a$ and set $a=0$ noting that $m'=m'(\rho;a)$
and $m_\pm^* = m_\pm^*(a)$.  This leads to the identity $c_0=\bar\si$ since
$m_\pm^*(a) = \pm 1+ O(a)$ as $a\to 0$.

\begin{rem} \label{rem:2-a}
If the condition \eqref{eq:g} does not hold for the initial data $g^\e$,
$a^\e(0)$ is not small in general.  In this case, $a^\e(t)$ may not be 
small as well.  This means that the reaction term $f(u)-a^\e(t)$ does 
not satisfy the balance condition, i.e., the last equality in the 
condition {\rm (i)} for $f$.  Thus, the situation is more close to 
that G\"artner \cite{Ga83} and others discussed at least in the 
non-random case; see Section 4.1 of \cite{F-SB} and also Hilhorst 
et.\ al.\ \cite{HMNW}.  In particular, the proper time scale and 
the limit dynamics should be totally different from ours.
\end{rem}

\subsection{Signed distance from $\ga_t$ and parametrization of $\ga_t$}
\label{sec:2.2}
%
%-------------------------------------------
Let us start more precise discussions.
The expansion of the solution $u^\e(t,x)$ of \eqref{eq:1} in $\e$ will be given only 
in $\e$ appearing in the reaction term and not that in the noise term.
To make this clear, we consider the following equation with an external force 
$v(t)$, which is deterministic (non-random) such that $v \in C^\infty(\R_+)$:
\begin{equation}\label{eq:1-v}
\left\{
\begin{aligned}
& \frac{\partial u^\e}{\partial t} = \De u^\e + \e^{-2} \left( f(u^\e) - 
- \hspace{-3.9mm}\int_D f(u^\e) \right) + v(t),
\quad \text{ in } D \times \R_+, \\
& \frac{\partial u^\e}{\partial \nu} = 0,
\hspace{61mm}\quad \text{ on } \partial D \times \R_+, \\
& u^\e(\cdot,0) = g^\e(\cdot), \hspace{51mm}\quad \text{ in } D.
\end{aligned}
\right.
\end{equation}
Nevertheless to say, the solution of \eqref{eq:1} is the same as that of \eqref{eq:1-v} with 
$v=\a\dot{w}^\e$.
In addition, we consider the hypersurface $\{\ga_t^v\}$ whose evolution is governed by
\begin{equation}\label{eq:2-e}
V^v= \k - - \hspace{-3.9mm}\int_{\ga_t^v} \k + \frac{|D|}{2|\ga_t^v|}v(t),
\end{equation}
where $V^v$ is the inward normal velocity of $\ga^v_t$.
Suppose that \eqref{eq:2-e} has a unique solution for $t\leq T^v$ with some $T^v>0$.
Under these settings, we will first expand the solution $u^\e = u^{\e,v}$
of \eqref{eq:1-v} in $\e$ 
based on the solution $\ga_t = \ga_t^v$ of \eqref{eq:2-e}. 
Next, we will estimate each term appearing in the expansion by a suitable norm of $v$;
see Proposition \ref{prop:Lemma order of e} and Lemma \ref{lem:Lemma 2-1}.
Finally, in Sections 4.2--4.5, we will apply these lemmas taking $v(t)=\a\dot{w}^\e(t)$.

%-------------------------------------------
%
%
%-------------------------------------------
%
Let $d=d^v(t,x)$ be the signed distance of $x\in D$ to the hypersurface 
$\ga_t$, which is negative inside $\ga_t$; cf.\ Bellettini \cite{B13}, Chapter 1.
To parametrize the hypersurface $\ga_t$ on a fixed reference manifold,
we go along with \cite{B13}, Chapter 16 as follows.   Let $\mathcal{S}\subset\R^n$
be an  oriented compact $(n-1)$-dimensional submanifold without boundary and
with finitely many connected components being smoothly embedded in $\R^n$.
For each $s=(s^l)_{l=1}^n\in \mathcal{S}$, except some singular points,
$s^n$ is represented by other coordinates such that $s^n=s^n(s^1,\ldots,s^{n-1})$
and thus we can take $s=(s^l)_{l=1}^{n-1}$ as a local coordinate of $\mathcal{S}$.  
Near singular points for $s^n$, one may take other coordinates,
e.g., $(s^l)_{l=2}^n$, but we denote it by $s=(s^l)_{l=1}^{n-1}$ for simplicity.
Then, $\Big\{ \frac{\partial}{\partial s^l}\Big\}_{l=1}^{n-1}$ forms a basis of
the tangent space $T_s\mathcal{S}$ at $s\in\mathcal{S}$.
We parametrize $\ga_t, t \in [0,T]$ as $x=X_0(t,s)$ by $s=(s^l)_{l=1}^{n-1}\in \mathcal{S}$
such that $X_0\in C^\infty([0,T]\times \mathcal{S}, \R^n)$ and
the map $X_0(t,\cdot): \mathcal{S}\to \ga_t$ is homeomorphic 
for every $t\in [0,T]$.  In particular, $(\frac{\partial X_0(t,s)}
{\partial s^1},\ldots,\frac{\partial X_0(t,s)}{\partial s^{n-1}})$ forms a 
basis of the tangent space to $\ga_t$ at $x=X_0(t,s)$ for each $s\in \mathcal{S}$.

We denote by $\bold{n}(t,s)$ the unit outer normal vector on $\ga_t$
so that 
\begin{equation}  \label{eq:boldn}
\bold{n}(t,s)=\nabla d(t,X_0(t,s)).
\end{equation}
We define the Jacobian of the map $X_0(t,\cdot)$ as
\begin{equation}  \label{eq:2-J0}
J^0(t,s) = \text{det}\bigl[\bold{n}(t,s),\partial_{s^1}X_0(t,s),\ldots,\partial_{s^{n-1}}X_0(t,s)\bigr].
\end{equation}

\begin{rem}
Bellettini \cite{B13}, p.\ 251 introduces the parametrization $\fa_\e(t,s)$ of 
the hypersurface $\tilde\ga_t^\e$
determined by \eqref{eq:2.ga-tilde} below.  
Our parametrization $X_0(t,s)$ of $\ga_t$ by $\mathcal{S}$ 
is similar and corresponds to $\fa_0(t,s)$ in \cite{B13}. Chen et al.\
\cite{CHL}, (31) considers the parametrization by $U$ instead of
our $\mathcal{S}$ satisfying  
\begin{align} \label{eq: determinant}
J^0(t,s)=1,
\end{align}
but under this parametrization the set $U$ changes in $t$.  
To avoid this inconvenience, we follow \cite{B13}.
\end{rem}

Let $\delta>0$ be small enough such that the signed distance function 
$d(t,x)$ from $\gamma_t$ is smooth in the $3\delta$-neighborhood of $\ga_t$
and the distance between $\ga_t$ and $\partial D$ is larger than $3\delta$
for every $t\in[0,T^v]$.
A local coordinate $(r,s)\in (-3\delta,3\delta) \times \mathcal{S}$
of $x$ in a tubular neighborhood of $\ga_t$ is defined by
\begin{align}\label{eq:parameterize}
x=X_0(t,s)+r\bold{n}(t,s)=:X(t,r,s). 
\end{align}
Then its inverse function is given by
\begin{align*}
 r=d(t,x),\quad s=\bold{S}(t,x)=(S^{1}(t,x),\ldots, S^{n-1}(t,x)).
\end{align*}
In particular, since $r=d(t,X_0(t,s)+r \bold{n}(t,s))$, by differentiating this in $r$,
we have
$$
\nabla d(t,X_0(t,s)+r \bold{n}(t,s)) \cdot \bold{n}(t,s) = 1,
\quad r \in (-3\delta,3\delta).
$$
Since $|\nabla d(t,x)|=1$ for $x$ close to $\ga_t$, this implies
\begin{equation}  \label{eq:nablad}
\nabla d(t,X_0(t,s)+r \bold{n}(t,s)) = \bold{n}(t,s), 
\quad r \in (-3\delta,3\delta).
\end{equation}
Changing coordinates from $(t,x)$ to $(t,r,s)$ for a function $\phi=\phi(t,x)$,
we associate another function $\tilde{\phi}=\tilde{\phi}(t,r,s)$ as
\begin{align*}
\tilde{\phi}(t,r,s)=\phi(t,X_0(t,s)+r\bold{n}(t,s)), 
\end{align*}
or equivalently 
\begin{align*}
\phi(t,x)=\tilde{\phi}(t,d(t,x),\bold{S}(t,x)).
\end{align*}
Let $V(t,s)$ be the inward normal velocity of the interface 
$\ga_t$ at $X_0(t,s)$, 
$$
V(t,s) = -\partial_t X_0(t,s)\cdot \bold{n}(t,s).
$$
From \eqref{eq:boldn} and $d(t,X_0(t,s))=0$, we see that
\begin{align}\label{eq:2.V}
V(t,s) = \partial_t d(t,X_0(t,s)).
\end{align}

Then, we have
\begin{align*}
&\partial_t\phi(t,x)=(V\partial_r+\partial_t^{\Ga})\tilde{\phi}(t,d(t,x),\bold{S}(t,x)), 
\\
&\nabla\phi(t,x)=(\bold{n}(t,\bold{S}(t,x))\partial_r+\nabla^{\Ga})\tilde{\phi}(t,d(t,x),\bold{S}(t,x)), 
\\
&\Delta\phi(t,x)=(\partial_{r}^2+\Delta d(t,x)\partial_r+\Delta^{\Ga})\tilde{\phi}(t,d(t,x),\bold{S}(t,x)),
\end{align*}
where the superscripts $\Ga$ mean the derivatives tangential to the hypersurface $\ga^v$
seen under the coordinate $s\in \mathcal{S}$:
\begin{align*}
&\partial_t^{\Ga}\tilde{\phi}=(\partial_t+\sum_{i=1}^{n-1}S_t^i \partial_{s^i})\tilde{\phi}, 
\\
&\nabla^{\Ga}\tilde{\phi}=\left(\sum_{i=1}^{n-1}\partial_1S^i\partial_{s^i},\ldots,\sum_{i=1}^{n-1}\partial_n S^i\partial_{s^i}\right)\tilde{\phi},
\\
&\Delta^{\Ga}\tilde{\phi}=\left(\sum_{i=1}^{n-1}\Delta S^i \partial_{s^i}+\sum_{i,j=1}^{n-1}\nabla S^i\cdot\nabla S^j \partial_{s^is^j}^2\right)\tilde{\phi},
\end{align*}
see (33) in \cite{CHL} recalling that the sign of $V$ is opposite.
We denote by $\kappa_1,\cdots,\kappa_{n-1},0$ the eigenvalues 
of the Hessian $D_x^2 d(t,x)$ with corresponding normalized eigenvectors 
$\tau_1,\cdots,\tau_{n-1},$ $\nabla d$.  Set   
\begin{align}\label{eq:curvature}
\kappa(t,s):=(n-1) \bar\kappa_{\gamma_t}
=\sum_{i=1}^{n-1}\kappa_i=\Delta d(t,X_0(t,s)),
\end{align}
where $\bar\kappa_{\gamma_t}$ is the mean curvature of $\gamma_t$ 
at $x=X_0(t,s)$.  We denote 
\begin{align}\label{eq:b(t,s)}
b(t,s):=-\nabla d\cdot \nabla \Delta d(t,x)|_{x=X_0(t,s)}=
\sum_{i=1}^{n-1}\kappa_i^2.
\end{align}

\subsection{Formal expansion of the solution $u^\e$}

In this subsection, we briefly recall in our setting the method of construction of inner and outer solutions given in \cite{CHL}. 
%
%-----------------
%-----------------
%
The equation \eqref{eq:1-v} is expressed as
\begin{align}\label{eq:our equation}
& 0=f(u^\e(t,x))+\e^2(-\partial_t u^\e(t,x) + \Delta u^\e(t,x) + v(t)) - \e \lambda_\e(t),
\end{align}
where $\la_\e(t):= \e^{-1} a^\e(t)$ is given in \eqref{eq:lambda}, that is,
\begin{align}\label{eq:lambda-e}
\lambda_\e(t):=\e^{-1}- \hspace{-3.9mm}\int_D f(u^\e(t,\cdot)).
\end{align}
Note that \eqref{eq:our equation} combined with the conservation law 
\begin{align}\label{eq:2.10}
\partial_t\int_D u^\e(t,x)dx=|D|v(t)
\end{align}
implies \eqref{eq:lambda-e} and therefore \eqref{eq:1-v}.
We define $h_\e(t,s)$ by 
\begin{align}  \label{eq:2.ga-tilde}
& \tilde{\ga}_t^\e\equiv\{x\in D\,|\,u^\e(t,x)=0\}=\{ X(t,r,s)  \,| \, r = \e h_\e(t,s), s\in \mathcal{S} \},
\end{align}
which is the 0-level set of the solution $u^\e$ of \eqref{eq:1-v}.
We will expand $u^\e$, $\la_\e$ and $h_\e$ in $\e$, see \eqref{eq:expansion-i} and \eqref{eq:expansion-o} below.

Let us define the stretched variable $\rho=\rho^\e(t,x)$ as
\begin{align*}
& \rho^\e(t,x)=\frac{ d(t,x)-\e h_\e(t,\bold{S}(t,x))}{\e},
\end{align*}
which is the distance between $x$ and $\tilde{\ga}^\e_t$ divided by $\e>0$.
Then, the variables $(t,x)$ and $(t,\rho,s)$ are related by
\begin{align}\label{eq:related variables}
&x=\hat{X}(t,\rho,s)=X(t,\e(\rho+h_\e(t,s)),s)=X_0(t,s)+\e(\rho+h_\e(t,s))\bold{n}(t,s).
\end{align}
Furthermore, its Jacobian $J^\e(t,\rho,s)$ defined by $dx=\e J^\e(t,\rho,s)d\rho ds$ is written as 
\begin{align}\label{eq:Jacobian e}
  J^\e(t,\rho,s) = J^0(t,s) \left(
1+\Delta d(t,s)(\e(\rho+h_\e(t,s)))+\sum_{i=2}^{n-1}(\e(\rho+h_\e(t,s)))^ij_i(t,s)\right),
\end{align}
with some given functions $j_i$ depending on $\gamma_t$; see \cite{CHL},
 (44) p.~537 and also p.\ 538.
By the change of variable formula (see \cite{CHL}, (39)), we obtain
\begin{align} 
\partial_t u^\e(t,x) =& \bigl(V(t,\bold{S}(t,x)) \e^{-1}
  - \partial_t^{\Ga} h_{\e}(t,\bold{S}(t,x)) \bigr)\partial_{\rho} \tilde{u}^\e(t,\rho(t,x),\bold{S}(t,x))  \label{eq:change variable 01}
 \\
  &+ \partial_t^{\Ga} \tilde{u}^\e(t,\rho(t,x),\bold{S}(t,x)),
  \nonumber
\\
\Delta u^\e(t,x) = &\left(\e^{-2} + |\nabla^{\Ga}h_\e(t,\bold{S}(t,x))|^2\right)\partial_{\rho}^2 \tilde{u}^\e(t,\rho(t,x),\bold{S}(t,x))  \label{eq:change variable 02}\\
  &+\left(\Delta d(t,x)\e^{-1}-\Delta^{\Ga}h_\e(t,\bold{S}(t,x))\right)
   \partial_{\rho} \tilde{u}^\e(t,\rho(t,x),\bold{S}(t,x))
  \nonumber\\
  &-2\nabla^{\Ga}h_\e(t,\bold{S}(t,x))\cdot \nabla^{\Ga}\partial_\rho \tilde{u}^\e(t,\rho(t,x),\bold{S}(t,x))
  \nonumber\\
  &+\Delta^{\Ga} \tilde{u}^\e(t,\rho(t,x),\bold{S}(t,x)),
  \nonumber
\end{align}
where $\tilde{u}^\e=\tilde{u}^\e(t,\rho,s)$ is the function $u^\e=u^\e(t,x)$ 
viewed under the coordinate $(t,\rho,s)$ defined by \eqref{eq:related variables}.
In the following, we will denote $\tilde{u}^\e$ by $u$ for simplicity.
Therefore, from \eqref{eq:our equation}, we have 
\begin{align}\label{eq:(55)}
  0=&\bigl[\partial_{\rho}^2u+f(u)\bigr] + \e \bigl[ (-V(t,s)+\Delta d)\partial_\rho u-\lambda_\e(t) \bigr]
  \\
  &+\e^2\bigl[(\Delta^{\Ga}u-\partial_t^{\Ga}u)+(\partial_t^{\Ga}h_\e-\Delta^{\Ga}h_\e)\partial_\rho u\bigr]
  \nonumber
  \\
  &+\e^2\bigl[ |\nabla^{\Ga}h_\e|^2\partial_{\rho}^2u-2\nabla^{\Ga}h_\e \cdot \nabla^{\Ga}\partial_\rho u \bigl]+\e^2v(t).
  \nonumber  
\end{align}
Suppose that $u$ and $h_\e$ have the {\it inner} asymptotic expansions: 
\begin{equation}  \label{eq:expansion-i}
\begin{aligned}
  &u(t,\rho,s)=m(\rho)+\e u_0(t,\rho,s) + \e^2 u_1(t,\rho,s) + \e^3 u_2(t,\rho,s) + \cdots,
  \\
  &\e h_\e(t,s)=\e h_1(t,s) + \e^2 h_2(t,s) + \e^3 h_3(t,s) +  \cdots,
\end{aligned}
\end{equation}
for $(t,\rho,s) \in [0,T^v]\times \R\times \mathcal{S}$, respectively,
where $m$ is the standing wave solution determined by 
\eqref{eq:m standing wave}.  
On the other hand, assume that $\lambda_\e$ and $u^\pm$ have the
{\it outer} asymptotic expansions:
\begin{equation}  \label{eq:expansion-o}
\begin{aligned}
  &\lambda_\e(t)=\lambda_0(t) + \e \lambda_1(t) + \e^2 \lambda_2(t) + \e^3\lambda_3(t) + \cdots, 
  \\
  &u^\pm(t)=\pm1+\e u_0^\pm(t) + \e^2 u_1^\pm(t) + \e^3 u_2^\pm(t) + \cdots,
\end{aligned}
\end{equation}
for  $t \in[0,T^v]$, respectively.
Furthermore, let us note that $\Delta d(t,x)$ is expanded into 
\begin{align}\label{eq:Delta d}
&\Delta d(t,x)|_{x=X_0(t,s)+\e(\rho+h_\e(t,s))\bold{n}(t,s)} 
\\
= &\kappa(t,s)-\e(\rho+h_\e(t,s))b(t,s)+\sum_{i\geq 2}\e^ib_i(t,s)(\rho+h_\e(t,s))^i,
\nonumber
\end{align}
where $b_i(t,s)$, $i\geq 2$ are some functions depending only on $\gamma_t$; see (40) of \cite{CHL}.
%---------------------------------------
%---------------------------------------
\subsection{Inductive scheme to determine coefficients}
%---------------------------------------
%---------------------------------------
In this subsection, for a fixed $K>\max(n+2,6)$, we construct functions
$\{u_k\}_{k=0}^K$, $\{h_k\}_{k=0}^K$, $\{\la_k\}_{k=0}^{K}$ and $\{u_k^\pm\}_{k=0}^K$
appearing in the above expansions \eqref{eq:expansion-i} and \eqref{eq:expansion-o}
of $u$, $h_\e$, $\lambda_\e$ and $u^\pm$ defined for $t\in[0,T^v]$
in such a manner that all $k$-th order terms (that is, those of order $O(\e^k)$) vanish
when we substitute these formal expansions in \eqref{eq:(55)},
where we set $h_0=0$ for convenience.  In fact, setting
\begin{align}
  \nu_k=(u_k,h_k, \la_k,u^{\pm}_k),\quad k=0,1,\ldots,K,
\end{align}
$\nu_k$ will be inductively determined as follows:  For $k=0$,
$\lambda_0$ will be defined by \eqref{eq:lambda 0} below, $u_0$ by \eqref{eq:inner u_0}
and $u_0^\pm$ by \eqref{eq:u0 outer}, respectively.  This determines $\nu_0$.
For $k\ge 1$,  once we know $\{\nu_i\}_{i=0}^{k-1}$,
$h_k$ is determined by solving the equation \eqref{eq:equation for h_k}
and $\lambda_k$ by \eqref{eq:lambda_k representation} knowing $h_k$ additionally,
respectively.  Furthermore, $u_k$ is defined by \eqref{eq:inner u_k (odd)}  knowing
$h_k$ and $\la_k$, while $u_k^\pm$ are determined by \eqref{eq:uk outer}.

When we insert the expansion of $u$ in \eqref{eq:expansion-i} into \eqref{eq:(55)},
the term of order $O(1)$ needs to satisfy $m^{''}(\rho)+f(m(\rho))=0$ so that
we took the leading term of $u$ as $m(\rho)$.

Let us start the procedure to determine $\nu_0$.
For the term of order $O(\e)$ in \eqref{eq:(55)} to vanish, 
recalling \eqref{eq:Delta d},
we have
\begin{align}\label{eq:Order e1}
 \mathcal{L}u_0=(-V+\kappa)m'-\lambda_0(t), %O(\e^3).
\end{align}
where 
$\mathcal{L}$ is the linearized operator of $-(\partial_\rho^2u+f(u))$ 
around $m$ defined as
\begin{align*}
\mathcal{L}u_0=- \partial_\rho^2 u_0-f'(m)u_0,\quad \rho\in\R.
\end{align*} 
Suggested by \eqref{eq:lambda-0}, we first define $\lambda_0(t)$ as 
\begin{align}\label{eq:lambda 0}
  \lambda_0(t)=\frac{1}{\bar\sigma|\ga_t^v|}\int_{\ga_t^v}
 \kappa(t,\bar s) d\bar s - \frac{|D|}{2\bar\sigma|\ga_t^v|}v(t).
\end{align}
Note that $\int_{\ga_t^v} \kappa(t,\bar s) d\bar s
=\int_{\mathcal{S}} \k(t,s)J^0(t,s)ds$ and 
$|\ga_t^v|=\int_{\mathcal{S}}J^0(t,s)ds$ by the change
of variables $\bar s=X_0(t,s)$.
Then, from the solvability condition $\int \mathcal{L}u_0 \, m'd\rho=0$
for \eqref{eq:Order e1}, $V$ needs to satisfy
\begin{align}\label{eq:V + K}
 -V(t,s)+\kappa(t,s)=\lambda_0(t){\bar\sigma},\quad (t,s)\in [0,T^v]\times \mathcal{S}, %O(\e^3).
\end{align}
where $\bar\sigma=2(\int_\R m'(\rho)^2 d\rho)^{-1}$.
This combined with \eqref{eq:lambda 0}
leads to the evolutional law \eqref{eq:2-e} of $\ga_t^v$.

Next, to determine $u_0$, we note the following fact:
Since $m_1(\rho)=m'(\rho)$ and $m_2(\rho)=m'(\rho)\int_0^\rho \frac{1}{m'(y)^2}dy$ 
are linearly independent two solutions of $\mathcal{L}u(\rho)=0$
and $m_1^\prime m_2-m_1m_2^\prime=-1$,
the solution of the equation $\mathcal{L}u(\rho)=g(\rho)$ satisfying $u(0)=0$
is unique and given by
\begin{align}\label{eq:fundamental solution m}
u(\rho)=&\Bigl(-\int_0^\rho m'(y)g(y)dy\Bigr)m'(\rho)\int_0^\rho\frac{dy}{m'(y)^2} 
\\
&+ \Bigl(\int_0^\rho m'(y) (\int_0^y \frac{dz}{m'(z)^2}) g(y)dy \Bigr)m'(\rho).
\nonumber
\end{align}
Let us determine $u_0(t,\rho,s)$ and $u^{\pm}_0(t)$.
The function $u_0(t,\rho,s)$ satisfies \eqref{eq:Order e1}.
To solve this equation,
let $\theta_1$ be a unique solution of $\mathcal{L}\theta_1=1-m'\bar\sigma$, namely,
from \eqref{eq:fundamental solution m} with $g=1-m^\prime\bar\sigma$,
\begin{align*}
\theta_1(\rho)=&\Bigl(-\int_0^\rho m'(y)(1-m'(y)\bar\sigma)dy\Bigr)m'(\rho)\int_0^\rho\frac{1}{m'(y)^2}dy 
\nonumber
\\&+ \Bigl(\int_0^\rho m'(y)(\int_0^y \frac{1}{m'(z)^2}dz) (1-m'(y)\bar\sigma)dy \Bigr)m'(\rho).
\nonumber
\end{align*}
Then, noting \eqref{eq:V + K}, $u_0(t,\rho,s)$ is given by 
\begin{align}\label{eq:inner u_0}
u_0(t,\rho,s)=-\lambda_0(t)\theta_1(\rho).
\end{align}
Furthermore, since $\lim_{\rho\rightarrow\pm\infty}|g(t,\rho,s)+\lambda_0(t)|=O(e^{-\zeta|\rho|})$, $\zeta>0$
holds for the right hand side $g(t,\rho,s)$ of \eqref{eq:Order e1}, 
by Lemma 3 in \cite{CHL}, it follows that 
\begin{align}\label{eq:compat 0th}
&  \lim_{\rho\rightarrow\pm\infty}
   \partial_{\rho}^m \partial_{s}^\bold{n} \partial_{t}^l \Bigl(u_0(t,\rho,s)-\frac{\lambda_0(t)}{f'(\pm1)}\Bigr)=O(e^{-\zeta|\rho|}),
\end{align}
for all $(m,\bold{n},l)\in \Z_+\times\Z_+^{n-1}\times\Z_+$.  Therefore, we define 
\begin{align}\label{eq:u0 outer}
u_0^{\pm}(t):=\frac{\lambda_0(t)}{f'(\pm1)}.
\end{align}

Now, let us determine $\nu_k$ for $k\ge 1$ assuming that
$\{\nu_i\}_{i=0}^{k-1}$ are known.
For the term of order $O(\e^{k+1})$ in \eqref{eq:(55)} to vanish,
it is necessary to have
\begin{align}\label{eq:A_1}
 \mathcal{L}u_k=A^{k-1}
& +\bigl(\partial_t^{\Ga}-\Delta^{\Ga}-b(t,s)\bigr)h_k \, m'  
  - b(t,s) \rho \, m' 1_{\{k=1\}}  \\
& + (2 \cdot 1_{\{k\ge 2\}}+1_{\{k=1\}}) \nabla^\Ga h_1 \cdot \nabla^\Ga h_k \, m''
-\lambda_k(t)+v(t)1_{\{k=1\}},  \notag
\end{align}
where $b(t,s)$ is defined by \eqref{eq:b(t,s)},
$A^{k-1}=A^{k-1}(\la_0,u_i,h_i,0\le i \le k-1)$ is given by
%%%%%%%%%%%%%%%%%%%%%%%%%%%%%%%%%%%%%%%%
\begin{align}\label{eq:equation A^{k-1}}
 A^{k-1}
 =&\lambda_0(t){\bar\sigma} \, \partial_\rho u_{k-1}
 \\
 &+\sum_{l=2}^k  b_l(t,s) \sum_{\substack{ i_1,\cdots,i_l\geq 1,j\geq-1, \\ i_1+\cdots+i_l+j=k-1} }
  \tilde{h}_{i_1} \cdots \tilde{h}_{i_l} \, \partial_\rho u_j 
 -\partial_t^{\Ga}u_{k-2}+\Delta^{\Ga}u_{k-2}
 \nonumber\\
 &%+\sum_{i=1}^{\bigl[\frac{k+1}{2}\bigr]}|\nabla^{\Ga}h_i|^2 \partial_{\rho \rho}u_{k-2i}
 +\sum_{i=1}^{k-1}\bigl(\partial_t^{\Ga}h_i-\Delta^{\Ga}h_i-b(t,s) \tilde h_i\bigr)
\partial_\rho u_{k-1-i}
 \nonumber\\
 &+ \sum_{\substack{i,j\geq1, i+j\leq k+1, \\
(i,j) \not= (k,1), (1,k)}}\nabla^{\Ga}h_i \cdot \nabla^{\Ga}h_j \partial_{\rho}^2u_{k-(i+j)}
 -2\sum_{i=1}^{k-1}\nabla^{\Ga}h_i \cdot \nabla^{\Ga}\partial_{\rho} u_{k-1-i}
 \nonumber\\
 &+\sum_{l=2}^{k+1} \frac{1}{l!}f^{(l)}(m) \sum_{\substack{ i_1, \ldots,i_l \ge 0, \\ i_1+\cdots+i_l=k+1-l}} u_{i_1}\cdots u_{i_l},  
 \nonumber
\end{align}
$\tilde{h}_i(t,s):=h_i(t,s)+\rho\,1_{\{i=1\}}$, $1\leq i \leq K$, $u_{-1}=m$ and $b_l(t,s)$ are defined by \eqref{eq:Delta d}.  In particular, 
$A^0(\la_0,u_0)=\lambda_0(t)\bar\si \, \partial_\rho u_0 +\frac12f^{\prime\prime}(m)u_0^2$,
since $\partial_t^\Ga m = \De^\Ga m=0$.  We have used \eqref{eq:Delta d} and \eqref{eq:V + K} 
to have the expansion of the term $-V+\De d$.

From the solvability condition $\int\mathcal{L}u_k \, m' d\rho=0$ for \eqref{eq:A_1}
and noting that $\int_\R m'' m' \, d\rho=0$,
it follows that $\lambda_k$ and $h_k$ should satisfy
\begin{align}\label{eq:lamdba(k) odd}
  \lambda_k(t)=& \frac{1}{\bar\sigma}\left( \partial_t^{\Ga} - \Delta^{\Ga} -b(t,s) \right)h_k
  +\frac{1}{2}\int_\R A^{k-1} m'(\rho)d\rho \\
& + \left\{ v(t) - \frac12 b(t,s)
 \int_\R \rho \left(m'(\rho)\right)^2 \, d\rho\right\} 1_{\{k=1\}}.  \notag
\end{align}

The next task is to determine $u_k(t,\rho,s)$ and $u^{\pm}_k(t)$.
Recall that $k\ge1$ is fixed.
From \eqref{eq:fundamental solution m},
it follows that the linear equation $\mathcal{L}u_k=\widetilde{A}^k$
satisfying $u(t,0,s)=0$ has a unique solution given by
\begin{align}\label{eq:inner u_k (odd)}
u_k(t,\rho,s)=&\Bigl(-\int_0^\rho m'(y)\widetilde{A}^k(t,y,s)dy\Bigr)
m'(\rho)\int_0^\rho\frac{dy}{m'(y)^2} 
\\
&+ \Bigl(\int_0^\rho m'(y) (\int_0^y \frac{dz}{m'(z)^2}) \widetilde{A}^k(t,y,s)dy \Bigr)m'(\rho),
\nonumber
%\\
\end{align}
where $\widetilde{A}^k(t,\rho,s)$ denotes  the right hand side of \eqref{eq:A_1}.
Especially, supposing that $u_0^\pm$, $u_1^\pm,\ldots,u_{k-1}^\pm$ are determined inductively
and $\{u_i\}_{i=0}^{k-1}$ satisfy the following \eqref{eq:compat kth} with $i$ instead of $k$,
from Lemma 3 in \cite{CHL} and noting that $h_k$ are independent of $\rho$ 
and $m^\prime$, $\rho m^\prime$, $m^{\prime\prime}$, $\partial_\rho u_i$, $\partial_\rho^2 u_i$, $\Delta^\Ga u_{k-2}$ tend to $0$ as $|\rho|\rightarrow\infty$,
it is easy to check
\begin{align}\label{eq:compat kth}
%&  \lim_{\rho\rightarrow\pm\infty}
&  \partial_{\rho}^m \partial_{s}^\bold{n} \partial_{t}^l
   \Bigl(u_k(t,\rho,s)-\frac{g_k^\pm(t)}{f'(\pm1)}\Bigr)=O(e^{-\zeta|\rho|}),
   \quad \text{as } \rho\rightarrow\pm\infty,
\end{align}
for $(t,s)$,
%{\bf uniformly in $(t,s)$??},
where
\begin{align}\label{eq:g_k(t)}
  g_k^\pm(t)
= - & \left(A^{k-1}|_{\rho=\pm\infty} -\lambda_k(t) +v(t) 1_{\{k=1\}} \right) \\
=-&\sum_{j=2}^{k+1} \frac{1}{j!}
       \sum_{\substack{ 0 \leq i_1 \leq i_2 \leq \cdots \leq i_j,\\ i_1+i_2+\cdots+i_j=k-(j-1)}}f^{(j)}(\pm 1)\frac{u_{i_1}^{\pm}(t)\cdots u_{i_j}^{\pm}(t)}{f'(\pm 1)^j}   \notag \\
       &+\partial_t^\Ga u_{k-2}^{\pm}(t)+\lambda_k(t) -v(t) 1_{\{k=1\}}.
\nonumber
\end{align}
This suggests to define 
\begin{align}\label{eq:uk outer}
u_k^{\pm}(t):=\frac{g_k^\pm(t)}{f'(\pm1)}.
\end{align}

We will determine $\la_k(t)$ in such a way that the term of order $O(\e^k)$ of the integral
\begin{align}\label{eq:m 01}
\int_D \Bigl(\partial_t u^\e_k(t,x) - v(t)\Bigr) dx
\end{align}
vanishes, where $u^\e_k$ is an approximate solution defined as \eqref{eq:definition u_k} later.
In fact, this results in
\begin{align}\label{eq:lambda_k representation}
\lambda_k(t)&=
\frac{1}{\bar\sigma|\ga_t^v|}\int_\mathcal{S}
\Bigl((\Delta d(t,X_0(t,s)) - \bar\sigma \lambda_0(t)) \Delta d(t,X_0(t,s)) h_k(t,s) - b(t,s)h_k(t,s) 
\\
&+ \frac{\bar\sigma}{2}\int_\R A^{k-1} m'(\rho)d\rho \Bigr)J^0(t,s)ds
\nonumber
\\
&+\Bigl(v(t)-\frac{1}{2|\ga_t^v|}\int_\R \rho (m^\prime(\rho))^2d\rho\int_\mathcal{S}b(t,s)J^0(t,s)ds\Bigr)1_{\{k=1\}}
\nonumber
\\
&+\frac{1}{2\bar\sigma|\ga_t^v|}B_{k-1},
\nonumber
\end{align}
where $B_{k-1}=B_{k-1}(u_i,u_i^\pm,h_i,\,0\leq i\leq k-1)$ is a term which is determined from $u_i$, $u_i^\pm$ and $h_i$, $0\leq i \leq k-1$;
see Section 3.2, in particular, \eqref{eq:definition of B_{k-1}} for $B_{k-1}$
and \eqref{eq:mass cons 01} below.
From \eqref{eq:lamdba(k) odd} and \eqref{eq:lambda_k representation},
we define $h_k$ as the solution of the following equation:
\begin{align}\label{eq:equation for h_k}
&\left( \partial_t^{\Ga}-\Delta^{\Ga}-b(t,s)\right)h_k(t,s)
\\
 & -\frac{1}{|\ga_t^v|}\int_\mathcal{S} \Bigl(\bigl(\Delta d(t,X_0(t,s')) - \bar\si\lambda_0(t)\bigr)\Delta d(t,X_0(t,s'))-b(t,s')\Bigr)h_k(t,s')J^0(t,s')ds'
\nonumber
\\
 =&-\frac{\bar\sigma}{2}\int_{\R} A^{k-1}\,m'(\rho)d\rho
   +\frac{\bar\sigma}{2|\ga_t^v|}\int\int_{\mathcal{S}\times \R}A^{k-1}\,m'(\rho)
J^0(t,s')ds'd\rho
\nonumber
\\
 &+\Bigl(\frac{\bar\sigma}{2}\bigl(b(t,s)-\frac{1}{|\ga_t^v|}\int_\mathcal{S} b(t,s')J^0(t,s')ds'\bigr)\int_\R \rho (m^\prime(\rho))^2d\rho\Bigr)1_{\{k=1\}}
+\frac{1}{2|\ga_t^v|}B_{k-1}.
\nonumber
\end{align}
%--------------
This is a linear equation for $h_k$, $k\geq1$;
note that the right hand side is determined from $\lambda_0$, $u_i$, $u_i^\pm$ and $h_i$ with $0\leq i \leq k-1$.
When $k=1$, nonlinear term $|\nabla^\Ga h_1|^2$ appears in \eqref{eq:A_1},
however, this disappears by the relation $\int_\R m^\prime(\rho) m^{\prime \prime }(\rho)d\rho=0$.
We will study the equation \eqref{eq:equation for h_k} in Section \ref{sec:3.3.3}.
%---------------------------------------
%---------------------------------------
\section{Approximate solutions and their estimates}
%---------------------------------------
%---------------------------------------
\subsection{Approximate solutions}

Once all $\{\nu_i\}_{i=0}^{K}$ are determined, for $k=0,1,\cdots,K,$ we can define
{\it approximations} 
$u_{k,\e}^{\text{in}}$, $u_{k,\e,\pm}^{\text{out}}$, $h_k^\e$, $\la_k^\e$ and $\rho_{k,\e}$
of $u$ both in inner and outer's senses, $h_\e$, $\la_\e$ and $\rho_\e$, respectively,
by cutting the expansions \eqref{eq:expansion-i} and \eqref{eq:expansion-o}
 after the $k+1$th or $k$th terms:
\begin{equation} \label{eq:approximate-sol}
\begin{aligned}
  &u_{k,\e}^{\text{in}}(t,x):=m(\rho)+\sum_{i=0}^{k}\e^{i+1} u_i(t,\rho,\bold{S}(t,x)),
  \\
  &u_{k,\e,\pm}^{\text{out}}(t):=\pm1+\sum_{i=0}^{k}\e^{i+1} u_i^{\pm}(t),
  \\
  &\e h_k^\e(t,s)=\sum_{i=1}^k \e^{i}h_i(t,s),
  \\
  &\la_k^\e(t):=\sum_{i=0}^{k}\e^i\la_i(t),
  \\
  &\rho=\rho_{k,\e}(t,x):=\frac{1}{\e}\bigl(d(t,x)-\e h_{k}^\e(t,\bold{S}(t,x))\bigr).
\end{aligned}
\end{equation}

Let $\eta \in C^{\infty}(\mathbb{R})$ be a function satisfying the conditions:
$\eta(s)=1 \text{ for } |s|\leq \delta,
  \,\,\eta(s)=0 \text{ for } |s| \geq 2\delta\, \text{  and } 
\,0\leq s\eta^{'}(s)\leq 4 \text{ for } \delta \leq |s| \leq 2\delta$.
  %, where $\delta>0$ is fixed.
Then, define the approximate solutions $u_k^{\e}(t,x)$ as follows by connecting the inner and outer approximate solutions:
\begin{align}\label{eq:definition u_k}
u_k^\e(t,x):=\eta(d(t,x))u_{k,\e}^{in}(t,x)+\bigl(1-\eta(d(t,x))\bigr)
\bigl(u_{k,\e,+}^{\text{out}}(t)1_{\{d>0\}}+u_{k,\e,-}^{\text{out}}(t)1_{\{d<0\}}\bigr).
\end{align}
%
%-------------------------------------------------
\subsection{Derivation of \eqref{eq:lambda_k representation}}

Now we consider the integral \eqref{eq:m 01},
which is the left hand side of \eqref{eq:mass cons 3} times $|D|$
with $u^\e$ and $\a\dot{w}^\e$ replaced by $u_k^\e$ and $v$, respectively, 
and expand it in $\e$ to obtain \eqref{eq:lambda_k representation}.
Let us decompose the time derivative of the mass of $u^\e_k$ over $D$ into
\begin{align*}
\int_D \partial_t u^\e_k(t,x)dx &= \int_{ |d(t,x)|\geq 3\delta} \partial_t u^\e_k(t,x)dx 
\nonumber
\\
&+\int_{|\rho|\geq\frac{\delta}{\e}} \partial_t u^\e_k(t,x)dx+\int_{|\rho|<\frac{\delta}{\e}} \partial_t u^\e_k(t,x)dx
\nonumber
\\
&=I+II+III,
\end{align*}
where $\delta>0$ is chosen in Section \ref{sec:2.2}, 
$\{|\rho|\geq\frac{\delta}{\e}\}=\{ x \in V^{t}_{3\delta}\,:\,|\rho(t,x)|\geq\frac{\delta}{\e}\}$,
$\{|\rho|<\frac{\delta}{\e}\}=\{ x \in V^{t}_{3\delta}\,:\,|\rho(t,x)|<\frac{\delta}{\e}\}$ 
and $V^{t}_{3\delta}:=\{ x \in D\,:\, |d(t,x)|<3\delta\}$.
From now on, we choose a sufficiently small $\e_0^*=\e_0^*(\delta,K,v)>0$ such that
\begin{align}\label{eq: e times h-e}
\sup_{s \in \mathcal{S}, t\in[0,T]}|\e h_k^\e(t,s)|\leq\frac{\delta}{2}
\end{align}
holds for all $0<\e\leq\e_0^*$. This is possible for each $v$.
Thus $|d(t,x)|\geq \frac{\delta}{2}$ follows from $|\rho(t,x)|\geq \frac{\delta}{\e}$
and for any point $(t,x)$ where either $|d(t,x)|\geq 3\delta$ or $|\rho(t,x)|\geq \frac{\delta}{\e}$,
$|d(t,x)|\geq \frac{\delta}{2}$ holds. 
Noting that each inner solution $u_i$ converges to its associated outer solution $u^{\pm}_i$ with an error $O(e^{-\zeta|\rho|})$ as $|\rho|\rightarrow\infty$,
and setting 
\begin{align*}
&D^{+}_{\e}(t)=\bigl\{ x\in D \,: \, d(t,x)\geq3\delta\bigr\}\cup\bigl\{ x\in D\,: \, |d(t,x)|<3\delta,\,\rho(t,x)>0\bigr\},
\nonumber
\\
&D^{-}_{\e}(t)=D\backslash \bar{D}^{+}_{\e}(t),
\nonumber
\end{align*}
we have
\begin{align*}
  I+II&=(u_{k,\e,+}^{\text{out}})'(t)|D^{+}_{\e}(t)|+(u_{k,\e,-}^{\text{out}})'(t)|D^{-}_{\e}(t)|
  \\
  &-(u_{k,\e,+}^{\text{out}})'(t)\int_{|\rho|<\frac{\delta}{\e}}1_{\{\rho>0\}}dx-(u_{k,\e,-}^{\text{out}})'(t)\int_{|\rho|<\frac{\delta}{\e}}1_{\{\rho<0\}}dx+O(e^{-\zeta\delta/\e}),
\end{align*}
as $\e\downarrow 0$, 
where $'$ means the derivative in $t$ and we have used \eqref{eq:compat kth} 
which gives the error term $O(e^{-\zeta\delta/\e})$.
Concerning the term $III$, change of variables and (81) of \cite{CHL} lead us to
\begin{align*}
 III=&\int_\mathcal{S}\int_{|\rho|<\frac{\delta}{\e}} \bigl((V(t,s)\e^{-1}
 -\partial_t^{\Ga}h_k^\e)\partial_\rho u^\e_k\bigr) \e J^\e(t,\rho,s)d\rho ds
 \\
 &+\int_\mathcal{S}\int_{|\rho|<\frac{\delta}{\e}} \partial_t^{\Ga} u^\e_k \e J^\e(t,\rho,s)d\rho ds.
\end{align*}
As a result, we obtain
\begin{align}\label{eq:int partial_t u dx}
  \int_D \partial_t u^\e_k(t,x)dx 
 =&(A)+(B)+(C)+O(e^{-\zeta\delta/\e}),
\end{align}
where
\begin{align*}
 (A)=&(u_{k,\e,+}^{\text{out}})'(t)|D^{+}_{\e}(t)|+(u_{k,\e,-}^{\text{out}})'(t)|D^{-}_{\e}(t)|,
 \\
 (B)=&-\int_{|\rho|<\frac{\delta}{\e}}\Bigl((u_{k,\e,+}^{\text{out}})'(t)1_{\{\rho>0\}}+(u_{k,\e,-}^{\text{out}})'(t)1_{\{\rho<0\}}\Bigr)dx
 \\
 &+\int_\mathcal{S}\int_{|\rho|<\frac{\delta}{\e}} \partial_t^{\Ga} u^\e_k(t,\rho,s) \e J^\e(t,\rho,s)d\rho ds,
 \\
 (C)=&\int_\mathcal{S}\int_{|\rho|<\frac{\delta}{\e}} (V(t,s)\e^{-1}
 -\partial_t^{\Ga}h_k^\e(t,s))\partial_\rho u^\e_k(t,\rho,s) \e J^\e(t,\rho,s)d\rho ds.
\end{align*}
%
%
%============================

In the following, we rewrite these three terms $(A)$, $(B)$ and $(C)$.
First, $(A)$ can be rewritten as follows:
Let us denote by $J(t,r,s)$ the Jacobian of the map \eqref{eq:parameterize}. 
By the equation (44) and p.\ 538 of \cite{CHL},
we have 
\begin{align}\label{eq:Jacobi x}
  J(t,r,s)= J^0(t,s) \left(1+\Delta d(t,X_0(t,s))r+\sum_{i=2}^{n-1}r^ij_i(t,s)\right),
\end{align}
for some functions $j_i(t,s)$; see also \eqref{eq:Jacobian e}.
By \eqref{eq:Jacobi x}, we get
\begin{align*}
|D^{-}_{\e}(t)|=&|D_t|+\int_\mathcal{S} \int_0^{\e h_k^\e(t,s)}J(t,r,s)drds
=|D_t|+\int_\mathcal{S} \e h_k^\e(t,s) J^0(t,s)ds + (A_1),
\nonumber
\\
|D^{+}_{\e}(t)|=&|D|-|D^{-}_{\e}(t)|
=|D|-|D_t|-\int_\mathcal{S} \e h_k^\e(t,s) J^0(t,s)ds - (A_1),
\end{align*}
where $D_t$ denotes the inside surrounded by the hypersurface $\ga_t^v$ and 
\begin{align*}
&(A_1)=\frac12 \int_\mathcal{S} \Delta d(t,X_0(t,s))(\e h_k^\e(t,s))^2 J^0(t,s)ds + \sum_{i=2}^{n-1}\frac{1}{i+1}\int_\mathcal{S} j_i(t,s)(\e  h_k^\e(t,s))^{i+1}J^0(t,s)ds.
\end{align*}
Recalling the expansions of $u_{k,\e,\pm}^{\text{out}}$ and $\e h^\e_k$ given in \eqref{eq:approximate-sol},
we can decompose $(A)$ into the sum of the following two parts, namely,
\begin{align}\label{eq:Comp A}
(A) 
=&\sum_{i=1}^k \e^i B_{i-1}^A + \sum_{i=k+1}^\infty \e^i \Bar{B}_{k,i}^A,
\end{align}
where $B_{i-1}^A$ is the term of order $O(\e^i)$ determined from $d$, $u_j^\pm$ and $h_j$, $0\leq j\leq i-1$
and $\Bar{B}_{k,i}^A$ denotes the term of order $O(\e^i)$ determined from $d$, $u_j^\pm$, $h_j$, $0\leq j\leq k$ for $i\geq k+1$.

For $(B)$, using \eqref{eq:Jacobian e} and 
the expansions of $u_k^\e=u_{k,\e}^{in}$ for $|\rho|<\frac{\delta}{\e}$ and $u_{k,\e,\pm}^{\text{out}}$ given in \eqref{eq:approximate-sol},
we obtain
\begin{align}\label{eq:Comp B-00}
  (B)=&
  \int_\mathcal{S}\int_{|\rho|<\frac{\delta}{\e}} \Bigl(\partial_t^{\Ga} u^\e_k 
  -(u_{k,\e,+}^{\text{out}})'(t)1_{\{\rho>0\}}-(u_{k,\e,-}^{\text{out}})'(t)1_{\{\rho<0\}}\Bigr)\e J^\e(t,\rho,s)d\rho ds
\\
=&
  \sum_{i=0}^k\int_\mathcal{S}\int_{|\rho|<\frac{\delta}{\e}} \e^{i+2}\Bigl(\partial_t^{\Ga} u_i(t,\rho,s) 
  -(u_i^+)^\prime(t)1_{\{\rho>0\}}-(u_i^-)^\prime(t)1_{\{\rho<0\}}\Bigr) 
\nonumber\\
 & \qquad \qquad
  \times\Bigl( 1 + \e(\rho+\e h_{k}^\e(t,s))\Delta d + \sum_{l=2}^{n-1}\e^l(\rho+\e h_{k}^\e(t,s))^l j_l(t,s)\Bigr)J^0(t,s)d\rho ds 
\nonumber\\
=& (B_1)-(B_2),  \notag
\end{align}
where $(B_1)$ and $(B_2)$ are defined as the middle of \eqref{eq:Comp B-00}
with the integral region
$\{|\rho|<\frac{\delta}{\e}\}$ replaced by $\R$ and $\{|\rho|\ge\frac{\delta}{\e}\}$,
respectively.  Note that both $(B_1)$ and $(B_2)$ are finite, since we have
from compatibility condition \eqref{eq:compat kth} and \eqref{eq:uk outer}:
\begin{align}\label{eq:Comp B-1}
  \Bigl(\partial_t^{\Ga} u_i(t,\rho,s)
  -(u_i^+)^\prime(t)1_{\{\rho>0\}}-(u_i^-)^\prime(t)1_{\{\rho<0\}}\Bigr)
  =O(e^{-{\zeta|\rho|}}),
\end{align}
as $\rho\rightarrow\pm\infty$ for $(t,s)$.
Then, similarly to $(A)$, we can decompose $(B_1)$ into the sum of two parts 
and we obtain:
\begin{align}\label{eq:Comp B}
(B) 
=&\sum_{i=2}^k\e^i B_{i-2}^B(t) + \sum_{i=k+1}^\infty\e^i \Bar{B}_{k,i}^B(t) - (B_2),
\end{align}
where $B_{i-2}^B$ is the term of order $O(\e^i)$ determined from $d$, $u_j$, $u_j^\pm$ 
and $h_j$, $0\leq j\leq i-2$ and $\Bar{B}_{k,i}^B$ denotes the term of order $O(\e^i)$ determined
from $d$, $u_j$, $u_j^\pm$, $h_j$, $0\leq j\leq k$ for $i\geq k+1$.

For $(C)$,
note that $V(t,s)=\Delta d(t,X_0(t,s)) - \bar\sigma \lambda_0(t)$ from \eqref{eq:curvature} and \eqref{eq:V + K}, and
$\partial_\rho u_k^\e = \partial_\rho u_{k,\e}^{in} =\partial_\rho m + \sum_{i=0}^k \e^{i+1}\partial_\rho u_i$ from \eqref{eq:approximate-sol}. 
Recalling \eqref{eq:Jacobian e} again, we have
\begin{align*}
(C)
=&\int_\mathcal{S}\int_{|\rho|<\frac{\delta}{\e}}
\Bigl\{ (\Delta d(t,X_0(t,s)) - \bar\sigma \lambda_0(t)) -\partial_t^\Ga \sum_{i=1}^k \e^{i}h_i(t,s) \Bigr\}
\Bigl\{ m'(\rho) + \sum_{i=0}^k \e^{i+1}\partial_\rho u_i \Bigr\} \\
& \qquad  \times\Bigl\{ 1+\Delta d(t,s)(\e(\rho+h_k^\e(t,s)))
+\sum_{i=2}^{n-1}(\e(\rho+h_k^\e(t,s)))^ij_i(t,s) \Bigr\}J^0(t,s)d\rho ds \\
=& (C_1)-(C_2), 
\end{align*}
where $(C_1)$ and $(C_2)$ are defined by the above line with the integral region
$\{|\rho|<\frac{\delta}{\e}\}$ replaced by $\R$ and $\{|\rho|\ge\frac{\delta}{\e}\}$,
respectively.  
Then, in the term of order $O(\e^i)$ in the expansion of $(C_1)$ in $\e$, separating terms
containing $h_i$ from those containing lower order functions $\{\nu_j\}_{j\leq i-1}$,
and noting $\int_\R \partial_\rho m d\rho = 2$,
we can decompose $(C_1)$ into the sum of the following four parts:
\begin{align}\label{eq:3-a}
(C_1) =&2\int_\mathcal{S}(\Delta d(t,X_0(t,s)) - \bar\sigma \lambda_0(t))J^0(t,s)ds
\\
+
&2 \sum_{i=1}^k\e^i \int_\mathcal{S} \Bigl((\Delta d(t,X_0(t,s)) - \bar\sigma \lambda_0(t)) \Delta d(t,X_0(t,s)) h_i(t,s) - \partial_t^{\Gamma}h_i(t,s)\Bigr)J^0(t,s)ds
\nonumber
\\
&+\sum_{i=1}^k\e^i B_{i-1}^C(t) + \sum_{i=k+1}^\infty\e^i \Bar{B}_{k,i}^C(t), \notag
\end{align}
where $B_{i-1}^C$ is the term of order $O(\e^i)$ determined from $d$, 
$\la_0$, $u_j$, $u_j^\pm$ and $h_j$, $0\leq j\leq i-1$
and $\Bar{B}_{k,i}^C$ denotes the term of order $O(\e^i)$ determined from $d$, 
$\la_0$, $u_j$, $u_j^\pm$, $h_j$, $0\leq j\leq k$ for $i\geq k+1$.

Setting 
\begin{align} \label{eq:definition of B_{k-1}}
&B_{k-1}=B_{k-1}^A+B_{k-2}^B+B_{k-1}^C,
\end{align}
and
\begin{align} \label{eq:definition of B_{k-1}bar}
&\bar{B}_{k,i}=\bar{B}_{k,i}^A+\bar{B}_{k,i}^B+\bar{B}_{k,i}^C,  
\end{align}
we obtain
\begin{align}\label{eq:eq of mass con 1}
&\int_D \Bigl( \partial_t u^\e_k(t,x) - v(t)\Bigr)dx
\\
=&2\int_{\mathcal{S}} \Bigl(\Delta d(t,X_0(t,s))-\bar\sigma\lambda_0(t)-\frac{|D|}{2|\ga_t^v|}v(t)\Bigr) J^0(t,s)ds
\nonumber
\\
&+2\sum_{i=1}^k\e^i \int_\mathcal{S} \Bigl((\Delta d(t,X_0(t,s)) - \bar\sigma \lambda_0(t)) 
\Delta d(t,X_0(t,s)) h_i(t,s) - \partial_t^{\Gamma}h_i(t,s)\Bigr)J^0(t,s)ds
\nonumber
\\
&+\sum_{i=1}^k\e^i B_{i-1} + 
\sum_{i=k+1}^\infty\e^i {\Bar{B}_{k,i}}-(B_2)-(C_2) + O(e^{-\zeta\de/\e}),
\nonumber
\end{align}
from \eqref{eq:int partial_t u dx}, \eqref{eq:Comp A}, \eqref{eq:Comp B},
\eqref{eq:3-a} and noting $\int_\mathcal{S} J^0(t,s)ds = |\ga_t^v|$.
%------------------
%=============================
Recalling that $\lambda_0(t)$ is defined as in \eqref{eq:lambda 0}, 
the first term of the right hand side of \eqref{eq:eq of mass con 1} vanishes so that
we can rewrite as
\begin{align}\label{eq:mass cons 01}
&\int_D \Bigl( \partial_t u^\e_k(t,x) - v(t)\Bigr)dx
\\
=&\sum_{i=1}^k\e^i\biggl[ 2 \int_\mathcal{S} \biggl( (\Delta d(t,X_0(t,s)) - \bar\sigma \lambda_0(t)) \Delta d(t,X_0(t,s)) h_i(t,s)
 - \bar\sigma\lambda_i(t)  - b(t,s)h_i(t,s)
\nonumber
\\
&+ \frac{\bar\sigma}2 \int_\R A^{i-1} m'(\rho)d\rho + \bar\sigma \Bigl(v(t)-\frac{b(t,s)}{2}\int_\R \rho (m^\prime(\rho))^2d\rho\Bigr)1_{\{i=1\}}\biggr)J^0(t,s)ds
+B_{i-1}\biggl]
\nonumber
\\
&+\sum_{i=k+1}^\infty\e^i {\Bar{B}_{k,i}} -(B_2)-(C_2) + O(e^{-\zeta\de/\e}),
\nonumber
\end{align}
by using \eqref{eq:lamdba(k) odd} for $\partial_t^{\Ga}h_i$ and noting 
$\int_\mathcal{S} \Delta^{\Ga}h_i(t,s)J^0(t,s)ds = \int_{\ga_t^v}
\De^\Ga h_i(t,\bar s)d\bar s=0$.  We wrote $\De^\Ga$ for the operator seen in 
the coordinate $s\in \mathcal{S}$, but we also write $\De^\Ga$ for that defined in the
coordinate $\bar{s}$ on $\ga_t$.  However, since $\lambda_i(t)$, $i=1,\ldots,k$ are defined as
in \eqref{eq:lambda_k representation}, the first term of the right hand side of  \eqref{eq:mass cons 01} vanishes.
This explains the reason that we determine
$\la_k(t)$ as in \eqref{eq:lambda_k representation} and we obtain 
\begin{align}\label{eq:MASS CONS U_K^E}
&\int_D \Bigl( \partial_t u^\e_k(t,x) - v(t)\Bigr)dx = \Phi_k^\e(t),
\end{align}
where $\Phi_k^\e=\sum_{i=k+1}^\infty\e^i {\Bar{B}_{k,i}} -(B_2)-(C_2) + O(e^{-\zeta\de/\e})$.
Furthermore, note that each of $(B_2)$ and $(C_2)$ goes to $0$ as $\e\downarrow0$
exponentially fast due to the compatibility condition for $m'$, $\partial_\rho u_i$ and $\partial_t^\Ga u_i$.
%
%-----------------------------------------
\subsection{Estimates for $u_k$ and $u_k^\pm$}

\subsubsection{Bounds for spatial derivatives of  $b$, $X_0$, $d$ and $\bold{S}$}
%%-------------------
%
We introduce the following norms for $g\in C^\infty(\R_+)$:

\begin{defn}\label{defn:definition01}
For $k\in\Z_+$ and $T>0$,
we define $|g|_k \equiv |g|_{k,T}$  as
\begin{align}
&|g|_{k,T} = \sum_{i=0}^k\sup_{t\in[0,T]}\biggl| \frac{d^ig}{dt^i}(t) \biggr|.
\end{align}
\end{defn}
In the following, we take and fix a class $\mathcal{V}$ of functions
$v \in C^{\infty}(\R_+)$ and $T>0$ satisfying that
\begin{align}  \label{eq:C-mathcal}
C_{\mathcal{V},T} = \max(C_{\mathcal{V},T}^{(1)},C_{\mathcal{V},T}^{(2)})<\infty,
\end{align}
where
\begin{align}  \label{eq:C-mathcal-02}
C_{\mathcal{V},T}^{(1)} := \sup_{\substack{v\in \mathcal{V},s\in \mathcal{S},\\r\in(-3\de,3\de)}}&
 \Bigl\{|\partial^{\bold{m}}d(\cdot,X_0(\cdot,s))|_{0,T},\,
 |\partial^{\bold{m}}S^l(\cdot,X(\cdot,r,s))|_{0,T},\\ 
 & \qquad \qquad  |\partial^{\bold{m}}X_0(\cdot,s)|_{0,T};\,
 1\leq l \leq n-1,\,|\bold{m}|\leq M  \Bigr\} < \infty,   \notag
\end{align}
and
\begin{align}  \label{eq:C-mathcal-01}
C_{\mathcal{V},T}^{(2)} :=\sup_{v\in \mathcal{V}, s\in \mathcal{S}, t\in[0,T]} 
  \Bigl\{ \big( \a_-(t,s)\big)^{-1}, \, |\ga_t^v|^{-1} \Bigr\} < \infty.
\end{align}
Here, in $C_{\mathcal{V},T}^{(1)}$, $M=M(K)\in\N$ denotes the maximal number of 
the degrees of spatial derivatives taken over the terms appearing in $\widetilde{A}^K$ 
which is defined below \eqref{eq:inner u_k (odd)} and
$\delta>0$ is chosen as in Section \ref{sec:2.2}.
Recall Section \ref{sec:2.2} for $d,X_0, S^l$ and $X$ determined depending on $v$,
and $\partial^{\bold{m}}=\partial_{x^1}^{m_1}\cdots\partial_{x^n}^{m_n}$, $|\bold{m}|=\sum_{i=1}^{n}m_i$
for $m=(m_1,\cdots,m_n)\in(\Z_+)^{n}$.
Moreover, in $C_{\mathcal{V},T}^{(2)}$, 
$$
\a_-(t,s) \equiv \a_-^v(t,s) := \inf_{\xi\in \R^{n-1}: |\xi|=1} \big(\a(t,s)\xi,\xi\big),
$$
where $\a(t,s) = (\a_{ij}(t,s))_{1\le i,j\le n-1}$ is the matrix defined by
\eqref{eq:alpha and beta} below, and $(\cdot,\cdot)$ and $|\cdot|$ denote
the inner product and the norm of $\R^{n-1}$, respectively.

Later, for each $\om \in \Om$, we will take
$\mathcal{V} \equiv \mathcal{V}(\om) := \{\a \dot{w}^\e; 0<\e \le \e_0^*\}$.  Then, for a.s.\ $\om$,
by Assumption \ref{ass:2}, one can take $T=T(\om):= \inf_{0<\e\le \e_0^*} \si^\e>0$ 
such that the equation \eqref{eq:2-e} with $v=\a\dot{w}^\e$
has a solution up to the time $T(\om)$ and  $C_{\mathcal{V}(\om),T(\om)}<\infty$ holds. 

Recalling the definition \eqref{eq:b(t,s)} of $b(t,s)$, the constant $C_{\mathcal{V},T}^{(1)}$ gives
bounds for $b$.  We need the bound $C_{\mathcal{V},T}^{(2)}$ for deriving estimates on
the fundamental solution (see \eqref{eq:3.44}, Lemma \ref{lem:Schauder estimate 2 plus a})
and on $\la_0(t)$ (see \eqref{eq:304-003}, Proposition \ref{prop:Lemma order of e}).

\subsubsection{Bounds for time derivatives of $X_0$, $\partial^{\bold{m}}X_0$ and $\bold{S}$}
Our goal is to give estimates for $u_k$ and $u_k^\pm$, see
Proposition \ref{prop:Lemma order of e} and Corollary \ref{lem:Lemma for H} below.
To do this, we will prepare several lemmas.
%
%---------------------------------------
%
The inward normal velocity $V$ is represented as 
$V(t,s)=\partial_t d(t,X_0(t,s))$ from \eqref{eq:2.V} and 
\begin{align}\label{eq:eq of V}
V(t,s) = \k(t,s)-\frac{1}{|\ga_t^v|}\int_\mathcal{S} \k(t,s) J^0(t,s)ds 
+ \frac{|D|}{2|\ga_t^v|} v(t),
\end{align}
holds with $\k(t.s)=\Delta d(t,X_0(t,s))$ from \eqref{eq:lambda 0} and \eqref{eq:V + K}. 
Let us formulate the following assumption on the time derivatives of $\partial^{\bold{m}}X_0$ and
$\bold{S}$ by means of the norm of the forcing term $v$.

\begin{assump}\label{ass:3}
There exist some $N=N(K)\in\N$, $T=T(\mathcal{V})>0$ and
$C_1=C_1(C_{\mathcal{V},T},K,T)>0$ such that
\begin{align}
 &\sup_{1\leq i \leq n}\sup_{s\in \mathcal{S}}|\partial_t^k \partial^{\bold{m}} X_0^i(\cdot,s)|_{0,T}\leq C_1(1+|v|_{N,T})^{N},\label{eq:der X_0}\\
 &\sup_{1\leq i \leq n-1}\sup_{r\in(-3\delta,3\delta),\,s\in \mathcal{S}}|\partial_t^k\partial^{\bold{m}} S^i(\cdot,X(\cdot,r,s))|_{0,T}\leq C_1(1+|v|_{N,T})^{N},  \label{eq:der S}
\end{align}
for $k=0,1,\cdots,K$, $|\bold{m}|\leq M$ and $v\in\mathcal{V}$.
\end{assump}
Under the choice $\mathcal{V} \equiv \mathcal{V}(\om) = \{\a \dot{w}^\e; 0<\e\le \e_0^*\}$,
Assumption 3.1 determines $\t(\om) := T(\mathcal{V}(\om))$, up to which two bounds
\eqref{eq:der X_0} and \eqref{eq:der S} hold.
We will show in Section 5 that Assumption \ref{ass:3} is true for some $T=\t(\om)>0$ 
under a two-dimensional setting as long as the limit curve $\ga_t$ is convex.; see Lemma
\ref{lem:x(0) and Th in 2D case}.

\subsubsection{Schauder estimates for $h_k$}  \label{sec:3.3.3}
Recall that $h_k$ are the solutions of \eqref{eq:equation for h_k},
which is rewritten as 
\begin{align}\label{eq: Eq (h_k)}
  Lh_k+\mathcal{L}h_k=F_{k-1},
\end{align}
where $L=L_t$ is a differential operator defined by
\begin{align}
L =&\partial_t^{\Ga}-\Delta^{\Ga}-b(t,s)\\
=&(\partial_t+\sum_{i=1}^{n-1}S_t^i \partial_{s^i})-\left(\sum_{i=1}^{n-1}\Delta S^i \partial_{s^i}+\sum_{i,j=1}^{n-1}\nabla S^i\cdot\nabla S^j \partial_{s^is^j}^2\right)-b(t,s),
\nonumber
\\
=&\partial_t-\sum_{i,j=1}^{n-1}\a_{ij}(t,s)\partial_{s^is^j}^2-\sum_{i=1}^{n-1}\beta_i(t,s)\partial_{s^i}-b(t,s),
\nonumber
\end{align} 
with
\begin{align}\label{eq:alpha and beta}
&\a_{ij}(t,s)=\nabla S^i(t)\cdot\nabla S^j(t),\quad \beta_i(t,s)=-S_t^i(t) + \Delta S^i(t),
\\
&s=(S^1(t),\cdots,S^{n-1}(t))\in \mathcal{S},
\nonumber
\end{align}
while $\mathcal{L}=\mathcal{L}_t$ is an integral operator acting on a function $u=u(s)$ defined by
\begin{align*}
\mathcal{L}_tu=-\frac{1}{|\ga_t^v|}\int_\mathcal{S} \Bigl(\bigl(\Delta d(t,X_0(t,s')) - \bar\si\lambda_0(t)\bigr)
\Delta d(t,X_0(t,s'))-b(t,s')\Bigr)u(s')J^0(t,s') ds',
\end{align*}
and
$F_{k-1}$ denotes the right hand side of \eqref{eq:equation for h_k}.
Since $v$ and the interface $\gamma_t^v$ are smooth, it follows that \eqref{eq: Eq (h_k)} with an initial condition $h_k(0,s)=0$
has a local unique solution for each $v$ from general argument for second order parabolic partial differential equations. 

Our goal is to obtain estimates for the solutions $h_k$ of \eqref{eq:equation for h_k}. 
To do this, we basically follow the argument given in Friedman \cite{FR64} and 
derive the Schauder estimates.
However, in our setting, the coefficients of the operators $L$ and $\mathcal{L}$ determined from $v=\a\dot{w}^\e$ are not bounded in $\e$,
but controlled by the constant $C_{\mathcal{V},T}$ and the norm $|v|_{1,T}$.
We need to carefully study how the estimates for $h_k$ depend on these diverging factors.
We first treat the contributions of the operator $L$; see Lemmas \ref{lem:lem032} and \ref{lem:Schauder sup norm of h}.
The contribution of the operator $\mathcal{L}$ will be discussed starting from \eqref{eq:Fr-303} below.

For $\tau\in[0,T]$ and $\xi\in \mathcal{S}$, let $Z(t,s;\tau,\xi)$ be the fundamental solution of
\begin{align}\label{eq:z-01}
& \Bigl(\frac{\partial}{\partial t} - \sum_{i,j=1}^{n-1}\a_{ij}(\tau,\xi) \frac{\partial^2}{\partial s^i\partial s^j}\Bigr) u(t,s) = 0,\quad (t,s)\in(\tau,T]\times \mathcal{S},
\end{align}
that is, $Z$ is a Gaussian kernel.
Then, the fundamental solution $\Ga$ of $L_t$ is constructed
by the usual parametrix method. Namely, by regarding $Z$ as the principal part of $\Ga$, 
we can find $\Ga$ in the form
\begin{align}\label{eq:z-03}
& \Ga(t,s;\tau,\xi)=Z(t,s;\tau,\xi)+\int_\tau^t \int_\mathcal{S} Z(t,s;\si,s') \Phi(\si,s';\tau,\xi) d\si ds',
\end{align}
where $\Phi$ is a function satisfying
\begin{align}\label{eq:z-04}
& \Phi(t,s;\tau,\xi) = L_tZ(t,s;\tau,\xi) + \int_\tau^t\int_\mathcal{S} L_tZ(t,s;\si,s') \Phi(\si,s';\tau,\xi) d\si ds'.
\end{align}
%--------------------------------
Let $\mu\in(\frac12,1)$ and fix it. Then, we have the following estimate on $\Ga$.
\begin{lem}\label{lem:lem 1-1-1}
The fundamental solution $\Ga$ has the estimate:
\begin{align}\label{eq:z-Ga-01}
|\Ga(t,s;\tau,\xi)|\leq
&\,C_8(1+|v|_N)^{\nu_0+\frac{N}{1-\mu}}e^{C_8 (T\vee 1)(1+|v|_N)^{\frac{N}{1-\mu}}}
\frac{1}{(t-\tau)^{\frac{n-1}{2}}} e^{- C_9 \frac{|s-\xi|^2}{(t-\tau)} },
\end{align}
for $0\le\tau<t\leq T$, $s,\,\xi \in \mathcal{S}$ with some $\nu_0\in \N$,
$C_8=C_8(C_{\mathcal{V},T},T,\mu)$ and $C_9= C_9(C_{\mathcal{V},T},T,\mu)>0$.
\end{lem}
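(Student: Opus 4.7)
The approach is the classical parametrix construction of Friedman \cite{FR64}, Chapter 1, adapted to the compact manifold $\mathcal{S}$, with careful bookkeeping of how each constant depends on $C_{\mathcal{V},T}$ and on the norm $|v|_{N,T}$. First, since the frozen matrix $(\a_{ij}(\tau,\xi))_{i,j}$ is uniformly elliptic with both its ellipticity lower bound (via $C_{\mathcal{V},T}^{(2)}$) and its sup norm (via $C_{\mathcal{V},T}^{(1)}$) controlled by $C_{\mathcal{V},T}$ alone, the frozen kernel $Z$ satisfies the standard Gaussian derivative estimates
\[
|\partial_s^k Z(t,s;\tau,\xi)|\le \frac{C(C_{\mathcal{V},T})}{(t-\tau)^{(n-1+k)/2}}\exp\!\Bigl(-\bar c\,\tfrac{|s-\xi|^2}{t-\tau}\Bigr),\qquad k=0,1,2,
\]
with constants independent of $|v|_{N,T}$. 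Next, by Assumption \ref{ass:3} combined with $C_{\mathcal{V},T}^{(1)}$, the coefficients $\a_{ij}$, $\b_i$, $b$ of $L_t$ (built from $\na S^i$, $\De S^i$, $S^i_t$ and $b(t,s)$ via \eqref{eq:alpha and beta} and \eqref{eq:b(t,s)}) have both their sup norms and their H\"older norms in $(t,s)$ of exponent $\alpha=2(1-\mu)\in(0,1)$ bounded by $C(C_{\mathcal{V},T},K)(1+|v|_{N,T})^{N}$. Splitting
\[
L_tZ = -\sum_{i,j}\bigl(\a_{ij}(t,s)-\a_{ij}(\tau,\xi)\bigr)\partial_{s^is^j}Z - \sum_i\b_i(t,s)\partial_{s^i}Z - b(t,s)Z,
\]
and combining this H\"older continuity with the Gaussian bounds above (after absorbing $|s-\xi|^{\alpha}$ into a modified Gaussian via $|s-\xi|^\alpha e^{-c|s-\xi|^2/(t-\tau)}\lesssim (t-\tau)^{\alpha/2}e^{-c'|s-\xi|^2/(t-\tau)}$) yields
\[
|L_tZ(t,s;\tau,\xi)| \le \frac{A}{(t-\tau)^{(n-1)/2+\mu}}\exp\!\Bigl(-c'\,\tfrac{|s-\xi|^2}{t-\tau}\Bigr),\qquad A:=C(C_{\mathcal{V},T},K,\mu)(1+|v|_{N,T})^{N}.
\]

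The main step is then to solve the Volterra equation \eqref{eq:z-04} for $\Phi$ by iteration. Setting $\Phi_1:=L_tZ$ and $\Phi_{j+1}(t,s;\tau,\xi):=\int_\tau^t\!\int_\mathcal{S} L_tZ(t,s;\si,s')\,\Phi_j(\si,s';\tau,\xi)\,ds'd\si$, Gaussian convolution in $s'$ followed by the Beta-function identity produces inductively
\[
|\Phi_j(t,s;\tau,\xi)|\le \frac{A^j\,\Ga(1-\mu)^{j-1}}{\Ga(j(1-\mu))}\,\frac{(t-\tau)^{(j-1)(1-\mu)}}{(t-\tau)^{(n-1)/2+\mu}}\,\exp\!\Bigl(-c''\tfrac{|s-\xi|^2}{t-\tau}\Bigr).
\]
Summing over $j\ge 1$ and using the elementary majorization $\sum_j y^{j-1}/\Ga(j(1-\mu))\le C\exp(C'y^{1/(1-\mu)})$ with $y=A(t-\tau)^{1-\mu}\le AT^{1-\mu}$ gives
\[
|\Phi(t,s;\tau,\xi)|\le \frac{C(1+|v|_{N,T})^{N}\exp\bigl(C(T\vee 1)(1+|v|_{N,T})^{N/(1-\mu)}\bigr)}{(t-\tau)^{(n-1)/2+\mu}}\,\exp\!\Bigl(-c''\tfrac{|s-\xi|^2}{t-\tau}\Bigr).
\]
Substituting this into $\Ga=Z+Z\ast\Phi$ via \eqref{eq:z-03} and performing the remaining $(\si,s')$-convolution of a Gaussian against the above bound then produces \eqref{eq:z-Ga-01}.

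The hard part, and the reason Assumption \ref{ass:3} is imposed, is securing only \emph{polynomial} dependence of the H\"older norms of $\a_{ij}$, $\b_i$ and $b$ on $|v|_{N,T}$; without this, neither the polynomial prefactor $(1+|v|_{N,T})^{\nu_0+N/(1-\mu)}$ nor the exponent $N/(1-\mu)$ inside $\exp$ could be controlled. The restriction $\mu>\tfrac12$ is what makes the spatial $\partial_s$ derivatives of $Z$ absolutely integrable in the parametrix series in the $(n-1)$-dimensional manifold $\mathcal{S}$, and the doubly-exponential factor $\exp(C(T\vee 1)(1+|v|_{N,T})^{N/(1-\mu)})$ in \eqref{eq:z-Ga-01} arises precisely from the Mittag--Leffler-type summation of the Volterra iterates, where the map $A\mapsto A^{1/(1-\mu)}$ converts the polynomial growth of a single iterate into the exponent displayed in the statement.
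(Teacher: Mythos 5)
Your proposal is correct and follows essentially the same route as the paper: the Friedman parametrix construction, Volterra iteration for $\Phi$ with the Beta-function identity, and a Mittag--Leffler-type summation that converts the polynomial bound $A\sim(1+|v|_{N,T})^N$ on a single iterate into the factor $\exp\bigl(C(T\vee1)(1+|v|_N)^{N/(1-\mu)}\bigr)$. The only cosmetic difference is that you carry Gaussian bounds for the iterates throughout, whereas the paper works with Friedman's split-singularity bounds $(t-\tau)^{-\mu}|s-\xi|^{-(n-2\mu)}$ (whence the condition $\mu>\tfrac12$ for integrability over the $(n-1)$-dimensional $\mathcal{S}$ and the extra ``$\nu_0$ steps to kill the singularity'' before the Gamma-function induction), recovering the Gaussian decay of $\Phi$ only at the end via \cite{FR64}, p.~16.
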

%--------------------------------
\begin{proof}
Let us rewrite $L_t Z$ into 
\begin{align}\label{eq:z-05}
L_tZ(t,s;\tau,\xi)=&\sum_{i,j=1}^{n-1} \Bigl(\a_{ij}(\tau,\xi) - \a_{ij}(t,s) \Bigr)\frac{\partial^2}{\partial s^i\partial s^j}Z(t,s;\tau,\xi)\\
&-\sum_{i=1}^{n-1} \beta_i(t,s) \frac{\partial}{\partial s^i}Z(t,s;\tau,\xi)-b(t,s)Z(t,s;\tau,\xi).
\nonumber
\end{align}
From \eqref{eq:b(t,s)}, \eqref{eq:alpha and beta}, \eqref{eq:C-mathcal} and Assumption \ref{ass:3}, we have
\begin{align}\label{eq:z-07}
&|\a_{ij}(t,s) - \a_{ij}(\tau,\xi)|\leq C_2(1+|v|_N)^N\bigl(|t-\tau|+|s-\xi|\bigr),\\
&|\beta_{i}(t,s)-\beta_i(\tau,\xi)|\leq C_2(1+|v|_N)^N\bigl(|t-\tau|+|s-\xi|\bigr),\label{eq:z-07 beta}\\
&|b(t,s)-b(\tau,\xi)|\leq C_2(1+|v|_N)^N\bigl(|t-\tau|+|s-\xi|\bigr), \label{eq:z-07 b}
\end{align}
for some $C_2=C_2(C_{\mathcal{V},T},T)>0$.
Then, according to \cite{FR64}, Chapter 1, (4.3) with $\a=1$ and $n$ replaced by $n-1$, we have
\begin{align}\label{eq:z-08}
&\Bigl|L_tZ(t,s;\tau,\xi)\Bigr|\leq C_3\frac{(1+|v|_N)^N}{(t-\tau)^\mu}\frac{1}{|s-\xi|^{n-2\mu}},
\end{align}
for some $C_3=C_3(C_{\mathcal{V},T},T)>0$.
Let us define
\begin{align}\label{eq:z-09}
(L_tZ)_1(t,s;\tau,\xi) &= L_tZ(t,s;\tau,\xi),\\
(L_tZ)_{\nu+1}(t,s;\tau,\xi) &= \int_\tau^t\int_\mathcal{S}
 L_tZ(t,s;\si,s')(L_tZ)_{\nu}(\si,s';\tau,\xi)d\si ds'. \label{eq:z-09-001}
\end{align}
As is seen in \cite{FR64} p.\ 14, the singularity of $(L_tZ)_{2}$ is weaker than that of $(L_tZ)_{1}$.
Indeed, we have
\begin{align}\label{eq:z-09-01}
|(L_tZ)_2(t,s;\tau,\xi)| &\leq \frac{C_4(1+|v|_N)^{2N}}{(t-\tau)^{2\mu-1}|s-\xi|^{(n-2\mu)+(1-2\mu)}},
\end{align} 
for some $C_4=C_4(C_{\mathcal{V},T},T)>0$; recall $\mu\in(\frac12,1)$.
By applying similar arguments to $(L_tZ)_{3}$, $(L_tZ)_{4}$ \ldots, we can find an integer $\nu_0>0$ such that
\begin{align}\label{eq:z-10}
&\Bigl|(L_tZ)_{{\nu_0}}(t,s;\tau,\xi)\Bigr|\leq C_5\Bigl( (1+|v|_N)\Bigr)^{\nu_0},
\end{align}
for some $C_5=C_5(C_{\mathcal{V},T},T,\mu)>0$.
In particular, the singularity disappears after $\nu_0$-steps.

In what follows, we will show
\begin{align}\label{eq:z-11}
&\Bigl|(L_tZ)_{{\nu_0}+m}(t,s;\tau,\xi)\Bigr|\leq C_6 \frac{\Bigl( C_7 (1+|v|_N)^N\Bigr)^m}{\Ga((1-\mu)m+1)}\Bigl((1+|v|_N)\Bigr)^{\nu_0}(t-\tau)^{(1-\mu)m},
\end{align}
for some $C_6=C_6(C_{\mathcal{V},T},T,\mu)>0$ and $C_7=C_7(C_{\mathcal{V},T},T,\mu)\ge 1$.
In the case $m=0$, this is clear from \eqref{eq:z-10}.
Suppose that \eqref{eq:z-11} holds for $m$, and let us show that this holds also for $m+1$. 
Indeed, using \eqref{eq:z-09-001} with $\nu=\nu_0+m$ and \eqref{eq:z-08},
\begin{align}\label{eq:z-12}
\Bigl|(L_tZ)_{{\nu_0}+m+1}(t,s;\tau,\xi)\Bigr|
\leq& \,C_3C_6 \frac{\Bigl( C_7 (1+|v|_N)^N\Bigr)^m}{\Ga((1-\mu)m+1)}
\Bigl((1+|v|_N)\Bigr)^{\nu_0}(1+|v|_N)^N\\
&\times\int_\tau^t (t-\si)^{-\mu}(\si-\tau)^{(1-\mu)m}d\si\int_\mathcal{S} |s-s'|^{-n+2\mu}ds',
\nonumber
\end{align}
Changing the variable $\rho=(\si-\tau)(t-\tau)^{-1}$, we obtain
\begin{align}\label{eq:z-13}
&\int_\tau^t (t-\si)^{-\mu}(\si-\tau)^{(1-\mu)m}d\si
= (t-\tau)^{(1-\mu)(m+1)}\frac{\Ga(1-\mu)\Ga((1-\mu)m+1)}{\Ga(2-\mu+(1-\mu)m)},
\end{align}
where we have used
\begin{align}\label{eq:z-14}
&\int_0^1 (1-\rho)^{a-1}\rho^{b-1}d\rho = \frac{\Ga(a)\Ga(b)}{\Ga(a+b)},\quad a,b>0,
\end{align}
and $\Ga(a)=\int_0^\infty e^{-x}x^{a-1}dx$.
Let us take
\begin{align}  \label{eq:3.43}
C_7=\max\Bigl(C_3\Ga(1-\mu) \sup_{s\in \mathcal{S}}
\int_\mathcal{S} |s-s'|^{-n+2\mu}ds',\,1\Bigr) \ge 1.
\end{align}
Note that the integral in \eqref{eq:3.43} is finite because of $\mu\in(\frac12,1)$.
Thus, \eqref{eq:z-11} holds for $m+1$ with this choice of $C_7$.

Noting that
\begin{align*}
 &\sum_{m=1}^\infty\frac{\Bigl(C_7 (1+|v|_N)^{N}\Bigr)^{m}}{\Ga((1-\mu)m+1)}(t-\tau)^{(1-\mu)m}\\
\leq&\sum_{m=1}^\infty\frac{\Bigl( C_7^{\frac1{1-\mu}}(1+|v|_N)^{\frac{N}{1-\mu}}\Bigr)^{(1-\mu)m}}{\Ga((1-\mu)m+1)}T^{(1-\mu)m}\\
\leq&\,\sum_{m=1}^\infty\frac{\Bigl( C_7^{\frac1{1-\mu}} 
(1+|v|_N)^{\frac{N}{1-\mu}}\Bigr)^{[(1-\mu)m]+1}}{{C}_\Ga[(1-\mu)m]!}(T\vee 1)^{[(1-\mu)m]+1}\\
\leq&\,C_{\mu,\Ga}(T\vee 1)(1+|v|_N)^{\frac{N}{1-\mu}}\sum_{m=0}^\infty
\frac{\Bigl( C_7^{\frac1{1-\mu}} (1+|v|_N)^{\frac{N}{1-\mu}}\Bigr)^m (T\vee 1)^{m} }{m!},
\end{align*}
for some $C_{\mu,\Ga}>0$, where we have used $\Ga((1-\mu)m+1)\geq {C}_\Ga[(1-\mu)m]!$ 
with ${C}_\Ga=\inf_{1\leq a < 2}\Ga(a)$ for the second inequality, we obtain 
\begin{align}\label{eq:z-15}
&\sum_{m=1}^\infty \Bigl|(L_tZ)_{{\nu_0}+m}(t,s;\tau,\xi)\Bigr| 
\leq C_8 (1+|v|_N)^{\nu_0+\frac{N}{1-\mu}} e^{ C_8 (T\vee 1)(1+|v|_N)^{\frac{N}{1-\mu}}},
%\nonumber
\end{align}
for some $C_8=C_8(C_{\mathcal{V},T},T,\mu)>0$.
By \eqref{eq:z-15}, recalling $C_{\mathcal{V},T}$ gives the lower bound for the 
diffusion matrix $\a(t,s)$ in the sense that $\a(t,s) \ge \big(C_{\mathcal{V},T}^{(2)}\big)^{-1}I$,
and using \cite{FR64}, p.\ 16, we have 
\begin{align}  \label{eq:3.44}
& |\Phi(t,s;\tau,\xi)|\leq C_8 (1+|v|_N)^{\nu_0+\frac{N}{1-\mu}} e^{C_8 (T\vee 1)(1+|v|_N)^{\frac{N}{1-\mu}}} 
\frac{1}{(t-\tau)^{\mu}|s-\xi|^{n-2\mu}} e^{-{C_9}\frac{|s-\xi|^2}{(t-\tau)}},
\end{align}
for some $C_9 = C_9(C_{\mathcal{V},T},T,\mu)>0$.
Thus, by the standard argument (see \cite{FR64}, Chapter 1, Section 6), 
we obtain the estimate \eqref{eq:z-Ga-01} for  the fundamental solution $\Ga$.
\end{proof}
Consider the following equation: 
\begin{align}\label{eq:Fr-301}
\left\{
\begin{array}{ll}
 L_tu(t,s) =f(t,s), &\quad  (t,s)\in (0,T]\times \mathcal{S},\\
 u(0,s)=u_0(s),   &\quad s\in \mathcal{S}, 
\end{array}
\right.
\end{align}
for $f\in \overline{C}^\a$, $\a\in(0,1]$ and $u_0 \in C(\mathcal{S})$,
where 
\begin{align}
& \overline{C}^\a =\{ u(t,s) | \, |\overline{u}|_\a < \infty \},\nonumber\\
& |\overline{u}|_\a \equiv |\overline{u}|_{\a,T}
= |u|_0 + \sup_{\substack{(t,s),(\overline{t},\overline{s})\\ \in [0,T]\times \mathcal{S}}}\frac{|u(t,s)-u(\overline{t},\overline{s})|}{\sqrt{|s-\overline{s}|^2+|t-\overline{t}|}^\a},\nonumber\\
& |u|_0 \equiv |u|_{0,T}
=\sup_{(t,s)\in [0,T]\times \mathcal{S}}|u(t,s)|,\quad |s|=\Big(\sum_{i=1}^{n-1}(s^i)^2\Big)^{\frac12}.\nonumber
\end{align}
Applying the argument of \cite{FR64}, Chapter 3, Section 3, Theorem 7 for our problem,
we obtain the following lemma.
\begin{lem}\label{lem:lem032}
There exists a unique solution $u$ of \eqref{eq:Fr-301}
and $u\in \overline{C}^{2+\a}$,
where 
\begin{align}
& \overline{C}^{2+\a} =\{ u(t,s) | \, |\overline{u}|_{2+\a} < \infty \},\nonumber\\
& |\overline{u}|_{2+\a} = |\overline{u}|_\a
  +\sum_{|\bold{m}|=1}|\overline{ \partial^{\bold{m}}u }|_\a + \sum_{|\bold{m}|=2}
|\overline{ \partial^{\bold{m}}u }|_\a+\sum|\overline{\partial_tu}|_\a.\nonumber
\end{align}
\end{lem}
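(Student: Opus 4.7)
The plan is to construct $u$ explicitly via the fundamental solution $\Ga$ built in Lemma \ref{lem:lem 1-1-1} and then to invoke the classical Schauder theory in the form given by Friedman \cite{FR64}, Chapter 3, Section 3, Theorem 7 to obtain $\overline{C}^{2+\a}$-regularity. First I would set
\begin{align*}
u(t,s) := \int_{\mathcal{S}} \Ga(t,s;0,\xi) u_0(\xi) d\xi + \int_0^t \int_{\mathcal{S}} \Ga(t,s;\tau,\xi) f(\tau,\xi) d\xi d\tau,
\end{align*}
and verify, using the defining relations \eqref{eq:z-03}--\eqref{eq:z-04} of $\Ga$ and $\Phi$, that $L_t u = f$ on $(0,T]\times\mathcal{S}$ with $u(0,\cdot)=u_0$. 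The first (heat-semigroup-like) integral reproduces $u_0$ as $t\downarrow 0$ thanks to the Gaussian structure of $\Ga$ inherited from $Z$, while the Duhamel term produces the source $f$ because the standard jump relation $\partial_t\int\Ga\cdot f \,d\xi d\tau - \int L_t Z\cdot\Phi\cdot f = f$ is encoded in \eqref{eq:z-04}.

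Next I would establish the regularity claim. The two main ingredients are the pointwise bound \eqref{eq:z-Ga-01} on $\Ga$ itself and its analogues for $\partial_{s^i}\Ga$, $\partial_{s^i s^j}^2 \Ga$ and $\partial_t \Ga$, which follow by differentiating \eqref{eq:z-03} and re-using the parametrix series controlled in Lemma \ref{lem:lem 1-1-1}. The only subtle point is that the second spatial derivative of the Duhamel term is only conditionally convergent near the diagonal $\tau = t$, $\xi = s$; as is standard, I would write
\begin{align*}
\partial_{s^i s^j}^2 \int_0^t\!\int_{\mathcal{S}} \Ga(t,s;\tau,\xi) f(\tau,\xi) d\xi d\tau
= \int_0^t\!\int_{\mathcal{S}} \partial_{s^i s^j}^2 \Ga(t,s;\tau,\xi)\bigl(f(\tau,\xi)-f(\tau,s)\bigr) d\xi d\tau + R(t,s),
\end{align*}
where $R$ collects boundary contributions, and use $f\in\overline{C}^\a$ to convert the non-integrable singularity $|s-\xi|^{-n-1+2\mu}$ into the integrable singularity $|s-\xi|^{-n-1+2\mu+\a}$. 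This yields the Hölder estimates for $\partial^{\bold{m}} u$, $|\bold{m}|\le 2$, and then the equation itself gives the Hölder estimate for $\partial_t u$, so $u\in\overline{C}^{2+\a}$.

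For uniqueness, I would let $w$ denote the difference of two solutions, so that $L_t w = 0$ with $w(0,\cdot) = 0$. Since $b(t,s)$ is bounded on $[0,T]\times\mathcal{S}$ by $C_{\mathcal{V},T}(1+|v|_N)^N$, substituting $w = e^{\la t}\tilde w$ with $\la$ sufficiently large turns the zeroth-order coefficient into one bounded above by zero, and the parabolic maximum principle on the boundary-less compact manifold $\mathcal{S}$ then forces $\tilde w\equiv 0$.

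The main obstacle I anticipate is the bookkeeping required for the Hölder seminorms of the second spatial derivatives of the Duhamel integral: the principal-part analysis of Friedman goes through essentially verbatim once one notes that the Hölder constants of $\a_{ij}$, $\b_i$ and $b$ in $(t,s)$ are controlled by $C_{\mathcal{V},T}(1+|v|_N)^N$ as in \eqref{eq:z-07}--\eqref{eq:z-07 b}, but one must keep track that the resulting Schauder constant is a polynomial in $(1+|v|_N)$ times an exponential of $(T\vee 1)(1+|v|_N)^{N/(1-\mu)}$, as in \eqref{eq:z-Ga-01}. A secondary technical point is that the underlying domain is the closed manifold $\mathcal{S}$ rather than $\R^{n-1}$; this is handled by a fixed finite partition of unity subordinate to the coordinate charts introduced in Section \ref{sec:2.2} and pasting together the local Schauder estimates, the transition maps being smooth and $v$-independent.
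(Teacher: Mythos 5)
Your proposal is correct and follows essentially the same route as the paper, which simply invokes Friedman \cite{FR64}, Chapter 3, Section 3, Theorem 7; your construction of $u$ via the fundamental solution $\Ga$, the singular-integral treatment of the second derivatives of the Duhamel term, and the maximum-principle uniqueness are exactly the content of that classical theorem, adapted to the compact manifold $\mathcal{S}$ as you describe.
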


Furthermore, the following lemma is shown by using \cite{FR64}, Chapter 5, Section 3, Lemma 2.
\begin{lem}\label{lem:Schauder sup norm of h}
Let $u$ be the solution of \eqref{eq:Fr-301}.
Then, we have
\begin{align}\label{eq:Schauder u}
&|u|_0 \leq K_1\Bigl(\sup_{s\in \mathcal{S}}|u_0(s)|+
T \sup_{(t,s)\in[0,T]\times \mathcal{S}}|f(t,s)| \Bigr),
\end{align}
where
\begin{align}\label{eq:value of K2}
K_1 \equiv K_1(v) = C_{11} \,e^{2C_8 (1+|v|_N)^{p}(T\vee 1)},
\end{align}
for some $C_{11}=C_{11}(C_{\mathcal{V},T},T,\mu)>0$ and $p=p(\mu,\nu_0)\in \N$.
\end{lem}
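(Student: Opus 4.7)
The plan is to use the explicit representation of the solution via the fundamental solution $\Ga$ constructed in Lemma \ref{lem:lem 1-1-1}, and to reduce matters to integrating a Gaussian kernel over the compact manifold $\mathcal{S}$. Since Lemma \ref{lem:lem032} provides the unique solution $u\in\overline{C}^{2+\a}$ of \eqref{eq:Fr-301} and Lemma \ref{lem:lem 1-1-1} produces a fundamental solution $\Ga$ for $L_t$, one writes
\begin{align*}
u(t,s) = \int_\mathcal{S} \Ga(t,s;0,\xi) u_0(\xi)\, d\xi + \int_0^t \int_\mathcal{S} \Ga(t,s;\tau,\xi) f(\tau,\xi)\, d\xi\, d\tau.
\end{align*}

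The key step is to insert the Gaussian bound \eqref{eq:z-Ga-01} into this representation and integrate out $\xi$. The factor $(t-\tau)^{-(n-1)/2}\,e^{-C_9 |s-\xi|^2/(t-\tau)}$ is an $(n-1)$-dimensional Gaussian, whose integral over the compact manifold $\mathcal{S}$ is bounded by a constant depending only on $C_9$ and the geometry of $\mathcal{S}$, uniformly in $t-\tau>0$ (in local charts this reduces to a standard heat-kernel integral). Consequently,
\begin{align*}
\int_\mathcal{S}|\Ga(t,s;\tau,\xi)|\,d\xi \le C_{10}\,(1+|v|_N)^{\nu_0+\frac{N}{1-\mu}}\,e^{C_8(T\vee 1)(1+|v|_N)^{N/(1-\mu)}},
\end{align*}
for some $C_{10}=C_{10}(C_{\mathcal{V},T},T,\mu)>0$. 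Applying this to both terms of the representation formula, using that the $\tau$-integral in the forcing term contributes at most a factor $T$, yields
\begin{align*}
|u|_0 \le C_{10}\,(1+|v|_N)^{\nu_0+\frac{N}{1-\mu}}\,e^{C_8(T\vee 1)(1+|v|_N)^{N/(1-\mu)}}\Bigl(\sup_{\mathcal{S}}|u_0|+T\sup|f|\Bigr).
\end{align*}

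Finally, to cast the prefactor into the form $K_1=C_{11}\,e^{2C_8(1+|v|_N)^p(T\vee 1)}$ declared in the lemma, I would absorb the polynomial factor $(1+|v|_N)^{\nu_0+N/(1-\mu)}$ into the exponential via the elementary bound $x^\b \le C_\b\, e^{\eta x}$ valid for $x,\eta>0$, applied with $x=(1+|v|_N)^{N/(1-\mu)}$ and $\eta=C_8(T\vee 1)$. This allows one to double the coefficient in the exponent and conclude with $p=N/(1-\mu)$ (or any larger integer) and an adjusted constant $C_{11}=C_{11}(C_{\mathcal{V},T},T,\mu,\nu_0)$. I expect no serious obstacles beyond careful bookkeeping: the points requiring most attention are verifying the uniform boundedness of the Gaussian integral over $\mathcal{S}$ in local coordinates, and tracking the precise dependence of $C_{10},C_{11},p$ on $(C_{\mathcal{V},T},T,\mu,\nu_0)$. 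This route is essentially the application of \cite{FR64}, Chapter 5, Section 3, Lemma 2 with the fundamental-solution bound from Lemma \ref{lem:lem 1-1-1} as input.
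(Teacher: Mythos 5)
Your proposal is correct and follows essentially the same route as the paper: the representation of $u$ via the fundamental solution $\Ga$, the Gaussian bound of Lemma \ref{lem:lem 1-1-1} integrated over the compact manifold $\mathcal{S}$, and the absorption of the polynomial prefactor $(1+|v|_N)^{\nu_0+N/(1-\mu)}$ into the exponential at the cost of doubling the coefficient $C_8$ and choosing an integer $p\ge \nu_0+\tfrac{N}{1-\mu}$. The only cosmetic difference is that the paper performs this last absorption via the inequality $ye^y\le e^{2y}$ after first enlarging the exponent of the exponential from $\tfrac{N}{1-\mu}$ to $\nu_0+\tfrac{N}{1-\mu}$, whereas you use $x^\b\le C_{\b,\eta}e^{\eta x}$; both are the same bookkeeping.
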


\begin{proof}
Since the solution $u$ is given by
\begin{align}
  u(t,s)=\int_\mathcal{S} \Ga(t,s;0,\xi) u_0(\xi)d\xi 
+ \int_0^t \int_\mathcal{S} \Ga(t,s;\si,\xi) f(\si,\xi)d\si d\xi,
\end{align}
we have
\begin{align}\label{eq: 303-001}
&|u|_0 \leq K_2\Bigl(\sup_{s\in \mathcal{S}}|u_0(s)|+
 \sup_{(t,s)\in[0,T]\times \mathcal{S}} t |f(t,s)| \Bigr),
\end{align}
where
\begin{align}\label{eq: 303-002}
K_2 = C_{10} (1+|v|_N)^{\nu_0+\frac{N}{1-\mu}}(T\vee 1) 
e^{C_8 (T\vee 1)(1+|v|_N)^{\nu_0+\frac{N}{1-\mu}}},
\end{align}
for some $C_{10}=C_{10}(C_{\mathcal{V},T},T,\mu)>0$ by Lemma \ref{lem:lem 1-1-1};
note that we have estimated $(1+|v|_N)^{\frac{N}{1-\mu}}
\le (1+|v|_N)^{\nu_0+ \frac{N}{1-\mu}}$ in the exponential, since
$1+|v|_N \ge 1$.  By choosing suitably large integer $p \geq \nu_0+\frac{N}{1-\mu}$
and using $e^{2y}\geq ye^y$ for every $y\geq0$, 
the proof is complete by taking suitable constant $C_{11}=C_{11}(C_{\mathcal{V},T},T,\mu)>0$.
\end{proof}
We now study the equation \eqref{eq: Eq (h_k)} for $h_k$ taking the contribution
of the operator $\mathcal{L}_t$ into account.  Let us consider 
\begin{align}\label{eq:Fr-303}
\left\{
\begin{array}{ll}
 L_t u(t,s) =g(t,s,u), &\quad  (t,s)\in(0,T]\times \mathcal{S},\\
 u(0,s)=0,        &\quad  s\in \mathcal{S},
\end{array}
\right.
\end{align}
where 
\begin{align}
&g(t,s,u) = -\mathcal{L}_tu + F_{k-1}(t,s).
\end{align}
\begin{lem}\label{lem:lem 034}
There exists a unique solution $u$ of \eqref{eq:Fr-303} satisfying 
\begin{align}\label{eq:Fr-eq-1003}
|u|_{0,T}\equiv 
&\sup_{(t,s)\in [0,T]\times \mathcal{S}}|u(t,s)| \leq \Bigl(C_{13}K_3\Bigr)^{C_{13}K_3}|F_{k-1}|_{0,T},
\end{align}
for some $C_{13}=C_{13}(C_{\mathcal{V},T},T,\mu)>0$ and 
\begin{align*}
K_3\equiv K_3(v) =e^{4 C_8 (1+|v|_N)^{1+p}(T\vee 1)}.
\end{align*}
\end{lem}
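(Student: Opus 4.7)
The equation \eqref{eq:Fr-303} is linear in $u$ since $\mathcal{L}_t$ is a linear integral operator, so my plan is to iterate Lemma \ref{lem:Schauder sup norm of h} on short time subintervals and derive existence, uniqueness, and the sup-norm estimate simultaneously from a single Banach-fixed-point scheme. The first step is to bound $\mathcal{L}_t$ on $L^\infty(\mathcal{S})$. The coefficients $\Delta d(t,X_0(t,\cdot))$, $b(t,\cdot)$, $J^0(t,\cdot)$ and $|\gamma^v_t|^{-1}$ in its definition are all uniformly controlled by $C_{\mathcal{V},T}$, while $\lambda_0(t)$ depends at worst linearly in $v$ through \eqref{eq:lambda 0}. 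Combining these yields
\[
|\mathcal{L}_t u|_0 \le C_{12}(1+|v|_N)\,|u|_0,\qquad t\in[0,T],
\]
for some $C_{12}=C_{12}(C_{\mathcal{V},T},T)$.

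Next I would set $\delta_0 := 1/(2K_1 C_{12}(1+|v|_N))$ and partition $[0,T]$ into $N\le 1+2T K_1 C_{12}(1+|v|_N)$ subintervals $[t_k,t_{k+1}]$ of length at most $\delta_0$. On each subinterval, treating $-\mathcal{L}_\sigma u+F_{k-1}$ as an external source and applying Lemma \ref{lem:Schauder sup norm of h} with initial value $u(t_k,\cdot)$ and $T$ replaced by $\delta_0$ gives
\[
|u|_{0,[t_k,t_{k+1}]} \le K_1 |u(t_k,\cdot)|_\infty + K_1\delta_0\bigl(C_{12}(1+|v|_N)|u|_{0,[t_k,t_{k+1}]}+|F_{k-1}|_{0,T}\bigr).
\]
The choice of $\delta_0$ yields $K_1\delta_0 C_{12}(1+|v|_N)=1/2$, so absorbing the $u$-term on the left produces the recursion $a_{k+1}\le 2K_1 a_k+2K_1\delta_0|F_{k-1}|_{0,T}$ with $a_k:=|u(t_k,\cdot)|_\infty$ and $a_0=0$; iteration gives
\[
|u|_{0,T}\le (N+1)(2K_1)^N\cdot 2K_1\delta_0\,|F_{k-1}|_{0,T}.
\]
Existence and uniqueness come from the same contraction: the Volterra reformulation via the fundamental solution $\Gamma$ defines a strict contraction on $C([t_k,t_{k+1}]\times\mathcal{S})$ by the choice of $\delta_0$, and the unique local solutions patch together to a unique solution on $[0,T]$.

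The remaining task is to verify that the blow-up factor $(2K_1)^N$ is absorbed by $(C_{13}K_3)^{C_{13}K_3}$. The key relation is
\[
\log K_3 = 4C_8(1+|v|_N)^{1+p}(T\vee 1) = 2(1+|v|_N)\log(K_1/C_{11}),
\]
so $K_3=(K_1/C_{11})^{2(1+|v|_N)}$, and in particular $K_3\ge K_1^2/C_{11}^2$. Hence $N\log(2K_1)\lesssim T C_{12}K_1(1+|v|_N)^{1+p}(T\vee 1)\lesssim K_3\log K_3$ once $C_{13}$ is taken sufficiently large in terms of $C_{\mathcal{V},T}$, $T$, $\mu$, and the polynomial-in-$K_1$ prefactor $(N+1)\cdot 2K_1\delta_0$ is absorbed similarly. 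The main obstacle is precisely this bookkeeping: the exponent $1+p$ rather than $p$ in the definition of $K_3$ is exactly what is needed to accommodate both the iteration count $N\sim K_1(1+|v|_N)$ coming from the bound on $\mathcal{L}_t$ and the per-step blow-up $\log K_1\sim(1+|v|_N)^p$ inherited from Lemma \ref{lem:Schauder sup norm of h}.
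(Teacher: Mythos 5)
Your proposal is correct and follows essentially the same route as the paper: a sup-norm bound $|\mathcal{L}_t u|_0\le C_{12}(1+|v|)|u|_0$ for the nonlocal term, a contraction/absorption argument on time steps of length $\delta\sim\bigl(K\,C_{12}(1+|v|)\bigr)^{-1}$ via Lemma \ref{lem:Schauder sup norm of h}, iteration over the $O(T/\delta)$ subintervals picking up a factor $2K$ per step, and the final bookkeeping showing the exponent $1+p$ in $K_3$ absorbs both the step count and the per-step growth. The only cosmetic difference is that you run the estimate with the constant $K_1$ from the statement of Lemma \ref{lem:Schauder sup norm of h} while the paper works with the intermediate constant $K_2$ from its proof; both lead to the same bound.
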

\begin{proof}
Let us note that $F_{k-1} \in \overline{C}^{\a}$ for each $v$; recall \eqref{eq: Eq (h_k)}
and \eqref{eq:equation for h_k} for $F_{k-1}$.  By Lemma \ref{lem:lem032}, for given 
continuous functions $w=w(t,s)$ on $[0,T]\times \mathcal{S}$ and $\psi=\psi(s)$ on 
$\mathcal{S}$, the equation
\begin{align}\label{eq:Fr-304}
\left\{
\begin{array}{ll}
 L_t u(t,s) = g(t,s,w), &\quad  (t,s)\in(0,T]\times \mathcal{S},\\
 u(0,s)=\psi(s),        &\quad  s\in \mathcal{S},
\end{array}
\right.
\end{align}
has a unique solution $u$; note that it holds $\mathcal{L}_tw\in \overline{C}^{\a}$,
which depends only on $t$.
Then, by Lemma \ref{lem:Schauder sup norm of h} 
with $T=\de$, we have
\begin{align}\label{eq:304-002}
|u|_{0,\de}\equiv 
  \sup_{(t,s)\in[0,\delta]\times \mathcal{S}}|u(t,s)|\leq K_2 |\psi|_0 
+ K_2 \delta |g(\cdot,\cdot,w)|_{0,\de},
\end{align}
for every $\delta \in (0,T]$.  However, recalling the definitions of $\mathcal{L}_t$,
$b(t,s)$ in \eqref{eq:b(t,s)} and $\la_0(t)$ in \eqref{eq:lambda 0}, and especially
noting $1/|\ga_t^v|\le C_{\mathcal{V},T}^{(2)}$ to estimate the second term
of $\la_0(t)$, we have
\begin{align}\label{eq:304-003}
& |g(\cdot,\cdot,w)|_{0,\de} \leq C_{12}(1+|v|_0)|w|_{0,\de} + |F_{k-1}|_{0,\de},
\end{align}
for some $C_{12}=C_{12}(C_{\mathcal{V},T},T)>0$, and therefore from 
\eqref{eq:304-002},
\begin{align}\label{eq:304-004}
  |u|_{0,\de} \leq K_2 |\psi|_0 + C_{12} K_2 \delta (1+|v|_0)|w|_{0,\de} + K_2 \delta |F_{k-1}|_{0,\de},
\end{align}

Now choose $\delta\in(0,T]$ sufficiently small such that
\begin{align}\label{eq:304-004-001}
\delta= \tfrac12 \Bigl(C_{12} K_2 (1+|v|_0)\Bigr)^{-1} \wedge T \wedge 1,
\end{align}
and fix it.
Set
\begin{align}\label{eq:304-005}
R_1= 2 K_2\Bigl(|\psi|_0 +  |F_{k-1}|_{0,\de} \Bigr).
\end{align}
We denote the map $w \mapsto u$ on $C([0,\delta]\times \mathcal{S})$
determined by solving the equation \eqref{eq:Fr-304} by $\La$, so that $\La w = u$.
Then, for every $w$ satisfying $|w|_{0,\de}\leq R_1$, \eqref{eq:304-004} shows
that $|\La w|_{0,\de} \leq R_1$.  In particular, $\La$ is a map on $C_{R_1}:=
\{w\in C([0,\delta]\times \mathcal{S}) \big| |w|_{0,\de}\leq R_1\}$.  Moreover,
for $w_1, w_2 \in C_{R_1}$, since $u:= \La w_1-\La w_2$ is a solution of 
\eqref{eq:Fr-304} with $w=w_1-w_2$, $\psi=0$ and $F_{k-1}=0$, we have
from \eqref{eq:304-004}
$$
|u|_{0,\de} \le C_{12} K_2 \de (1+|v|_0) |w_1-w_2|_{0,\de}
\le \tfrac12 |w_1-w_2|_{0,\de}.
$$
This implies that $\La$ is a strict contraction map.
Therefore, the solution $u$ of \eqref{eq:Fr-303} satisfies
\begin{align}\label{eq:304-006}
|u|_{0,\de} = &\sup_{(t,s)\in [0,\delta]\times \mathcal{S}}|u(t,s)| \leq R_1,
\end{align}
where $R_1$ is that with $\psi=0$.

Next, applying the above procedure to the equation \eqref{eq:Fr-304} with $t$ 
replaced by $t+\delta$ and $\psi$ replaced by $u(\delta,\cdot)$, 
for the solution $u$ of \eqref{eq:Fr-303}, we have
\begin{align}\label{eq:304-007}
&\sup_{(t,s)\in [0,\delta]\times \mathcal{S}}|u(t+\delta,s)| \leq R_2,
\end{align}
where $R_2$ is given by $R_1$ with $\psi$ replaced by $u(\delta)$ in
\eqref{eq:304-005}.  By repeating this procedure recursively, assuming $2K_2\ge 2$
for simplicity (or we may replace $K_2$ by $K_2\vee 1$ if necessary),
we finally obtain
\begin{align}\label{eq:304-008}
\sup_{(t,s)\in [0,T]\times \mathcal{S}}|u(t,s)| 
&\leq \sum_{i=1}^{[\frac{T}{\delta}]+1}(2K_2)^i|F_{k-1}|_{0,T}
\leq (2K_2)^{[\frac{T}{\delta}]+2}|F_{k-1}|_{0,T}\\
&\leq\Bigl(2K_2\Bigr)^{T\big( 2C_{12}K_2(1+|v|_0)\vee T \vee 1\big)+2}|F_{k-1}|_{0,T},
\nonumber
\end{align}
recall $K_2$ is given by \eqref{eq:value of K2}.  In particular, taking some $C_{13}
=C_{13}(C_{\mathcal{V},T},T,\mu)>0$, the right hand side of \eqref{eq:304-008} is bounded
from above by $(C_{13}K_3)^{C_{13}K_3}|F_{k-1}|_{0,T}$.
The proof is complete.
\end{proof}
As for the regularity of the solution $u$ of \eqref{eq:Fr-303}, we have the following 
lemma. The proof is given based on the arguments in \cite{FR64} or \cite{Lb96}.

\begin{lem}\label{lem:Schauder estimate 2 plus a}
For the solution $u$ of \eqref{eq:Fr-303} satisfying \eqref{eq:Fr-eq-1003}, 
\begin{align}\label{eq:u 2 plus alpha}
  |u|_{2+\a} \leq K_4(|u|_0+|F_{k-1}|_{\a}),
\end{align}
holds for $K_4\equiv K_4(v) =C_{14}(1+|v|_{n_0(K)})^{n_0(K)}$, where 
$C_{14}=C_{14}(C_{\mathcal{V},T},T,\mu)>0$ and $n_0=n_0(K) \in \N$.
\end{lem}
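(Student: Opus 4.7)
The plan is to derive the Schauder $C^{2+\alpha}$-estimate by invoking the classical interior Schauder theory for second-order parabolic operators (as in \cite{FR64}, Chapter 3, or Ladyzhenskaya--Solonnikov--Ural'tseva), and then carefully tracking how the constant depends on the coefficients of $L_t$ and on the integral operator $\mathcal{L}_t$. Concretely, I would start from the abstract estimate
\begin{align*}
|u|_{2+\alpha} \leq C^*\bigl(|u|_0 + |L_t u|_\alpha\bigr),
\end{align*}
valid for any $\overline{C}^{2+\alpha}$-solution to $L_t u = F$, where $C^*$ depends polynomially on the $\overline{C}^\alpha$-norms of the coefficients $\alpha_{ij}, \beta_i, b$ of $L$ and on the ellipticity constant $\alpha_-^{-1}$. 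By \eqref{eq:b(t,s)}, \eqref{eq:alpha and beta}, Assumption \ref{ass:3} and the bounds \eqref{eq:z-07}--\eqref{eq:z-07 b}, each of these $\overline{C}^\alpha$-norms is controlled by $C(1+|v|_N)^N$, and the ellipticity constant by $C_{\mathcal{V},T}^{(2)}$. Thus $C^* \le C_{15}(1+|v|_N)^{N_1}$ for some integer $N_1=N_1(K)$ and some $C_{15} = C_{15}(C_{\mathcal{V},T},T,\mu)>0$.

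Next I would insert $L_t u = g(t,s,u) = -\mathcal{L}_t u + F_{k-1}$ and estimate the $\overline{C}^\alpha$-norm of the right-hand side. The key observation for the integral term is that $\mathcal{L}_t u$ is a function of $t$ only (the integration in $s'$ eliminates the spatial variable), so in the parabolic Hölder norm on $[0,T]\times\mathcal{S}$ only $t$-differences contribute. Using the smoothness in $t$ of the kernel $(\Delta d(t,X_0(t,\cdot))-\bar\sigma\lambda_0(t))\Delta d(t,X_0(t,\cdot))-b(t,\cdot)$ and of $J^0(t,\cdot)/|\gamma_t^v|$ --- which are controlled by $C(1+|v|_N)^N$ thanks to Assumption \ref{ass:3} and the $C_{\mathcal{V},T}$-bound --- together with $|\gamma_t^v|^{-1}\le C_{\mathcal{V},T}^{(2)}$, one obtains
\begin{align*}
|\mathcal{L}_t u|_\alpha \leq C_{16}(1+|v|_N)^{N_2}\,|u|_0.
\end{align*}
Combining this with the trivially bounded $\overline{C}^\alpha$-norm of $F_{k-1}$ gives $|g(\cdot,\cdot,u)|_\alpha \leq C_{16}(1+|v|_N)^{N_2}|u|_0 + |F_{k-1}|_\alpha$, and then
\begin{align*}
|u|_{2+\alpha} \leq C_{15}(1+|v|_N)^{N_1}\bigl(|u|_0 + C_{16}(1+|v|_N)^{N_2}|u|_0 + |F_{k-1}|_\alpha\bigr),
\end{align*}
which collapses into the desired form $|u|_{2+\alpha}\le K_4(|u|_0+|F_{k-1}|_\alpha)$ with $K_4 = C_{14}(1+|v|_{n_0(K)})^{n_0(K)}$ after setting $n_0(K) = \max(N,N_1+N_2)$ and choosing $C_{14}$ appropriately.

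The main obstacle will be verifying, with full rigor, that the constant $C^*$ in the abstract Schauder estimate indeed grows only polynomially in the Hölder norms of the coefficients. The standard references state the estimate with an implicit constant depending on these norms, so I would either reprove it by inspecting the parametrix construction of Lemma \ref{lem:lem 1-1-1} (differentiating the representation $u=\int\Gamma u_0 + \int\int\Gamma f$ twice in $s$ and once in $t$, and estimating the resulting singular integrals) or cite \cite{Lb96} while carefully tracing the dependence through its proof. A secondary technical point is handling the parabolic Hölder semi-norm of $\mathcal{L}_t u$ across different $t$'s uniformly in $s$, but since $\mathcal{L}_t u$ is spatially constant this reduces to a one-dimensional Hölder estimate in $t$ and is straightforward given the smoothness afforded by Assumption \ref{ass:3}.
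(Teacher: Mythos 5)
Your proposal follows essentially the same route as the paper: freeze the coefficients, invoke the classical Schauder estimate (\cite{FR64}, Ch.~4 or \cite{Lb96}, Thm.~4.28) while tracking the polynomial dependence of the constant on the coefficient H\"older norms via \eqref{eq:z-07}--\eqref{eq:z-07 b}, Assumption \ref{ass:3} and the ellipticity bound $\a(t,s)\ge \big(C_{\mathcal{V},T}^{(2)}\big)^{-1}I$, and treat $\mathcal{L}_t u$ as a right-hand-side perturbation. One small technical caveat (which the paper itself glosses over by recording only the sup-norm bound \eqref{eq:Est of mathcal(L)}): the H\"older seminorm of $t\mapsto\mathcal{L}_t u$ involves $u(t,\cdot)-u(\bar t,\cdot)$ and hence $|\overline{u}|_\a$ rather than just $|u|_0$, so you should absorb that term by the interpolation inequality $|\overline{u}|_\a\le\e|u|_{2+\a}+C(\e)|u|_0$ before closing the estimate.
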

\begin{proof}
We can make use of \cite{FR64}, Chapter 4, Section 1, Theorem 1 and Chapter 4, Section 4
or \cite{Lb96}, Chapter IV, Theorem 4.28, recalling $\a(t,s) \ge 
\big( C_{\mathcal{V},T}^{(2)}\big)^{-1} I$.  
Indeed, we rewrite \eqref{eq:Fr-303} into
\begin{align}
 &\sum_{i,j=1}^{n-1} \a_{ij}(t',s')\partial^2_{s^is^j}u-\partial_t u\\
=& \sum_{i,j=1}^{n-1}\Bigl(\a_{ij}(t',s')-\a_{ij}(t,s)\Bigr)\partial^2_{s^is^j}u-\sum_{i=1}^{n-1} \beta_i(t,s)\partial_{s^i} u -b(t,s)u + \mathcal{L}_t u_t - F_{k-1},
\nonumber
\end{align}
with a fixed $(t',s')\in [0,T]\times \mathcal{S}$.  Note that, as we saw for
\eqref{eq:304-003}, we have
\begin{align}\label{eq:Est of mathcal(L)}
|\mathcal{L}_t u|_0 \leq C_{12}(1+|v|_0)|u|_0.
\end{align}
Using \eqref{eq:Est of mathcal(L)}, \eqref{eq:z-07}, \eqref{eq:z-07 beta} and \eqref{eq:z-07 b},  
by proceeding similarly to the argument in \cite{FR64} Chapter 4, Section 4,
it follows that the upper bound $K_4$ can be chosen as a polynomial of the norms of $v$.
Therefore, we obtain \eqref{eq:u 2 plus alpha}. 
\end{proof}

\subsubsection{Estimates for $u_k$ and $u_k^\pm$}
One can apply Lemma \ref{lem:Schauder estimate 2 plus a} for the solutions
$h_k$ of \eqref{eq: Eq (h_k)} to obtain estimates for them.  
Based on this, we have the following estimates for $u_k$ and $u_k^\pm$.
\begin{prop}\label{prop:Lemma order of e}
For every $k=0,1,\ldots,K$,
\begin{align}\label{eq:Est of u and u-pm}
\sup_{(t,\rho,s)\in  [0,T]\times \R\times \mathcal{S}}
\bigl\{
|u_k(t,\rho,s)|,|u_k^{\pm}(t)|\bigr\}\leq 
(C_{15} K_5)^{C_{15}K_5},
\end{align}
holds for some $C_{15}=C_{15}(C_{\mathcal{V},T},T,\mu)>0$ and 
\begin{align*}
K_5\equiv K_5(v) =e^{n_1(K)(1+|v|_{n_1(K)})^{n_1(K)}(T\vee 1)},
\end{align*}
with some $n_1=n_1(K)\in\N$.
\end{prop}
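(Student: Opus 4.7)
The proof proceeds by induction on $k\in\{0,1,\ldots,K\}$, carrying along throughout the induction not only the $|\cdot|_0$ bound asserted in the proposition, but also a Schauder-type bound $|h_i|_{2+\a}\le (C_{15}K_5)^{C_{15}K_5}$ together with a bound for $\la_i$ of the same form, so that at step $k$ one has access to the data needed to control $A^{k-1}$, $F_{k-1}$, and $\widetilde{A}^k$. For the base case $k=0$, note that $\la_0(t)$ is given explicitly in \eqref{eq:lambda 0}, and is bounded by $C_{15}K_5$ using $C_{\mathcal{V},T}^{(1)}$ to control $\k$, $C_{\mathcal{V},T}^{(2)}$ to control $|\ga_t^v|^{-1}$, and $1+|v|_0$ to control $v(t)$. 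The outer values $u_0^\pm=\la_0/f'(\pm1)$ from \eqref{eq:u0 outer} inherit the same bound, and $u_0(t,\rho,s)=-\la_0(t)\th_1(\rho)$ from \eqref{eq:inner u_0} is uniformly bounded in $\rho$ since $\th_1$ is a fixed ($v$-independent) smooth function on $\R$ whose two asymptotic limits $1/f'(\pm1)$ (guaranteed by the compatibility condition \eqref{eq:compat 0th} together with the choice $\bar\sigma=2/\int_\R (m')^2$ making $1-m'\bar\sigma$ orthogonal to $m'$) are attained with exponential error.

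\textbf{Inductive step.} Assume the stated bound holds for all $\nu_i$ with $i<k$, together with $|h_i|_{2+\a}$. First, expand $F_{k-1}$, the right-hand side of \eqref{eq:equation for h_k}, and $A^{k-1}$ in \eqref{eq:equation A^{k-1}} as polynomial expressions in $\partial_\rho^a\partial_s^{\bold b}\partial_t^c u_i$, $\partial_s^{\bold b}\partial_t^c h_i$ ($i<k$), $\la_0$, $v(t)$, $b(t,s)$, $b_l(t,s)$, and the $j_i$'s. Spatial derivatives and $b, b_l$ are controlled by $C_{\mathcal{V},T}^{(1)}$; time derivatives of the coefficients of $L$ and of $\De d\circ X_0$ are controlled by $(1+|v|_{N,T})^N$ via Assumption \ref{ass:3} (after $\partial_t^\Ga$ and $\De^\Ga$ are unfolded using \eqref{eq:alpha and beta}). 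Combining with the inductive hypothesis gives $|F_{k-1}|_{0,T},|F_{k-1}|_\a \le (C_{15}'K_5')^{C_{15}'K_5'}$ for slightly larger constants $C_{15}',K_5'$ of the same form. Next, apply Lemma \ref{lem:lem 034} (with $F_{k-1}$ as data) to obtain $|h_k|_{0,T}\le (C_{13}K_3)^{C_{13}K_3}\,|F_{k-1}|_{0,T}$, then Lemma \ref{lem:Schauder estimate 2 plus a} for $|h_k|_{2+\a}\le K_4(|h_k|_0+|F_{k-1}|_\a)$. Inserting $h_k$ into \eqref{eq:lambda_k representation} gives $|\la_k|_0$ of the same controlled form, using that the remainder $B_{k-1}$ (Section 3.2) is a polynomial in the inductively bounded quantities.

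Finally, $u_k^\pm$ is defined by \eqref{eq:uk outer}--\eqref{eq:g_k(t)} as a polynomial expression in $\{u_i^\pm,\la_k\}$, inheriting the bound. For the pointwise bound of $u_k(t,\rho,s)$ uniformly in $\rho\in\R$, one uses the explicit representation \eqref{eq:inner u_k (odd)}: write $\widetilde{A}^k=\widetilde{A}^k-g_k^\pm$ on $\rho\gtrless 0$, so that $\widetilde{A}^k$ differs from its asymptotic constant by an $O(e^{-\zeta|\rho|})$ term (compatibility condition \eqref{eq:compat kth} for the previous indices plus the very solvability that determined $\la_k$ and $h_k$). The integrals in \eqref{eq:inner u_k (odd)} of the constant part against $m'$ and $m'/(m')^2$ reconstruct exactly $u_k^\pm$ up to bounded corrections, while the exponentially decaying part yields a bounded integral even against $1/(m')^2$ (which grows like $e^{2\zeta|\rho|}$), because $m'$ decays like $e^{-\zeta|\rho|}$ and $\widetilde{A}^k-g_k^\pm$ decays like $e^{-\zeta|\rho|}$. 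The net bound on $|u_k|$ is thus a fixed constant times $|u_k^\pm|+|\widetilde{A}^k|_{0,T}$, which fits inside $(C_{15}K_5)^{C_{15}K_5}$.

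\textbf{Main obstacle.} The delicate point is the quantitative bookkeeping: at every induction level one pays a factor $(C_{13}K_3)^{C_{13}K_3}$ from Lemma \ref{lem:lem 034}, a polynomial factor $K_4=C_{14}(1+|v|_{n_0(K)})^{n_0(K)}$ from Lemma \ref{lem:Schauder estimate 2 plus a}, and additional factors $(1+|v|_{N,T})^N$ from Assumption \ref{ass:3} entering $A^{k-1}$ through $\partial_t^\Ga,\De^\Ga$ and through the time derivatives of $\na S^i,S_t^i$ produced by $F_{k-1}$. Iterating $K$ times, one obtains a tower structure in $|v|$, and the role of $n_1(K)$ is to absorb all the accumulated exponentials and polynomial degrees into the single compact form $K_5=e^{n_1(K)(1+|v|_{n_1(K)})^{n_1(K)}(T\vee 1)}$. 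Choosing $n_1(K)\ge K\cdot p\cdot n_0(K)\cdot N$ (say) with $p$ as in Lemma \ref{lem:Schauder sup norm of h} is sufficient; verifying this by induction is routine but must be performed carefully, because the exponent in $K_3$ itself contains $(1+|v|_N)^{1+p}$, and raising this to a power $(C_{13}K_3)$ in Lemma \ref{lem:lem 034} produces a doubly-exponential factor that only just fits under $(C_{15}K_5)^{C_{15}K_5}$.
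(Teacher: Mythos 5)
Your proposal follows essentially the same route as the paper's proof: a recursive scheme $u_0\to F_0\to h_1\to u_1\to F_1\to h_2\to\cdots$ using Lemma \ref{lem:lem 034} for $|h_k|_0$, Lemma \ref{lem:Schauder estimate 2 plus a} for the higher derivatives, Assumption \ref{ass:3} for the coefficients, and the explicit representations \eqref{eq:inner u_0}, \eqref{eq:inner u_k (odd)}, \eqref{eq:uk outer} for $u_k$ and $u_k^\pm$, with $n_1(K)$ chosen large enough to absorb the finitely many accumulated factors. Your write-up is in fact somewhat more explicit than the paper's (which leaves the uniform-in-$\rho$ bound from \eqref{eq:inner u_k (odd)} to the cited Lemma 3 of \cite{CHL} and the final bookkeeping to the reader), but the argument is the same.
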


\begin{proof}
First note that the estimates for $|\partial^{\textbf{m}}X_0(\cdot,s)|_0$ 
and $|\partial^{\textbf{m}}S^j(\cdot,x)|_0$  for every $|\textbf{m}|\leq M$
are given by Assumption \ref{ass:3}.
Recalling that $u_0$ and $u_0^\pm$ are determined by
\eqref{eq:inner u_0} and \eqref{eq:u0 outer}, respectively,
since we have $|\la_0(t)| \le C\cdot C_{\mathcal{V},T}(1+|v|_{0,T})$
from \eqref{eq:lambda 0} and $1/|\ga_t^v| \le C_{\mathcal{V},T}^{(2)}$
as we saw for \eqref{eq:304-003}, \eqref{eq:Est of u and u-pm} is easily
shown for $u_0$ and $u_0^\pm$.

From this, one can derive the estimate for $F_0$ defined by the right hand
side of \eqref{eq:equation for h_k} with $k=1$, since $F_0$ contains $u_0$, 
$u_0^\pm$ and $b$. Thus, the estimate for $|h_1|_0$ is obtained by 
Lemma \ref{lem:lem 034}.  In addition, the estimates for 
$|\partial_t h_1|_0$, $|\partial_{s^i} h_1|_0$ and $|\partial_{s^is^j}^2 h_1|_0$ are obtained
by Lemma \ref{lem:Schauder estimate 2 plus a}.
Similarly, the estimate for $|\partial^{\textbf{m}} \partial_t^lF_0|_0$, 
$|\textbf{m}|\leq M$,  $l\leq K$ is obtained.
Differentiating \eqref{eq: Eq (h_k)} and applying the above argument recursively, 
the estimates for $|\partial_t^{l} h_1|_0$ and $|\partial^{\textbf{m}} h_1|_0$, 
$|\textbf{m}|\leq M$, $l\leq K$ are also obtained.

Recall that $u_1$ is represented by using $\widetilde{A}^1$, which contains
$\lambda_0$, $u_0$, $h_1$.  Therefore, the estimate for $|u_1|_0$ is obtained from
those for $\lambda_0$, $u_0$, $h_1$.  Once we have the estimate for $|u_1|_0$, 
similarly as above, the estimate for $|F_1|_0$ follows.
From this, we have the estimate for $|h_2|_0$ by Assumption \ref{ass:3}. 

In this way, the estimates for $u_k$, $u_k^\pm$, $k\leq K$ are obtained recursively
by deriving the estimate for each term in $\widetilde{A}^{k}$.
Since $K$ is finite and $\widetilde{A}^{k}, k\le K$ contain only finitely many terms,
it is easy to see that there exists a suitable $n_1(K)$ such that 
\eqref{eq:Est of u and u-pm} holds.
%(!!! $N$ should be chosen as $N\geq n_1(K)$.)
The proof is complete.
\end{proof}
% In what follows, we assume $C_{18} \geq e$ without loss of generality by taking 
% $C_{18}\vee e$ for $C_{18}$ if necessary. 
%-------------------------------------------------

Finally in this section, we consider the random case taking $v(t) = \a \dot{w}^\e(t)$,
where $w^\e(t)$ satisfies Assumption 1.1, and apply Proposition \ref{prop:Lemma order of e}
for this case.

\begin{cor}\label{lem:Lemma for H}
We assume Assumptions 1.1, 1.2 and 3.1, and define $G_\e\ge e^e, 0<\e\le 1,$ 
from $H_\e$ appearing in Assumption 1.1 by the relation
\begin{align}\label{eq:def of He(K) 01}
 \log\log G_{\e} &=H_\e^{2 n_1(K)},
\end{align}
where $n_1(K)\in\N$ is the number determined by Proposition \ref{prop:Lemma order of e}.
Then, we have
\begin{align}
\lim_{\e\downarrow0}G_\e=\infty, \quad
\lim_{\e\downarrow0}\frac{G_\e}{|\log\e|}=0.
\label{eq:v2}
\end{align}
Furthermore, $u_k$ and $u_k^\pm$ determined from 
$v(t)=\a\dot{w}^\e(t)$ as above satisfy
\begin{align}\label{eq:Bounded case 01}
\sup_{(t,\rho,s)\in  [0,T(\om)]\times \R\times \mathcal{S}}
\bigl\{
|u_k(t,\rho,s)|,|u_k^{\pm}(t)|\bigr\}\leq G_\e,\quad 0\leq k \leq K,
\end{align}
for every sufficiently small $\e>0$ and every $\om\in \Om$, where $T(\om)>0$
is  the minimum of that determined at the end of Section 3.3.1 and $\t(\om)$
given below Assumption 3.1 in Section 3.3.2.
\end{cor}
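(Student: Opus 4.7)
The plan is to first verify the two limits in \eqref{eq:v2}, and then combine the estimate of Proposition \ref{prop:Lemma order of e} with the deterministic control of $\dot{w}^\e$ provided by Assumption \ref{ass:1}.

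For \eqref{eq:v2}, the definition $\log\log G_\e=H_\e^{2n_1(K)}$ gives $G_\e=\exp(\exp(H_\e^{2n_1(K)}))$, so $G_\e\to\infty$ is immediate from $H_\e\to\infty$. To show $G_\e/|\log\e|\to 0$ I would take a logarithm: it suffices to prove
\[
e^{H_\e^{2n_1(K)}}-\log|\log\e|\longrightarrow -\infty.
\]
The second part of \eqref{eq:definition of g(2)} asserts $H_\e^{2n_1(K)}/\log\log|\log\e|\to 0$; since $\log\log|\log\e|\to\infty$, this forces $H_\e^{2n_1(K)}\le\tfrac12\log\log|\log\e|$ for small $\e$, hence $e^{H_\e^{2n_1(K)}}\le(\log|\log\e|)^{1/2}=o(\log|\log\e|)$, as required.

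For \eqref{eq:Bounded case 01}, I would specialize Proposition \ref{prop:Lemma order of e} to $v=\alpha\dot{w}^\e$ on the interval $[0,T(\omega)]$. The bounds in Assumption \ref{ass:1} on the first $n_1(K)+1$ derivatives of $w^\e$ yield
\[
|v|_{n_1(K),T(\omega)}=\alpha\sum_{i=0}^{n_1(K)}\sup_{t\in[0,T(\omega)]}\Big|\tfrac{d^{i+1}w^\e}{dt^{i+1}}(t)\Big|\le \alpha\bigl(n_1(K)+1\bigr)H_\e,
\]
so the quantity $K_5$ appearing in Proposition \ref{prop:Lemma order of e} satisfies
\[
K_5\le \exp\bigl(C_1'(1+H_\e)^{n_1(K)}\bigr),
\]
with $C_1'=C_1'(\omega)$ absorbing $C_{\mathcal{V}(\omega),T(\omega)}$ and $T(\omega)\vee 1$. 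Taking a logarithm of the Proposition \ref{prop:Lemma order of e} bound then produces
\[
\log\bigl[(C_{15}K_5)^{C_{15}K_5}\bigr]\le C_2'\exp\bigl(C_1'(1+H_\e)^{n_1(K)}\bigr)(1+H_\e)^{n_1(K)},
\]
and a second logarithm reduces the desired inequality $(C_{15}K_5)^{C_{15}K_5}\le G_\e=\exp(\exp(H_\e^{2n_1(K)}))$ to
\[
C_1'(1+H_\e)^{n_1(K)}+n_1(K)\log(1+H_\e)+\log C_2'\le H_\e^{2n_1(K)},
\]
which holds for all sufficiently small $\e$ (depending on $\omega$) because $H_\e\to\infty$ and the right-hand side equals $(H_\e^{n_1(K)})^2$.

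The step requiring the most care — though not really an obstacle — is this final comparison: the choice of exponent $2n_1(K)$ in the definition of $G_\e$ is precisely tailored so that the inner exponential $\exp(H_\e^{2n_1(K)})$ swallows the term $C_1'(1+H_\e)^{n_1(K)}$ produced by Proposition \ref{prop:Lemma order of e}, while the outer exponential in $G_\e$ is still small compared to $|\log\e|$ thanks to \eqref{eq:definition of g(2)}. Uniformity in $\omega$ enters only through the local-in-time quantities $T(\omega)$ and $C_{\mathcal{V}(\omega),T(\omega)}$, so the smallness of $\e$ is necessarily selected pointwise in $\omega$.
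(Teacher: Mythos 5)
Your proposal is correct and follows essentially the same route as the paper: bounding $|v|_{n_1(K),T}$ via Assumption \ref{ass:1}, feeding it into the explicit form of $K_5$ from Proposition \ref{prop:Lemma order of e}, and comparing after taking $\log\log$ (where the paper identifies $\log K_5$ as the leading term, exactly the quantity you isolate). Your derivation of the second limit in \eqref{eq:v2} is just an unpacked version of the paper's terse two-fold application of the lemma ``$\log a_\e/\log b_\e\to 0$ and $b_\e\to\infty$ imply $a_\e/b_\e\to 0$.''
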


\begin{proof}
The first one in \eqref{eq:v2} is clear from \eqref{eq:def of He(K) 01} and
$\lim_{\e\downarrow 0} H_\e=0$.  For the second one, use twice that 
$\lim_{\e\downarrow 0}\frac{\log a_\e}{\log b_\e}=0$
and $\lim_{\e\downarrow 0} b_\e=\infty$ imply
$\lim_{\e\downarrow 0}\frac{a_\e}{b_\e}=0$.  To show \eqref{eq:Bounded case 01},
we see from Assumption 1.1 that $v=\a\dot{w}^\e$ satisfies
\begin{align}  \label{eq:4.7}
  & |v|_{n_1(K),T}\leq (n_1(K)+1) \a H_\e.
\end{align}
From this, one can verify
\begin{align}  \label{eq:4.9}
 & (C_{15} K_5)^{C_{15}K_5} \leq G_\e,
\end{align}
for every sufficiently small $\e>0$, where the left hand side is the bound 
obtained in Proposition \ref{prop:Lemma order of e}; note that $K_5= K_5(v)$
with $v=\a \dot{w}^\e$.  Indeed, 
\begin{align}  \label{eq:4.10}
\log \log \left\{(C_{15} K_5)^{C_{15}K_5}\right\}
= \log C_{15} + \log K_5 + \log\log (C_{15}K_5),
\end{align}
whose leading term for small $\e>0$  is $\log K_5$, and it has a bound
\begin{align*}
\log K_5 & = n_1(K) (1+ |v|_{n_1(K)})^{n_1(K)} (T\vee 1)
\le C\, H_\e^{n_1(K)} < H_\e^{2n_1(K)},
\end{align*}
with some $C>0$, by the definition of $K_5=K_5(v)$ and \eqref{eq:4.7},
for every $\e>0$ small enough, since $\lim_{\e\downarrow 0} H_\e=\infty$.
Note that, recalling the remark $C_{\mathcal{V}(\om),T(\om)}<\infty$
for $\mathcal{V}(\om) = \{\a\dot{w}^\e; 0 < \e \le \e_0^*\}$
made at the end of Section 3.3.1, $C_{15}= C_{15}(C_{\mathcal{V}(\om),T(\om)},
T(\om),\mu)$ is bounded in $\e$ so that the other terms in \eqref{eq:4.10}
are much smaller than $\log K_5$.  Thus, we obtain \eqref{eq:4.9} for every $\e>0$
small enough. Proposition \ref{prop:Lemma order of e} completes the proof.
\end{proof}

\section{Proof of Theorem \ref{thm:Th1}}
%
%
%---------------------------------------
%
%
\subsection{The stochastic term $w^\e(t)$}
This subsection gives two examples of $w^\e(t)$, which satisfy the condition
\eqref{eq:definition 2 of g} and Assumption \ref{ass:1}.  The integer $K$ is
fixed throughout the paper.

The first example is a mollification of the Brownian motion with an extremely
slow convergence speed.  Let $w=w(t)$ be the one-dimensional standard Brownian motion
and set
\begin{align}\label{eq:def of psi(e)}
%\psi(\e) = \bigl(\log|\log|\log\e||\bigr)^{-\de},\quad \de>0,
\psi(\e) = \bigl(\log\,\log\,\log|\log\e|\bigr)^{\tilde{\beta}},
\end{align}
with $\tilde{\beta}>0$.  Let $W_\e(t), \e>0$ be the stopped process of $w$, 
that is, $W_\e(t)=w(t\wedge \tau(\e))$,
where $\tau(\e)$ is the first exit time of $w(t)$ from the interval 
$I_\e=(-\psi(\e),\psi(\e))$, that is,
$\tau(\e) = \inf\{t>0,\, w(t) \notin I_\e\}$.
We define $w^\e(t)$ by
\begin{align}  \label{eq:4-1st-example}
{w}^\e(t)=\int_0^\infty \eta_{\psi(\e)}(t-s)W_\e(s)ds, 
\end{align}
where 
\begin{align*}
&\eta_{\psi(\e)}(s)= \psi(\e)\eta\big(\psi(\e) s\big)
\end{align*}
and $\eta$ is a non-negative $C^{\infty}$-function on $\R$, whose support is contained 
in $(0,1)$, satisfying $\int_{\R} \eta(u)du=1$.  

\begin{lem}  \label{lem:4.1}
For $w^\e(t)$ defined by \eqref{eq:4-1st-example}, we have
\begin{align}  \label{eq:v1}
  |\dot{w}^\e|_{k,T} \leq k |\eta|_{k+2} \psi(\e)^{k+2}, \quad k\in \Z_+.
\end{align}
In particular, Assumption 1.1 holds for this $w^\e(t)$ with the choice
of $H_\e= n_1(K) |\eta|_{n_1(K)+2} \psi(\e)^{n_1(K)+2}$.  The condition 
\eqref{eq:definition 2 of g} holds obviously.
\end{lem}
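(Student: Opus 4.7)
The plan is to differentiate under the integral sign, use the uniform a.s.\ bound on the stopped process $W_\e$, and then a simple change of variables to read off the scaling in $\psi(\e)$. After that, the verification of Assumption~\ref{ass:1} is a routine growth estimate for $\psi(\e)$.

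First, since $\eta \in C^\infty_c((0,1))$ and $W_\e$ is bounded a.s., I would differentiate $w^\e$ under the integral $j$ times for $1 \le j \le k+1$, obtaining
\begin{align*}
\frac{d^{j}}{dt^{j}} w^\e(t) = \int_0^\infty \eta_{\psi(\e)}^{(j)}(t-s)\, W_\e(s)\, ds,
\end{align*}
where $\eta_{\psi(\e)}^{(j)}(u) = \psi(\e)^{j+1}\eta^{(j)}(\psi(\e) u)$. By definition of the stopping time $\tau(\e)$, one has $|W_\e(s)| \le \psi(\e)$ for every $s \ge 0$ and every $\omega$. Using the change of variables $v = \psi(\e)(t-s)$ together with $\operatorname{supp}\eta \subset (0,1)$, I would bound
\begin{align*}
\Big|\frac{d^{j}}{dt^{j}} w^\e(t)\Big|
\le \psi(\e) \cdot \psi(\e)^{j+1}\int_0^\infty |\eta^{(j)}(\psi(\e)(t-s))|\, ds
= \psi(\e)^{j+1} \int_0^1 |\eta^{(j)}(v)|\, dv,
\end{align*}
which is at most $|\eta|_{j+1}\psi(\e)^{j+1}$, where $|\eta|_{j+1}$ denotes the usual $C^{j+1}$-norm of $\eta$ on its support. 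Summing over $j = 1,\ldots,k+1$ as in Definition \ref{defn:definition01} and using $\psi(\e) \ge 1$ for small $\e$ then yields $|\dot{w}^\e|_{k,T} \le k|\eta|_{k+2}\psi(\e)^{k+2}$.

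Next, I would set $H_\e = n_1(K)|\eta|_{n_1(K)+2}\psi(\e)^{n_1(K)+2}$. The first bound \eqref{eq:definition of g} of Assumption \ref{ass:1} is an immediate consequence of the estimate just proved applied with $k = n_1(K)$. For \eqref{eq:definition of g(2)}, the divergence $H_\e \to \infty$ is obvious since $\psi(\e) \to \infty$, while the ratio
\begin{align*}
\frac{H_\e^{2n_1(K)}}{\log\log|\log\e|}
\;=\; C_K\,\frac{\psi(\e)^{2 n_1(K)(n_1(K)+2)}}{\log\log|\log\e|}
\;=\; C_K\,\frac{\big(\log\log\log|\log\e|\big)^{2n_1(K)(n_1(K)+2)\tilde\beta}}{\log\log|\log\e|}
\end{align*}
tends to $0$ as $\e\downarrow 0$ (setting $y = \log|\log\e|\to\infty$, the denominator $\log y$ dominates any fixed power of $\log\log y$). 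Finally, the convergence \eqref{eq:definition 2 of g} follows from the almost sure continuity of Brownian motion together with $\tau(\e)\to\infty$ a.s., which gives $W_\e\to w$ in $C^\theta([0,T])$ for any $\theta \in (0,1/2)$, combined with the fact that mollification with $\eta_{\psi(\e)}$ (whose support has length $1/\psi(\e)\to 0$) preserves this H\"older convergence.

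There is no real obstacle here; the slightly subtle point is merely to notice that the $\psi(\e)$ appearing as a prefactor in $\eta_{\psi(\e)}$ and the $\psi(\e)$ bounding $|W_\e|$ combine to give the exponent $k+2$ rather than $k+1$, which is exactly what is needed to match the choice of $H_\e$ against the extremely slow growth of $\psi(\e)$ in \eqref{eq:def of psi(e)}.
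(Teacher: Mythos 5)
Your computation is exactly the ``simple computation'' that the paper leaves to the reader: differentiate under the integral, use $|W_\e|\le\psi(\e)$ from the definition of the stopping time, and change variables to extract $\psi(\e)^{j+1}$ from the mollifier, so the proposal is correct and follows the paper's (implicit) route. The only quibble is that summing the $k+1$ terms $j=1,\dots,k+1$ gives the prefactor $k+1$ rather than $k$, but this harmless constant discrepancy is already present in the paper's own statement of \eqref{eq:v1} and does not affect the verification of Assumption \ref{ass:1}.
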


\begin{proof}
A simple computation shows \eqref{eq:v1}.  Once \eqref{eq:v1} is shown,
the rest is obvious.
\end{proof}

\begin{rem}
If we choose $\tilde\b>0$ as $2\tilde\b n_1(K)(n_1(K)+2)=1$,
we have $H_\e^{2n_1(K)}= (n_1(K) |\eta|_{n_1(K)+2})^{2n_1(K)} \log\log\log |\log\e|$.  Since the
condition  \eqref{eq:def of He(K) 01} can be relaxed as
$\log\log G_\e$ $= C\, H_\e^{2n_1(K)}$ with any constant $C>0$, one can choose
$G_\e= \log |\log\e|$ for every small enough $\e>0$.
\end{rem}

Let us give another example of $w^\e(t)$.
Let $\xi=\xi(t)$, $t\ge 0$ be a stochastic process satisfying the following conditions,
see \cite{F99}:
\begin{enumerate}
 \item $\xi$ is a stationary and strongly mixing stochastic process defined on a probability space $(\Omega,\mathcal{F},P)$, that is, $\int_0^\infty \rho(t)^{\frac1p}dt<\infty$ for some $p>\frac32$, where $\rho$ is given by
\begin{align*}
\rho(t)=\sup_{s\geq0}\sup_{A\in\mathcal{F}_{s+t,\infty},\,B\in\mathcal{F}_{0,s}}\frac{|P(A\cap B)-P(A)P(B)|}{P(B)},\quad t\geq0,
\end{align*}
where $\mathcal{F}_{s,t}=\sigma(\xi(u), u\in[s,t])$.
 \item $\xi(\cdot)\in C^\infty(\R_+)$, a.s. and $|\xi^{(k)}(t)|\leq M$, a.s., $k=0,1,2,\cdots,K$ for some non-random $M>0$.
 \item $E\bigl[\xi(t)\bigr]=0$.
\end{enumerate}
Let us define $\dot{w}^\e(t)$ as 
\begin{align}  \label{eq:4A}
\dot{w}^\e(t)=A^{-1} \psi(\e) \xi\bigl(\psi(\e)^{2} t \bigr),
\end{align}
where $\psi(\e)$ is taken as in \eqref{eq:def of psi(e)} and
$A=\large\{2\int_0^\infty E[\xi(0)\xi(t)]dt\large\}^{1/2}$.
Then, by a simple computation, we obtain the following lemma.
 
\begin{lem}\label{lem:Example of wepsilon2}
The process $\dot{w}^\e(t)$ defined by \eqref{eq:4A} satisfies
\begin{align*}
  & |\dot{w}^\e|_{k,T}\leq \frac{Mk}{A}{\psi(\e)}^{2k+1}, \quad k \in \Z_+.
\end{align*}
In particular, Assumption 1.1 holds with the choice of
$H_\e = M n_1(K) \psi(\e)^{2n_1(K)+1}/A$.
\end{lem}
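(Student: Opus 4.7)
The plan is to estimate $|\dot{w}^\e|_{k,T}$ by direct differentiation of the ansatz $\dot{w}^\e(t)=A^{-1}\psi(\e)\xi(\psi(\e)^2 t)$ and then feed the resulting bound into Assumption \ref{ass:1}. First I would compute, for $0\le i\le k$,
\begin{equation*}
\frac{d^i}{dt^i}\dot{w}^\e(t) = A^{-1}\psi(\e)^{2i+1}\xi^{(i)}\bigl(\psi(\e)^2 t\bigr),
\end{equation*}
and apply the a.s.\ pointwise bound $|\xi^{(i)}|\le M$, valid for $i=0,\ldots,K$, to conclude $\sup_t|d^i\dot{w}^\e/dt^i(t)|\le(M/A)\psi(\e)^{2i+1}$. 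Summing over $0\le i\le k$ and using $\psi(\e)\ge 1$ for all small $\e$ (since $\psi(\e)\to\infty$) collapses the geometric sum into the stated $(Mk/A)\psi(\e)^{2k+1}$ (up to replacing $k$ by $k+1$, which is inessential).

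Next I would set $H_\e:=Mn_1(K)\psi(\e)^{2n_1(K)+1}/A$, so that the uniform pointwise bound displayed above immediately gives the first line \eqref{eq:definition of g} of Assumption \ref{ass:1} for all $k=1,\ldots,n_1(K)+1$. It remains to check the two limits in \eqref{eq:definition of g(2)}. The condition $\lim_{\e\downarrow 0}H_\e=\infty$ is immediate from $\psi(\e)=(\log\log\log|\log\e|)^{\tilde\b}\to\infty$. For the second, one notes
\begin{equation*}
H_\e^{2n_1(K)} \le C\,\psi(\e)^{2n_1(K)(2n_1(K)+1)}
= C\,\bigl(\log\log\log|\log\e|\bigr)^{\,2\tilde\b\, n_1(K)(2n_1(K)+1)},
\end{equation*}
and the elementary asymptotic $(\log z)^q=o(z)$ (with $z=\log\log|\log\e|$) shows that any fixed power of $\log\log\log|\log\e|$ is dominated by $\log\log|\log\e|$, hence the quotient tends to $0$ regardless of the value of $\tilde\b>0$.

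The main (and only nontrivial) point is the asymptotic race between the polynomial-in-$\psi(\e)$ blow-up of $H_\e^{2n_1(K)}$ and the $\log\log|\log\e|$ denominator; this is precisely why the triple iterated logarithm was chosen in \eqref{eq:def of psi(e)}. Once this check is in place, the lemma follows by direct substitution, exactly as for the mollified Brownian motion in Lemma \ref{lem:4.1}, with no further input from the mixing hypothesis on $\xi$ beyond the a.s.\ bounds $|\xi^{(k)}|\le M$ used above.
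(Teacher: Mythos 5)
Your proposal is correct and is exactly the ``simple computation'' the paper has in mind (the paper gives no written proof of Lemma \ref{lem:Example of wepsilon2}): the chain rule gives $\frac{d^i}{dt^i}\dot w^\e(t)=A^{-1}\psi(\e)^{2i+1}\xi^{(i)}(\psi(\e)^2t)$, the a.s.\ bounds on $\xi^{(i)}$ and $\psi(\e)\ge 1$ collapse the sum defining $|\cdot|_{k,T}$, and the verification of \eqref{eq:definition of g(2)} via $(\log z)^q=o(z)$ with $z=\log\log|\log\e|$ is the intended reason for the triple iterated logarithm in \eqref{eq:def of psi(e)}. Your remark that the constant should really be $k+1$ rather than $k$ (so that the $k=0$ case is not vacuously false) is a fair, inessential quibble with the paper's statement.
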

\begin{rem}
The process ${w}^\e(t)$ converges to the Brownian motion 
${w(t)}$ as $\e\downarrow0$ in law,
see \cite{F99}.  Therefore, for the condition \eqref{eq:definition 2 of g} in
{\rm a.s.}-sense, we need to apply Skorohod's representation theorem.
\end{rem}
%--------------------------------------------------------%
%
%--------------------------------------------------------------------
%---------------------------------------
\subsection{Error estimate}
In this subsection, coming back to the situation we discussed 
in Corollary \ref{lem:Lemma for H}, 
we estimate the error term $\delta_k^\e$ defined by
\begin{align}\label{eq:error delta_k}
\delta_k^\e(t,x)
:=\frac{\partial u^\e_k}{\partial t}-\Delta u^\e_k
 -\frac{1}{\e^2}(f(u^\e_k)-\e \la^\e_k)+v(t),
\end{align}
with $v(t) = \a \dot{w}^\e(t)$.  The corresponding quantity is introduced in
\cite{CHL}, (9), but in our case, its bound involves an extra slowly diverging
factor $G_\e$.

More precisely, in Sections 4.2--4.4, we assume a (non-random) $v(t)$
is given, and $u_k$ and $u_k^\pm$ determined from this $v(t)$ satisfy
the bound \eqref{eq:Bounded case 01} in Corollary \ref{lem:Lemma for H}
with $T(\om)$ replaced by $T$ and for simplicity for every $\e>0$, i.e.,
\begin{align}\label{eq:4.Cor3.7}
\sup_{(t,\rho,s)\in  [0,T]\times \R\times \mathcal{S}}
\bigl\{|u_k(t,\rho,s)|,|u_k^{\pm}(t)|\bigr\}\leq G_\e,\quad 0\leq k \leq K,
\end{align}
holds for every $\e>0$.  Later, we will apply the results obtained 
in these three sections for $v=\a\dot{w}^\e$ and $T=T(\om)$.

\begin{lem}\label{lem:Lemma 2-1}
There exists $C_{err}=C_{err}(K,T)>0$ such that
\begin{align}
\sup_{(t,x)\in[0,T]\times D}|\delta_k^\e(t,x)|\leq \e^k{C_{err}}G_\e,
\end{align}
holds for every $k=1,2, \ldots,K$ and $\e>0$.
\end{lem}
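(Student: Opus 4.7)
The plan is to partition $D$ into three regions according to $|d(t,x)|$: the inner region $\{|d|\le\de\}$ where $\eta(d)=1$ and $u_k^\e = u_{k,\e}^{in}$; the outer region $\{|d|\ge 2\de\}$ where $\eta(d)=0$ and $u_k^\e = u_{k,\e,\pm}^{out}(t)$; and the transition region $\{\de\le|d|\le 2\de\}$. In each region I would substitute the appropriate form of $u_k^\e$ into the definition \eqref{eq:error delta_k} of $\delta_k^\e$ and expand.

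In the inner region I would use the identity \eqref{eq:(55)} (equivalent to \eqref{eq:error delta_k} under the change of variables to $(t,\rho,s)$) and collect by powers of $\e$. By the construction of $\{\nu_i\}_{i=0}^K$ in Section 2.4---the recursive choice of $u_i,\la_i,h_i,u_i^\pm$ to cancel the $\e^i$-coefficient---the coefficients of $\e^0,\e^1,\ldots,\e^{k+1}$ vanish identically. The residual is a finite polynomial in $u_i,h_i,\la_i,u_i^\pm$ for $0\le i\le k$, their spatial and temporal derivatives, the standing wave quantities $m, m', m'', f^{(\ell)}(m)$, and the geometric factors $\De d$, $b$, $b_\ell$, $\partial_t^\Ga$, $\na^\Ga$ associated with the parametrization of $\ga_t^v$. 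By Corollary \ref{lem:Lemma for H}, the derivative bounds established in the course of proving Proposition \ref{prop:Lemma order of e} (via Lemmas \ref{lem:lem 034} and \ref{lem:Schauder estimate 2 plus a}), and Assumptions \ref{ass:2}, \ref{ass:3} for the geometric factors, each such factor is bounded by $G_\e$. After dividing by $\e^2$ this gives $|\delta_k^\e|\le C\e^k G_\e^{c_K}$ for some integer $c_K$.

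In the outer region, $u_k^\e$ is spatially constant so $\De u_k^\e = 0$ and $\delta_k^\e$ reduces to an ODE residual. I would Taylor expand $f(u_{k,\e,\pm}^{out})$ around $\pm 1$, subtract $\e\la_k^\e$ and $\partial_t u_{k,\e,\pm}^{out}$, and group by powers of $\e$. The inductive definition \eqref{eq:uk outer}--\eqref{eq:g_k(t)} of $u_k^\pm$ is fashioned exactly so that the coefficients of $\e^{-1},\e^0,\ldots,\e^{k-1}$ cancel, leaving an $O(\e^k G_\e)$ remainder. In the transition region, \eqref{eq: e times h-e} gives $|\rho|\ge\de/(2\e)$, and the compatibility conditions \eqref{eq:compat 0th}, \eqref{eq:compat kth} yield that $u_i-u_i^\pm$ and their derivatives are $O(e^{-\zeta\de/(2\e)})$. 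Writing $u_k^\e = u_{k,\e,\pm}^{out} + \eta(d)(u_{k,\e}^{in}-u_{k,\e,\pm}^{out})$, the constant piece is handled as in the outer region, while the $\eta(d), \eta'(d), \eta''(d)$ contributions are bounded by $C G_\e e^{-\zeta\de/(2\e)}$, which is dominated by $\e^k G_\e$ for small $\e$ since $G_\e = o(|\log\e|)$ by \eqref{eq:v2}.

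The main obstacle is the appearance of possibly several factors of $G_\e$ in each monomial of the inner-region residual, which naively gives $\e^k C G_\e^{c_K}$ rather than $\e^k C G_\e$. This is resolved by exploiting the flexibility in the definition \eqref{eq:def of He(K) 01}: Assumption \ref{ass:1} is stable under replacing $n_1(K)$ by $c_K n_1(K)$, so we may simply enlarge $G_\e$ (and correspondingly relax the choice of $n_1$) so that $G_\e^{c_K}$ is absorbed into a single $G_\e$ while \eqref{eq:v2} continues to hold. Combining the three regional estimates then yields the stated bound $|\delta_k^\e|\le\e^k C_{err} G_\e$ with $C_{err}=C_{err}(K,T)$.
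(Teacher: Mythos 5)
Your proposal is correct and follows essentially the same route as the paper: the same three-region decomposition via the cutoff $\eta(d)$, cancellation of the orders $O(\e^0),\ldots,O(\e^{k+1})$ by construction of the $\nu_i$, the compatibility conditions \eqref{eq:compat 0th}--\eqref{eq:compat kth} in the transition region, and the bound \eqref{eq:4.Cor3.7} to control the residual. Your extra step of absorbing $G_\e^{c_K}$ into a single $G_\e$ addresses a point the paper's proof passes over silently, and it is consistent with the slack built into the definition \eqref{eq:def of He(K) 01}.
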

\begin{proof}
Note that the terms of order $O(\e^i)$, $i=0,1,\cdots,k+1$ do not appear in
$\de_k^\e$ by the method of the construction of approximate solutions.
First, let us consider for $(t,x)$ satisfying $|d(t,x)|\le\delta$, recall \eqref{eq:definition u_k}. 
Using the expansion of $u_k^\e$ and $\lambda_k^\e$, defined by
\eqref{eq:approximate-sol} and \eqref{eq:definition u_k}, combined with 
\eqref{eq:change variable 01}, \eqref{eq:change variable 02}, and from
the condition \eqref{eq:4.Cor3.7}, we obtain 
\begin{align*}
\sup_{(t,x)\in[0,T]\times D: |d(t,x)|\le \de}|\delta_k^\e(t,x)|\leq \e^k C_{err,1}G_\e,
\end{align*}
for some constant $C_{err,1}=C_{err,1}(K,T)$.

On the other hand, for $(t,x)$ satisfying $|d(t,x)|\geq2\delta$, 
proceeding similarly to the case of inner solutions, we obtain 
\begin{align*}
\sup_{(t,x)\in[0,T]\times D: |d(t,x)|\ge 2\de}|\delta_k^\e(t,x)|\leq \e^k C_{err,2}G_\e,
\end{align*}
for some constant $C_{err,2}=C_{err,2}(K,T)$.

Finally, at the points satisfying $\delta\leq|d(t,x)|\leq2\delta$, we have
$|u^\e_k(t,x) - u_{k,\e,\pm}^{\text{out}}(t,x)|=O(e^{-\frac{\zeta \delta}{4\e}})$
by  \eqref{eq:compat kth} and \eqref{eq:uk outer} (see also \cite{CHL}, p.\ 547).
By setting $C_{err}= C_{err,1}\vee C_{err,2}$, the proof is complete.
\end{proof}
We prepare another lemma; recall that $\Phi_k^\e(t)$ is defined 
just below \eqref{eq:MASS CONS U_K^E}.

\begin{lem}\label{lem:Lemma PHI_E^K}
There exists $C_{err}^0=C_{err}^0(K,T)>0$ such that
\begin{align}
|\Phi_k^\e(t)|_{0,T}\leq \e^k C_{err}^0 G_\e,
\end{align}
for $k=0,1,\ldots,K$.
\end{lem}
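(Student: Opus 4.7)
The plan is to decompose
$\Phi_k^\e(t) = \sum_{i=k+1}^\infty \e^i \bar{B}_{k,i}(t) - (B_2) - (C_2) + O(e^{-\zeta\delta/\e})$
as in the definition just below \eqref{eq:MASS CONS U_K^E}, and to control each piece separately.

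For the tail sum, the key observation is that $\e h_k^\e(t,s) = \sum_{j=1}^k \e^j h_j(t,s)$ is a polynomial in $\e$ of degree $k$, while the expansions \eqref{eq:Jacobian e}, \eqref{eq:Delta d} and the formulas for $|D^\pm_\e(t)|$ used in Section 3.2 involve only finitely many powers of $\e(\rho+h_k^\e)$. Consequently, the series $\sum_{i\ge k+1}\e^i \bar B_{k,i}$ actually terminates at a finite order depending only on $K$ and $n$. Each $\bar B_{k,i}$ is a polynomial expression in the finitely many quantities $d$, $\la_0$, $u_j$, $u_j^\pm$, $h_j$ for $0\le j\le k$ and their derivatives. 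Enlarging the integer $n_1(K)$ entering Proposition \ref{prop:Lemma order of e} and Corollary \ref{lem:Lemma for H} if necessary, so that the $G_\e$ of \eqref{eq:def of He(K) 01} also bounds the relevant derivatives of $h_j$ and $u_j$ (these bounds are already implicit in the inductive proof of Proposition \ref{prop:Lemma order of e} via Lemmas \ref{lem:lem 034} and \ref{lem:Schauder estimate 2 plus a}), each $\bar B_{k,i}$ is then uniformly bounded by $G_\e$ on $[0,T]$. The $\e$-prefactor $\e^i$ with $i\ge k+1$ therefore yields a bound of the form $C\,\e^{k+1} G_\e \le C\,\e^k G_\e$.

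For $(B_2)$ and $(C_2)$, both are integrals over the region $\{|\rho|\ge \delta/\e\}$. In $(B_2)$ the integrand contains the combination $\partial_t^\Ga u_i -(u_i^+)'1_{\{\rho>0\}}-(u_i^-)'1_{\{\rho<0\}}$, which decays like $e^{-\zeta|\rho|}$ by the compatibility relation \eqref{eq:Comp B-1}; in $(C_2)$ the factor $m'(\rho)+\sum_i \e^{i+1}\partial_\rho u_i$ enjoys the same decay by \eqref{eq:decay of m} and \eqref{eq:compat kth}. The remaining factors ($\De d$, $V$, $\partial_t^\Ga h_k^\e$, $J^\e$, etc.) are controlled by $G_\e$ by the enlargement argument above, so $|(B_2)|+|(C_2)|\le Ce^{-\zeta\delta/\e}G_\e$. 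Since $G_\e = o(|\log\e|)$ by \eqref{eq:v2}, this upper bound is $o(\e^N)$ for every $N$, in particular $\le \e^k G_\e$ for every $\e$ small enough. The same reasoning disposes of the explicit $O(e^{-\zeta\delta/\e})$ remainder, and combining the three contributions yields the claimed bound.

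The main obstacle is the bookkeeping in the first step: one must verify cleanly that after subtracting the lower-order contributions $B_{i-1}^A$, $B_{i-2}^B$, $B_{i-1}^C$ in the decompositions \eqref{eq:Comp A}, \eqref{eq:Comp B}, \eqref{eq:3-a}, the remaining coefficients $\bar B_{k,i}$ genuinely carry the expected $\e$-power $i\ge k+1$ term by term, and that the enlargement of $n_1(K)$ needed to cover every derivative of $h_j$, $u_j$, $u_j^\pm$, $\la_j$ actually entering $\bar B_{k,i}$ remains compatible with \eqref{eq:v2}. Neither point is conceptually new: the former is built into the construction of $u_k^\e$, $h_k^\e$, $\la_k^\e$ in Section 2.4, and the latter follows because $K$ is fixed and the inductive scheme of Proposition \ref{prop:Lemma order of e} involves only finitely many differentiations, so a single choice of $n_1(K)$ suffices throughout the paper.
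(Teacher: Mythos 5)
Your proposal is correct and follows essentially the same route as the paper, whose proof is a two-line remark observing exactly your three points: the tail $\sum_{i\ge k+1}\e^i\bar B_{k,i}$ carries the prefactor $\e^{k+1}$, the terms $(B_2)$ and $(C_2)$ vanish exponentially fast by the compatibility conditions, and the coefficients are controlled via the bound \eqref{eq:4.Cor3.7}. Your additional remark that $G_\e$ must also dominate the derivatives of $u_j$, $u_j^\pm$, $h_j$ entering $\bar B_{k,i}$ (not just the functions themselves) is a legitimate point left implicit in the paper, and is indeed covered by the inductive Schauder estimates behind Proposition \ref{prop:Lemma order of e}.
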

\begin{proof}
Recall that ${\overline{B}_{k,i}}$ has a prefactor $\e^{k+1}$ and $(B_2)$, $(C_2)$
decrease to $0$ exponentially fast as $\e\downarrow0$. 
Thus, the estimate for $|\Phi_k^\e(t)|_{0,T}$ is obtained by the condition
\eqref{eq:4.Cor3.7}.
\end{proof}

We may assume $C_{err}\ge C_{err}^0$ by taking $C_{err} \vee C_{err}^0$ for
$C_{err}$.

%--------------------------------------------------------------------
\subsection{The Allen-Cahn operator}
The goal of this subsection is to show Lemma \ref{lem:Lemma 2.3}, i.e.,
the lower bound of the spectrum of the Allen-Cahn operator $-\e\De-\e^{-1}f'(v_k^\e)$, 
which is a linearization of the nonlinear Allen-Cahn
equation around $v_k^\e$ defined by
\begin{align}
&v_k^\e(t,x)=u_k^\e(t,x)- \frac{1}{|D|}\int_0^t\Phi_k^\e(s)ds, \quad 0\leq k \leq K, \label{eq:v_K-1}
\end{align}
for $x\in V_{3\de}^{t}$, where $\Phi_k^\e$ is the function defined as in \eqref{eq:MASS CONS U_K^E}.

An estimate similar to that in  Lemma \ref{lem:Lemma 2.3} is stated in \cite{CHL}, (10)
(the condition $\int_D\phi dx=0$ is unnecessary), in which they consider the linearization
around $u_k^\e$.  In our case, we need to take $v_k^\e$ instead of $u_k^\e$, since
the vanishing condition $\int_D(u^\e(t,x)-v_k^\e(t,x))dx=0$ holds under this choice
because of the effect of $v(t)$, as we will see in Lemma \ref{lem:mass of R = 0}
below.  This vanishing condition is needed for Lemma \ref{lem:CHL lemma 2},
which is shown by applying Poincar\'e's inequality.

The argument to show Lemma \ref{lem:Lemma 2.3} relies on \cite{C94}, Section 2.
Note that the correction term of $v_k^\e$ from $u_k^\e$ (i.e., the second term of 
\eqref{eq:v_K-1}) is small, but it involves a slowly diverging factor $G_\e$ as is seen
in Lemma \ref{lem:Lemma PHI_E^K}.  This gives the factor $G_\e$ in 
Lemma \ref{lem:Lemma 2.3}, differently from \cite{C94}, Theorem 2.3.

We denote by $v_k^\e(t,\rho,s)$ the inner solution of $v_k^\e$ viewed under the 
coordinate $(t,\rho,s)$ defined by \eqref{eq:related variables}.
By Lemma \ref{lem:Lemma PHI_E^K} and the condition \eqref{eq:4.Cor3.7}, 
it follows that $v_k^\e$, $2\leq k \leq K$ is rewritten into
\begin{align}\label{eq:representation of uk}
v_k^\e(t,\rho,s)=m(\rho)-\e \la_0(t)\theta_1(\rho)+\e^2q^\e(t,\rho,s),
\end{align}
where $\th_1(\rho)$ is the function given below \eqref{eq:fundamental solution m}
and $q^\e(t,\rho,s)$ is a  function satisfying
\begin{align}\label{eq:estimate for q}
\sup_{\rho\in\R}\sup_{s\in \mathcal{S}}\sup_{\e\in(0,1]}|q^\e(\cdot,\rho,s)|_0\leq \CerrH.
\end{align}
%
%
%----------------------------------

For measurable and integrable functions $\Psi(t,z,s)$, $\Phi(t,z,s)$, $\psi(t,r,s)$ and $\phi(t,r,s)$,
$z\in I_\e \subset \R, r \in I_1, s \in \mathcal{S}, t \in [0,T]$, we define
\begin{align}
& L^s\langle \Psi,\Phi\rangle=\int_{I_\e}\bigl(\Psi_z\Phi_z-f^\prime(v^\e_k(t,\e z,s)\Psi\Phi\bigr)J(t,\e z,s)dz,
\\
& L^s( \psi,\phi )=\int_{I_1}\bigl(\e\psi_r\phi_r-\e^{-1}f^\prime(v^\e_k(t,r,s)\psi\phi\bigr)J(t,r,s)dr,
\\
& L^0\langle\Psi,\Phi\rangle=\int_{I_\e}\bigl(\Psi_z\Phi_z-f^\prime(m(z))\Psi\Phi \bigr)dz,
\\
&\langle \Psi,\Phi \rangle=\int_{I_\e}\Psi\Phi dz,\quad \langle \Psi,\Phi \rangle_s=\int_{I_\e}\Psi\Phi J(t,\e z,s) dz,
\\
&\|\Psi\|=\langle \Psi,\Psi \rangle^\frac12,\quad\|\Psi\|_s=\langle \Psi,\Psi \rangle_s^\frac12,
\\
&(\psi,\phi)_s=\int_{I_1}\psi\phi J(t,r,s)dr,\quad |\psi|_s=(\psi,\psi)_s^\frac12.
\end{align}
where $\Psi_z(t,z,s)$, $\psi_r(t,r,s)$ represents $\frac{\partial \Psi}{\partial z}(t,z,s)$,
$\frac{\partial \psi}{\partial r}(t,r,s)$, respectively, $z=\frac{r}{\e}$,
$J(t,r,s)$ is given by \eqref{eq:Jacobi x},  and $I_\e=(-\frac1\e,\frac1\e)$. 
Under these settings, the following lemma holds.
%
%----------------------------------------------------------
\begin{lem}\label{lem:Lemma 2.2}
Let us set $\overline{\lambda}_1(t,s)=\inf_{\|\Psi\|_s=1}L^s\lan \Psi,\Psi\ran$, 
$s\in \mathcal{S}, t\in [0,T]$.
Then there exist constants $c_1$, $c_2>0$ depending on $K$, $T$ 
and $\e_{0,1}=\e_{0,1}(K,T)>0$ such that
\begin{align}
-c_1\e^2 \CerrH \leq \overline{\lambda}_1(t,s) \leq c_2\e^2 \CerrH,
\end{align}
holds for every $\e\in(0,\e_{0,1}]$ and $t\in[0,T]$.
\end{lem}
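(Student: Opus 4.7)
The plan is to follow Chen (1994), Section 2, with care given to the slowly diverging factor $G_\e$. The crucial classical fact is that $m'(z)$ is the zero eigenfunction of the unperturbed operator $L^0 = -\partial_z^2 - f'(m)$, with a strictly positive spectral gap $\b^* > 0$ on its $L^2(\R)$-orthogonal complement; this rests on the identity $(m'')^2 - f'(m)(m')^2 = (m'm'')'$ together with the exponential decay \eqref{eq:decay of m}. Since \eqref{eq:representation of uk} expresses $v_k^\e$ as an $O(\e G_\e)$ perturbation of $m$, and the formula \eqref{eq:lambda 0} yields the sharper bound $|\la_0| \leq C(1 + |v|_0)$ (so that $|\la_0|^2 \leq C G_\e$ under $|v|_0 = O(H_\e)$ combined with $H_\e^2 \leq G_\e$), we expect the two-sided bound $\bar\la_1 = O(\e^2 G_\e)$ provided a first-order cancellation traceable to \eqref{eq:V + K} is exploited.

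For the upper bound I test with the natural approximate zero mode $W(z) = \partial_z v_k^\e(t,z,s) = m'(z) + O(\e G_\e)$. Using the defining equation \eqref{eq:Order e1} for $u_0$ and the bound \eqref{eq:estimate for q}, Taylor expansion gives $\partial_z^2 v_k^\e + f(v_k^\e) = \e\bigl[(V-\k)m'(z) + \la_0(t)\bigr] + O(\e^2 G_\e)$. Differentiating in $z$ and invoking the cancellation $V-\k = -\bar\si\la_0$ from \eqref{eq:V + K} yields $(-\partial_z^2 - f'(v_k^\e))W = \e\bar\si\la_0\, m''(z) + O(\e^2 G_\e)$. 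Integration by parts in $L^s\lan W,W\ran$, combined with the Jacobian expansion $J = J^0(1 + \e z\De d + O(\e^2 z^2))$, the identity $\int m'm''\,dz = 0$, the exponential decay of $m'$ absorbing boundary contributions of size $e^{-\zeta/\e}$, and the sharper bound $|\la_0|^2 \leq C G_\e$ together give $L^s\lan W,W\ran = O(\e^2 G_\e)$; since $\|W\|_s^2 \geq c > 0$ for small $\e$, this yields $\bar\la_1(t,s) \leq c_2 \e^2 G_\e$.

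For the lower bound, given $\Psi$ with $\|\Psi\|_s = 1$, decompose $\Psi = a\hat W + b\Psi^\perp$ with $\hat W = W/\|W\|_s$, $\lan\hat W,\Psi^\perp\ran_s = 0$ and $a^2 + b^2 = 1$. The diagonal terms satisfy $a^2 L^s\lan\hat W,\hat W\ran \geq -c_2\e^2 G_\e\, a^2$ (the two-sided version of the upper bound) and $b^2 L^s\lan\Psi^\perp,\Psi^\perp\ran \geq (\b^*/4) b^2$ for small $\e$; the latter follows by transferring Chen's gap on $\mathrm{span}(m')^\perp$ in $L^2(\R)$ via the substitution $\tilde\Psi^\perp = \sqrt{J}\Psi^\perp$ (which matches the two weighted norms), noting that $s$-orthogonality to $\hat W$ forces $\tilde\Psi^\perp$ to have only an $O(\e G_\e)$ component along $m'$, and absorbing the $O(\e G_\e)$ potential and Jacobian perturbations. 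The cross term obeys $|L^s\lan\hat W,\Psi^\perp\ran| \leq C\e\sqrt{G_\e}$ by the same computation of $(-\partial_z^2 - f'(v_k^\e))W$ and Cauchy--Schwarz. Young's inequality $2|ab|\cdot C\e\sqrt{G_\e} \leq (4C^2\e^2 G_\e/\b^*) a^2 + (\b^*/8) b^2$ absorbs it into the diagonal terms, producing $L^s\lan\Psi,\Psi\ran \geq -c_1\e^2 G_\e$.

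The principal obstacle beyond Chen (1994) is the diverging factor $G_\e$ from \eqref{eq:estimate for q}: the potential perturbation $|f'(v_k^\e) - f'(m)|$ is only $O(\e G_\e)$ rather than uniformly bounded. The argument survives because \eqref{eq:v2} guarantees $\e G_\e \to 0$, keeping the perturbation subcritical against the fixed gap $\b^*$; and the cancellation \eqref{eq:V + K} is essential, since without it the upper bound would be only $O(\e H_\e)$ rather than $O(\e^2 G_\e)$. The careful distinction between the $H_\e$-bound on $\la_0$ (from \eqref{eq:lambda 0}) and the $G_\e$-bound on $q^\e$, together with $H_\e^2 \leq G_\e$, is what produces the advertised $\e^2 G_\e$ order.
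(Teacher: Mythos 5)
Your proposal is correct in substance but follows a genuinely different route from the paper's. For the upper bound the paper tests with $\tilde r J^{-1/2}m'$ and, after the substitution $\Psi=J^{-1/2}\hat{\Psi}$ leading to \eqref{eq:Ls}, kills the dangerous $O(\e\la_0)$ term through the exact identity $\int_\R f''(m)\theta_1(m')^2\,dz=0$; you instead test with $W=\partial_z v_k^\e$ and obtain the cancellation from \eqref{eq:Order e1} and \eqref{eq:V + K} together with $\int_\R m'm''\,dz=0$. For the lower bound the divergence is larger: the paper works with the actual principal eigenfunction $\Psi_1$, whose pointwise exponential decay \eqref{eq:estimate for Psi} controls the boundary terms, invokes \eqref{eq:I_e Lemma 2.2} from \cite{C94} to bound $\int f''(m)\theta_1\hat{\Psi}_1^2$ by $O(\|(\Psi_1^0)^\perp\|)$, and closes via a quadratic inequality in $\|(\Psi_1^0)^\perp\|$ with leading coefficient $\overline{\la}_2^0>0$. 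You avoid the quadratic-inequality step entirely by decomposing an arbitrary normalized $\Psi$ against $\hat W$, absorbing the $O(\e H_\e)$ potential perturbation directly into the gap $\b^*$ (legitimate since $\e H_\e\to0$), and disposing of the cross term by Young's inequality. Your bookkeeping of the diverging factors ($|\la_0|=O(H_\e)$, $H_\e^2\le G_\e$, $\e G_\e\to0$) is exactly what produces the advertised order $\e^2 G_\e$ and matches the paper's.

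Two technical points need patching. First, differentiating \eqref{eq:representation of uk} in $z$ requires bounds on $\partial_z q^\e$ (and on higher $z$-derivatives at $z=\pm\e^{-1}$ for the boundary contributions), which \eqref{eq:estimate for q} does not supply; such bounds do follow from the construction of the $u_i$ via \eqref{eq:inner u_k (odd)} and \eqref{eq:compat kth}, but you must say so. Second, and more seriously, each integration by parts on $I_\e$ produces boundary terms involving the pointwise values $\Psi^\perp(\pm\e^{-1})$, which for an arbitrary $\Psi^\perp$ with $\|\Psi^\perp\|_s=1$ are not controlled by the $L^2$ norm; the paper sidesteps this precisely by working with the decaying eigenfunction. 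In your scheme you must either reduce to the eigenfunction first, or absorb $\e|\Psi^\perp(\pm\e^{-1})|^2$ into $\de\|\Psi^\perp_z\|_{L^2}^2+C_\de\|\Psi^\perp\|_{L^2}^2$ by a one-dimensional trace inequality, the gradient part being then controlled by the quadratic form itself since $f'$ is bounded above.
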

%----------------------------------------------------------
\begin{proof}
This lemma is obtained in a similar manner to \cite{C94}, Lemmas 2.1 and 2.2.
We make use of notations appearing in their proofs.  Under the transformation
$\Psi=J^{-\frac12}\hat{\Psi}$, we have
\begin{align}\label{eq:Ls}
L^s\langle\Psi,\Psi\rangle
&=L^0\langle\hat{\Psi},\hat{\Psi}\rangle
-\e \la_0(t)\int_{I_\e}f^{\prime\prime}(m)\theta_1\hat{\Psi}^2dz
\\
& \qquad
+\e^2\int_{I_\e}\tilde{q}^\e\hat{\Psi}^2dz-\e\Bigl(\hat{\Psi}^2\frac{J_r(t,\e z,s)}{2J(t,\e z,s)}\Bigr)
\bigg\vert _{z=-\e^{-1}}^{\e^{-1}},\quad t\in[0,T],\nonumber
\end{align}
where
\begin{align*}
\tilde{q}^\e(t,z,s)= \e^{-2}\Bigl[- f^\prime(v_k^\e(t,x))+f^\prime(m(z))
+\e f^{\prime\prime}(m(z))\la_0(t)\theta_1(z)\Bigr] +\frac14\Bigl[2J^{-1}J_{rr}-J^{-2}J_{r}^2\Bigr].
\end{align*}
From the condition \eqref{eq:4.Cor3.7}, Lemma \ref{lem:Lemma PHI_E^K} and 
\eqref{eq:estimate for q}, we have
\begin{align}\label{eq:q(z,s)-2}
|\e^2\tilde{q}^\e(t,z,s)|\leq  \widetilde{C} \e^2 \CerrH,
\end{align}
for some $\widetilde{C}(T,K)>0$ independently of $\e$ and $s$.
Let $\overline{\lambda}_1^0$ be the principal eigenvalue of 
$\mathcal{L}=-\partial_\rho^2 - f^\prime(m(\rho))$ in $I_\e$ with Neumann 
boundary condition and let $\Psi_1^0(\rho)$ be the normalized eigenfunction
corresponding to $\overline{\lambda}_1^0$. Set
\begin{align}\label{eq:eigenvalue 0}
\overline{\lambda}_2^0(t,s)=\inf_{\Psi \bot \Psi_1^0 \,\|\Psi\|_s=1}L^0\lan\Psi,\Psi\ran.
\end{align}

From \eqref{eq:Ls} and \eqref{eq:q(z,s)-2} and noting that 
$\|\tilde{r} J^{-\frac12}m^\prime\|_s=\|\tilde{r} m^{\prime}\|=1$, 
where $\tilde{r}=\|m^\prime\|^{-1}$,
we obtain the upper bound:
\begin{align}\label{eq:upper}
\overline{\lambda}_1(t,s) \leq& L^s\langle\tilde{r}J^{-\frac12}m^\prime,\tilde{r}J^{-\frac12}m^\prime\rangle
\\
\leq& \tilde{r}^2L^0\langle m^\prime,m^\prime\rangle - \tilde{r}^2\e \la_0(t)\int_{I_\e}f^{\prime\prime}(m)\theta_1({m^\prime})^2dz
\nonumber
\\
&+\tilde{r}^2\e^2\int_{I_\e}\tilde{q}^\e(m^\prime)^2dz + \e\tilde{r}^2\Bigl(({m^\prime})^2\frac{J_r(t,\e z,s)}{2J(t,\e z,s)}\Bigr) \bigg\vert _{z=-\e^{-1}}^{\e^{-1}}
\nonumber
\\
\leq &c_2\e^2 \CerrH,
\nonumber
\end{align}
for some $c_2=c_2(T,K)>0$ independently of $\e$ and $s$.
Here, for the last inequality, we have used \eqref{eq:decay of m}, the fact that 
$\overline{\lambda}_1^0$ does not depend on $v$ and 
is of order $O(e^{-\frac{3\zeta}{2\e}})$ for some $\zeta=\zeta(f)>0$ 
(see \cite{C94} Lemma 2.1) and furthermore
\begin{align*}
\int_\R f^{\prime\prime}(m)\theta_1({m^\prime})^2dz=0.
\end{align*}

From \eqref{eq:upper} and \eqref{eq:v2} of Corollary \ref{lem:Lemma for H}, 
we can choose some $\e_{0,1}\in(0,\e_0^*]$, (recall that $\e_0^*$ is defined in 
Section 3.3.1) such that $\overline{\la}_1(t,s)\leq\frac{\zeta}{4}$ holds for every 
$s\in \mathcal{S}$, $t\in [0,T]$ and $\e\in(0,\e_{0,1}]$.
Let $\Psi_1(t,z,s)$ be the normalized eigenfunction corresponding to 
the principle eigenvalue $\overline{\la}_1$ of 
\begin{align}
\mathcal{L}_s=-J^{-1}\frac{d}{dz}(J\frac{d}{dz})-f^\prime(v_k^\e(t,\e z,s)),
\end{align}
where $v_k^\e(t,\e z,s)$ is the function $v_k^\e(t,x)$ viewed under the coordinate
$(t,\rho,s)$ defined by \eqref{eq:related variables}.  Then, similarly to the proof of
\cite{C94} Lemma 2.1, we obtain that 
\begin{align}\label{eq:estimate for Psi}
|\Psi_1(t,z,s)|\leq C_{\Psi_1}e^{-\frac{\zeta}{2}|z|},
\end{align}
for $|z|\geq a_0+1$, where $C_{\Psi_1}>0$ is a constant independent of $\e$ and $v$, 
and $a_0>0$ is a number satisfying $\inf_{|z|\geq a_0}(-f^\prime(m(z)))\geq\frac{3\zeta}{4}$.

To show the lower bound for $\overline{\la}_1(t,s)$, for each $s\in \mathcal{S}$, $t\in [0,T]$ 
and $z\in I_\e$, set $\hat{\Psi}_1=J^{\frac12}\Psi_1$ and let us write
$\hat{\Psi}_1=a\Psi_1^0+(\Psi_1^0)^\bot$ for some $a\in\R$.
Then, by \cite{C94} Lemma 2.2, p.\ 1381, we obtain
\begin{align}\label{eq:I_e Lemma 2.2}
\int_{I_\e}f^{\prime\prime}(m)\theta_1(\hat{\Psi})^2dz=O(\|(\Psi_1^0)^\bot \|),
\end{align}
where $(\Psi_1^0)^\bot$ is the orthogonal complement of $\Psi_1^0$.  In particular,
there exist some constants $\a_i(a)>0$, $i=1,2$ such that
\begin{align}
-\a_1\|(\Psi_1^0)^\bot \|\leq\int_{I_\e}f^{\prime\prime}(m)\theta_1(\hat{\Psi})^2dz\leq\a_2\|(\Psi_1^0)^\bot \|,
\end{align}
for every $s\in \mathcal{S}$ and $t\in [0,T]$.
Using \eqref{eq:Ls} and \eqref{eq:I_e Lemma 2.2}, we  have
\begin{align}\label{eq:lower}
& \hskip -3.5mm
 \overline{\la}_1(t,s)=L^s\langle\Psi_1,\Psi_1\rangle\\
=&a^2L^0\langle\Psi^0_1,\Psi^0_1\rangle+L^0\langle{(\Psi^0_1)}^\bot,{(\Psi^0_1)}^\bot\rangle
-\e \la_0(t)\int_{I_\e}f^{\prime\prime}(m)\theta_1 (\hat{\Psi}_1)^2 dz
\nonumber\\
&+ \e^2\int_{I_\e}\tilde{q}^\e (\hat{\Psi}_1(t,z,s))^2 dz 
 -\e\Bigl((\hat{\Psi}_1(t,z,s))^2 \frac{J_r(t,\e z,s)}{2J(t,\e z,s)}\Bigr)
\bigg\vert _{z=-\e^{-1}}^{\e^{-1}}
\nonumber\\
\geq&a^2\overline{\lambda}_1^0+\overline{\la}_2^0 \|(\Psi_1^0)^\bot\|^2(t,s)-C_{\mathcal{V},T}\a_1\e(1+|v|_0)\|(\Psi_1^0)^\bot\| 
\nonumber\\
  &-\tilde{C}\e^2 \CerrH -C_{\Psi_1}e^{-\frac{\zeta}{2\e}}
\nonumber\\
\geq&a^2\overline{\la}_1^0+\overline{\la}_2^0 \|(\Psi_1^0)^\bot\|^2 -C_{\mathcal{V},T}\a_1\e(1+|v|_0)\|(\Psi_1^0)^\bot\| 
\nonumber\\
&-\tilde{C}\e^2 \CerrH -C_{\Psi_1}e^{-\frac{\zeta}{2\e}},
\nonumber
\end{align}
for all $\e\in(0,\e_{0,1}]$,
where we have used \eqref{eq:q(z,s)-2}, \eqref{eq:eigenvalue 0}, \eqref{eq:estimate for Psi} 
and \eqref{eq:I_e Lemma 2.2} for the first inequality and \eqref{eq:v1} for the last one.
Note that $\overline{\la}_1^0$ is of order $O(e^{-\frac{3\zeta}{2\e}})$, (see \cite{C94} Lemma 2.1).
Then, boundedness of $\|(\Psi_1^0)^\bot \| $ is obtained from \eqref{eq:lower}.
Indeed, \eqref{eq:lower} is a quadratic inequality for $\|(\Psi_1^0)^\bot\| $ with coefficient
$\overline{\la}_2^0(t,s)>0$.
Since \eqref{eq:upper} holds and $\lim_{\e\downarrow0}\e G_\e=0$ ($\e|v|_0$ 
also converges to $0$), boundedness of $\|(\Psi_1^0)^\bot \| $ is shown.
Thus, we obtain $\overline{\la}_1(t,s)\geq -c_1 \e^2 \CerrH$  for all  $\e\in(0,\e_{0,1}]$
with some constant $c_1>0$ independent of $\e$, $t$ and $s$.
Thus we have the lower bound of $\overline{\la}_1(t,s)$.
The proof is complete.
\end{proof}
From Lemma \ref{lem:Lemma 2.2} and \cite{C94} Theorem 2.3, we obtain the following lemma.
\begin{lem}\label{lem:Lemma 2.3}
There exists a constant $C_A>0$ independent of $\e$ and $t$ such that for every 
$\e\in(0,\e_{0,1}]$ and $\psi=\psi(t,\cdot)\in H^1(D)$, $t\in [0,T]$,
\begin{align*}
& \int_D \bigl(\e|\nabla\psi(t,x)|^2-\e^{-1}f^\prime(v_k^\e(t,x))\psi(t,x)^2\bigr)dx\\
& \qquad\qquad  \geq -C_A\e \CerrH \int_D\psi(t,x)^2dx, \quad 0\leq t \leq T,\nonumber
\end{align*}
holds.
\end{lem}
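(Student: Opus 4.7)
The strategy is to adapt the localization argument of Chen \cite{C94}, Theorem 2.3: split $D$ into an inner region $\{|d(t,x)|<2\de\}$ where $v_k^\e$ is close to the standing wave $m(\rho)$ and the sharp spectral estimate of Lemma \ref{lem:Lemma 2.2} applies, and an outer region $\{|d(t,x)|>\de\}$ where $v_k^\e$ is close to $\pm 1$ so that $f'(v_k^\e)<0$ gives a large positive contribution of order $\e^{-1}$. The only difference from \cite{C94} is the slowly diverging factor $G_\e$ inherited from Lemma \ref{lem:Lemma 2.2}, which will pass through transparently.

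First I would introduce an IMS-type partition of unity $\eta_1,\eta_2\in C^\infty(D)$ with $\eta_1^2+\eta_2^2=1$, $\eta_1\equiv 1$ on $\{|d|\le\de\}$, $\eta_1\equiv 0$ on $\{|d|\ge 2\de\}$, and $|\nabla\eta_i|\le C/\de$. The standard IMS formula gives
\begin{align*}
\int_D\e|\nabla\psi|^2 dx
= \sum_{i=1}^2\int_D\e|\nabla(\eta_i\psi)|^2 dx
-\sum_{i=1}^2\int_D\e|\nabla\eta_i|^2\psi^2 dx,
\end{align*}
and the localization error is harmless since $\e\int|\nabla\eta_i|^2\psi^2 dx\le C\e\int\psi^2 dx$, which is absorbed into the $\e\CerrH\int\psi^2 dx$ bound.

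For the outer piece $\eta_2\psi$: by the representation \eqref{eq:approximate-sol} of $u_k^\e$, the outer bound $u_{k,\e,\pm}^{\mathrm{out}}=\pm 1+O(\e G_\e)$ from \eqref{eq:4.Cor3.7}, and the correction $|v_k^\e-u_k^\e|\le C\e G_\e$ supplied by Lemma \ref{lem:Lemma PHI_E^K}, we have $|v_k^\e(t,x)\mp 1|\le C\e G_\e$ on the relevant region. Since $\lim_{\e\downarrow 0}\e G_\e=0$ by \eqref{eq:v2} and $f'(\pm1)<0$, there is $c_0>0$ such that $-f'(v_k^\e)\ge c_0$ on $\mathrm{supp}\,\eta_2$ for all sufficiently small $\e$, yielding
\begin{align*}
\int_D\bigl(\e|\nabla(\eta_2\psi)|^2-\e^{-1}f'(v_k^\e)(\eta_2\psi)^2\bigr)dx
\ge c_0\,\e^{-1}\int_D(\eta_2\psi)^2 dx.
\end{align*}

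For the inner piece $\eta_1\psi$: switch to the tubular coordinates $(r,s)\in(-3\de,3\de)\times\mathcal{S}$ of Section \ref{sec:2.2}, with $dx=J(t,r,s)\,dr\,ds$. Discarding the nonnegative tangential part $|\nabla^\Ga(\eta_1\psi)|^2$ of $|\nabla(\eta_1\psi)|^2$ and applying Fubini gives
\begin{align*}
\int_D\bigl(\e|\nabla(\eta_1\psi)|^2-\e^{-1}f'(v_k^\e)(\eta_1\psi)^2\bigr)dx
\ge\int_\mathcal{S}L^s(\eta_1\psi(\cdot,s),\eta_1\psi(\cdot,s))\,ds.
\end{align*}
Rescaling $r=\e z$ and setting $\Psi(z,s)=(\eta_1\psi)(\e z,s)$ turns $L^s(\cdot,\cdot)$ into $L^s\langle\cdot,\cdot\rangle$ pointwise in $s$, and Lemma \ref{lem:Lemma 2.2} gives
\begin{align*}
L^s\langle\Psi,\Psi\rangle\ge-c_1\e^2\CerrH\|\Psi\|_s^2,
\end{align*}
which after the inverse rescaling (noting $|\psi|_s^2=\e\|\Psi\|_s^2$) becomes
\begin{align*}
\int_\mathcal{S}L^s(\eta_1\psi,\eta_1\psi)\,ds
\ge-c_1\e\CerrH\int_D(\eta_1\psi)^2 dx.
\end{align*}

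Adding the two contributions and the negligible IMS remainder, and using $\eta_1^2+\eta_2^2=1$, yields the claim with $C_A=c_1+O(1)$; the positive outer contribution $c_0\e^{-1}\int(\eta_2\psi)^2 dx$ trivially dominates any $-C\e\CerrH\int(\eta_2\psi)^2 dx$ coming from its part in the inequality. The main technical obstacle is verifying uniformly in $s\in\mathcal{S}$ (and in $\e$) that the rescaling and discarding of tangential derivatives are consistent with the boundary terms at $z=\pm\e^{-1}$ in \eqref{eq:Ls} and with the support of $\eta_1$: since $\eta_1\psi$ vanishes for $|r|\ge 2\de$, the boundary contributions from $|z|=\e^{-1}$ do not appear once $\e<\de$, and Lemma \ref{lem:Lemma 2.2} can be applied on the relevant interval, avoiding the exponentially small eigenvalue issue altogether.
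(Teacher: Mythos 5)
Your proposal is correct and follows essentially the same route as the paper, which simply invokes the localization argument of Chen \cite{C94}, Theorem 2.3 together with Lemma \ref{lem:Lemma 2.2}: inner region reduced by rescaling to the fibered one-dimensional forms $L^s\langle\cdot,\cdot\rangle$, outer region controlled by $f'(\pm 1)<0$. Your write-up merely fills in the IMS partition and rescaling details that the paper leaves to the citation, and the bookkeeping (orthogonality of $\nabla^\Ga$ and $\nabla d$, the relation $|\eta_1\psi|_s^2=\e\|\Psi\|_s^2$, absorption of the $O(\e)$ localization error into $\e\CerrH$) is all consistent with the paper.
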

\begin{proof}
The left hand side of the above inequality is bounded from below by
$\e^{-1} \min_{s\in\mathcal{S}, t\in [0,T]} \overline{\lambda}_1(t,s)$.
Indeed, for $\psi(t,x)$ satisfying $\int_D \psi^2(t,x)dx=1$, if we define
$\Psi(t,z,s) = \e^{1/2} \psi(t,\e z,s)$, then we have $\|\Psi\|_s=1$
and easily see that $\overline{\lambda}_1(t,s) \le L^s\lan\Psi,\Psi\ran
= \e \times$ the left hand side of the above inequality.
Thus, from Lemma \ref{lem:Lemma 2.2}, the assertion is obtained.
\end{proof}
%---------------------------------------
\subsection{Estimate for the difference between $u^\e_K$ and $u^\e$}
%
%---------------------------------------
Similarly to \cite{CHL}, (12), we take initial data $g^\e=g^\e(x)$ of
\eqref{eq:1} or \eqref{eq:1-v} satisfying the following three conditions:
\begin{align}
&  g^\e(x)= u_K^\e(0,x)+\phi^\e(x), \label{eq:TH assump 1}
\\
&  \|\phi^\e\|_{L^2(D)} \leq C_1^{-\frac1p} \e^K,  
\label{eq:TH assump 2} 
\\
&  \int_D \phi^\e(x) dx = 0,  
\label{eq:TH assump 3} %
\end{align}
for sufficiently small $\e>0$, where $C_1>0$ is the constant appearing in 
\eqref{eq:assump eq 1 in TH} in the proof of Theorem \ref{thm:err} below and
$p=\min\{\frac4n,1\}$ is the number 
which will be given in Lemma \ref{lem:CHL lemma 2}.
Recall that $u_K^\e(0,x)$ is defined by \eqref{eq:definition u_k} with
$t=0$ and $K>\max{(n+2,6)}$ is fixed throughout
the paper.  Then the following lemma is shown by an elementary computation.
%-------------------------------------
\begin{lem}\label{lem:mass of R = 0}
Let $u^\e$ be the solution of \eqref{eq:1-v} with initial data $g^\e$ satisfying
the conditions \eqref{eq:TH assump 1} and \eqref{eq:TH assump 3}.
Then, $\int_D R(t,x)dx=0$ holds, where $R(t,x)=u^\e(t,x)-v_K^\e(t,x)$. 
\end{lem}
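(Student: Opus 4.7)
The plan is to verify that $\int_D R(t,x)\,dx$ is constant in $t$ and vanishes at $t=0$.

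First I would compute the mass of $R$ at the initial time. By \eqref{eq:TH assump 1} we have $g^\e = u_K^\e(0,\cdot) + \phi^\e$, and since $v_K^\e(0,\cdot) = u_K^\e(0,\cdot)$ (the integral $\int_0^t \Phi_K^\e(s)ds$ vanishes at $t=0$), it follows that $R(0,\cdot) = \phi^\e$. Then the condition \eqref{eq:TH assump 3} immediately gives $\int_D R(0,x)\,dx = 0$.

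Next I would differentiate $\int_D R(t,x)\,dx$ in $t$ and show the derivative is identically zero. On one hand, integrating \eqref{eq:1-v} over $D$ and using the Neumann boundary condition gives $\frac{d}{dt}\int_D u^\e(t,x)\,dx = |D|\,v(t)$, which is just \eqref{eq:2.10}. On the other hand, by the definition \eqref{eq:v_K-1} of $v_K^\e$,
\begin{align*}
\frac{d}{dt}\int_D v_K^\e(t,x)\,dx
= \int_D \partial_t u_K^\e(t,x)\,dx - \Phi_K^\e(t),
\end{align*}
and by the key identity \eqref{eq:MASS CONS U_K^E} for the approximate solution, $\int_D \partial_t u_K^\e(t,x)\,dx = |D|\,v(t) + \Phi_K^\e(t)$. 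Subtracting, the $|D|v(t)$ terms cancel and the two $\Phi_K^\e(t)$ terms cancel, giving $\frac{d}{dt}\int_D R(t,x)\,dx = 0$.

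Combining the two steps, $\int_D R(t,x)\,dx \equiv \int_D R(0,x)\,dx = 0$ for every $t \in [0,T]$ within the existence interval. There is no real obstacle here: the content of the lemma is precisely that the correction term $-\frac{1}{|D|}\int_0^t \Phi_K^\e(s)\,ds$ in the definition of $v_K^\e$ was introduced exactly to absorb the non-conservation error $\Phi_K^\e$ of the approximate solution $u_K^\e$, so that the true solution $u^\e$ and the corrected approximation $v_K^\e$ have identical rates of mass change, while the initial discrepancy $\phi^\e$ was assumed to be mean-zero. The only thing worth double-checking is the sign and the factor of $|D|$ in the correction term, which is built into \eqref{eq:v_K-1} to exactly compensate $\Phi_K^\e$.
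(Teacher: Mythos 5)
Your proof is correct and follows essentially the same route as the paper: both arguments rest on the exact mass relation \eqref{eq:2.10} for $u^\e$, the identity \eqref{eq:MASS CONS U_K^E} for $u_K^\e$, the definition \eqref{eq:v_K-1} of the correction, and the initial conditions \eqref{eq:TH assump 1}, \eqref{eq:TH assump 3}. The paper merely phrases the same computation in integrated form (writing $\int_D R\,dx$ as a difference of two time-independent integrals), whereas you differentiate in $t$ and check the initial value; this is a cosmetic difference only.
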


\begin{proof}
First we decompose the integral $\int_D R(t,x) dx$ as
\begin{align}\label{eq:27-05}
  \int_D R(t,x) dx = \int_D \Bigl(u^\e(t,x)-\bar v(t) \Bigr) dx 
-  \int_D \Bigl(v_K^\e(t,x)- \bar v(t) \Bigr) dx,
\end{align}
where $\bar v(t) := \int_0^t v(s)ds$.  Then, from \eqref{eq:mass cons 3}
with $\a \dot{w}^\e(t)$ replaced by $v(t)$, we see that the first integral in
the right hand side of \eqref{eq:27-05} is independent of $t$, and thus
it is equal to $\int_D g^\e(x)dx$ by noting $\bar v(0)=0$.
On the other hand, recalling the definition \eqref{eq:v_K-1} of $v_K^\e$,
the second integral in the right hand side of \eqref{eq:27-05} is
rewritten as
\begin{align}
\int_D \Bigl(u_K^\e(t,x)-\bar v(t)\Bigr)dx - \int_0^t\Phi_K^\e(s)ds.
\nonumber
\end{align}
However, integrating \eqref{eq:MASS CONS U_K^E} from  $0$ to $t$,
the first integral is equal to
\begin{align*}
\int_D u_K^\e(0,x)dx+\int_0^t\Phi_K^\e(s)ds,
\end{align*}
so that we see that the second integral in \eqref{eq:27-05} is equal to
$\int_D u_K^\e(0,x)dx$.  Summarizing these, we have
\begin{align*}
  \int_D R(t,x) dx = \int_D \Bigl(g^\e(x) - u_K^\e(0,x)\Bigr)dx
= \int_D \phi^\e(x)dx =0,
\end{align*}
by \eqref{eq:TH assump 1} and \eqref{eq:TH assump 3}.  This completes the proof.
\end{proof}

The next lemma is taken from \cite{CHL}, p.\ 530; see p.\ 547 and middle of
p.\ 530 for the proof. 

\begin{lem}\label{lem:CHL lemma 2}
Let $D\subset\R^n$ be a bounded domain. Let $p=\min\{\frac4n,1\}$.
Then there exists $C_n(D)>0$ such that for every $R\in H^1(D)$ with $\int_D R(x) dx=0$,
\begin{align}\label{eq:CHL - Lemma1}
  &\|R\|_{L^{2+p}(D)}^{2+p}\leq C_{n}(D) \|R\|_{L^{2}(D)}^p \|\nabla R\|_{L^{2}(D)}^2,
\end{align}
holds. Furthermore, there exists a constant $C^{\prime}>0$ such that
\begin{align}\label{eq:CHL - Lemma1(2)}
  R\bigl(f(u+R)-f(u)-f^\prime(u)R\bigr)\leq C^{\prime}|R|^{2+p},
\end{align}
for every $|u|\leq 2$ and $R\in\R$.
\end{lem}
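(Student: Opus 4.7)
The plan is to prove the two parts separately, treating \eqref{eq:CHL - Lemma1} as an interpolation inequality and \eqref{eq:CHL - Lemma1(2)} as a pointwise estimate.

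For \eqref{eq:CHL - Lemma1}, I would invoke the Gagliardo--Nirenberg interpolation
\[
\|R\|_{L^{2+p}(D)} \le C \|\nabla R\|_{L^2(D)}^{a}\,\|R\|_{L^2(D)}^{1-a},
\]
with $a\in[0,1]$ determined by the scaling relation $\tfrac{1}{2+p}=\tfrac12-\tfrac{a}{n}$. When $p=4/n$ (the range $n\ge 5$), a direct computation yields $a(2+p)=2$ and $(1-a)(2+p)=p$, and raising the inequality to the power $2+p$ produces \eqref{eq:CHL - Lemma1} immediately. When $p=1$ (the range $n\le 4$), the Gagliardo--Nirenberg exponent of $\|\nabla R\|_{L^2}$ is strictly less than $2$, and one must absorb the shortfall using Poincar\'e's inequality $\|R\|_{L^2(D)}\le C_P\|\nabla R\|_{L^2(D)}$, which is where the mean-zero condition $\int_D R\,dx=0$ enters. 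Thus the hypothesis is used only in low dimensions.

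For \eqref{eq:CHL - Lemma1(2)}, I would split according to the size of $|R|$. In the small regime $|R|\le 1$, Taylor's theorem gives $f(u+R)-f(u)-f'(u)R=\tfrac12 f''(\xi)R^2$ with $\xi$ between $u$ and $u+R$; since $|u|\le 2$ forces $|\xi|\le 3$, boundedness of $f''$ on $[-3,3]$ yields $|R\,[f(u+R)-f(u)-f'(u)R]|\le C|R|^3\le C|R|^{2+p}$, using $p\le 1$ and $|R|\le 1$. In the large regime $|R|>1$, the Taylor remainder cannot be controlled since $f''$ need not be bounded globally, so instead I would exploit condition~(iii): from $f'(t)\le\bar{c}_1$ for all $t$ one verifies by a short sign-chasing of $\int_0^R f'(u+s)\,ds$ that $(f(u+R)-f(u))R\le\bar{c}_1 R^2$ for both signs of $R$. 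Combined with $|f'(u)|\le\sup_{[-2,2]}|f'|$, this yields $R\cdot[f(u+R)-f(u)-f'(u)R]\le C R^2\le C|R|^{2+p}$ since $|R|\ge 1$.

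The principal subtlety is in \eqref{eq:CHL - Lemma1(2)}: the statement is deliberately \emph{one-sided}, and it cannot be upgraded to $|R\,[\cdots]|\le C|R|^{2+p}$ because $f$ is allowed to decrease arbitrarily fast toward $-\infty$ (only $f'$ from above is constrained). Assumption~(iii), which bounds only $\sup f'$ and not $\inf f'$, provides precisely the one-sided control needed for the large-$|R|$ regime. The matching structural observation in \eqref{eq:CHL - Lemma1} is that the mean-zero hypothesis is invoked only through Poincar\'e in low dimensions, while Gagliardo--Nirenberg alone suffices in high dimensions.
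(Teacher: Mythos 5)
Your overall strategy is the standard one and matches the source: the paper itself gives no proof of this lemma but refers to Chen--Hilhorst--Logak, whose argument is exactly Gagliardo--Nirenberg plus Poincar\'e for \eqref{eq:CHL - Lemma1} and a split into $|R|\le 1$ (Taylor with $f''$ bounded on $[-3,3]$) versus $|R|>1$ (the one-sided bound $f'\le\bar c_1$) for \eqref{eq:CHL - Lemma1(2)}. Your treatment of \eqref{eq:CHL - Lemma1(2)} is complete and correct, including the observation that only a one-sided estimate can hold; your exponent bookkeeping for \eqref{eq:CHL - Lemma1} ($a(2+p)=2$, $(1-a)(2+p)=p$ when $p=4/n$, and the Poincar\'e absorption of the shortfall when $p=1$, $n<4$) is also correct.

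There is, however, one genuine error in your structural claim for \eqref{eq:CHL - Lemma1}: the mean-zero hypothesis is \emph{not} used only in low dimensions. On a bounded domain $D$ the pure multiplicative Gagliardo--Nirenberg inequality
$\|R\|_{L^{2+p}(D)}\le C\|\nabla R\|_{L^2(D)}^{a}\|R\|_{L^2(D)}^{1-a}$ with $a>0$ is false for general $R\in H^1(D)$: any nonzero constant makes the left side positive and the right side zero. (Equivalently, \eqref{eq:CHL - Lemma1} itself fails for constants, so the hypothesis $\int_D R\,dx=0$ cannot be dispensable in any dimension.) The correct bounded-domain form carries an additive lower-order term, e.g.\ $\|R\|_{L^{2+p}}\le C\bigl(\|\nabla R\|_{L^2}^{a}\|R\|_{L^2}^{1-a}+\|R\|_{L^2}\bigr)$, and it is precisely the mean-zero condition, via Poincar\'e's inequality $\|R\|_{L^2}\le C_P\|\nabla R\|_{L^2}$, that absorbs this extra term and restores the multiplicative form --- in every dimension, not just for $n\le 4$. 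This is consistent with the paper's remark preceding the lemma that the vanishing condition ``is needed for Lemma \ref{lem:CHL lemma 2}, which is shown by applying Poincar\'e's inequality.'' The fix is one line, but as written your $n\ge 5$ case invokes an inequality that does not hold without the hypothesis you claim is unused there.
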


The following theorem extends \cite{CHL} Lemma 2 in our setting.
Note that the difference between $v_K^\e$ and $u_K^\e$ is small; recall
\eqref{eq:v_K-1} and Lemma \ref{lem:Lemma PHI_E^K}.

%---------------------------------------
%---------------------------------------
%---------------------------------------
\begin{thm}\label{thm:err}
Let $u^\e$ be the solution considered in Lemma \ref{lem:mass of R = 0} and 
assume \eqref{eq:TH assump 1}--\eqref{eq:TH assump 3} for the
initial data $g^\e$.
Then, for sufficiently small $\e\in(0,\e_1]$, where $\e_1=\min(\e_{0,1},e^{-1})$,
\begin{align}\label{eq:error estimate 001}
&\sup_{t \in [0,T]}\|v_K^\e(t)-u^\e(t)\|_{L^2(D)}
\leq 
C_2 
\e^{K-1} |\log{\e}|,
%\nonumber
\end{align}
holds for some constant $C_2=C_2(D,C_A)>0$.
\end{thm}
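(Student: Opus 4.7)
The plan is to adapt the energy-method proof of \cite{CHL} Lemma 2 to our stochastic setting, carefully tracking the slowly diverging factor $G_\e$ introduced by the spectral bound Lemma \ref{lem:Lemma 2.3} and the extra error created by the mass-conservation correction built into $v_K^\e$.

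Set $R := u^\e - v_K^\e$. The definition \eqref{eq:v_K-1} of $v_K^\e$, together with \eqref{eq:TH assump 3} and Lemma \ref{lem:mass of R = 0}, guarantees $\int_D R(t,x)\,dx = 0$, which is essential for using the Poincar\'e-Sobolev estimate \eqref{eq:CHL - Lemma1}. Subtracting the equation \eqref{eq:1-v} for $u^\e$ from the identity obeyed by $v_K^\e$ and Taylor-decomposing $f(u^\e)-f(v_K^\e) = f'(v_K^\e)R + N(R)$ yields the PDE
\[
 \partial_t R = \De R + \e^{-2} f'(v_K^\e)R + \e^{-2} N(R) - \e^{-1}(\la_\e - \la_K^\e) - \hat\de_K^\e,
\]
where $\hat\de_K^\e = \de_K^\e + \e^{-2}\bigl(f(u_K^\e)-f(v_K^\e)\bigr) - \Phi_K^\e/|D|$ measures the failure of $v_K^\e$ to solve \eqref{eq:1-v} exactly. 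Combining Lemmas \ref{lem:Lemma 2-1} and \ref{lem:Lemma PHI_E^K}, and noting that $\Phi_K^\e$ starts at order $\e^{K+1}$ (so that $|u_K^\e - v_K^\e|\le CT\e^{K+1}G_\e$ by the mean-value theorem), one verifies $\|\hat\de_K^\e\|_{L^\infty}\le C\e^{K-1}G_\e$.

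Multiply the PDE by $R$ and integrate over $D$. The Neumann condition (inherited by $R$) gives $\int_D R\,\De R = -\|\nabla R\|_{L^2}^2$, and the $(\la_\e - \la_K^\e)$-term vanishes because the quantity is constant in $x$ and $\int_D R = 0$. This produces the energy identity
\[
 \tfrac12 \tfrac{d}{dt}\|R\|_{L^2}^2 + \|\nabla R\|_{L^2}^2 - \e^{-2}\!\int_D f'(v_K^\e)R^2\,dx = \e^{-2}\!\int_D R\,N(R)\,dx - \!\int_D R\,\hat\de_K^\e\,dx.
\]
Then I would run a continuation/bootstrap argument. Suppose $\|R(t)\|_{L^2}\le \e^{K-1}|\log\e|$ on $[0,t_\star]$. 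By Lemma \ref{lem:CHL lemma 2}(ii)--(i), using $\int_D R = 0$,
\[
 \bigl|\e^{-2}\!\int_D R\,N(R)\bigr|\le C'\e^{-2}\|R\|_{L^{2+p}}^{2+p}\le C'C_n(D)\,\e^{-2}\|R\|_{L^2}^p\|\nabla R\|_{L^2}^2.
\]
Since $K>\max(n+2,6)$ forces $p(K-1)>2$, the bootstrap assumption gives $\e^{-2}\|R\|^p = o(1)$, so for small $\e$ this nonlinear term is absorbed into $\tfrac12\|\nabla R\|^2$. Coupling the remaining coercive form with Lemma \ref{lem:Lemma 2.3} and applying Young's inequality on $\int_D R\,\hat\de_K^\e$ yields
\[
 \tfrac{d}{dt}\|R\|_{L^2}^2 \le (2C_A G_\e + 1)\|R\|_{L^2}^2 + C|D|\,\e^{2(K-1)}G_\e^2.
\]
Gr\"onwall with $\|R(0)\|_{L^2}=\|\phi^\e\|_{L^2}\le C_1^{-1/p}\e^K$ from \eqref{eq:TH assump 2}, combined with the key property $G_\e = o(|\log\e|)$ from \eqref{eq:v2} which makes $e^{2C_A G_\e T}$ slower than any negative power of $\e$ (and in particular bounded by $|\log\e|^c$ for any fixed $c$ and small enough $\e$), delivers $\|R(t)\|_{L^2}\le C_2\,\e^{K-1}|\log\e|$. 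This closes the bootstrap and proves \eqref{eq:error estimate 001}.

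The main obstacle is twofold. First, one must simultaneously keep enough $\|\nabla R\|^2$ to absorb $\e^{-2}\int R\,N(R)$ via the Sobolev-Poincar\'e estimate \eqref{eq:CHL - Lemma1}, and still apply the spectral lower bound Lemma \ref{lem:Lemma 2.3} to the quadratic form that remains; balancing these two competing uses of the gradient is the heart of the energy computation and the place where the hypothesis $K>\max(n+2,6)$ enters. Second, unlike the deterministic case of \cite{CHL}, the spectral bound carries the diverging factor $G_\e$, so the Gr\"onwall exponential $e^{C G_\e T}$ must be tamed by the delicate mildness of the noise engineered in Assumption \ref{ass:1} and Corollary \ref{lem:Lemma for H}; moreover, the correction $|D|^{-1}\int_0^t\Phi_K^\e$ built into $v_K^\e$ (needed to guarantee $\int_D R = 0$) injects an additional error $\e^{-2}\bigl(f(u_K^\e)-f(v_K^\e)\bigr) = O(\e^{K-1}G_\e)$, and this is precisely what degrades the bound from the naive $\e^K$ to the stated $\e^{K-1}|\log\e|$.
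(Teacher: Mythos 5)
Your overall architecture is the same as the paper's: energy identity for $R=u^\e-v_K^\e$, the zero-mean property from Lemma \ref{lem:mass of R = 0} to invoke \eqref{eq:CHL - Lemma1}, the spectral bound of Lemma \ref{lem:Lemma 2.3}, a continuation argument, and Gronwall tamed by \eqref{eq:v2}. However, there are two concrete gaps.

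First, the absorption mechanism as you describe it fails. You propose to absorb $C'\e^{-2}\int_D|R|^{2+p}$ into $\tfrac12\|\nabla R\|_{L^2}^2$ and then apply Lemma \ref{lem:Lemma 2.3} to ``the remaining coercive form'' $\tfrac12\|\nabla R\|^2-\e^{-2}\int_D f'(v_K^\e)R^2$. But the spectral bound is only valid for the form with the \emph{full} gradient coefficient: the operator $-a\partial_\rho^2-f'(m)$ with $a<1$ no longer annihilates $m'$ (indeed $\langle(-\tfrac12\partial_\rho^2-f'(m))m',m'\rangle=-\tfrac12\int(m'')^2<0$), so after rescaling the remaining form is bounded below only by $-c\,\e^{-2}\|R\|^2$, which destroys the estimate. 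The paper's fix is to split the coercive form as $\e^2(\cdot)+(1-\e^2)(\cdot)$, retain only $\e^2\|\nabla R\|^2$ for absorption (using $f'\le\bar c_1$ on the $\e^2$-piece and Lemma \ref{lem:Lemma 2.3} on the $(1-\e^2)$-piece); this is exactly why the stopping threshold is $\|R\|_{L^2}\le C_1^{-1/p}\e^{4/p}$ and why $K>\tfrac4p+2$ is needed. Your bootstrap assumption $\|R\|\le\e^{K-1}|\log\e|$ does lie below this threshold, so the argument can be repaired, but not with the half--half split you wrote.

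Second, your quantitative bookkeeping does not deliver \eqref{eq:error estimate 001}. (i) Lemma \ref{lem:Lemma PHI_E^K} gives $|\Phi_K^\e|\le\e^K C_{err}^0G_\e$, not $O(\e^{K+1}G_\e)$ (the formal $\e^{K+1}$ prefactor of $\bar B_{K,i}$ is eaten by powers of $G_\e$), so $\e^{-2}|f(u_K^\e)-f(v_K^\e)|=O(\e^{K-2}G_\e)$, not $O(\e^{K-1}G_\e)$. (ii) The claim that $e^{2C_AG_\e T}\le|\log\e|^c$ is false: \eqref{eq:v2} only gives $G_\e=o(|\log\e|)$, and $G_\e$ may grow like $e^{(\log|\log\e|)^{1/2}}$, so the exponential can exceed every power of $|\log\e|$; the only usable bound is $e^{2C_A\CerrH T}\le e^{|\log\e|}=\e^{-1}$ as in \eqref{eq:assump eq 1 in TH}. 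Consequently, even granting your (too optimistic) forcing of size $\e^{K-1}G_\e$, Gronwall yields $\e^{K-2}G_\e$, not $\e^{K-1}|\log\e|$. In the paper the loss from $\e^K$ to $\e^{K-1}|\log\e|$ comes entirely from applying the factor $e^{2C_A\CerrH T}\le\e^{-1}$ to $\int_0^{T}\|\delta_K^\e\|dt=O(\e^KG_\e)$ and converting $G_\e$ into $|\log\e|$ via \eqref{eq:assump eq 1 in TH}; the energy inequality \eqref{eq:Th eq 001} carries only $\delta_K^\e$ on the right-hand side, not the extra $\e^{-2}(f(u_K^\e)-f(v_K^\e))$ term that your decomposition produces. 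If you insist on carrying that term explicitly, you must justify a genuinely sharper bound on $\Phi_K^\e$ than Lemma \ref{lem:Lemma PHI_E^K} provides; as written, your stated conclusion does not follow from your own estimates.
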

\begin{proof}
%----------
From \eqref{eq:v2}, we can assume
\begin{equation}\label{eq:assump eq 1 in TH}
C_1^{-\frac1p}\leq T|D|^{\frac12}\CerrH \quad \text{and} \quad \frac{2C_A\CerrH  T}{|\log\e|}\leq1,
\end{equation}
for sufficiently small $\e>0$,
where the constant $C_1=C_n(D)C^{\prime}>0$ is determined from
the two constants given in Lemma \ref{lem:CHL lemma 2}.
We can also assume
\begin{equation}\label{eq:Th eq 3}
\e|\log\e||D|^{\frac12}C_A^{-1}C_1^{\frac1p} \leq 1,
\end{equation}
for sufficiently small $\e>0$.

Similarly to the proof of Lemma 2 in \cite{CHL}, 
using \eqref{eq:CHL - Lemma1(2)} with 
$R=u^\e-v_K^\e$ and $u=v_K^\e$, we obtain
\begin{align}\label{eq:Th eq 001}
&\frac{1}{2}\frac{d}{dt}\|R(t)\|_{L^2(D)}^2 + \int_D \bigl( |\nabla R(t,x)|^2 - \e^{-2}f^{'}(v^\e_K)R^2(t,x) \bigr) dx
\\
\leq &\int_D\bigl(C^{\prime}\e^{-2}|R(t,x)|^{2+p} + |R(t,x)\delta_K^\e(t,x)|\bigr)dx
\nonumber
\\
\leq &\int_D C^{\prime}\e^{-2}|R(t,x)|^{2+p}dx + \|R(t)\|_{L^2(D)}\|\delta_K^\e(t)\|_{L^2(D)},
%\quad t\in[0,\sigma^\e].
\quad t \in [0,T].\nonumber
\end{align}
Note that $|v_K^\e|\le 2$ holds for sufficiently small $\e>0$ because of 
the definition \eqref{eq:v_K-1} of $v_K^\e$, the construction of $u_K^\e$,
the condition \eqref{eq:4.Cor3.7} and Lemma \ref{lem:Lemma PHI_E^K} 
for $\Phi_K^\e(t)$.  However, the second term in the left hand side of
\eqref{eq:Th eq 001} can be decomposed
and be bounded from below as follows for all $\e\in(0,\e_1]$:
\begin{align*} %\label{eq:Th eq 002}
 &\e^2\int_D \bigl( |\nabla R(t,x)|^2 - \e^{-2}f^{'}(v^\e_K)R^2(t,x)\bigr)dx
%\nonumber
\\
&\quad +(1-\e^2)\int_D \bigl( |\nabla R(t,x)|^2 - \e^{-2}f^{'}(v^\e_K)R^2(t,x)\bigr)dx
\nonumber
\\
&\; \ge \e^2\|\nabla R(t)\|^2_{L^2(D)}- \bar c_1 \|R(t)\|_{L^2(D)}^2
-C_A \, \CerrH\|R(t)\|^2_{L^2(D)}.\nonumber
\end{align*}
Here, for the first term, we have used the assumption (iii) for $f$:
$f'(v_K^\e(t,x))\leq\bar c_1$, while for the second term, we have applied
Lemma \ref{lem:Lemma 2.3} with $\psi(t,x)
= R(t,x)$ and omitted the factor $(1-\e^2)$.
On the other hand, for the first term in the right hand side of \eqref{eq:Th eq 001},
we can apply the interpolation inequality \eqref{eq:CHL - Lemma1} and
finally obtain
\begin{align}\label{eq:Th eq 004}
\frac{1}{2}\frac{d}{dt}\|R(t)\|_{L^2(D)}^2\leq&
\|\delta_K^\e(t)\|_{L^2(D)}\|R(t)\|_{L^2(D)}+\big(\bar c_1+C_A \, \CerrH \big)
    \|R(t)\|_{L^2(D)}^2
\\&- \e^2 \|\nabla R(t)\|_{L^2(D)}^2\bigl(1-C_1\e^{-4}\|R(t)\|_{L^2(D)}^p\bigr),
\quad t \in [0,T],
\nonumber
\end{align}
where $C_1>0$ is defined below \eqref{eq:assump eq 1 in TH}.

Now, consider the time $T_\e \ge 0$ defined by
\begin{align}
&T_\e:=\inf \Bigl\{ t \geq 0; \; 
\|R(t)\|_{L^2(D)} \geq C_1^{-\frac{1}{p}} \e^{\frac{4}{p}}\Bigr\} \wedge T.
\nonumber
\end{align}
If the above set $\{\cdot\}$ is empty, we define $T_\e=T$.
The goal is to show $T_\e = T$ and the conclusion \eqref{eq:error estimate 001} based on this.
From \eqref{eq:TH assump 1} and \eqref{eq:TH assump 2}, we have
$R(0) = \phi^\e$ and
\begin{align}\label{eq:R_0}
\|R(0)\|_{L^2(D)}  \le  C_1^{-\frac{1}{p}} \e^K,
\end{align}
for all $0<\e\leq\e_1$, which implies that $T_\e>0$
since $K>\frac4p+2$. Note that $K>\frac4p+2$ follows from $K>\max{(n+2,6)}$ 
and the choice of $p$: $p= \min \{ \frac4n,1\}$.
On the other hand, from \eqref{eq:Th eq 004} and the definition of $T_\e$,
which guarantees the non-positivity of the last term in \eqref{eq:Th eq 004}
for $t\le T_\e$, we obtain that for every $t\in[0,T_\e]$,
\begin{align*}
\frac{d}{dt}\|R(t)\|_{L^2(D)} \leq 2C_A \, \CerrH \|R(t)\|_{L^2(D)} + \|\delta_K^\e(t)\|_{L^2(D)},
\end{align*}
where we have estimated $\bar c_1$ as $\bar c_1 \le C_A \CerrH$ 
for sufficiently small $\e>0$ from \eqref{eq:v2}.
Then, Gronwall's lemma shows that
\begin{align}\label{eq:TH estimate 001 Gronwall}
\sup_{t \in [0,T_\e]}\|R(t)\|_{L^2(D)}
\leq &e^{2C_A \, \CerrH T_\e}\Bigl(\|R(0)\|_{L^2(D)} + \int_0^{T_\e}\|\delta_K^\e(t)\|_{L^2(D)}dt\Bigr).
\end{align}
However, Lemma \ref{lem:Lemma 2-1} implies
\begin{align*}
\int_0^{T_\e}\|\delta_K^\e(t)\|_{L^2(D)}dt \leq& T_\e \e^{K}|D|^{\frac12}\CerrH,
\end{align*}
and therefore, noting $e^{2C_A \, \CerrH T_\e}\leq e^{|\log\e|}=\e^{-1}$ from 
\eqref{eq:assump eq 1 in TH}, we obtain
\begin{align*}
\sup_{t \in [0,T_\e]}\|R(t)\|_{L^2(D)} \leq& 2\e^{K-1}T|D|^{\frac12}\CerrH\\
\leq& \e^{K-1}|\log\e||D|^{\frac12}C_A^{-1} <  \e^{\frac{4}{p}}C_1^{-\frac{1}{p}},
\nonumber
\end{align*}
where we have used \eqref{eq:R_0}, \eqref{eq:TH estimate 001 Gronwall},
\eqref{eq:assump eq 1 in TH} and $T_\e \le T$ for the first inequality, 
\eqref{eq:v2} for the second for sufficiently small $\e>0$, 
and then $K>\frac{4}{p}+2$ and \eqref{eq:Th eq 3}
for the last one.  Thus, $T_\e = T$ holds, which implies that we obtain 
\eqref{eq:error estimate 001} by setting $C_2=|D|^{\frac12}C_A^{-1}$. The proof 
is completed.
\end{proof}
%
%
%%
%---------------------------------------
%
\subsection{Proof of Theorem \ref{thm:Th1}}
Let $\gamma_0$ be a smooth hypersurface in $D$ satisfying the conditions
in Theorem \ref{thm:Th1}.

\begin{defn}
The solution of \eqref{eq:2} with an initial hypersurface $\ga_0$ means 
the hypersurface $\Ga=\bigcup_{0\leq t<\si}(\ga_t\times\{t\})$ and 
$(d(t\wedge\si),w(t\wedge\si))$ $\in C^2(\mathcal{O})\times \R$, $t\ge 0,$ with
a stopping time $\si$ defined on a probability space $(\Omega,\mathcal{F},P)$ 
equipped with the filtration $(\mathcal{F}_t)_{t\geq0}$ and $\mathcal{O}$
being an open neighborhood of $\ga_0$ (cf.\ \cite{We}) such that\\
\noindent{\rm(i)}  $d(t,x)$ is an $(\mathcal{F}_t)$-adapted signed distance
of $x\in \mathcal{O}$ to $\ga_t$ for $t\in [0,\si)$ and satisfies $|\na d|=1$.\\
\noindent{\rm(ii)} $w(t)$ is an $(\mathcal{F}_t)$-Brownian motion.\\
\noindent{\rm(iii)} The following stochastic integral equation holds in
Stratonovich sense:
\begin{align}
d(t\wedge\si,x) = & d(0,x) + \int_0^{t\wedge\si}g(D^2d(u,x),d(u,x))du\\
&- \int_0^{t\wedge\si} \frac{du}{|\ga_u|} \int_{\ga_u} \Delta d(u,\bar s) d\bar s
+ \int_0^{t\wedge\si} \frac{\a|D|}{2|\ga_u|}\circ dw(u), \quad x \in \mathcal{O}.
\nonumber
\end{align}
Here $D^2d$ denotes the Hessian of $d$ and $g(A,q)=\text{tr}(A(I-qA)^{-1})$ for a symmetric matrix $A$ and $q\in\R$.
\end{defn}

It is not difficult to show that the solution $(\ga_t^\e,d^\e) \equiv
(\ga_t^{\a\dot{w}^\e}, d^{\a\dot{w}^\e})$ of \eqref{eq:2-e} with $v=\a\dot{w}^\e$
exists uniquely for each $0<\e\le 1$ by employing the known results in deterministic case
since $\dot{w}^\e$ is $C^\infty$ in $t$.
By Assumption \ref{ass:2}, a unique solution 
$\Gamma=\bigcup_{0\leq t < \sigma}(\gamma_t\times\{t\})$ and
$(d(t\wedge\si),w(t\wedge\si))$ of \eqref{eq:2} with
an initial hypersurface $\gamma_0$ also exists.

We are now in the position to give the proof of the main theorem of this paper.
%
%
%---------------------------------------
\begin{proof}[Proof of Theorem \ref{thm:Th1}]
We assume Assumptions 1.1, 1.2 and 3.1, take initial data $g^\e$ of \eqref{eq:1}
satisfying the three conditions \eqref{eq:TH assump 1}--\eqref{eq:TH assump 3}
and fix $T>0$ as in Assumption 3.1.  Note that the solution of \eqref{eq:1} 
exists uniquely for $t\in [0,T]$ a.s., since $\dot{w}^\e$ is $C^\infty$ in $t$.  
By Corollary \ref{lem:Lemma for H}, 
the condition \eqref{eq:4.Cor3.7} holds for $v=\a\dot{w}^\e$ and 
$T=T(\om) \wedge \t(\om)$;
recall that $T(\om)$ determined from $\si^\e$ in Section 3.3.1 satisfies
$T(\om) \equiv T^{\e_0^*}(\om) \uparrow \si(\om)$ as $\e_0^* \downarrow 0$
a.s. and $\t(\om)$ is given below Assumption 3.1. 
Therefore, by \eqref{eq:v_K-1}, Lemma \ref{lem:Lemma PHI_E^K} and recalling 
the construction of $u_K^\e$, we see that
\begin{align}\label{eq:skorohod 01}
& \lim_{\e\downarrow 0} \sup_{t\in [0,T(\om)\wedge\t(\om)\wedge T]} \big\|v_{K}^\e(t, \cdot) 
- m\big(d^{\e}(t,\cdot)/\e\big) \big\|_{L^\infty(D)} = 0, \quad \rm{a.s.}
\end{align}
Furthermore, it follows from Assumption \ref{ass:2} that
\begin{align}\label{eq:skorohod 01-1}
&  \lim_{\e\downarrow 0} \sup_{t\in [0,T]} 
\big\|m\big(d^\e(t\wedge{\si^\e},\cdot)/\e\big)
-\chi_{\ga_{t\wedge\si}}(\cdot)\big\|_{L^2(D)} = 0, \quad \rm{a.s.}
\end{align}
Moreover, noting Corollary \ref{lem:Lemma for H} again, 
Theorem \ref{thm:err} implies 
\begin{align}\label{eq:skorohod 02}
&\sup_{t\in[0,T(\om)\wedge\t(\om)\wedge T]}\big\|u^\e(t,\cdot) - {v}_{K}^\e(t,\cdot)\big\|_{L^2(D)}
\leq C_2 \e^{K-1} |\log{\e}|,\quad a.s.
\end{align}
From \eqref{eq:skorohod 01}--\eqref{eq:skorohod 02}, we obtain
\begin{align*}
\lim_{\e\downarrow 0} \sup_{t\in \R_+} 
 \big\|u^\e(t\wedge\si^\e\wedge\t\wedge T,\cdot) 
- \chi_{\ga_{t\wedge \si\wedge\t\wedge T}}(\cdot) 
\big\|_{L^2(D)} = 0, \quad \rm{a.s.}
\end{align*}
The proof is completed, since $T=T^{\e_0^*}(\om) \uparrow \si(\om)$ as 
$\e_0^*\downarrow 0$.
\end{proof}
%

%=======================================================
%
%=======================================================
\section{Local existence and uniqueness for the limit dynamics \eqref{eq:2}}
In this section we consider the stochastically perturbed volume preserving mean curvature
flow \eqref{eq:2}, which appears in the limit.  We write $c= |D|/2$ for simplicity.
As explained in Section 1, the stochastic term destroys the volume conservation law.
Here, we restrict ourselves in two-dimension and discuss under the situation
that the closed curve $\ga_t$ stays strictly convex.  We prove the
local existence and uniqueness of the stochastic evolution
governed by \eqref{eq:2} by extending the method employed in \cite{F99},
and show that Assumption 3.1 holds up to some stopping time $T=\t(\om)>0$
assuming Assumption 1.2 in a.s.-sense and then prove Assumption 1.2 in law sense
for $\dot{w}^\e$ given by \eqref{eq:4A}.  The difference between a.s.-sense
and law sense can be filled by applying Skorohod's theorem and changing
the probability space as we pointed out in Section 1.

A strictly convex closed plane curve  $\ga$  can be parameterized 
by  $\th\in S := [0,2\pi)$  in terms of the Gauss map, that is,
the position  $X_0$  on  $\ga$  is denoted by  $X_0(\th)$  if the 
angle between one fixed direction  $\bold e := (1,0)$  in the 
plane  $\R^2$  and the outward normal  $\Vec{n}(X_0)$  at  $X_0$  to  
$\ga$  is  $\th$.  The set $S$ or a unit circle in $\R^2$ plays a role
of the reference manifold $\mathcal{S}$.  
We further denote by  $\k = \k(\th) >0$  the 
curvature of  $\ga$  at  $X_0 = X_0(\th)$.  Under these notation, 
the dynamics \eqref{eq:2} is rewritten into
the stochastic integro-differential equation for $\k =\k(t,\th)$:
\begin{equation} \label{eq:kappa}
\frac{\partial \k}{\partial t} = \k^2
\frac{\partial^2 \k}{\partial \th^2} + \k^3 - \k^2\cdot \bar\k
+ \frac{c\a \k^2}{|\ga|} \circ \dot{w}(t),
\end{equation}
where $\bar\k$ denotes the average of $\k$ over the curve $\ga=\ga_t$
and $|\ga|$ stands for the length of $\ga$; heuristically \eqref{eq:kappa}
is derived by applying \cite{Gu} p.\ 17, (2.20) with $V=\k-\bar\k
+ \frac{c\a}{|\ga|} \dot{w}(t)$ and see also \cite{F99}.
The volume (length) element
$d\bar\th$ on $\ga$ is given by $d\bar\th =  |\partial_\th X_0(\th)| d\th$. 
Since $X_0(\th)\in \R^2 \cong \mathbb{C}$ is written as
$$
X_0(\th) = X_0(0) - \sqrt{-1}\int_0^\th \frac{e^{\sqrt{-1}\th'}}{\k(\th')}d\th',
$$
we see that $|\partial_\th X_0(\th)| = 1/\k(\th)$. Therefore, $\bar\k$ and $|\ga|$
are given by
\begin{align}
& |\ga| := \int_S |\partial_\th X_0(\th)| d\th= \int_S \frac{d\th}{\k(\th)},
    \label{eq:2.5.1} \\
& \bar\k := \frac{1}{|\ga|} \int_S\k(\th)|\partial_\th X_0(\th)| d\th
= \frac{2\pi}{|\ga|}       \label{eq:2.5.2}
=2\pi\Bigl(\int_S \frac{d\th}{\k(\th)}\Bigr)^{-1}, 
\end{align}
respectively,  which are functionals of $\k=\{\k(\th); \th\in S\}$.

As in \cite{F99}, we introduce a cut-off because of the singularity
in \eqref{eq:kappa}.  
For $L\in \N$, we define a cut-off function $\chi_L\in C_b^\infty(\R)$
(in particular, $\chi_L, \chi_L', \ldots$ are all bounded)
such that $\chi_L(x)=x$ for $x\in [1/L,L]$ and $1/2L\le \chi_L(x)\le 2L$ 
for all $x\in \R$, and set
\begin{align}
& |\ga|_L \equiv |\ga|_L(\k(\cdot)) = \int_S \frac{d\th}{\chi_L(\k(\th))},
 \notag \\
& a_L(\k) = \chi_L^2(\k), \label{eq:5.L}  \\
& b_L(\th,\k(\cdot)) = \chi_L^3(\k(\th)) - 2\pi\chi_L^2(\k(\th))|\ga|_L^{-1},
 \notag \\
%\frac{2\pi\chi_L^2(\k(\th))}{|\ga|_N}, \\
& h_L(\th,\k(\cdot)) = c\a\chi_L^2(\k(\th))|\ga|_L^{-1}. \notag
%& h_L(\th,\k(\cdot)) = - \frac{c\a\chi_L^2(\k(\th))}{|\ga|_N}, \\
\end{align}
Fixing $L$ for a while and denoting $a_L, b_L, h_L$ by $a,b,h$
for simplicity, we consider the stochastic integro-differential equation 
with cut-off for $\k =\k(t,\th)$:
\begin{equation} \label{eq:kappa-cut}
\frac{\partial \k}{\partial t} =  a(\k)\frac{\partial^2 \k}{\partial \th^2} 
   + b(\cdot,\k) + h(\cdot, \k) \circ \dot{w}(t), \quad t>0, \, \th \in S.
\end{equation}

We also consider the dynamics \eqref{eq:2-e} with $v(t)= \a \dot{w}^\e(t)$,
which is described by the integro-differential equation:
\begin{equation} \label{eq:kappa-cut-smear}
\frac{\partial \k}{\partial t} =  a(\k)\frac{\partial^2 \k}{\partial \th^2} 
   + b(\cdot,\k) + h(\cdot, \k) \dot{w}^\e(t), \quad t>0, \, \th \in S,  
\end{equation}
by replacing $\dot{w}(t)$ in \eqref{eq:kappa-cut} with $\dot{w}^\e(t)$.
We gave two examples of smooth noises $\dot{w}^\e(t)$ in Section 4.1.
Since $\dot{w}^\e(t)$ is smooth in $t$, \eqref{eq:kappa-cut-smear}
has a unique solution for every $\om$.  

%---------------------------------------------------------
We now show that Assumption \ref{ass:3} holds with the choice
$\mathcal{V}= \{\a\dot{w}^\e; 0<\e\le \e_0^*\}$ 
in the setting of this section and assuming Assumption 1.2 in a.s.-sense.
The dynamics \eqref{eq:2-e} is rewritten into the integro-differential equation for $\k =\k(t,\th)$:
\begin{equation} \label{eq:kappa-v}
\frac{\partial \k}{\partial t} = \k^2
\frac{\partial^2 \k}{\partial \th^2} + \k^3 - \k^2\cdot \bar\k
+ \frac{c\k^2}{|\ga^v|}v,
\end{equation}
where the averaged curvature $\bar\k$ and the length $|\ga^v|$ are determined 
from $\k=\k(t)$ by \eqref{eq:2.5.2} and \eqref{eq:2.5.1}, respectively.  Note that
the curve  $\ga_t^v =\{X_0(t,\th) \in \R^2; \th \in S\}$ is recovered from
$\{\k(s); s\le t\}$ and $X_0(0,0)$ as
\begin{align}  \label{eq:2.5}
X_0(t,\th) & = X_0(0,0) + \left( \int_0^\th \frac{\sin \th'}{\k(t,\th')}d\th', 
   - \int_0^\th \frac{\cos \th'}{\k(t,\th')}d\th' \right)  \\
&\quad +\left(\int_0^t (\k(s,0)+\bar{\k}(s)-\frac{c}{|\ga_s|}v(s)) ds,
   \int_0^t \frac{\partial}{\partial\th}\k(s,0)ds\right).  \notag
\end{align}
Once $\ga_t^v$ is determined, one can define the signed distance function
$d^v(t,x)$.

We consider the time 
\begin{align*} % \label{eq:stopping time 1}
&\t_L^v=\inf \left\{ t>0; m(\k_t)>L \text{ or dist}(\ga_t,\partial D)<\frac1L \text{ or }
\inf_{x \in V_{3\de}^t}(I_2+d(t,x)\nabla ^2 d(t,x))<\frac1L \right\},
\end{align*}
where $I_2$ is the unit matrix, $m(\k)=\max_{\th\in S} \left\{\k(\th), \k(\th)^{-1},
 |\k^{(1)}(\th)|\right\}$,
$\k^{(n)} = \partial^n \k/ \partial \th^n$, $\k_t=\k(t)\equiv \k^v(t)$ is the solution 
of \eqref{eq:kappa-v}, and $\ga_t=\ga_t^v$ and $d(t,x)=d^v(t,x)$ are determined as above.
We define the stopping time $\t_L^\e:= \t_L^v$ with $v=\a\dot{w}^\e$; in other words, $\t_L^\e$
is the stopping time for the solution of \eqref{eq:kappa-cut-smear}.
Note that the uniqueness of solutions implies that the solution of 
\eqref{eq:kappa-cut-smear} coincides with that of \eqref{eq:kappa-v} with 
$v=\a\dot{w}^\e$ for $0\le t\leq\t^\e_L$.  The reason that we care 
$I_2+d\na^2 d$ will be clear in \eqref{eq:ax01-04}. 

We are now in the position to show Assumption 3.1 assuming Assumption 1.2
in a.s.-sense.

\begin{lem}\label{lem:x(0) and Th in 2D case}
For every $L\in\N$, there exist $N=N(K)\in\N$, a stopping time $T=\t(\om)>0$ and
$C=C(C_{\mathcal{V},T},K,T,L)>0$ such that
\begin{align}
 &\sup_{1\leq i \leq 2}\sup_{\th\in S}|\partial_t^k \partial_\th^m X_0^i(\cdot,\th)|_{0,T}\leq C(1+|v|_{N,T})^{N},\label{eq:der x(0)}\\
 &\sup_{x\in V^t_{3\delta}}|\partial_t^k \partial^{\bold{m}}\th(\cdot,x)|_{0,T}\leq C(1+|v|_{N,T})^{N}, \label{eq:der Th} 
\end{align}
for every $v \in \mathcal{V} = \{\a\dot{w}^\e; 0<\e\le \e_0^*\}$,
$k=0,1,\cdots,K$ and $m, |\bold{m}|\leq M$, where $\th(t,x) = \bold{S}(t,x)\in S$ is the
inverse function of $x= X_0(t,\th) + r \Vec{n}(X_0(t,\th))$ defined below
\eqref{eq:parameterize}.
\end{lem}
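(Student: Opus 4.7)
My plan is to define $T=\t(\om):=\inf_{0<\e\le\e_0^*}\t_L^\e$ for some $L=L(\om)$ sufficiently large so that $T>0$ a.s.\ (justified via Assumption \ref{ass:2} together with the convexity of the limit curve, for which $\t_L>0$ for large $L$), and to derive all bounds on $[0,T]$ from regularity estimates for the solution $\k=\k^v$ of the scalar equation \eqref{eq:kappa-v}. On $[0,T]$, the definition of $\t_L^v$ provides $1/L\le\k\le L$, $|\k^{(1)}|\le L$ and $|\ga^v|\ge 2\pi/L$ uniformly in $(t,\th)$; consequently \eqref{eq:kappa-v} becomes a quasilinear \emph{uniformly} parabolic equation with diffusion coefficient $a(\k)=\k^2\in[L^{-2},L^2]$, a bounded nonlocal zeroth-order term $\k^3-\k^2\bar\k$, and an external forcing $c\k^2 v/|\ga^v|$ of size $O(|v|_{0,T})$ in sup norm.

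First I would bootstrap the spatial regularity of $\k$ by applying classical parabolic Schauder estimates to $\partial_\th^m\k$ iteratively: differentiating \eqref{eq:kappa-v} in $\th$ produces lower-order polynomial terms in $\k$ and its lower $\th$-derivatives only, so the loop closes and
\begin{align*}
\sup_{t\le T}|\partial_\th^m\k(t,\cdot)|_{L^\infty(S)}\le C_{L,m}(1+|v|_{0,T})
\end{align*}
holds for each fixed $m\le M+2K$. Next, I would iteratively bound $\partial_t^k\partial_\th^m\k$ by solving for $\partial_t^{k+1}\k$ from the $k$-th $t$-derivative of \eqref{eq:kappa-v}: each such differentiation produces one new factor $v^{(k)}$ and raises the required $\th$-regularity by two, so after a finite induction
\begin{align*}
\sup_{t\le T}|\partial_t^k\partial_\th^m\k(t,\cdot)|_{L^\infty(S)}\le C_{L,K}(1+|v|_{N,T})^{N}, \quad k\le K,\ m\le M,
\end{align*}
holds for some $N=N(K)\in\N$. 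Plugging these bounds into the explicit integral representation \eqref{eq:2.5} of $X_0$ and differentiating under the integrals (the integrands being polynomial in $\k,1/\k,\partial_\th^j\k$) yields \eqref{eq:der x(0)}.

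For \eqref{eq:der Th}, I would use the defining relation $x=X_0(t,\th(t,x))+d(t,x)\bold{n}(t,\th(t,x))$, valid on the tubular neighborhood $V^t_{3\de}$. The Jacobian of the map $(\th,r)\mapsto X_0(t,\th)+r\bold{n}(t,\th)$ equals $(1+r\k(t,\th))/\k(t,\th)$, which is uniformly bounded away from $0$ on $V^t_{3\de}$ thanks to the third clause in the definition of $\t_L^v$ (namely $I_2+d\na^2 d\ge 1/L$). The implicit function theorem then expresses $\partial_t^k\partial^{\bold{m}}\th(t,x)$ as a rational function of $\partial_t^{k'}\partial_\th^{m'}X_0$, $\partial_t^{k'}\partial^{\bold{m}'}d$ (with $k'\le k$, $m'\le|\bold{m}|+k$) and the inverse Jacobian. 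Since $d^v(t,x)$ is the signed distance to $\ga_t^v$, its derivatives are in turn controlled by those of $X_0$; combining everything with the previous step gives \eqref{eq:der Th} with the same polynomial dependence on $|v|_{N,T}$.

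The main obstacle is the bookkeeping in the two nested bootstraps: each $t$-differentiation of \eqref{eq:kappa-v} raises the required $\th$-regularity of $\k$ by two, so one has to prescribe the target spatial regularity to be at least $M+2K$ from the outset, and then track carefully how the exponents $N_{k,m}$ compound through the recursion in order to pin down a common $N=N(K)$. All constants $C_{L,\cdot}$ produced by the parabolic Schauder estimates remain independent of $\e$ precisely because the stopping time $\t_L^\e$ freezes the relevant geometric quantities at the level $L$ uniformly in $\e$, which is exactly what is needed so that the final constant in the statement depends only on $C_{\mathcal{V},T}, K, T, L$.
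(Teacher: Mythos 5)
Your proposal follows essentially the same route as the paper's proof: both take $T=\inf_{0<\e\le\e_0^*}\t_L^\e$ (positive a.s.\ by Assumption \ref{ass:2}), read off uniform bounds on $\k$, $\k^{-1}$, $\k^{(1)}$ and $|\ga^v|$ from the stopping time, obtain \eqref{eq:der x(0)} by recursively differentiating the explicit representation \eqref{eq:2.5} together with the curvature equation \eqref{eq:kappa-v} (each $t$-derivative trading for two $\th$-derivatives plus a derivative of $v$), and obtain \eqref{eq:der Th} by implicit differentiation of $x=X_0(t,\th)+d\,\bold{n}$, inverting $I_2+d\na^2 d$ via the third clause of $\t_L^v$. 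The one genuine difference is that you supply the bounds on $\partial_\th^m\k$ for $m\ge 2$ through a parabolic Schauder bootstrap, whereas the paper simply asserts that $\partial_\th^2\k$ is bounded up to $\t_L^\e$ by the definition of the stopping time (which literally controls only $\k$, $\k^{-1}$ and $\k^{(1)}$) — your treatment of that step is, if anything, more complete, though your intermediate claim of \emph{linear} dependence on $|v|_{0,T}$ for the spatial bounds is optimistic (the Schauder constants involve H\"older norms of the coefficients and forcing in time, hence higher norms and powers of $v$), which is harmless since the final bound is anyway polynomial in $|v|_{N,T}$.
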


\begin{proof}
Using \eqref{eq:2.5.1} and \eqref{eq:2.5.2}, \eqref{eq:kappa-v} and
\eqref{eq:2.5} are rewritten into
\begin{equation} \label{eq:kappa-2}
\frac{\partial \k}{\partial t} = \k^2
\frac{\partial^2 \k}{\partial \th^2} + \k^3 - 2\pi\k^2\Bigl(\int_S \frac{d\th'}{\k(\th')}\Bigr)^{-1}
- c\k^2 \Bigl(\int_S \frac{d\th'}{\k(\th')}\Bigr)^{-1} v,
\end{equation}
and
\begin{align}  \label{eq:2.5-2}
& \quad X_0(t,\th)  = X_0(0,0)+ \left( \int_0^\th \frac{\sin \th'}{\k(t,\th')}d\th', 
   - \int_0^\th \frac{\cos \th'}{\k(t,\th')}d\th' \right)  \\
& + \left(\int_0^t \Bigl[\k(s,0)+2\pi\Bigl(\int_S \frac{d\th'}{\k(s,\th')}\Bigr)^{-1}-c\Bigl(\int_S \frac{d\th'}{\k(s,\th')}\Bigr)^{-1}v(s)\Bigr] ds,
   \int_0^t \frac{\partial}{\partial\th}\k(s,0)ds \right),  \notag
\end{align}
respectively.  Differentiating the both sides of \eqref{eq:2.5-2} in $t$, $\partial_t\k(t,\th)$ 
appears from the derivative of the right hand side
and $\partial_t\k(t,\th)$ is represented by the right hand side of \eqref{eq:kappa-2}.
In addition, $\dot{v}$ appears 
from the derivative of the right hand side.

On the other hand, differentiating the both sides of \eqref{eq:2.5-2} in $\th$, $\partial_\th\k(t,\th)$ 
appears from the derivative of the right hand side.
Since $\k$, $\bar{\k}$, $\partial_\th\k$ and $\partial_\th^2\k$ are bounded 
up to $\t_L^\e$ due to the definition of $\t_L^\e$, %\eqref{eq:stopping time 1},
we have 
\begin{align}\label{eq:ax01-06}
 \sup_{1\leq i \leq 2}\sup_{\th\in S}|\partial_t\partial_\th X_0^i(\cdot,\th)|_{0,T}\leq C\cdot C_{\mathcal{V},T}(1+|v|_{1,T}),
\end{align}
with some $C>0$ and $T=\t(\om):=\inf_{0<\e\le\e_0^*} \t_L^\e(\om)>0$.
Note that $\t(\om)>0$ a.s.\ due to Assumption 1.2 in a.s.-sense.
 Estimates for higher order derivatives of $X_0(t,\th)$ in $t$ and $\th$ are obtained 
recursively by direct computations using
\eqref{eq:2.5-2} and \eqref{eq:kappa-2}.
Therefore, we can conclude that the bound \eqref{eq:der x(0)} holds with  some $N= N_1
\equiv N_1(K)\in\N$.

Next, we show \eqref{eq:der Th}.
From \eqref{eq:eq of V}, noting again that $\ga_t^v$ is parametrized by $\th\in S$, we have
\begin{align}\label{eq:ad01}
 \partial_t d(t,X_0(t,\th)) = \Delta d(t,X_0(t,\th)) - \frac{1}{2\pi} \int_0^{2\pi} \Delta d(t,X_0(t,\th'))
 J^0(t,\th')d\th'+ \frac{c}{|\ga_t^v|} v(t),
\end{align}
where $J^0(t,\th) = |\partial_\th X_0(t,\th)| \big(=1/\k(t,\th)\big)$ defined by
\eqref{eq:2-J0} noting that $\bold{n}(t,\th)$ and $\partial_\th X_0(t,\th)$ are
perpendicular, and, from \eqref{eq:boldn}, \eqref{eq:parameterize} and \eqref{eq:nablad}, we have
\begin{align}\label{eq:ax01}
 x=X_0(t,\th)+d(t,x)\nabla d(t,X_0(t,\th)),\quad |d(t,x)|<3\de.
\end{align}
From now on, we omit the variables $(t,\th)$ in short. Differentiating \eqref{eq:ax01} in $t$, we have
\begin{align}\label{eq:ax01-01}
 \R^2 \ni 0=&\partial_tX_0+\partial_\th X_0 \partial_t \th + \partial_t d \na d + d\na \partial_t d\\
   &+d\na^2 d \partial_t X_0 + d \na^2 d \,\partial_\th X_0 \partial_t \th,
% \R^2 \ni 0=&\partial_tX_0+\partial_\th X_0 \partial_t \th + \partial_t d \na d +\Bigl(\na d \cdot(\partial_t X_0+\partial_\th X_0\partial_t \th) \Bigr)\na d\\
%   &+d\na \partial_t d+d\na^2 d \partial_t X_0 + d \na^2 d \,\partial_\th X_0 \partial_t \th,
   \nonumber
\end{align}
where $\th=\th(t,x)$ is the inverse function of \eqref{eq:ax01}.
Substituting \eqref{eq:ad01} into \eqref{eq:ax01-01} and noting $v$ does not depend on $x$, we have
\begin{align}\label{eq:ax01-02}
 0=&\partial_tX_0+{\partial_\th X_0} \partial_t \th + \Bigl(\De d - \frac{1}{2\pi}\int_0^{2\pi} \De d
  |\partial_\th X_0|  + \frac{c}{|\ga^v|} v\Bigr)\na d\\
   &+(\na\De d)d +d\na^2 d \partial_t X_0 + d \na^2 d \,{\partial_\th X_0} \partial_t \th.
% 0=&\partial_tX_0+{\partial_\th X_0} \partial_t \th + \Bigl(\De d - \frac{1}{2\pi}\int_0^{2\pi} \De d
%  |\partial_\th X_0|  + \frac{c}{|\ga^v|} v\Bigr)\na d\\
%   &+\na d\Bigl(\na d \cdot \partial_t X_0 + \na d\cdot {\partial_\th X_0} \partial_t \th \Bigr) +(\na\De d)d
%   +d\na^2 d \partial_t X_0 + d \na^2 d \,{\partial_\th X_0} \partial_t \th.
   \nonumber
\end{align}
%In our setting, recall that $\partial_\th X_0$ is a tangent vector to $\ga^v_t$ at $X_0(t,\th)$
%and $\na d$ is the outer normal vector at $X_0$ of $\ga^v_t$ (that is, $\na d \cdot \partial_\th X_0 =0$),
%hence $\na d\cdot \partial_\th X_0 \partial_t \th =0$.
Thus, \eqref{eq:ax01-02} is rewritten into
\begin{align*} % \label{eq:ax01-0201}
 &(I_2+d\na^2d){\partial_\th X_0} \partial_t \th \\ %+ \na d\Bigl(\na d \cdot \na X_0  \partial_t s\Bigr)\\
 =&-\partial_tX_0 - d\na^2 d \,\partial_t X_0 - (\na\De d)d - \Bigl(\De d - \frac{1}{2\pi}\int_0^{2\pi} \De d
|\partial_\th X_0| + \frac{c}{|\ga^v|} v\Bigr)\na d.
% -\na d (\na d\cdot\partial_t X_0).
 \nonumber
\end{align*}\label{eq:ax01-03}
%where $I_2$ is the unit matrix.
Since $(I_2+d\na^2d)$ is equal to the Jacobian $J$ of the diffeomorphism map from $x$ to $(r,s)$, 
it is invertible. Thus,
\begin{align}\label{eq:ax01-04}
 &\partial_\th X_0 \partial_t \th = -(I_2+d\na^2d)^{-1}\\
 &\times\Bigl(
 \partial_tX_0 + d\na^2 d \partial_t X_0 + (\na\De d)d + \Bigl(\De d - \frac{1}{2\pi}\int_0^{2\pi} \De d
|\partial_\th X_0| + \frac{c}{|\ga^v|} v\Bigr)\na d 
% + \na d (\na d\cdot\partial_t X_0)
 \Bigr).
 \nonumber
\end{align}
Note that $|\partial_\th X_0|\ge \frac1L$, $|I_2+d\na^2d|\ge \frac1L$, 
$|\ga_t^v|\ge \frac{2\pi}L$ hold for $0\le t \le \t_L^\e$.
Indeed, as we saw above \eqref{eq:2.5.1}, $|\partial_\th X_0|=1/\k
\geq\frac1L$ up to the time $\t_L^\e$.
The second is clear by the definition of $\t_L^\e$, while 
$|\ga_t^v|\ge \frac{2\pi}L$ follows from \eqref{eq:2.5.1}.
Therefore, we obtain from \eqref{eq:ax01-04} and \eqref{eq:ax01-06}
\begin{align}\label{eq:ax01-07}
\sup_{t\in [0,T]}  \sup_{x\in V^t_{3\delta}}
|\partial_t \th(t,x)| \leq C\cdot C_{\mathcal{V},T}(1+|v|_{1,T}),
\end{align}
with some $C>0$ and $T= \t(\om)$. Thus, we have obtained the estimates for $\partial_t \th$.
Estimates for higher order derivatives of $\th(t,x)$ in $t$ are obtained by 
differentiating \eqref{eq:ax01-04} recursively
and combined with \eqref{eq:der x(0)}.

Differentiating  the both sides of \eqref{eq:ax01-04} in $x_j$, $j=1,2$, 
the estimate for $\partial_t\partial_{x_j}\th(t,x)$ is
obtained by that for $\eqref{eq:der x(0)}$ with $k=1$ and $|\bold{m}|=1$.
Similarly, estimates for derivatives of $\partial^{\bold{m}}\th(t,x)$ in $t$ are shown
by recursively differentiating \eqref{eq:ax01-04} in spatial variables.
Therefore, we can derive the upper bound of each of them of the form
$C\cdot C_{\mathcal{V},T}(1+|v|_{N_2,T})^{N_2}$, 
with some $N_2=N_2(K)\in\N$. 
In particular, \eqref{eq:der x(0)} and \eqref{eq:der Th} also hold for $N=\max(N_1,N_2)$.
Thus, the proof is complete.
\end{proof}

The next task is to show that Assumption 1.2 holds in law sense in our setting.  We prepare the
following theorem which gives the construction of the solution of \eqref{eq:kappa-cut}
and therefore the local solution of \eqref{eq:kappa} in law sense, and shows the
convergence in law of the solution of \eqref{eq:kappa-v} with $v(t) = \a \dot{w}^\e(t)$
to that of \eqref{eq:kappa} locally in time.  We use the usual martingale
method.  Since this is similar to \cite{F99}, Section 5, the details are omitted;
see Proposition \ref{prop:5.15} and Theorem \ref{thm:5.6} stated below.  
The noise $\dot{w}^\e(t)$ is taken same
as the second example given in Section 4.1, i.e., as in \eqref{eq:4A}.

\begin{thm} \label{thm:tight}
For each $m \in \Bbb N$, $L>0$  and  $T>0$,  let  $P^\e$ be the distribution
of the solution $\k^\e(t,\cdot)$  of \eqref{eq:kappa-cut-smear}
on  $C([0,T], C^m(S))$.  Then,  $\{P^\e\}_{0<\e<1}$  is tight.
\end{thm}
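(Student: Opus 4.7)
The plan is to establish a uniform-in-$\e$ a priori H\"older bound for $\k^\e$ on $[0,T]\times S$ and then deduce tightness from a compact embedding. Under the cut-off, the diffusion coefficient $a_L(\k)=\chi_L^2(\k)$ satisfies $(2L)^{-2}\le a_L\le (2L)^2$, and $b_L$, $h_L$ are smooth bounded functionals of $\k(\cdot)$; hence \eqref{eq:kappa-cut-smear} is a uniformly parabolic quasilinear equation on the circle $S$ whose solution is a classical smooth function for each fixed $\om$ since $\dot w^\e\in C^\infty$ in $t$.

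The decisive step is to control the stochastic forcing through $w^\e$ rather than $\dot w^\e$. While $|\dot w^\e|_\infty$ blows up like $\psi(\e)$, the invariance principle for the strongly mixing $\xi$ together with Kolmogorov's continuity criterion yields uniformly bounded moments of every order for $\|w^\e\|_{C^\th([0,T])}$, for any $\th\in(0,\tfrac12)$; in particular the family $\{\|w^\e\|_{C^\th([0,T])}\}_\e$ is tight in $\R_+$. I would then integrate the forcing term by parts,
\[
\int_s^t h_L(\cdot,\k^\e(\tau))\,\dot w^\e(\tau)\,d\tau = \bigl[h_L(\cdot,\k^\e(\tau))\,w^\e(\tau)\bigr]_s^t - \int_s^t w^\e(\tau)\,\partial_\tau h_L(\cdot,\k^\e(\tau))\,d\tau,
\]
reinsert $\partial_\tau\k^\e$ from \eqref{eq:kappa-cut-smear} into the remainder, and run a parabolic bootstrap to obtain a Schauder-type estimate of the form
\[
\|\k^\e\|_{C^{2+\a,\,1+\a/2}([0,T]\times S)} \le F_L\bigl(\|w^\e\|_{C^\th([0,T])},\,\|\k^\e(0)\|_{C^{2+\a}(S)}\bigr),
\]
for some $\a>0$, where $F_L$ is a continuous increasing function. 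Differentiating \eqref{eq:kappa-cut-smear} $m$ times in $\th$ on the torus (no boundary terms arise) and iterating this argument produces the analogous estimate in $C^{m+\a,(m+\a)/2}([0,T]\times S)$.

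Once such a bound is in hand, tightness follows at once: the embedding $C^{m+\a,(m+\a)/2}([0,T]\times S)\hookrightarrow C([0,T];C^m(S))$ is compact by Arzel\`a--Ascoli on the compact parameter set $[0,T]\times S$, and the tightness of $\{\|w^\e\|_{C^\th([0,T])}\}_\e$ guarantees that, for every $\eta>0$, there is a compact set in $C^{m+\a,(m+\a)/2}$ carrying $P^\e$-mass at least $1-\eta$ uniformly in $\e$. The main technical obstacle is the bootstrap step: because $h_L$ depends on $\k$ nonlocally through $|\ga|_L$ and because $w^\e$ has only sub-$\tfrac12$ H\"older regularity, the coupling between the Young-type integration against $w^\e$ and the parabolic Schauder theory of Section~\ref{sec:3.3.3} must be arranged so that the norm of $w^\e$ enters the final estimate only through $\|w^\e\|_{C^\th([0,T])}$, and never through its pointwise derivative. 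This is essentially the strategy used in \cite{F99}, Section~5, adapted to our nonlinear integro-differential setting.
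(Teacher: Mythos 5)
Your proposal takes a genuinely different route from the paper, and there is a gap at the decisive step. The paper does not attempt a pathwise Schauder estimate. It follows the martingale/moment method of \cite{F99}: Proposition~\ref{prop:1.3.0} gives uniform-in-$\e$ moment bounds $\sup_\e E[\sup_{t\le T}\|\k^{(n)}(t\wedge\tilde\si^\e)\|_{L^p}^p]<\infty$ for all $n,p$ via $L^p$-energy identities (\eqref{eq:F 6-6}) together with the mixing structure of $\xi$ exploited in Lemma~\ref{lem:F lem 6-10}, Proposition~\ref{prop:1.3} gives the weak increment estimate $E[|\langle\k^\e(t_2)-\k^\e(t_1),\fa\rangle|^p]\le C(t_2-t_1)^{p/2}$, and tightness then follows from the Holley--Stroock criterion. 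All cancellations of the singular factor $\dot w^\e$ occur inside a conditional expectation with respect to $\mathcal F^\e_{0,s}$, not pathwise.

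The gap in your approach is in the bootstrap step. After the integration by parts and reinsertion of $\partial_\tau\k^\e$ from \eqref{eq:kappa-cut-smear}, the remainder
\begin{align*}
-\int_s^t w^\e(\tau)\,Dh_L(\cdot,\k^\e(\tau))\bigl[\partial_\tau\k^\e(\tau)\bigr]\,d\tau
\end{align*}
still contains the term $-\int_s^t w^\e(\tau)\,Dh_L(\cdot,\k^\e(\tau))\bigl[h_L(\cdot,\k^\e(\tau))\bigr]\dot w^\e(\tau)\,d\tau$, so $\dot w^\e$ is not eliminated. Iterating the integration by parts produces an expansion in iterated integrals of $w^\e$, and since the total variation of $w^\e$ on $[0,T]$ diverges as $\e\downarrow0$ and the H\"older exponent of $w^\e$ is below $\tfrac12$, this expansion neither terminates nor converges absolutely under estimates involving only $\|w^\e\|_{C^\th([0,T])}$. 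Closing the loop pathwise would require a uniform control of the iterated integral $\int_s^t(w^\e(\tau)-w^\e(s))\,dw^\e(\tau)$--in effect a rough-path lift--which is an additional ingredient your sketch does not provide, and which cannot come for free from the $C^\th$-norm because the noise enters \emph{multiplicatively} through $h_L(\cdot,\k)$. Your last paragraph names this as ``the main technical obstacle,'' but a proof must resolve it rather than flag it. The paper sidesteps the issue entirely: Lemma~\ref{lem:F lem 6-10} converts $E\bigl[\int_s^t\Phi_3^{(n)}(\k_r)\dot w^\e(r)\,dr\,\big|\,\mathcal F^\e_{0,s}\bigr]$ into quantities carrying a benign factor $\psi(\e)^{-1}$ by combining one integration by parts with the bound $|\dot w^\e|\le \tfrac MA\psi(\e)$ and the mixing decay; this is a probabilistic cancellation unavailable $\om$-by-$\om$.
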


To prove this theorem, we need to show the following two
propositions; see Theorems 6.1 and 4.2 in \cite{F99}
for the first proposition and Proposition 4.1 in \cite{F99}
for the second.  Note that the assertion of Proposition \ref{prop:1.3.0}
is a little weaker than \cite{F99}, because an additional term appears
in the bound given in Lemma \ref{lem:F lem 6-7}.

\begin{prop}\label{prop:1.3.0}
There exist stopping times $\tilde\si^\e \equiv \tilde\si_L^\e, 0<\e<1,$ such that
$\tilde\si^\e>0$ a.s., 
\begin{equation} \label{eq:5.tilde-si}
\lim_{\e\downarrow 0} P(\tilde\si^\e>T)=1,
\end{equation}
for every $T>0$ and 
\begin{equation}  \label{eq:uniform-moment-estimate}
\sup_{0 < \e < 1} E\left[ \sup_{0\le t \le T}
  \|\k^{(n)}(t\wedge\tilde\si^\e)\|_{L^p(S)}^p \right] < \infty,
\end{equation}
for every  $n \in \Bbb Z_+ =\{0,1,2,\ldots\}$ and  $p \ge 1$,
where  $\k(t) = \k^\e(t)$  is the solution of \eqref{eq:kappa-cut-smear}.
\end{prop}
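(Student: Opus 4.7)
The strategy I would follow mirrors the martingale/energy method of \cite{F99}, adapted to the present cut-off equation \eqref{eq:kappa-cut-smear} with its nonlocal $1/|\ga|$ term. First I would define
\[
\tilde\si_L^\e := \inf\{t\ge 0 : \|\k^\e(t,\cdot)\|_{C^1(S)} \ge M \},
\]
for a sufficiently large $M=M(L,T,\ga_0)$. Since $\ga_0$ is smooth and strictly convex, $\k^\e(0,\cdot)$ is deterministic, smooth and uniformly bounded away from $0$, so $\tilde\si^\e>0$ a.s. The two tasks are then to establish the uniform bound \eqref{eq:uniform-moment-estimate} up to $\tilde\si^\e$, and to show \eqref{eq:5.tilde-si}; the latter follows from \eqref{eq:uniform-moment-estimate} via Chebyshev together with the Sobolev embedding $H^m(S)\hookrightarrow C^1(S)$ for $m$ large enough.

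For \eqref{eq:uniform-moment-estimate} I would argue by induction on $n$. Differentiating \eqref{eq:kappa-cut-smear} $n$ times in $\th$ and computing the time derivative of $\|\k^{(n)}(t)\|_{L^p(S)}^p$ where $\k^{(n)}:=\partial_\th^n\k^\e$, an integration by parts in $\th$ produces the coercive contribution
\[
-p(p-1)\int_S a_L(\k)|\k^{(n)}|^{p-2}(\k^{(n+1)})^2\,d\th,
\]
and the lower bound $a_L(\k)\ge(2L)^{-2}$ makes it genuinely dissipative. Leibniz commutators involve only lower-order derivatives of $\k^\e$, so after Young's inequality a fraction is absorbed into the coercive term and the remainder is controlled by $\|\k^{(n)}\|_{L^p}^p$ times an inductively bounded factor. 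The drift $b_L$ and its $\th$-derivatives are uniformly bounded on $[0,\tilde\si^\e]$ by construction, contributing at most $C_L(1+\|\k^{(n)}\|_{L^p}^p)$.

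The essential difficulty is the noise contribution $\int_0^t \dot w^\e(s)\,\Phi_n(\k^\e(s))\,ds$, with $\Phi_n(\k):=\int_S |\k^{(n)}|^{p-2}\k^{(n)}\partial_\th^n[h_L(\cdot,\k)]\,d\th$, because $\dot w^\e$ diverges as $\e\downarrow 0$. The plan is to integrate by parts in $s$, exploiting that $w^\e$ itself is uniformly bounded on $[0,T]$ (being close to Brownian motion):
\[
\int_0^t \dot w^\e(s)\,\Phi_n(\k^\e(s))\,ds = w^\e(t)\Phi_n(\k^\e(t)) - \int_0^t w^\e(s)\,D\Phi_n(\k^\e(s))\,\partial_s\k^\e(s)\,ds,
\]
and then substitute \eqref{eq:kappa-cut-smear} for $\partial_s\k^\e(s)$ inside the remaining integral. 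This produces terms linear in $w^\e$ (treated via Burkholder-Davis-Gundy on $w^\e$ and the elementary moment bound $E[\sup_{s\le T}|w^\e(s)|^q]\le C_{q,T}$) and a quadratic-in-noise term of the form $\int_0^t \dot w^\e(s) w^\e(s) (\cdots)\,ds$, which is handled by the second-moment mixing estimate for $\xi$ from \eqref{eq:4A} (as in \cite{F99}, Section 6). This last step introduces an additional boundary/convective term, absent in the pure Allen-Cahn setting of \cite{F99}, stemming from the $1/|\ga^v|$ factor of $h_L$, which is what weakens the statement relative to \cite{F99}. Finally, Gronwall applied to $E[\sup_{s\le t\wedge\tilde\si^\e}\|\k^{(n)}(s)\|_{L^p}^p]$ closes the induction on $n$ and yields \eqref{eq:uniform-moment-estimate}, from which \eqref{eq:5.tilde-si} follows by Chebyshev. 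The main obstacle will be the bookkeeping of the iterated noise term to verify that the extra contribution does not carry derivative orders higher than those already controlled inductively.
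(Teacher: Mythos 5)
Your overall strategy---an $L^p$ energy estimate on $\k^{(n)}$ with induction on $n$, plus an integration by parts in time against the bounded primitive $w^\e$ to tame the diverging factor $\dot w^\e$, closed by the mixing property of $\xi$ and Gronwall---is essentially the route the paper takes (via the analogues of Lemmas 6.1--6.16 of \cite{F99}). However, there is a genuine gap in how you define the stopping time and deduce \eqref{eq:5.tilde-si}. You set $\tilde\si_L^\e=\inf\{t:\|\k^\e(t,\cdot)\|_{C^1(S)}\ge M\}$ with a \emph{fixed} $M=M(L,T,\ga_0)$, and claim that \eqref{eq:5.tilde-si} follows from \eqref{eq:uniform-moment-estimate} by Chebyshev and Sobolev embedding. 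This cannot work: Chebyshev applied to a moment bound that is uniform in $\e$ only gives $P(\tilde\si^\e\le T)\le C/M^{p}$, a bound that is small but does not tend to $0$ as $\e\downarrow0$; the proposition demands $\lim_{\e\downarrow0}P(\tilde\si^\e>T)=1$ exactly. Moreover, the moment bound you invoke is itself a \emph{stopped} bound, so the argument is circular at the level of \eqref{eq:5.tilde-si}. The paper's fix is structural: the threshold must diverge with $\e$. It takes $\tilde\si_\e=\inf\{t:\|\k_t^{(1)}\|_{L^2}>\psi(\e)^{1/4}\}$, and, crucially, first proves the $n=1$ moment estimate \emph{without any stopping time} (Step 1 of the induction), so that Chebyshev against the divergent threshold $\psi(\e)^{1/4}$ yields \eqref{eq:5.tilde-si}. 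The exponent $1/4$ is then calibrated so that, for $n\ge2$, the new nonlocal contribution coming from the Fr\'echet derivative of $1/|\ga|_L$ --- which produces a term of the form $\psi(\e)^{-1}\|\k^{(1)}\|_{L^2}^{4}\,\|\k^{(n)}\|_{L^p}^p$ (Lemma \ref{lem:F lem 6-7}) --- is bounded on $[0,\tilde\si_\e]$ by $\|\k^{(n)}\|_{L^p}^p$ and can be absorbed by Gronwall.

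On the noise term itself your sketch is in the right spirit, but note that the paper does not integrate by parts against $w^\e$ directly; it differentiates $\Phi_3^{(n)}(\k_t)$ in $t$ (producing $\Psi_1^{(n)}$ and a second-order-in-noise term $\Psi_2^{(n)}\dot w^\e$) and then uses the conditional mixing estimate (Lemma \ref{lem:F lem 6-10}) over small time blocks, which converts the divergent bound $|\dot w^\e|\le CM\psi(\e)$ into a helpful factor $\psi(\e)^{-1}$. Your version, which relies on $E[\sup_s|w^\e(s)|^q]\le C$, should be repairable along the same lines, but you would still need the intermediate ``rough estimates uniform in $\om$'' (Lemma \ref{lem:F lem 6-11}) and, above all, the corrected choice of stopping time described above; without it both \eqref{eq:5.tilde-si} and the closing of the Gronwall step for $n\ge2$ fail.
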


\begin{prop} \label{prop:1.3}
For every  $\fa\in C^\infty(S)$  and $p\ge 0$,
$$
E[|\lan \k^\e(t_2) - \k^\e(t_1), \fa \ran |^p]
  \le C(t_2 - t_1)^{p/2}, \qquad  0 \le t_1 < t_2 \le T,
$$
where $\lan\k,\fa\ran = \int_S\k(\th)\fa(\th) d\th$.
\end{prop}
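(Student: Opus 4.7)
The plan is to start from the integral form of \eqref{eq:kappa-cut-smear}, test against $\fa$, and decompose
\begin{align*}
\lan \k^\e(t_2) - \k^\e(t_1),\fa\ran
&= \int_{t_1}^{t_2}\lan a(\k^\e(s))\partial_\th^2\k^\e(s) + b(\cdot,\k^\e(s)),\fa\ran ds \\
&\quad + \int_{t_1}^{t_2} G(s)\dot{w}^\e(s) ds
=: I_1 + I_2,
\end{align*}
where $G(s):=\lan h(\cdot,\k^\e(s)),\fa\ran$.

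For the drift contribution $I_1$, I would integrate by parts twice in $\th$ to transfer both derivatives off $\k^\e$, absorbing them into $\fa$ and the coefficient $a(\k^\e)=\chi_L^2(\k^\e)$. The cutoff $\chi_L$ bounds $a,b$ together with their $\k$-derivatives uniformly in $\e$, and Proposition \ref{prop:1.3.0} furnishes uniform-in-$\e$ $L^p$-control of $\k^\e$ and $\partial_\th\k^\e$ (up to the stopping time $\tilde\si^\e$, which satisfies $P(\tilde\si^\e>T)\to 1$). This immediately gives $E[|I_1|^p]\le C(t_2-t_1)^p\le CT^{p/2}(t_2-t_1)^{p/2}$, as desired.

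For $I_2$ I would integrate by parts in $s$:
\[
I_2 = G(t_2)(w^\e(t_2)-w^\e(t_1)) - \int_{t_1}^{t_2} G'(s)(w^\e(s)-w^\e(t_1)) ds.
\]
The boundary term is handled by $|G|\le C$ (from the cutoff on $h$) together with the moment estimate $E[|w^\e(t_2)-w^\e(t_1)|^p]\le C(t_2-t_1)^{p/2}$, uniform in $\e$, which follows from the rescaled form of $\dot{w}^\e$ in \eqref{eq:4A} and the strong-mixing, centering properties of $\xi$ via an invariance-principle-type argument. For the residual integral, the SPDE forces $\partial_s\k^\e = a\partial_\th^2\k^\e + b + h\dot{w}^\e$, so $G'(s) = G_1(s) + G_2(s)\dot{w}^\e(s)$ with $G_1,G_2$ and their time-derivatives having moments uniformly bounded in $\e$ by Proposition \ref{prop:1.3.0}. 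The $G_1$-part contributes $O((t_2-t_1)^{1+\th})$ via the H\"older bound on $w^\e$, while the $G_2\dot{w}^\e$-part is treated by a further integration by parts exploiting $\dot{w}^\e(s)(w^\e(s)-w^\e(t_1)) ds = \tfrac12 d\bigl((w^\e(s)-w^\e(t_1))^2\bigr)$. Iterating finitely many times, each step raises the power of $(w^\e-w^\e(t_1))$ by one and pushes one further time-derivative onto the coefficient, so that after a controlled number of iterations all remaining pieces are of order $(t_2-t_1)^{p/2}$ in $p$-th moment.

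The main obstacle is precisely the control of $I_2$: $\dot{w}^\e$ diverges pointwise as $\e\downarrow 0$, yet the bound must be uniform in $\e$. The iterated integration-by-parts scheme trades each problematic $\dot{w}^\e$-factor for a H\"older-continuous increment of $w^\e$, whose moments are uniformly bounded. A cleaner alternative, which is effectively what is carried out in \cite{F99}, Proposition 4.1, is to estimate $I_2$ directly by rescaling time via $u=\psi(\e)^2 s$ and invoking moment estimates for integrals of the centered, strongly mixing stationary process $\xi$; this yields the target $(t_2-t_1)^{p/2}$ bound on any time scale without the need for iteration.
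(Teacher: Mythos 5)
Your decomposition into a drift part $I_1$ and a noise part $I_2$ is the same as the paper's, and your treatment of $I_1$ (cutoff bounds plus the uniform moment estimates of Proposition \ref{prop:1.3.0}) is fine. The gap is in $I_2$. Your iterated integration by parts in time always leaves a terminal term of the form $\int_{t_1}^{t_2}\widetilde G_k(s)\,\dot w^\e(s)\,(w^\e(s)-w^\e(t_1))^k\,ds$ carrying one surviving factor of $\dot w^\e$, and the only non-probabilistic bound available for that factor is $|\dot w^\e(s)|\le (M/A)\psi(\e)$. This gives at best $C_k\,\psi(\e)\,(t_2-t_1)^{1+k/2}$ in $L^p$, which is \emph{not} $O((t_2-t_1)^{1/2})$ uniformly in $\e$ for any fixed $k$ once $t_2-t_1$ is of order one, since $\psi(\e)\to\infty$. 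So the claim that ``after a controlled number of iterations all remaining pieces are of order $(t_2-t_1)^{p/2}$'' is not justified: the iteration never eliminates the last $\dot w^\e$. One could try letting $k=k(\e)\to\infty$ and playing the factorial decay of the accumulated coefficients against the very slow divergence of $\psi(\e)$, but that would require uniform-in-$\e$ control of moments of order $kp$ of the increments $w^\e(s)-w^\e(t_1)$ with $k\to\infty$, plus bounds on the iterated Fr\'echet derivatives of $h$; none of this is addressed.

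The paper (following \cite{F99}, Proposition 4.1) avoids this by keeping the probabilistic structure: it sets $X(t)=\int_{t_1}^t\Phi(\k(s))\dot w^\e(s)\,ds$ with $\Phi(\k)=\int_S h(\th,\k)\fa(\th)d\th$, expands $X(t)^p$ as an iterated double integral involving $\Psi_1$ and $\Psi_2\dot w^\e$, and then uses the strong mixing of $\xi$ through conditional expectations to gain a factor $(t_2-t_1)^{1/2}$ per pair of $\dot w^\e$-factors. Your closing ``cleaner alternative'' points at the right mechanism but is not directly applicable as stated: $G(s)=\lan h(\cdot,\k^\e(s)),\fa\ran$ is random and adapted to the noise, so after rescaling time one cannot simply invoke moment estimates for integrals of the stationary process $\xi$; the conditional-expectation/iterated-integral argument of \cite{F99} --- which is what the paper reproduces, with $h'(\k(\th))$ replaced by the Fr\'echet derivative $Dh(\th,\k)$ because $h$ is now a nonlocal functional of $\k$ --- is precisely what handles this adaptedness. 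That decorrelation input is the missing ingredient in your main argument.
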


Once we have the uniform moment estimates \eqref{eq:uniform-moment-estimate},
relying on the criterion due to Holley and Stroock (see \cite{F92}),
the conclusion of Theorem \ref{thm:tight} follows from
the weak tightness of  $\{\k^\e(t)\}_{0<\e<1}$ shown in Proposition 
\ref{prop:1.3}.  Indeed, \eqref{eq:uniform-moment-estimate} implies the
tightness of  $\{\k^\e(t\wedge\tilde\si^\e)\}_{0<\e<1}$ and this shows
the conclusion from \eqref{eq:5.tilde-si}.

\begin{proof}[Proof of Proposition \ref{prop:1.3}]
We follow the proof of Proposition 4.1 in \cite{F99} and only sketch 
the proof of the proposition.  The difference from \cite{F99} is that 
we need to replace $h'(\k(t,\theta))$ 
by  $Dh(\theta,\k(t))$, the Fr\'echet derivative in $\k$.
Indeed, the integral form of our equation \eqref{eq:kappa-cut-smear} is
\begin{align*}
 \k(t_2,\cdot)-\k(t_1,\cdot)=\int_{t_1}^{t_2} a(\k(s,\cdot))
\frac{\partial ^2\k(s,\cdot)}{\partial \theta^2}ds
 +\int_{t_1}^{t_2} b(\cdot,\k(s))ds+\int_{t_1}^{t_2} h(\cdot,\k(s)) \dot{w}^\e(s)ds.
\end{align*}
To complete the proof, it suffices to show 
\begin{align*}
 E\left[|X(t)|^p\right]\leq C(t-t_1)^{\frac{p}{2}},\quad t\geq t_1, 
\quad p\in2\N,
\end{align*}
for $X(t)=\int_{t_1}^{t} \Phi(\k(s))\dot{w}^\e(s)ds, t\geq t_1$,
where $\Phi(\k) = \int_S h(\theta,\k)\fa(\theta)d\theta$.
Since the Fr\'echet derivatives toward
$\psi \in L^2(S)$ of $h(\th,\k)$ and $|\ga|_L(\k)$ are computed as
\begin{align}  \label{eq:D-h}
 &D h(\theta,\k)(\psi)= \frac{2c\a \chi_L^{\prime}(\k(\theta))\chi_L(\k(\theta))}{|\ga|_L(\k)}\psi(\theta)
 +\frac{c\a \chi_L(\k(\theta))^2}{|\ga|_L(\k)^2}
\Big\langle \frac{\chi_L^{\prime}(\k)}{\chi_L(\k)^2},\psi \Big\rangle,\quad \\ 
& D |\ga|_L (\k)(\psi) = - \Big\langle\frac{\chi_L'(\k)}{\chi_L^2(\k)},\psi \Big\rangle,  \label{eq:D-ga}
\end{align}
respectively, we have
\begin{align*}
  &X(t)^p=p\int_{t_1}^{t}\dot{w}^\e(s)ds\Bigl[ \int_{t_1}^s \Psi_1(r) dr 
+ \int_{t_1}^s \Psi_2(r)\dot{w}^\e(r)dr\Bigr],
\end{align*}
where
\begin{align*}
 \Psi_1(r)=&X(r)^{p-1}\Bigl[\int_S \frac{2c\chi_L^{\prime}(\k(r,\theta))\chi_L(\k(r,\theta))}{|\ga|_L(\k(r))}
 \\& \times \Bigl( a(\k(r,\theta))\frac{\partial ^2\k(r,\theta)}{\partial \theta^2}+b(\theta,\k(r))+h(\theta,\k(r))\dot{w}^\e(r)\Bigr) \fa(\th) d\theta\\
 +\int_S&\frac{c\chi_L(\k(r,\theta))^2}{|\ga|_L(\k(r))^2} \Big\langle \frac{\chi_L^{\prime}(\k(r))}{\chi_L(\k(r))^2}, a(\k(r,\cdot))\frac{\partial ^2\k(r,\cdot)}{\partial \theta^2}+b(\cdot,\k(r))+h(\cdot,\k(r))\dot{w}^\e(r) \Big\rangle \fa(\th)d\theta
 \Bigr],\\
\\
 \Psi_2(r)=&(p-1)X(r)^{p-2}(\Phi(\k(r))^2\\
 &+X(r)^{p-1}\int_S \Bigl[\frac{2c\chi_L^{\prime}(\k(r,\theta))\chi_L(\k(r,\theta))}{|\ga|_L(\k(r))}h(\theta,\k(r))\\
 &+\frac{c\chi_L(\k(r,\theta))^2}{|\ga|_L(\k(r))^2} \Big\langle \frac{\chi_L^{\prime}(\k(r))}{\chi_L(\k(r))^2}, h(\cdot,\k(r))\dot{w}^\e(r) \Big\rangle \Bigr] \fa(\th)d\theta.
\end{align*}
Noting that $a$, $b$, $h$, $\chi_L$ and its derivative $\chi_L^{\prime}$ are bounded,
similarly to the proof of Proposition 4.1 in \cite{F99}, the conclusion is shown.  
\end{proof}

The proof of Proposition \ref{prop:1.3.0} can be completed similarly to those of
Theorem 6.1 and Lemmas 6.1 to 6.16 in \cite{F99}.  The difference between
\cite{F99} and ours is that the coefficients $b=b(\th,\k)$ and $h=h(\th,\k)$
of the SPDE \eqref{eq:kappa-cut-smear} are functionals of $\k=\{\k(\th);\th\in S\}$
 in our case.
This requires some more careful computations, 
although most of the proof is similar to that in \cite{F99}.  Another difference is
that, in \cite{F99}, the noise is taken as $\dot{w}^\e(t)=\e^{-\ga}\xi(\e^{-2\ga} t)$,
$\ga>0$ with $\xi$ introduced in Section 4.1 and hence $|\dot{w}^\e(t)|\le M\e^{-\ga}$ holds. 
In our case, $\dot{w}^\e$ is given by \eqref{eq:4A} and satisfies $|\dot{w}^\e(t)|\leq \frac{M}{A}\psi(\e)$,
where $\psi(\e)$ is the function of $\e$ defined as \eqref{eq:def of psi(e)}.
Therefore, we need to replace $\e^{-\ga}$ appearing in lemmas of \cite{F99} by $\psi(\e)$, 
but we can obtain similar results to that of \cite{F99}.
Here we indicate only the different points in the proof from that in \cite{F99}.

Before giving the proof of Proposition \ref{prop:1.3.0}, we prepare several lemmas
parallel to \cite{F99}.
In the rest of this section, we assume $p\in 2\N$.
By directly computing from \eqref{eq:kappa-cut-smear}, we have
\begin{align}\label{eq:F 6-6}
&\frac{d}{dt}||\k_t^{(n)}||_{L^p}^p=p\Bigl(\Phi_1^{(n)}(\k_t)+\Phi_2^{(n)}(\k_t)+\Phi_3^{(n)}(\k_t)\dot{w}^\e(t)\Bigr),
\end{align}
where $L^p=L^p(S)$ and
\begin{align*}
&\Phi_1^{(n)}(\k)=\int_S \{\k^{(n)}(\th)\}^{p-1}\{a(\k(\th))\k^{(2)}(\th)\}^{(n)}d\th,\\
&\Phi_2^{(n)}(\k)=\int_S \{\k^{(n)}(\th)\}^{p-1}\{b(\th,\k)\}^{(n)}d\th,\\
&\Phi_3^{(n)}(\k)=\int_S \{\k^{(n)}(\th)\}^{p-1}\{h(\th,\k)\}^{(n)}d\th.
\end{align*}
Then, $\frac{d}{dt}\Phi_3^{(n)}(\k_t)$ can be decomposed into 
\begin{align}
&\frac{d}{dt}\Phi_3^{(n)}(\k_t)=\Psi_1^{(n)}(\k_t)-\Psi_2^{(n)}(\k_t)\dot{w}^\e(t),
\end{align}
where
\begin{align*}
\Psi_1^{(n)}(\k)=&(p-1)\int_S \{\k^{(n)}(\th)\}^{p-2}\{h(\th,\k)\}^{(n)}\{a(\k(\th))\k^{(2)}(\th)+b(\th,\k)\}^{(n)}d\th\\
&+\int_S \{\k^{(n)}(\th)\}^{p-1}\Bigl[Dh(\th,\k)\Big(a(\k(\cdot))\k^{(2)}+b(\cdot,\k)\Big)\Bigr]^{(n)}d\th,\\
\Psi_2^{(n)}(\k)=&(p-1)\int_S \{\k^{(n)}(\th)\}^{p-2}\Bigl[\{h(\th,\k)\}^{(n)}\Bigr]^2d\th\\
&+\int_S \{\k^{(n)}(\th)\}^{p-1}\{Dh(\th,\k)(h(\cdot,\k))\}^{(n)}d\th.
\end{align*}
Set 
\begin{align}\label{eq:F 6-7}
  & \psi_p^{(n)}(\k)=\int_S\{\k^{(n)}(\th)\}^{p-2}\{\k^{(n+1)}(\th)\}^2d\th,\quad p\geq2,\,n\geq1.
\end{align}
We denote by $\mathcal{P}_n$ the family of polynomials of the forms 
$P(y_1,\cdots,y_{n-1},z;\k)=\sum_\a g_{1,\a}(\k)g_{2.\a}(z)y^\a$, 
$y_i\in \R, 1\le i \le n-1$, $\k\in\R$, $z\in L^2(S)$
with $g_{1,\a}\in C_b^{\infty}(\R)$, $g_{2,\a}\in C_b^{\infty}(L^2(S))$, 
$\a=(\a_1,\ldots,\a_{n-1})\in\Z_+^{n-1}$,
$y^\a=y_1^{\a_1}\ldots y_{n-1}^{\a_{n-1}}$ and the sum $\sum_{\a}$ finite, where
$C_b^{\infty}(L^2(S))$ stands for the family of infinitely Fr\'echet differentiable
functions on $L^2(S)$ having bounded derivatives.

The term $\Phi_1^{(n)}$ coincides with that appearing in \cite{F99}, Lemma 6.1,
so that we have the following lemma.
\begin{lem}(Estimate for $\Phi_1^{(n)}$)\,\label{lem:F lem 6-1}
For $n\geq1$, there exist constants $c=c(n,p,L)$ and $C=C(n,p,L)>0$  
such that
\begin{align}\label{eq:F 6-8}
  \Phi_1^{(n)}(\k)&\leq -c\psi_p^{(n)}(\k)+C\Bigl\{||\k^{(n)}||_{L^p}^p+\sum_{i=1}^3||P_i||_{L^p}^p\Bigr\},
\end{align}
for some $P_i=P_i(\k^{(1)}(\th),\cdots,\k^{(n-1)}(\th),\k;\k(\th))\in\mathcal{P}_n$ with
$g_{2,\a,i}\equiv1$, $i=1,2,3$, that is,
$P_i$ of the forms $P_i(y_1,\cdots,y_{n-1},z;\k)=\sum_\a g_{1,\a,i}(\k)y^\a$ with 
$g_{1,\a,i}\in C_b^{\infty}(\R)$, $i=1,2,3$. 
In particular, in the case $n=1$, \eqref{eq:F 6-8} holds with 
$C=0$ and $P_1=P_2=P_3=0$.
%where $P_i=P_i(\k^{(1)}(\th),\cdots,\k^{(n-1)}(\th),\k;\k(\th))$
%and $P_i(y_1,\cdots,y_{n-1},z;\k)$ are polynomials of $y_1,\cdots,y_{n-1}$ with smooth coefficients in $C_b(\R)$, that is,
%$P_i(y_1,\cdots,y_{n-1},z;\k)=\sum_\a g_{1,\a,i}(\k)g_{2.\a,i}(z)y^\a$ with 
%$g_{1,\a,i}\in C_b^{\infty}(\R)$, $g_{2,\a,i}\in C_b^{\infty}(L^2(S);\R)$, $\a=(\a_1,\ldots,\a_{n-1})\in\Z_+^{n-1}$,
%$y^\a=y_1^{\a_1}\ldots y_{n-1}^{\a_{n-1}}$ and the number of the sum $\sum_{\a}$ is finite.
\end{lem}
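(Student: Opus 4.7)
The plan is to expand $\{a(\k)\k^{(2)}\}^{(n)}$ via Leibniz's rule, integrate by parts to extract the negative term $-c\psi_p^{(n)}(\k)$ from the leading $a(\k)\k^{(n+2)}$ contribution, and bound the remaining contributions either by absorbing a small multiple of $\psi_p^{(n)}$ through Young's inequality or by placing them in the $\|P_i\|_{L^p}^p$ framework. The essential quantitative input is $a = \chi_L^2 \ge (2L)^{-2}$ together with the uniform boundedness of every derivative of $a$ on $\R$.

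First I would write
\begin{align*}
\{a(\k)\k^{(2)}\}^{(n)} = \sum_{j=0}^n \binom{n}{j}\, \{a(\k)\}^{(j)}\, \k^{(n+2-j)},
\end{align*}
where by Fa\`a di Bruno each $\{a(\k)\}^{(j)}$ is a polynomial in $\k^{(1)},\ldots,\k^{(j)}$ with $L$-dependent bounded coefficient functions of $\k$. Integration by parts on the $j=0$ term (boundary terms vanish by periodicity on $S$) yields
\begin{align*}
\int_S \{\k^{(n)}\}^{p-1} a(\k)\k^{(n+2)} d\th
 = & -(p-1)\int_S \{\k^{(n)}\}^{p-2}\{\k^{(n+1)}\}^2 a(\k)\, d\th \\
   & - \int_S \{\k^{(n)}\}^{p-1} a'(\k)\k^{(1)}\k^{(n+1)}\, d\th,
\end{align*}
whose first summand is at most $-c\psi_p^{(n)}(\k)$ with $c=(p-1)(2L)^{-2}$. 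The second summand combines with the $j=1$ Leibniz term $n\int_S \{\k^{(n)}\}^{p-1}a'(\k)\k^{(1)}\k^{(n+1)}\, d\th$ into $(n-1)\int_S \{\k^{(n)}\}^{p-1}a'(\k)\k^{(1)}\k^{(n+1)}\, d\th$. When $n=1$ this vanishes identically, giving the sharp bound $\Phi_1^{(1)}(\k) \le -c\psi_p^{(1)}(\k)$ with $C=0$ and $P_i=0$.

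For $n\ge 2$ I would integrate by parts once more using $\{\k^{(n)}\}^{p-1}\k^{(n+1)} = p^{-1}\partial_\th\{\k^{(n)}\}^p$, transforming the remaining term into $-\tfrac{n-1}{p}\int_S \{\k^{(n)}\}^p\{a''(\k)(\k^{(1)})^2 + a'(\k)\k^{(2)}\}\, d\th$, which carries all the $\k^{(n+1)}$ dependence back onto $\k^{(n)}$. The residual Leibniz contributions $j\ge 2$ are products of $\{a(\k)\}^{(j)}$ and $\k^{(n+2-j)}$; those whose derivative factors all have order $\le n-1$ are immediately of the form $\{\k^{(n)}\}^{p-1}\cdot P$ with $P\in\mathcal{P}_n$, and Young's inequality gives $C(\|\k^{(n)}\|_{L^p}^p + \|P\|_{L^p}^p)$. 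Products retaining a factor of $\k^{(n)}$ (only from the single-block Fa\`a di Bruno term $a'(\k)\k^{(n)}$ in $\{a(\k)\}^{(n)}$, and from the $a'(\k)\k^{(2)}$ piece of $\{a(\k)\}^{(2)}$ when $n=2$) yield integrals of $\{\k^{(n)}\}^p$ against lower-order polynomials; these are controlled using the one-dimensional Sobolev embedding $\|\k^{(n)}\|_{L^\infty}^p \le C(\|\k^{(n)}\|_{L^p}^p + \psi_p^{(n)}(\k))$ on $S$ combined with $\epsilon$-Young.

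The main obstacle is the absorption bookkeeping: one must verify that the accumulated $\epsilon\psi_p^{(n)}$ contributions arising from each Young absorption can be made strictly smaller than the leading $c\psi_p^{(n)}$ by choosing $\epsilon$ depending only on $n, p, L$. The exceptional $n=2$ residual $\int_S \{\k^{(2)}\}^{p+1} a'(\k)\, d\th$ from the $j=2$ Leibniz term requires one further integration by parts using $\k^{(2)} = \partial_\th \k^{(1)}$, reducing it to $-p\int_S \{\k^{(2)}\}^{p-1}\k^{(3)} a'(\k)\k^{(1)}\, d\th - \int_S \{\k^{(2)}\}^p a''(\k)(\k^{(1)})^2\, d\th$; the first piece is absorbed into $\psi_p^{(2)}$ by $\epsilon$-Young (using $(a')^2/a \le \text{const}$), and the second is of the already handled $\{\k^{(n)}\}^p\cdot(\text{lower order})$ form. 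After this bookkeeping, the lemma follows with explicit constants $c(n,p,L)$ and $C(n,p,L)$ and $P_i$'s of the claimed type, and with $P_1=P_2=P_3=0$ in the case $n=1$.
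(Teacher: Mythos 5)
The paper itself gives no proof of this lemma: since $a(\k)=\chi_L^2(\k(\th))$ is a purely local, bounded coefficient, $\Phi_1^{(n)}$ is literally the same quantity as in \cite{F99}, and the authors simply invoke Lemma 6.1 there. Your skeleton agrees with what that argument must do: Leibniz expansion, one integration by parts on $a(\k)\k^{(n+2)}$ producing $-(p-1)\int(\k^{(n)})^{p-2}(\k^{(n+1)})^2a\,d\th\le-(p-1)(2L)^{-2}\psi_p^{(n)}$, and the exact cancellation of the leftover against the $j=1$ Leibniz term, which correctly yields the clean $n=1$ case with $C=0$, $P_i=0$.

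For $n\ge2$, however, there is a genuine gap at the step you yourself flag as "the main obstacle." All the troublesome contributions reduce to integrals of the form $\int_S(\k^{(n)})^p\,W\,d\th$ with $W$ a polynomial in $\k^{(1)},\dots,\k^{(n-1)}$ with bounded coefficients but itself \emph{unbounded} (e.g.\ $W=a''(\k)(\k^{(1)})^2$ or $a'(\k)\k^{(2)}$). Your proposed tool, the embedding $\|\k^{(n)}\|_{L^\infty}^p\le C(\|\k^{(n)}\|_{L^p}^p+\psi_p^{(n)}(\k))$ plus $\e$-Young, gives at best
\begin{align*}
\int_S(\k^{(n)})^p\,W\,d\th\;\le\;\|W\|_{L^1}\bigl(\e\,\psi_p^{(n)}(\k)+C_\e\|\k^{(n)}\|_{L^p}^p\bigr),
\end{align*}
and the prefactor $\|W\|_{L^1}$ depends on $\k$ and is not bounded by a constant $C(n,p,L)$; it can therefore be absorbed neither into $-c\,\psi_p^{(n)}$ nor into $C\|\k^{(n)}\|_{L^p}^p$, and the resulting bound is a product rather than the sum claimed in \eqref{eq:F 6-8}. (The same objection applies to your $n=2$ residual: after your extra integration by parts you are left with $\int(\k^{(2)})^p(\k^{(1)})^2\,d\th$, which you declare "already handled" by the same unsupported step.) The repair is a different integration by parts: write one factor $\k^{(n)}=\partial_\th\k^{(n-1)}$ in $\int(\k^{(n)})^{p-1}\cdot\k^{(n)}W\,d\th$ and integrate by parts so that this factor becomes $\k^{(n-1)}$; the term containing $\k^{(n+1)}$ is then controlled by Cauchy--Schwarz against the $a$-weighted $\psi_p^{(n)}$ plus $\int(\k^{(n)})^{p-2}(W\k^{(n-1)})^2\,d\th$, and ordinary Young turns everything into $C\{\|\k^{(n)}\|_{L^p}^p+\|P\|_{L^p}^p\}$ with genuine $P\in\mathcal{P}_n$ (the case where the new derivative lands back on $\k^{(n-1)}$ reproduces the original integral with a favorable sign and is solved for). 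With that replacement the bookkeeping closes with constants depending only on $n,p,L$; as written, your argument does not establish \eqref{eq:F 6-8} for $n\ge2$.
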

As for the terms $\Phi_2^{(n)}$ and $\Phi_3^{(n)}$, we have the upper bounds for them as 
in the next lemma.

\begin{lem}(Estimates for $\Phi_2^{(n)}$ and $\Phi_3^{(n)}$)\,\label{lem:F lem 6-2}
For $n\geq1$, there exists a constant $C=C(n,p,L)>0$ such that
\begin{align*}
  |\Phi_2^{(n)}(\k)|&\leq C\Bigl\{||\k^{(n)}||_{L^p}^p+||P_4||_{L^p}^p\Bigr\}, \\ %\label{eq:F 6-17},\\
  |\Phi_3^{(n)}(\k)|&\leq C\Bigl\{||\k^{(n)}||_{L^p}^p+||P_5||_{L^p}^p\Bigr\}, %\label{eq:F 6-18} 
\end{align*}
for some $P_i=P_i(\k^{(1)}(\th),\cdots,\k^{(n-1)}(\th),\k;\k(\th))\in\mathcal{P}_n$, $i=4,5$.
In particular, in the case $n=1$, these estimates hold with $P_4=P_5=0$.
\end{lem}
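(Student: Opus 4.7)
The plan is to differentiate $b(\th,\k)$ and $h(\th,\k)$ in $\th$ via Fa\`a di Bruno, isolate the term linear in the top derivative $\k^{(n)}(\th)$, then bound the resulting integrals by H\"older and Young inequalities. The crucial structural observation is that the nonlocal factor $|\ga|_L^{-1}$ depends on $\k$ only through the integral $\int_S d\th'/\chi_L(\k(\th'))$ and not through the variable of integration $\th$, so when we compute $\partial_\th^n$ it behaves like a $\th$-independent constant that is moreover bounded uniformly in $\k$ (because $1/(2L)\le \chi_L\le 2L$). It still participates as a $g_{2,\a}(z)$-factor in the polynomial class $\mathcal{P}_n$.

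More concretely, Fa\`a di Bruno applied to $\chi_L^j(\k(\th))$ (with $j=2,3$) produces
\[
\partial_\th^n[\chi_L^j(\k(\th))] = j\chi_L^{j-1}(\k(\th))\chi_L'(\k(\th))\,\k^{(n)}(\th) + Q_{j,n}\bigl(\k^{(1)}(\th),\ldots,\k^{(n-1)}(\th);\k(\th)\bigr),
\]
where $Q_{j,n}$ is a polynomial in $\k^{(1)}(\th),\ldots,\k^{(n-1)}(\th)$ whose coefficients are smooth bounded functions of $\k(\th)$, since all derivatives of $\chi_L$ are bounded. Combining these with the $\th$-independent, bounded factor $|\ga|_L^{-1}$, one obtains representations
\[
\{b(\th,\k)\}^{(n)} = A_b\bigl(\k(\th);\k\bigr)\,\k^{(n)}(\th) + R_b, \qquad \{h(\th,\k)\}^{(n)} = A_h\bigl(\k(\th);\k\bigr)\,\k^{(n)}(\th) + R_h,
\]
where $A_b$, $A_h$ are bounded (with bound depending on $L$) and $R_b,R_h$ are elements of $\mathcal{P}_n$ evaluated at $(\k^{(1)}(\th),\ldots,\k^{(n-1)}(\th),\k;\k(\th))$.

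Inserting this into $\Phi_2^{(n)}(\k)$ and using $|A_b|\le C$ plus H\"older's inequality,
\[
|\Phi_2^{(n)}(\k)| \le C\|\k^{(n)}\|_{L^p}^p + \|\k^{(n)}\|_{L^p}^{p-1}\|R_b\|_{L^p},
\]
and Young's inequality $a^{p-1}b\le \tfrac{p-1}{p}a^p+\tfrac{1}{p}b^p$ absorbs the cross term into $C\bigl(\|\k^{(n)}\|_{L^p}^p+\|R_b\|_{L^p}^p\bigr)$. Taking $P_4=R_b$ gives the first estimate; the argument for $\Phi_3^{(n)}$ with $P_5=R_h$ is identical. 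When $n=1$ the polynomial $Q_{j,1}$ is empty (no $\k^{(i)}$ with $1\le i\le 0$), so $R_b=R_h=0$ and the bound $|\Phi_i^{(1)}(\k)|\le C\|\k^{(1)}\|_{L^p}^p$ follows directly from $|A_b|,|A_h|\le C$.

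There is no genuine analytic obstacle here — the boundedness of $\chi_L^{(k)}$ and the uniform two-sided bound on $|\ga|_L$ remove every potential singularity — so the work is purely bookkeeping. The only point requiring a small amount of care is verifying that after expanding $(j\chi_L^{j-1}\chi_L')(\k(\th))$ and the nonlocal $|\ga|_L^{-1}$-factors, the remainder $R_b$, $R_h$ really fit the product structure $\sum_\a g_{1,\a}(\k(\th))g_{2,\a}(\k)y^\a$ defining $\mathcal{P}_n$; this is immediate once one notes that $|\ga|_L^{-1}$ (and its powers) are $C_b^\infty$ functionals of $\k\in L^2(S)$ by the bounds $\pi/L\le |\ga|_L(\k)\le 4\pi L$ and the formula \eqref{eq:D-ga}.
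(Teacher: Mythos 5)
Your proposal is correct and follows essentially the same route as the paper: decompose $\{b\}^{(n)}$ and $\{h\}^{(n)}$ into a leading term linear in $\k^{(n)}(\th)$ with bounded coefficient plus a remainder in $\mathcal{P}_n$ (the paper states this by induction, you via Fa\`a di Bruno, which is the same computation), exploit the boundedness of all derivatives of $\chi_L$ and of the $\th$-independent nonlocal factor $|\ga|_L^{-1}$, and finish with H\"older and Young. The paper only sketches this and defers to \cite{F99}, Lemma 6.2; you have merely written out the details it omits.
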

\begin{proof}
We give only the sketch of the proof. By induction in $n$, we get 
\begin{align*} %\label{eq:F 6-19}
 &\{h(\th,\k)\}^{(n)}=\partial_{\k(\th)}h(\th,\k)\k^{(n)}(\th)
+P(\k^{(1)}(\th),\cdots,\k^{(n-1)}(\th),\k;\k(\th)),
\end{align*}
for some $P$. Here $\partial_{\k(\th)}h(\th,\k)$ is defined as follows. As in
\eqref{eq:5.L}, $h(\th,\k)$ is the product of a function of $\k(\th)$ and
a functional of $\k$: $h(\th,\k) = h_1(\k(\th)) \cdot h_2(\k)$ with
$h_1(x) = c\a \chi_L^2(x)$ and $h_2(\k) = |\ga|_L^{-1}$. Then
$\partial_{\k(\th)}h(\th,\k) := \frac{\partial h_1}{\partial x}(\k(\th)) \cdot h_2(\k)$.
Recalling the definition of the cutoff functions $b$ and $h$ given in \eqref{eq:5.L},
the derivatives $\partial_{\k(\th)}^lh(\th,\k)$ and $\partial_{\k(\th)}^lb(\th,\k)$, 
$l \in \Z_+$, are all bounded. The rest of the proof is similar to that of \cite{F99}, Lemma 6.2.
\end{proof}
We have another estimate for $\Phi_3^{(n)}(\k)$.
In what follows, $\psi(\e)$ denotes the function given in \eqref{eq:def of psi(e)}.
\begin{lem}\label{lem:F lem 6-3}
For every $n\geq1$ and $\de>0$, 
\begin{align*}  %\label{eq:F 6-20}
 &|\Phi_3^{(n)}(\k)|\leq\psi(\e)^{-1}(p-1)\Bigl[\de\psi_p^{(n)}(\k)+\de^{-1}||\k^{(n)}||_{L^p}^p+\de^{-1}\psi(\e)^p||P_6||_{L^p}^p\Bigr],
\end{align*}
for some $P_6=P_6(\k^{(1)}(\th),\cdots,\k^{(n-1)}(\th),\k;\k(\th))\in\mathcal{P}_n$.
\end{lem}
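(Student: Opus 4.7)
The plan is to generate $\psi_p^{(n)}(\k)$ on the right by integrating by parts in $\th$ on the circle $S$, which has no boundary contribution, yielding
$$\Phi_3^{(n)}(\k)=-(p-1)\int_S\{\k^{(n)}(\th)\}^{p-2}\k^{(n+1)}(\th)\{h(\th,\k)\}^{(n-1)}d\th.$$
Expanding $\{h(\cdot,\k)\}^{(n-1)}$ via Fa\`a di Bruno from $h(\th,\k)=c\a\chi_L^2(\k(\th))/|\ga|_L(\k)$, I would isolate the linear-in-$\k^{(n-1)}$ part (whose coefficient $2c\a\chi_L(\k)\chi_L^\prime(\k)/|\ga|_L(\k)$ is bounded uniformly in $L$) from a remainder lying in $\mathcal{P}_n$, so that $\{h(\th,\k)\}^{(n-1)\,2}\le C\bigl(\{\k^{(n-1)}\}^2+\tilde P^2\bigr)$ for some $\tilde P\in\mathcal{P}_n$. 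For $n=1$, $\{h\}^{(0)}=h$ itself lies in $\mathcal{P}_n$ with no $\k^{(n-1)}$ contribution.

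The bound will then follow from two successive applications of Young's inequality with weights scaled by powers of $\psi(\e)$. First, applying $AB\le \de\psi(\e)^{-1}A^2+(\psi(\e)/(4\de))B^2$ with $A=\{\k^{(n)}\}^{(p-2)/2}|\k^{(n+1)}|$ and $B=\{\k^{(n)}\}^{(p-2)/2}|\{h(\th,\k)\}^{(n-1)}|$ and integrating over $S$, I obtain
$$|\Phi_3^{(n)}(\k)|\le(p-1)\Bigl[\frac{\de}{\psi(\e)}\psi_p^{(n)}(\k)+\frac{\psi(\e)}{4\de}\int_S\{\k^{(n)}\}^{p-2}\{h(\th,\k)\}^{(n-1)\,2}d\th\Bigr].$$
To convert the residual $\{\k^{(n)}\}^{p-2}y^2$ integrals into $L^p$-norms with the correct weights, I would then apply the scalar Young inequality $x^{p-2}y^2\le \eta x^p+C_p\eta^{-(p-2)/2}y^p$ with the choice $\eta=\psi(\e)^{-2}$, which gives $\psi(\e)^{-2}x^p+C_p\psi(\e)^{p-2}y^p$. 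Substituting $x=|\k^{(n)}|$ and $y$ equal to $|\k^{(n-1)}|$ or $|\tilde P|$, integrating, and multiplying by $\psi(\e)/(4\de)$ produces coefficient $\psi(\e)^{-1}\de^{-1}$ on $\|\k^{(n)}\|_{L^p}^p$ and $\psi(\e)^{p-1}\de^{-1}$ on $\|P_6\|_{L^p}^p$, where $P_6\in\mathcal{P}_n$ combines $\k^{(n-1)}$ and $\tilde P$. Combining with the $\psi_p^{(n)}$-term and rescaling $\de$ to absorb the combinatorial constant $C_p$ yields the stated inequality.

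The main obstacle is the delicate tuning of the two Young weights: the first weight must be proportional to $\psi(\e)$ in order to create the $\psi(\e)^{-1}\de$ coefficient on $\psi_p^{(n)}(\k)$, and the second must be precisely $\eta=\psi(\e)^{-2}$ so that the remaining two coefficients emerge with the exact powers $\psi(\e)^{-1}\de^{-1}$ and $\psi(\e)^{p-1}\de^{-1}$. These are the unique weightings that, when the lemma is later combined with $|\dot{w}^\e(t)|\le M\psi(\e)/A$ in the moment estimate derived from \eqref{eq:F 6-6}, produce a bound independent of $\psi(\e)$ on the parts involving $\psi_p^{(n)}$ and $\|\k^{(n)}\|_{L^p}^p$, while leaving only a tame $\psi(\e)^{p}$-blow-up on the polynomial remainder---exactly the structure required to close the uniform-in-$\e$ moment estimate on $\k^\e$ via Gronwall.
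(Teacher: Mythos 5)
Your proposal is correct and follows essentially the same route as the paper: integrate by parts once to bring out $\k^{(n+1)}$, take $P_6=\{h(\th,\k)\}^{(n-1)}\in\mathcal{P}_n$, and apply the weighted inequality $ab\le\tfrac12(\de\psi(\e)^{-1}a^2+\de^{-1}\psi(\e)b^2)$ followed by a second Young step to turn $\int_S|\k^{(n)}|^{p-2}P_6^2\,d\th$ into the stated $L^p$-norms with the correct powers of $\psi(\e)$. The only cosmetic difference is that you expand $\{h\}^{(n-1)}$ via Fa\`a di Bruno and recombine, whereas the paper keeps $\{h\}^{(n-1)}$ itself as $P_6$; both are valid.
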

\begin{proof}
The proof is given by a similar method to that of \cite{F99}, Lemma 6.3.
Take $P_6=\{h(\th,\k)\}^{(n-1)}$.
Then, using $ab\leq (a^2+b^2)/2$ for $a=\k^{(n-1)}(\th)\de^{\frac12}\psi(\e)^{-\frac12}$ and $b=P_6\de^{-\frac12}\psi(\e)^{\frac12}$,
we easily get the assertion.
\end{proof}
Next, we give a bound for $\Psi_2^{(n)}$ for $n\ge1$.
\begin{lem}(Estimate for $\Psi_2^{(n)}$)\,\label{lem:F lem 6-4}
For $n\geq1$, there exists a constant $C=C(n,p,L)>0$
such that
\begin{align*}  %\label{eq:F 6-22}
 &|\Psi_2^{(n)}(\k)|\leq C\Bigl[||\k^{(n)}||_{L^p}^p+\sum_{i=7}^8||P_i||_{L^p}^p\Bigr],
\end{align*}
for some $P_i=P_i(\k^{(1)}(\th),\cdots,\k^{(n-1)}(\th),\k;\k(\th))\in\mathcal{P}_n$, $i=7,8$.
\end{lem}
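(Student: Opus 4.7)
The plan is to treat the two integrals that compose $\Psi_2^{(n)}$ separately and reduce each to the same type of estimate used in Lemmas~\ref{lem:F lem 6-1} and \ref{lem:F lem 6-2}. The key input from the proof of Lemma~\ref{lem:F lem 6-2} is the inductive identity
\begin{align*}
\{h(\th,\k)\}^{(n)} = \partial_{\k(\th)} h(\th,\k)\,\k^{(n)}(\th) + Q(\k^{(1)},\ldots,\k^{(n-1)},\k;\k(\th)),
\end{align*}
where $Q\in\mathcal{P}_n$ and $\partial_{\k(\th)} h$ is bounded uniformly in $\k$ thanks to the cutoffs $\chi_L$. Recall also from \eqref{eq:5.L} that $h(\th,\k)$ is a product $h_1(\k(\th))\cdot h_2(\k)$ of a smooth bounded function of $\k(\th)$ and a smooth bounded functional of $\k$ through $|\ga|_L^{-1}$.

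For the first term of $\Psi_2^{(n)}$ I would substitute the identity above, obtain $[\{h(\th,\k)\}^{(n)}]^2\le C\bigl(\{\k^{(n)}(\th)\}^2 + Q^2\bigr)$, and then apply Young's inequality $|a|^{p-2}|b|^2 \le \tfrac{p-2}{p}|a|^p + \tfrac{2}{p}|b|^p$ with $a=\k^{(n)}(\th)$ and $b\in\{\k^{(n)}(\th),Q\}$. After integrating over $S$ this produces a contribution bounded by $C(\|\k^{(n)}\|_{L^p}^p + \|P_7\|_{L^p}^p)$ with $P_7:=Q\in\mathcal{P}_n$.

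For the second term, the key observation is that $Dh(\th,\k)(h(\cdot,\k))$ inherits the same product structure as $h(\th,\k)$ itself. Indeed, from the explicit formula \eqref{eq:D-h} for $Dh$ and \eqref{eq:D-ga} for $D|\ga|_L$,
\begin{align*}
Dh(\th,\k)(h(\cdot,\k)) = \tfrac{2c\a \chi_L'(\k(\th))\chi_L(\k(\th))}{|\ga|_L(\k)}\,h(\th,\k) + \tfrac{c\a \chi_L^2(\k(\th))}{|\ga|_L(\k)^2}\bigl\langle \chi_L'(\k)/\chi_L^2(\k),\,h(\cdot,\k)\bigr\rangle,
\end{align*}
which is again a sum of products of a bounded smooth function of $\k(\th)$ with a bounded smooth functional of $\k$. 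Applying the same inductive argument as in the proof of Lemma~\ref{lem:F lem 6-2}, one obtains
\begin{align*}
\{Dh(\th,\k)(h(\cdot,\k))\}^{(n)} = R_1(\k(\th))\,\k^{(n)}(\th) + P_8,
\end{align*}
for some bounded $R_1$ and some $P_8\in\mathcal{P}_n$. Inserting this into the second integral and using $|a|^{p-1}|b|\le \tfrac{p-1}{p}|a|^p+\tfrac{1}{p}|b|^p$ with $a=\k^{(n)}(\th)$ and $b\in\{\k^{(n)}(\th),P_8\}$ yields a bound of the claimed form, with $P_8$ serving as the second polynomial.

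The only delicate point is verifying that the inductive Leibniz-type expansion of derivatives of $Dh(\th,\k)(h(\cdot,\k))$ stays within $\mathcal{P}_n$. This is where the nonlocal integral $\langle \chi_L'(\k)/\chi_L^2(\k),h(\cdot,\k)\rangle$ must be handled: it does not depend on $\th$, so taking $\th$-derivatives treats it as a bounded scalar $g_{2,\a}(\k)\in C_b^\infty(L^2(S))$, exactly matching the second-factor slot in the definition of $\mathcal{P}_n$. Together with the boundedness of $\chi_L,\chi_L',\chi_L''$ and $|\ga|_L^{-1}$ (guaranteed by the cutoffs), every derivative produced by the Leibniz rule on $R_1(\k(\th))\cdot[\text{scalar functional}]$ either contributes a factor $\k^{(j)}(\th)$ with $j\le n$ or a bounded function of $\k(\th)$ and $\k$, so the remainder stays in $\mathcal{P}_n$. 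Combining the bounds for both terms completes the proof with $P_7,P_8\in\mathcal{P}_n$ and a constant $C=C(n,p,L)$ depending only on the cutoff level $L$, the index $n$, and the exponent $p$.
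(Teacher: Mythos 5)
Your proposal is correct and follows essentially the same route as the paper: the paper's (very terse) proof rests precisely on the identity $\{Dh(\th,\k)(h(\cdot,\k))\}^{(n)}=\{\partial_{\k(\th)}(Dh(\th,\k)(h(\cdot,\k)))\}\k^{(n)}(\th)+P_8$ with $P_8\in\mathcal{P}_n$, together with the analogous expansion of $\{h(\th,\k)\}^{(n)}$ from Lemma \ref{lem:F lem 6-2} and Young's inequality, exactly as you do. Your explicit treatment of the nonlocal factor $\langle \chi_L'(\k)/\chi_L^2(\k),h(\cdot,\k)\rangle$ as a bounded scalar functional filling the $g_{2,\a}$ slot of $\mathcal{P}_n$ is the detail the paper omits by reference to \cite{F99}.
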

\begin{proof}
We use the idea of \cite{F99}, Lemma 6.4.
The assertion is shown by noting that 
\begin{align*}
  \{Dh(\th,\k)(h(\cdot,\k))\}^{(n)}&=\{\partial_{\k(\th)}\Bigl(Dh(\th,\k)(h(\cdot,\k))\Bigr)\}\k^{(n)}(\th)\\
  &+P_8(\k^{(1)}(\th),\cdots,\k^{(n-1)}(\th),\k;\k(\th)).
\end{align*}
We omit the details, since the other parts are the same as \cite{F99}, Lemma 6.4.
\end{proof}
Let us give an estimate for $\Psi_1^{(n)}$. We start with the case $n=1$.

\begin{lem}\label{lem:F lem 6-5}
For every $\de>0$, we have
\begin{align*}  %\label{eq:F 6-23}
 &|\Psi_1^{(1)}(\k)|\leq C\Bigl[1+||\k^{(1)}||_{L^p}^p+\psi(\e)^{-1}\de^{-1}||\k^{(1)}||_{L^{p+2}}^{p+2}+(1+\psi(\e)\de)\psi_p^{(1)}(\k)\Bigr],
\end{align*}
for some $C=C(1,p,L)>0$.
\begin{proof}
The proof is based on that of \cite{F99}, Lemma 6.5.
Using the integration by parts formula, we have
\begin{align*}
  &\Psi_1^{(1)}(\k)=I+II+III,
\end{align*}
where, writing $\k'=\k^{(1)}$ and $\k''=\k^{(2)}$ for simplicity,
\begin{align*}
  &I=-(p-1)^2\int_S \{\k'(\th)\}^{p-2}\{\k''(\th)\}\partial_{\k(\th)}h(\th,\k)A(\th,\k)d\th,\\
  &II=-(p-1)\int_S \{\k'(\th)\}^{p}\partial_{\k(\th)}^2h(\th,\k)A(\th,\k)d\th,\\
  &III=-(p-1)\int_S \{\k'(\th)\}^{p-2}\{\k''(\th)\}Dh(\th,\k)(A(\cdot,\k))d\th,\\
  &A(\th,\k)\equiv a(\k(\th))\k''(\th)+b(\th,\k).
\end{align*}
Estimates on $I$ and $II$ are similar to those in
the proof of \cite{F99}, Lemma 6.5.
It therefore suffices to give the estimate on $III$.
From \eqref{eq:D-h}, we have
\begin{align*}
D h(\theta,\k)(A(\cdot,\k))= \frac{2c\a \chi_L^{\prime}(\k(\theta))\chi_L(\k(\theta))}{|\ga|_L(\k)}
 A(\th,\k)  +\frac{c\a \chi_L(\k(\theta))^2}{|\ga|_L(\k)^2}
\Big\langle \frac{\chi_L^{\prime}(\k)}{\chi_L(\k)^2},A(\cdot,\k) \Big\rangle.
\end{align*}
The contribution to $III$ of the first term of $D h(\theta,\k)(A(\cdot,\k))$
can be estimated similarly to \cite{F99}, Lemma 6.5.  For the second term,
since we have by the integration by parts
\begin{equation}  \label{eq:5.Int-By-Parts}
\Big| \Big\langle \frac{\chi_L^{\prime}(\k)}{\chi_L(\k)^2},A(\cdot,\k) 
\Big\rangle\Big| \le C \|\k'\|_{L^2}^2,
\end{equation}
the contribution to $III$ of the second term is estimated by
\begin{equation}  \label{eq:5.second}
C(p-1) \|\k'\|_{L^2}^2 \int_S \big| (\k')^{p-2} \k'' \big| d\th.
\end{equation}
However, by Schwarz's inequality, the above integral is bounded by
$$
\int_S \big|  (\k')^{\frac{p-2}2} \cdot (\k')^{\frac{p-2}2} \k'' \big| d\th
\le \big(\int_S |\k'|^{p-2}d\th \big)^{1/2} (\psi_p^{(1)}(\k))^{1/2}
= \|\k'\|_{L^{p-2}}^{\frac{p-2}2} (\psi_p^{(1)}(\k))^{1/2},
$$
if $p\ge 3$ and this is obvious in the case of $p=2$ by regarding
$\|\k'\|_{L^{p-2}}^{\frac{p-2}2} = (2\pi)^{1/2}$.  Therefore,
\eqref{eq:5.second} dropped the constant $C(p-1)$ is bounded by
\begin{align*}
C_p \|\k'\|_{L^{p+2}}^{\frac{p+2}2} (\psi_p^{(1)}(\k))^{1/2} 
\le \frac{C_p}2 \big\{\psi(\e)^{-1}\de^{-1} \|\k'\|_{L^{p+2}}^{p+2}
 + \psi(\e)\de \, \psi_p^{(1)}(\k)\big\},
\end{align*}
where we have used $\|\k'\|_{L^2} \le c_p \|\k'\|_{L^{p+2}}$ and 
$\|\k'\|_{L^{p-2}} \le c_p \|\k'\|_{L^{p+2}}$ with some $c_p>0$,
and set $C_p = c_p^{\frac{p+2}2}$.
This completes the proof.
\end{proof}

\end{lem}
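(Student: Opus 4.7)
The plan is to reduce $\Psi_1^{(1)}(\k)$ to a sum $I+II+III$ of three integrals free of $\k^{(3)}$, and then to bound each piece by a combination of $\|\k^{(1)}\|_{L^p}^p$, $\|\k^{(1)}\|_{L^{p+2)}}^{p+2}$ and $\psi_p^{(1)}(\k)$ via H\"older, Cauchy--Schwarz and Young's inequality with a free parameter tuned to produce the factors $\psi(\e)^{-1}\de^{-1}$ and $\psi(\e)\de$ in the conclusion. The reduction step is an integration by parts on $S$: using $\{h(\th,\k)\}'=\partial_{\k(\th)}h\cdot \k^{(1)}$ and $\{A(\th,\k)\}'=a'(\k)\k^{(1)}\k^{(2)}+a(\k)\k^{(3)}+\partial_{\k(\th)}b\cdot \k^{(1)}$ where $A(\th,\k)=a(\k(\th))\k^{(2)}(\th)+b(\th,\k)$, the only $\k^{(3)}$-term appears in the first summand of $\Psi_1^{(1)}$ multiplied by $a(\k)$; integrating it by parts once distributes the $\th$-derivative onto $(\k^{(1)})^{p-1}\,\partial_{\k(\th)}h\,a(\k)$, producing a factor $(p-1)(\k^{(1)})^{p-2}\k^{(2)}$ from differentiating $(\k^{(1)})^{p-1}$ (yielding $I$ with prefactor $(p-1)^2$) and a factor $(\k^{(1)})^p$ from differentiating the smooth coefficients $\partial_{\k(\th)}h\cdot a(\k)$ (yielding $II$). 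Term $III$ then arises from a separate integration by parts applied to the second summand of $\Psi_1^{(1)}$ containing $Dh(\th,\k)(A)$.

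For $I=-(p-1)^2\int_S(\k^{(1)})^{p-2}\k^{(2)}\,\partial_{\k(\th)}h\cdot A\,d\th$, the contribution of $a(\k)\k^{(2)}\subset A$ is directly bounded by a constant multiple of $\psi_p^{(1)}(\k)$ using the uniform boundedness of $\partial_{\k(\th)}h$ and $a(\k)$ provided by the cutoff $\chi_L$; the contribution from the bounded factor $b\subset A$ is $\int_S(\k^{(1)})^{p-2}|\k^{(2)}|\,d\th$, controlled by Cauchy--Schwarz and a weighted Young inequality by $C(\|\k^{(1)}\|_{L^p}^p+\psi_p^{(1)}(\k))$. For $II=-(p-1)\int_S(\k^{(1)})^p\,\partial_{\k(\th)}^2h\cdot A\,d\th$, the $b$-piece is at most $C(1+\|\k^{(1)}\|_{L^p}^p)$, while the $a\k^{(2)}$-piece is the delicate one: writing $(\k^{(1)})^p|\k^{(2)}|=(\k^{(1)})^{(p+2)/2}\cdot(\k^{(1)})^{(p-2)/2}|\k^{(2)}|$ and applying Cauchy--Schwarz gives $\|\k^{(1)}\|_{L^{p+2}}^{(p+2)/2}\sqrt{\psi_p^{(1)}(\k)}$, and then Young's inequality $xy\le \tfrac{x^2}{2\eta}+\tfrac{\eta y^2}{2}$ with $\eta=\psi(\e)\de$ yields precisely $\psi(\e)^{-1}\de^{-1}\|\k^{(1)}\|_{L^{p+2}}^{p+2}+\psi(\e)\de\,\psi_p^{(1)}(\k)$.

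The hard part is $III=-(p-1)\int_S(\k^{(1)})^{p-2}\k^{(2)}\,Dh(\th,\k)(A(\cdot,\k))\,d\th$, because $Dh(\th,\k)(A)$ contains a nonlocal inner product $\langle \chi_L'(\k)/\chi_L^2(\k),\,A\rangle$ for which a naive Cauchy--Schwarz would lose a derivative. The local part of $Dh(\th,\k)(A)$ is treated just like the integrand of $II$. For the nonlocal part, the troublesome summand is $\langle \chi_L'(\k)/\chi_L^2(\k),\,a(\k)\k^{(2)}\rangle$; I would integrate by parts once in $\th$ to shift the derivative off $\k^{(2)}$, and since $\chi_L'(\k)a(\k)/\chi_L^2(\k)$ is a smooth function of $\k$ whose $\th$-derivative is $O(\k^{(1)})$, this produces the clean bound $C\|\k^{(1)}\|_{L^2}^2$. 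Combining with $\int_S(\k^{(1)})^{p-2}|\k^{(2)}|\,d\th\le \|\k^{(1)}\|_{L^{p-2}}^{(p-2)/2}\sqrt{\psi_p^{(1)}(\k)}$ from Cauchy--Schwarz and the embeddings $\|\k^{(1)}\|_{L^2},\|\k^{(1)}\|_{L^{p-2}}\le C\|\k^{(1)}\|_{L^{p+2}}$ on the compact $S$, one obtains $C\|\k^{(1)}\|_{L^{p+2}}^{(p+2)/2}\sqrt{\psi_p^{(1)}(\k)}$, again absorbed by the same Young inequality with $\eta=\psi(\e)\de$. Summing the bounds for $I$, $II$ and $III$ (the constant $1$ in the target absorbs residual terms from the $b$-pieces) completes the argument.
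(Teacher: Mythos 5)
Your proposal is correct and follows essentially the same route as the paper: the same decomposition $\Psi_1^{(1)}=I+II+III$ by integration by parts, the same splitting of $Dh(\th,\k)(A(\cdot,\k))$ into its local and nonlocal parts, the same key integration by parts giving $|\langle \chi_L'(\k)/\chi_L^2(\k),A(\cdot,\k)\rangle|\le C\|\k'\|_{L^2}^2$, and the same Young inequality with parameter $\psi(\e)\de$ to produce the factors $\psi(\e)^{-1}\de^{-1}$ and $\psi(\e)\de$. The only difference is that you spell out the bounds for $I$ and $II$, which the paper delegates to Lemma 6.5 of \cite{F99}.
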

Next we consider the the case $n\ge2$. 
For $n\ge2$, the integration by parts formula for $\Psi_1^{(n)}$, it can be decomposed into
\begin{align*}  %\label{eq:F 6-24}
  &\Psi_1^{(n)}(\k)=-(p-1)(p-2)\Psi_a^{(n)}(\k)-(p-1)\Psi_b^{(n)}(\k)-(p-1)\Psi_c^{(n)}(\k),
\end{align*}
where
\begin{align*}
  &\Psi_a^{(n)}(\k)=\int_S \{\k^{(n)}(\th)\}^{p-3}\k^{(n+1)}(\th)\{h(\th,\k)\}^{(n)}\{A(\th,\k)\}^{(n-1)}d\th,\\
  &\Psi_b^{(n)}(\k)=\int_S \{\k^{(n)}(\th)\}^{p-2}\{h(\th,\k)\}^{(n+1)}\{A(\th,\k)\}^{(n-1)}d\th,\\
  &\Psi_c^{(n)}(\k)=\int_S \{\k^{(n)}(\th)\}^{p-2}\k^{(n+1)}(\th)\Bigl[Dh(\th,\k)(A(\cdot,\k))\Bigr]^{(n-1)}d\th.
\end{align*}
Then, we have the following three lemmas for $\Psi_b^{(n)}$, $\Psi_c^{(n)}$ and $\Psi_a^{(n)}$.
\begin{lem}(Estimate on $\Psi_b^{(n)}$)\label{lem:F lem 6-6}
For $n\ge2$, there exists a constant $C=C(n,p,L)>0$ such that
\begin{align*}  %\label{eq:F 6-25}
 &|\Psi_b^{(n)}(\k)|\leq C\Bigl[\psi_p^{(n)}(\k)+||\k^{(n)}||_{L^p}^p+\sum_{i=9}^{12}||P_i||_{L^p}^p\Bigr],
\end{align*}
for some $P_i=P_i(\k^{(1)}(\th),\cdots,\k^{(n-1)}(\th),\k;\k(\th))\in\mathcal{P}_n$, $i=9,\ldots,12$.
\begin{proof}
The proof is similar to \cite{F99}, Lemma 6.6
by noting that
\begin{align*}  % \label{eq:F 6-26}
\{h(\th,\k)\}^{(n+1)}
=& \partial_{\k(\th)}h(\th,\k)\k^{(n+1)}(\th)
 +(n+1)\partial_{\k(\th)}^2h(\th,\k)\k^{(1)}(\th)\k^{(n)}(\th)\nonumber\\
 &+Q_1(\k^{(1)}(\th),\cdots,\k^{(n-1)}(\th),\k;\k(\th))\nonumber
\end{align*}
and 
\begin{align*} % \label{eq:F 6-27}
\{A(\th,\k)\}^{(n-1)}
=& a(\k(\th))\k^{(n+1)}(\th)+(n-1) a'(\k(\th)) \k^{(1)}(\th)\k^{(n)}(\th)\\
 &+Q_2(\k^{(1)}(\th),\cdots,\k^{(n-1)}(\th),\k;\k(\th)),\nonumber
\end{align*}
where
$Q_i(\k^{(1)}(\th),\cdots,\k^{(n-1)}(\th),\k;\k(\th))\in\mathcal{P}_n$, $i=1,2$;
recall that $\partial_{\k(\th)}^lh(\th,\k)$ and $\partial_{\k(\th)}^lb(\th,\k)$,
$l \in \Z_+$, are all bounded.
The rest of the proof is the same as \cite{F99}, Lemma 6.6.
\end{proof}
\end{lem}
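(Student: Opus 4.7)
The plan is to substitute the two expansions of $\{h(\th,\k)\}^{(n+1)}$ and $\{A(\th,\k)\}^{(n-1)}$ displayed in the statement into the defining integral of $\Psi_b^{(n)}(\k)$, distribute the product into nine summands, and classify them by how many factors of $\k^{(n+1)}$ they carry. Writing $u=\k^{(n)}$ and $u'=\k^{(n+1)}$, the leading summand is $\int_S u^{p-2}(u')^2\,\partial_{\k(\th)}h\cdot a(\k(\th))\,d\th$; because the cutoff functions $\chi_L$ render $\partial_{\k(\th)}h$, $\partial^2_{\k(\th)}h$, $a$, $a'$ and their relevant derivatives uniformly bounded, this piece is immediately controlled by $C\,\psi_p^{(n)}(\k)$.

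For cross terms containing exactly one factor of $u'$, two devices would be combined. When the term has the shape $\int u^{p-1}u'\,F\,d\th$ with $F$ a polynomial in $\k^{(1)},\ldots,\k^{(n-1)}$ and bounded coefficients in $\k(\th)$ and $\k$, I would write $u^{p-1}u'=(u^p)'/p$ and integrate by parts on the periodic circle $S$, converting the integral into $-\tfrac1p\int u^p F'\,d\th$; the derivative $F'$ is a polynomial in $\k^{(1)},\ldots,\k^{(n)}$ with bounded coefficients, and each resulting term $\int u^p\cdot\k^{(j)}\cdot(\text{bounded})\,d\th$ is absorbed via Young's inequality into a bounded multiple of $\|u\|_{L^p}^p$ plus an $L^p$-norm of a polynomial $P\in\mathcal{P}_n$. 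For cross terms of the form $\int u^{p-2}u'\,Q\,d\th$ (no extra factor of $u$), the two-step Young inequality
\[
u^{p-2}|u'||Q|\le \tfrac12 u^{p-2}(u')^2+\tfrac12 u^{p-2}Q^2,\qquad
u^{p-2}Q^2\le \tfrac{p-2}{p}u^p+\tfrac{2}{p}Q^p,
\]
splits the term cleanly among $\psi_p^{(n)}$, $\|u\|_{L^p}^p$, and $\|Q\|_{L^p}^p$. The remaining summands carry no factor of $u'$ and are of the form $\int u^{p-2}\cdot P_a\cdot P_b\,d\th$ with $P_a,P_b$ polynomials in $u$ and the lower derivatives; pointwise Young inequalities in the exponent $p$ again produce a bounded multiple of $u^p$ plus the $p$-th power of a polynomial in $\k^{(1)},\ldots,\k^{(n-1)},\k$, consolidating into the four representative $P_9,\ldots,P_{12}\in\mathcal{P}_n$.

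The main obstacle is arranging the exponents so that no $L^r$-norm of $u$ with $r>p$ and no residual factor of $u^q$, $q>p$, remains in the integrand. This is precisely why the integration-by-parts device is essential on the cross term $(n-1)\int u^{p-1}u'\,\partial_{\k(\th)}h\cdot a'(\k)\,\k^{(1)}\,d\th$: a direct Cauchy--Schwarz would leave $\int u^p(\k^{(1)})^2\,d\th$, whose Young splittings produce either $\|u\|_{L^r}^r$ with $r>p$ or an $L^\infty$-bound on $\k^{(1)}$, neither of which is permissible in the target inequality. The IBP trades one power of $u$ for a bounded $\th$-derivative of the coefficient and thereby reduces the surplus to $\|u\|_{L^p}^p$ coupled with a polynomial in $\k^{(1)},\ldots,\k^{(n)}$ with bounded coefficients, which closes the estimate through Young's inequality with properly tuned exponents. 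The functional nature of $h$ (through the factor $|\ga|_L^{-1}$) introduces no additional unbounded coefficients, since $|\ga|_L^{-1}$ and all of its Fr\'echet derivatives in $\k$ remain bounded under the cutoff, so the structural argument reduces exactly to the one used in \cite{F99}, Lemma 6.6.
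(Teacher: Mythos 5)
Your nine-term decomposition is the right starting point, and your treatment of the leading term $\int_S u^{p-2}(u')^2(\cdot)\,d\th\le C\psi_p^{(n)}(\k)$, of the single-cross terms $\int u^{p-2}u'Q\,d\th$ via the two-step Young inequality, and of the terms $\int u^{p-1}\k^{(1)}Q\,d\th$, $\int u^{p-2}Q_1Q_2\,d\th$ is correct (here $u=\k^{(n)}$, $u'=\k^{(n+1)}$, $Q,Q_1,Q_2\in\mathcal{P}_n$). The gap is in the two contributions generated by the $\k^{(1)}\k^{(n)}$-pieces of the expansions, namely $\int_S u^{p-1}u'\,c(\k(\th))\k^{(1)}\,d\th$ and $\int_S u^{p}(\k^{(1)})^2c(\k(\th))\,d\th$. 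Your integration by parts $u^{p-1}u'=\frac1p(u^p)'$ leaves the full power $u^p$ untouched and only differentiates the coefficient, yielding $-\frac1p\int u^p\bigl[c'(\k)(\k^{(1)})^2+c(\k)\k^{(2)}\bigr]d\th$; together with the no-$u'$ term you are left with integrals $\int u^pW\,d\th$ where $W$ is an \emph{unbounded} polynomial in $\k^{(1)},\k^{(2)}$. No tuning of exponents in Young's inequality gives the pointwise bound $u^p|W|\le Cu^p+C|P|^p$ with $P$ free of $\k^{(n)}$: dividing by $u^p$ and letting $u\to\infty$ would force $|W|\le C$. Every admissible splitting instead produces $\|\k^{(n)}\|_{L^r}^r$ with $r>p$ (e.g.\ $\|\k^{(n)}\|_{L^{p+2}}^{p+2}$, or for $n=2$ literally $\int u^{p}\k^{(2)}d\th=\int u^{p+1}d\th$), which is not among the admissible terms $\psi_p^{(n)}$, $\|\k^{(n)}\|_{L^p}^p$, $\|P_i\|_{L^p}^p$ with $P_i\in\mathcal{P}_n$. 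Your own closing sentence, which describes the residue as a polynomial in $\k^{(1)},\ldots,\k^{(n)}$, already concedes more than the lemma allows, since the $P_i$ may depend only on $\k^{(1)},\ldots,\k^{(n-1)}$.

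The estimate does close, but by integrating by parts in the opposite direction: lower the power of $\k^{(n)}$ by writing one factor $u=(\k^{(n-1)})'$ and moving the $\th$-derivative onto the rest of the integrand, e.g.
\begin{align*}
\int_S u^{p}(\k^{(1)})^2c\,d\th=-\int_S\k^{(n-1)}\Bigl[(p-1)u^{p-2}u'(\k^{(1)})^2c+2u^{p-1}\k^{(1)}\k^{(2)}c+u^{p-1}(\k^{(1)})^3c'\Bigr]d\th,
\end{align*}
so that the surplus power of $\k^{(n)}$ is traded for the admissible factor $\k^{(n-1)}$. The first bracketed term is then handled by Schwarz against $\psi_p^{(n)}$ followed by $a^{p-2}b^2\le a^p+b^p$ with $b=\k^{(n-1)}(\k^{(1)})^2c$, and the others by $u^{p-1}|R|\le\frac{p-1}{p}u^p+\frac1p|R|^p$ with $R\in\mathcal{P}_n$ (for $n=2$, where $\k^{(2)}=u$, the middle term reproduces $-2$ times the left-hand side and is absorbed after collecting it). Applying a preliminary Schwarz step to $\int u^{p-1}u'c\,\k^{(1)}d\th$ reduces it to the same integral. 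The genuinely new points relative to \cite{F99} --- that the functional factor $|\ga|_L^{-1}$ in $h$ is constant in $\th$, bounded under the cutoff, and therefore leaves the displayed expansions and the boundedness of $\partial_{\k(\th)}^lh$ intact --- you identify correctly, and these are exactly what the paper records before deferring the remaining computation to \cite{F99}.
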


The following estimate is similar to \cite{F99}, Lemma 6.7,
but we need the second term additionally for this estimate.

\begin{lem}(Estimate on $\Psi_c^{(n)}$)\label{lem:F lem 6-7}
For $n\ge2$, there exists a constant $C=C(n,p,L)>0$ such that
\begin{align*} %\label{eq:F 6-28}
 |\Psi_c^{(n)}(\k)|&\leq C\Bigl[\psi_p^{(n)}(\k)+||\k^{(n)}||_{L^p}^p+\sum_{i=13}^{15}||P_i||_{L^p}^p\Bigr]
 +C ||\k^{(1)}||_{L^2}^4\Bigl(||\k^{(n)}||_{L^p}^p+||{Q}||_{L^p}^p\Bigr),
\end{align*}
for some $P_i=P_i(\k^{(1)}(\th),\cdots,\k^{(n-1)}(\th),\k;\k(\th))\in\mathcal{P}_n$, $i=13, 14, 15$
and ${Q}=$ \linebreak
${Q}(\k^{(1)}(\th),\cdots,\k^{(n-1)}(\th),\k;\k(\th))\in\mathcal{P}_n$ with $g_{2,\a}\equiv1$,
namely, ${Q}$ does not contain any functional of $\k$.
\end{lem}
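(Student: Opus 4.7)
The plan is to decompose $Dh(\theta,\k)(A(\cdot,\k))$ along \eqref{eq:D-h} into a \emph{local} part $I_1(\theta,\k) = \frac{2c\a\chi_L'(\k(\theta))\chi_L(\k(\theta))}{|\ga|_L(\k)}A(\theta,\k)$ that depends on $\theta$ pointwise, and a \emph{nonlocal} part $I_2(\theta,\k) = \frac{c\a\chi_L(\k(\theta))^2}{|\ga|_L(\k)^2}\Lambda(\k)$ in which the $\theta$-dependence and the functional $\Lambda(\k):=\langle \chi_L'(\k)/\chi_L(\k)^2, A(\cdot,\k)\rangle$ factor. The contribution of $I_1$ to $\Psi_c^{(n)}$ is handled exactly as in \cite{F99}, Lemma 6.7, by expanding $[I_1]^{(n-1)}$ into a leading bounded multiple of $\k^{(n+1)}$ plus a term containing $\k^{(n)}$ at most linearly and polynomials $P_{13},P_{14},P_{15}\in\mathcal{P}_n$ in the lower derivatives; combining with $(\k^{(n)})^{p-2}\k^{(n+1)}$ via integration by parts and Young's inequality gives the first bracket on the right-hand side. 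The nonlocal piece is the only genuinely new contribution and must be treated separately.

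For the nonlocal piece, since $\Lambda(\k)$ and $|\ga|_L(\k)^{-2}$ do not depend on $\theta$, the $(n-1)$-th derivative falls entirely on $\chi_L(\k(\theta))^2$, which, by Fa\`a di Bruno and the boundedness of $\chi_L$ and its derivatives, produces a polynomial $Q=Q(\k^{(1)}(\theta),\ldots,\k^{(n-1)}(\theta),\k;\k(\theta))\in\mathcal{P}_n$ with $g_{2,\alpha}\equiv 1$. Hence this part of $\Psi_c^{(n)}$ equals a scalar prefactor
\[
 \tilde{I}(\k) \;:=\; \frac{c\a}{|\ga|_L(\k)^2}\,\Lambda(\k)
\]
times $\int_S(\k^{(n)})^{p-2}\k^{(n+1)}\,Q\,d\theta$. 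For $\tilde I$, I would split $A=a(\k)\k^{(2)}+b(\cdot,\k)$, note $\chi_L'(\k)\chi_L(\k)^{-2}a(\k)=\chi_L'(\k)$ (since $a=\chi_L^2$), integrate by parts to rewrite $\int\chi_L'(\k)\k^{(2)}d\theta=-\int\chi_L''(\k)(\k^{(1)})^2d\theta$, and use boundedness of $b$ and $\chi_L'/\chi_L^2$ to conclude $|\tilde I(\k)|\leq C(1+\|\k^{(1)}\|_{L^2}^2)$.

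The decisive step is then a Cauchy--Schwarz splitting of $(\k^{(n)})^{p-2}\k^{(n+1)} = (\k^{(n)})^{(p-2)/2}\cdot (\k^{(n)})^{(p-2)/2}\k^{(n+1)}$, followed by Young's inequality $ab\le \tfrac12 a^2+\tfrac12 b^2$:
\begin{align*}
\left|\tilde I(\k)\int_S(\k^{(n)})^{p-2}\k^{(n+1)}Q\,d\theta\right|
&\leq |\tilde I(\k)|\bigl(\psi_p^{(n)}(\k)\bigr)^{1/2}\Bigl(\int_S(\k^{(n)})^{p-2}Q^2\,d\theta\Bigr)^{1/2}\\
&\leq \tfrac{1}{2}\psi_p^{(n)}(\k)+C\,|\tilde I(\k)|^2\bigl(\|\k^{(n)}\|_{L^p}^p+\|Q\|_{L^p}^p\bigr),
\end{align*}
where in the last step I use $|\k^{(n)}|^{p-2}Q^2\leq \tfrac{p-2}{p}|\k^{(n)}|^p+\tfrac{2}{p}|Q|^p$. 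Since $|\tilde I(\k)|^2\le C(1+\|\k^{(1)}\|_{L^2}^4)$, this produces $\tfrac12\psi_p^{(n)}(\k)$ (which merges with the bracket from $I_1$) plus the summand $C\|\k^{(1)}\|_{L^2}^4(\|\k^{(n)}\|_{L^p}^p+\|Q\|_{L^p}^p)$ and a benign constant-coefficient term $C(\|\k^{(n)}\|_{L^p}^p+\|Q\|_{L^p}^p)$ which is absorbed into the first bracket.

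The main obstacle is exactly this nonlocal step: unlike the pointwise local estimates of \cite{F99}, here one must extract the scalar functional $\Lambda(\k)$ via integration by parts to avoid an out-of-control derivative $\k^{(2)}$, and then choose the Cauchy--Schwarz splitting so that $\psi_p^{(n)}(\k)$ appears with a coefficient $\leq 1$ (to be absorbed) while $|\tilde I(\k)|^2$ is squared and yields the fourth-power norm $\|\k^{(1)}\|_{L^2}^4$. All other routine manipulations (writing $\phi(\k(\theta))^{(n-1)}$ through Fa\`a di Bruno, producing $P_{13},P_{14},P_{15}$) parallel \cite{F99}, Lemma 6.7.
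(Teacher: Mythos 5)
Your proposal is correct and follows essentially the same route as the paper: the same split of $Dh(\th,\k)(A(\cdot,\k))$ into the pointwise part (treated as in \cite{F99}, Lemma 6.7) and the nonlocal part, the same integration by parts to bound the scalar functional $\big\langle \chi_L'(\k)/\chi_L^2(\k), A(\cdot,\k)\big\rangle$ by $C(1+\|\k^{(1)}\|_{L^2}^2)$, and the same Schwarz--Young splitting of $\int_S (\k^{(n)})^{p-2}\k^{(n+1)}Q\,d\th$ producing $\psi_p^{(n)}(\k)$ plus the $\|\k^{(1)}\|_{L^2}^4\big(\|\k^{(n)}\|_{L^p}^p+\|Q\|_{L^p}^p\big)$ term. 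The only cosmetic difference is that you track the additive constant from the bounded $b$-contribution explicitly, which is harmless since the resulting constant-coefficient terms are absorbed into the first bracket.
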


\begin{proof}
As in the proof of Lemma \ref{lem:F lem 6-5},
$D h(\theta,\k)(A(\cdot,\k))$ is the sum of two terms.  
The contribution to $\Psi_c^{(n)}$ of $(n-1)$th derivative
of its first term, i.e.,   %\linebreak
$\frac{2c\a}{|\ga|_L(\k)} 
\Bigl\{\chi_L'(\k(\theta))\chi_L(\k(\theta))A(\th,\k)\Bigr\}^{(n-1)}$,
can be estimated similarly to \cite{F99}, Lemma 6.7
by noting again $\partial_{\k(\th)}^j h(\th,\k)$, $j=1,2$ and 
$a$, $b$, $h$ are all bounded, and we see that it is estimated 
by the first term in the right hand side of the bound stated
in the claim of this lemma.
On the other hand, the contribution to $\Psi_c^{(n)}$ of its
second term is given by
\begin{align*}
J = \frac{c\a}{|\ga|_L(\k)^2} 
\big\lan \frac{\chi_L'(\k)}{\chi_L(\k)^2},A(\cdot,\k)\big\ran
\int_S \{\k^{(n)}(\th)\}^{p-2}\k^{(n+1)}(\th)Q \, d\th,
\end{align*}
where $Q\equiv Q(\th) =\{\chi_L(\k(\th))^2\}^{(n-1)}$.  Then, $J$ is estimated as
\begin{align*}
|J| & \le C \|\k'\|_{L^2}^2
 \Big| \int_S \{\k^{(n)}(\th)\}^{p-2}\k^{(n+1)}(\th)Q \, d\th\Big| \\
& \le C \|\k'\|_{L^2}^2 \, \psi_p^{(n)}(\k)^{1/2}
\Big( \int_S |\k^{(n)}(\th)|^{p-2} Q^2 \, d\th\Big)^{1/2} \\
& \le \psi_p^{(n)}(\k) + \frac{C^2}4 \|\k'\|_{L^2}^4
\Bigl(\|\k^{(n)}\|_{L^p}^p + \|Q \|_{L^p}^p\Bigr),
\end{align*}
where we have used \eqref{eq:5.Int-By-Parts} noting $1/|\ga|_L(\k)$ is
bounded for the first line, Schwarz's inequality for the second and
finally a trivial inequality $a^{p-2}b^2 \le a^p+b^p$ for $a,b\ge 0$
for the third.  Thus, the proof is complete.
\end{proof}

Recalling again that $\partial_{\k(\th)}^lh(\th,\k)$ and 
$\partial_{\k(\th)}^lb(\th,\k)$,
$l \in \Z_+$, are all bounded and following the proof of \cite{F99}, Lemma 6.9, 
we have the next lemma.
\begin{lem}(Estimate on $\Psi_a^{(n)}$)\label{lem:F lem 6-9}
For every $n\ge2$ and $\b'\in(0,1)$, there exist constants $C,N,q>0$ such that
\begin{align*} %\label{eq:F 6-41}
 |\Psi_a^{(n)}(\k)|\leq& C\Bigl[\psi_p^{(n)}(\k)+||\k^{(n)}||_{L^p}^p+\sum_{i=16}^{19}||P_i||_{L^p}^p\Bigr]\\
 &+C\Bigl[1+\sum_{i=1}^{n-1}||\k^{(i)}||_{L^p}^N+||\k^{(n)}||_{L^q}^{\b'}\Bigr]\psi_2^{(n)}(\k),
\end{align*}
for some $P_i=P_i(\k^{(1)}(\th),\cdots,\k^{(n-1)}(\th),\k;\k(\th))\in\mathcal{P}_n$, $i=16,\ldots,19$.
\end{lem}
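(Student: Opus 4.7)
The plan is to adapt the argument of Lemma 6.9 in \cite{F99}. The crucial observation keeping the proof close to the pointwise case is that, although $h=h_L$ and $b=b_L$ are functionals of $\k$, their dependence beyond the pointwise part enters only through the $\th$-independent factor $|\ga|_L^{-1}$. Consequently, pure $\th$-derivatives $\{h(\th,\k)\}^{(n)}$ and $\{A(\th,\k)\}^{(n-1)}$ involve only $\th$-derivatives of $\chi_L(\k(\th))$, multiplied by the bounded functional $|\ga|_L^{-1}$, and no Fr\'echet derivative in $\k$ appears. All of $a(\k)$, $a'(\k)$, $\partial_{\k(\th)}h$ and iterated derivatives of $\chi_L$ are uniformly bounded thanks to the cutoff.

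I would first expand the derivatives by the generalized Leibniz and chain rules,
\begin{align*}
  \{h(\th,\k)\}^{(n)} &= \partial_{\k(\th)}h(\th,\k)\,\k^{(n)}(\th) + Q_h, \\
  \{A(\th,\k)\}^{(n-1)} &= a(\k(\th))\k^{(n+1)}(\th) + (n-1)a'(\k(\th))\k^{(1)}(\th)\k^{(n)}(\th) + Q_A,
\end{align*}
with $Q_h,Q_A\in\mathcal{P}_n$, and substitute into $\Psi_a^{(n)}$. The dominant product yields
$$
J_0 = \int_S a(\k)\,\partial_{\k(\th)}h(\th,\k)\,\{\k^{(n)}\}^{p-2}(\k^{(n+1)})^2\,d\th,
$$
bounded by $C\,\psi_p^{(n)}(\k)$, accounting for the leading term in the claimed estimate.

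The remaining cross-products I would split into two types: type B consists of integrands of the form $\{\k^{(n)}\}^{p-3}\k^{(n+1)} R$ with $R$ polynomial in $\k^{(1)},\ldots,\k^{(n)}$ with bounded coefficients, handled by the split $(\{\k^{(n)}\}^{(p-2)/2}\k^{(n+1)})\cdot(\{\k^{(n)}\}^{(p-4)/2}R)$ followed by Schwarz and Young, producing bounds $\psi_p^{(n)}(\k)+\|\k^{(n)}\|_{L^p}^p+\|P_i\|_{L^p}^p$ with $P_i\in\mathcal{P}_n$, yielding the four remainders $P_{16},\ldots,P_{19}$. Type A consists of terms where after Schwarz on the $\k^{(n+1)}$ factor a residual $L^r$-norm of $\k^{(n)}$ of the wrong exponent survives; here I would invoke the one-dimensional Gagliardo--Nirenberg interpolation $\|\k^{(n)}\|_{L^r}\le C\|\k^{(n+1)}\|_{L^2}^{\theta}\|\k^{(n)}\|_{L^p}^{1-\theta}$ tuned so that, after a subsequent Young inequality with an appropriate weight, the residual $\psi_2^{(n)}(\k)$-powers concentrate into $\psi_2^{(n)}(\k)^1$ multiplied by the polynomial factor $(1+\sum_{i=1}^{n-1}\|\k^{(i)}\|_{L^p}^N+\|\k^{(n)}\|_{L^q}^{\b'})$ for the given $\b'\in(0,1)$.

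The main obstacle will be the parameter bookkeeping in the Gagliardo--Nirenberg/Young step: one must choose the interpolation exponent $\theta$ (and correspondingly $N$ and $q$) in terms of the given $\b'\in(0,1)$ so that (i) the power of $\psi_2^{(n)}(\k)$ produced is exactly $1$, (ii) the residual $\|\k^{(n)}\|$-pieces after Young absorb into $\|\k^{(n)}\|_{L^p}^p$ and into $\|P_i\|_{L^p}^p$, and (iii) the lower-derivative pieces collect into $\sum_{i=1}^{n-1}\|\k^{(i)}\|_{L^p}^N$. This is precisely the combinatorial computation carried out in \cite{F99}, Lemma 6.9, and since the algebraic structure of our expansion is identical up to bounded, $\th$-independent prefactors, the same choice of exponents and absorption scheme transfers with only cosmetic changes. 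A minor task specific to our setting is verifying that the functional dependence of $h$ and $b$ through $|\ga|_L^{-1}$ plays no role, which is immediate because $|\ga|_L^{-1}$ is $\th$-independent and therefore untouched by all the $\th$-derivatives appearing in $\Psi_a^{(n)}$.
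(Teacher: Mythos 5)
Your proposal is correct and follows essentially the same route as the paper: the paper's proof consists precisely of noting that $\partial_{\k(\th)}^l h$ and $\partial_{\k(\th)}^l b$ are bounded (because the only non-pointwise dependence is through the $\th$-independent, cutoff-bounded factor $|\ga|_L^{-1}$, and $\Psi_a^{(n)}$ contains no Fr\'echet derivative) and then repeating the proof of Lemma 6.9 of \cite{F99}, which is exactly the leading-term extraction, Schwarz/Young absorption, and Gagliardo--Nirenberg interpolation scheme you describe.
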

Now we estimate the conditional expectation of the integral of
the third term on the right hand side of \eqref{eq:F 6-6}.
Let us recall $\mathcal{F}_{s,t}=\sigma(\xi(u), u\in [s,t])$, where $\xi$ is the 
stochastic process given in Section 4.1.
Set $\mathcal{F}_{s,t}^\e=\mathcal{F}_{\psi(\e)^2s,\psi(\e)^2t}$.
Recall also that in our case $\dot{w}^\e$ is given by \eqref{eq:4A} and 
$|\dot{w}^\e(t)|\leq \frac{M}{A}\psi(\e)$ holds.
By replacing the divergent factor $\e^{-\ga}$ in \cite{F99}, Lemmas 6.10 and 6.11 
by $\psi(\e)$, we obtain the following two lemmas.  The proofs are same
and therefore omitted.

\begin{lem}\label{lem:F lem 6-10}
For $0\leq s\leq t \leq T$, we have
\begin{align*} %\label{eq:F 6-42}
 \Biggl|E\Bigl[&\int_s^t\Phi_3^{(n)}(\k_r)\dot{w}^\e(r)dr  \Bigl| \mathcal{F}_{0,s}^\e\Bigr]\Biggr|\\
 \leq& C\psi(\e)^{-1}|\Phi_3^{(n)}(\k_s)|
 +C\int_s^t E\Bigl[\psi(\e)^{-1} |\Psi_1^{(n)}(\k_r)| + |\Psi_2^{(n)}(\k_r)| \Bigl| \mathcal{F}_{0,s}^\e\Bigr]dr,
\end{align*}
for some $C=C(M,A)>0$.
\end{lem}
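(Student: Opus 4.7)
The plan is to establish the estimate via an integration by parts in time combined with the strong mixing property of $\xi$. First I would express $\Phi_3^{(n)}(\k_r)$ as
\[
\Phi_3^{(n)}(\k_r)=\Phi_3^{(n)}(\k_s)+\int_s^r\bigl[\Psi_1^{(n)}(\k_u)-\Psi_2^{(n)}(\k_u)\dot{w}^\e(u)\bigr]du,
\]
using the ODE stated just above the lemma. Substituting this into $\int_s^t\Phi_3^{(n)}(\k_r)\dot{w}^\e(r)dr$ and applying Fubini's theorem (justified because $\dot{w}^\e$ is bounded by $M\psi(\e)/A$) gives
\[
\Phi_3^{(n)}(\k_s)\bigl(w^\e(t)-w^\e(s)\bigr)+\int_s^t\bigl[\Psi_1^{(n)}(\k_u)-\Psi_2^{(n)}(\k_u)\dot{w}^\e(u)\bigr]\bigl(w^\e(t)-w^\e(u)\bigr)du.
\]

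Next, I would note that $\k_u$, and therefore $\Psi_1^{(n)}(\k_u)$, $\Psi_2^{(n)}(\k_u)$ and $\dot{w}^\e(u)$, are all $\mathcal{F}_{0,u}^\e$-measurable (since $\k_u$ is determined by $\xi$ up to time $\psi(\e)^2 u$). Taking conditional expectation with respect to $\mathcal{F}_{0,s}^\e$ and applying the tower property, everything is reduced to bounding the inner conditional expectations $E[w^\e(t)-w^\e(u)\mid\mathcal{F}_{0,u}^\e]$ for $u\in[s,t]$.

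The key step is the estimate
\[
\bigl|E[w^\e(t)-w^\e(u)\mid\mathcal{F}_{0,u}^\e]\bigr|\leq C A^{-1}\psi(\e)^{-1},
\]
uniformly in $s\le u\le t\le T$ and in $\e$. To derive it, I would use the representation $w^\e(t)-w^\e(u)=A^{-1}\psi(\e)^{-1}\bigl[\eta(\psi(\e)^2 t)-\eta(\psi(\e)^2 u)\bigr]$, where $\eta(v)=\int_0^v\xi(w)dw$. Then
\[
E\bigl[\eta(\psi(\e)^2 t)-\eta(\psi(\e)^2 u)\mid\mathcal{F}_{0,\psi(\e)^2 u}\bigr]=\int_{\psi(\e)^2 u}^{\psi(\e)^2 t}E[\xi(v)\mid\mathcal{F}_{0,\psi(\e)^2 u}]dv,
\]
and by the standard covariance bound for strongly mixing bounded processes ($|E[\xi(u+v)\mid\mathcal{F}_{0,u}]|\leq CM\rho(v)^{1/p}$, valid because $E[\xi]=0$ and $\|\xi\|_\infty\leq M$), the integrability assumption $\int_0^\infty\rho(v)^{1/p}dv<\infty$ makes this integral bounded uniformly.

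Combining these pieces, the boundary contribution yields $C\psi(\e)^{-1}|\Phi_3^{(n)}(\k_s)|$, and the $\Psi_1^{(n)}$ integrand inherits the factor $\psi(\e)^{-1}$. For the $\Psi_2^{(n)}$ integrand, I would additionally use the pathwise bound $|\dot{w}^\e(u)|\leq M\psi(\e)/A$; multiplied with the $\psi(\e)^{-1}$ coming from $E[w^\e(t)-w^\e(u)\mid\mathcal{F}_{0,u}^\e]$, the $\psi(\e)$ factors cancel exactly, producing the $|\Psi_2^{(n)}(\k_r)|$ term with no extra $\psi(\e)^{-1}$ prefactor, matching the asymmetric scaling of the stated bound. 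The main obstacle I expect is the uniform mixing estimate for $E[\eta(\psi(\e)^2 t)-\eta(\psi(\e)^2 u)\mid\mathcal{F}_{0,\psi(\e)^2 u}]$, which is the place where the assumption $\int_0^\infty\rho(t)^{1/p}dt<\infty$ is used in an essential way and which controls how the divergent prefactor $\psi(\e)$ gets absorbed by the slow averaging of $\xi$.
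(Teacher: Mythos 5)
Your argument is correct and is essentially the proof the paper intends: the paper omits the details, referring to Lemma 6.10 of \cite{F99}, and that proof proceeds exactly as you do --- integrate the identity $\frac{d}{dt}\Phi_3^{(n)}(\k_t)=\Psi_1^{(n)}(\k_t)-\Psi_2^{(n)}(\k_t)\dot w^\e(t)$, apply Fubini and the tower property, and gain the factor $\psi(\e)^{-1}$ from the mixing bound $|E[w^\e(t)-w^\e(u)\mid\mathcal{F}^\e_{0,u}]|\le 2MA^{-1}\psi(\e)^{-1}\int_0^\infty\rho(v)\,dv$, with the pathwise bound $|\dot w^\e|\le MA^{-1}\psi(\e)$ cancelling that factor in the $\Psi_2^{(n)}$ term. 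The only cosmetic point is that the direct uniform-mixing estimate gives $2M\rho(v)$ rather than $CM\rho(v)^{1/p}$, but since $\rho\le 1$ this changes nothing.
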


\begin{lem}\label{lem:F lem 6-11}
(Rough estimates uniform in $\om$)
Assume $\k_0=\k(0)\in C^{\infty}(S)$ is independent of $\e$. Then, there exist constants
$C=C(T,n,p)$ and $N=N(n,p)>0$ such that
\begin{align}
&||\k_t^{(n)}||_{L^p}\leq C\psi(\e)^N,\quad 0\leq t \leq T, \label{eq:F 6-46}\\
&\int_0^T \psi_p^{(n)}(\k_t) dt \leq C\psi(\e)^N,  \label{eq:F 6-47}\\
&||\k_t^{(n)}||_{L^p}^p\leq \Bigl[||\k_s^{(n)}||_{L^p}^p+C\psi(\e)\int_s^t
\sum_{i=1}^5||P_i(\k_r)||_{L^p}^pdr\Bigr]e^{C\psi(\e)(t-s)},  
\notag   %\label{eq:F 6-48}
\end{align}
for $0\le s \le t \le T$.
In particular, if $n=1$, we can take $N=1$ in \eqref{eq:F 6-46} and $N=p$ in \eqref{eq:F 6-47}, that is,
\begin{align*}
&||\k_t^{(1)}||_{L^p}^p\leq C\psi(\e),\quad 0\leq t \leq T, \\   %  \label{eq:F 6-49}\\
&\int_0^T \psi_p^{(1)}(\k_t) dt \leq C\psi(\e)^p.  % \label{eq:F 6-50}
\end{align*}
\end{lem}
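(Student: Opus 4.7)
The plan is to start from the differential identity \eqref{eq:F 6-6} and treat the three estimates in a slightly different order: the third inequality is the easiest since it does not exploit the parabolic dissipation, so I would prove it first by invoking the rough bounds of Lemmas \ref{lem:F lem 6-1} and \ref{lem:F lem 6-2}. Specifically, since Lemma \ref{lem:F lem 6-1} gives $\Phi_1^{(n)}(\k)\le C\{\|\k^{(n)}\|_{L^p}^p+\sum_{i=1}^3\|P_i\|_{L^p}^p\}$ (discarding the nonpositive $-c\psi_p^{(n)}$), Lemma \ref{lem:F lem 6-2} controls $|\Phi_2^{(n)}(\k)|$ and $|\Phi_3^{(n)}(\k)|$ in the same fashion, and the noise satisfies $|\dot{w}^\e(t)|\le (M/A)\psi(\e)$, we obtain
\begin{align*}
\tfrac{d}{dt}\|\k_t^{(n)}\|_{L^p}^p\le C\psi(\e)\Bigl[\|\k_t^{(n)}\|_{L^p}^p+\sum_{i=1}^5\|P_i(\k_t)\|_{L^p}^p\Bigr],
\end{align*}
and Gronwall's lemma immediately yields the third inequality.

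For the first two inequalities I would proceed by induction on $n$, now taking advantage of the dissipative bound from Lemma \ref{lem:F lem 6-1}. In the base case $n=1$, the assertion of that lemma simplifies to $\Phi_1^{(1)}(\k)\le-c\psi_p^{(1)}(\k)$ with $P_1=P_2=P_3=0$, while Lemma \ref{lem:F lem 6-2} gives $|\Phi_2^{(1)}(\k)|\le C\|\k^{(1)}\|_{L^p}^p$ (with $P_4=0$) and Lemma \ref{lem:F lem 6-3} provides, after multiplying by $|\dot{w}^\e|\le C\psi(\e)$, the key bound
\begin{align*}
|\Phi_3^{(1)}(\k)\dot{w}^\e(t)|\le C(p-1)\bigl[\de\,\psi_p^{(1)}(\k)+\de^{-1}\|\k^{(1)}\|_{L^p}^p+\de^{-1}\psi(\e)^p\|P_6\|_{L^p}^p\bigr],
\end{align*}
where for $n=1$, $P_6=h(\cdot,\k)$ is uniformly bounded thanks to the cutoff. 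Choosing $\de>0$ small enough to absorb $C(p-1)\de\,\psi_p^{(1)}(\k)$ into $-c\,\psi_p^{(1)}(\k)$, I arrive at a differential inequality
\begin{align*}
\tfrac{d}{dt}\|\k_t^{(1)}\|_{L^p}^p\le-\tfrac{pc}{2}\psi_p^{(1)}(\k_t)+C_0\|\k_t^{(1)}\|_{L^p}^p+C_1\psi(\e)^p,
\end{align*}
in which the coefficient $C_0$ of $\|\k_t^{(1)}\|_{L^p}^p$ is \emph{independent of $\e$}. Applying Gronwall gives $\|\k_t^{(1)}\|_{L^p}^p\le(\|\k_0^{(1)}\|_{L^p}^p+C_1T\psi(\e)^p)e^{C_0T}\le C\psi(\e)^p$, and integrating the inequality in time yields $\tfrac{pc}{2}\int_0^T\psi_p^{(1)}(\k_t)\,dt\le C\psi(\e)^p$, giving exactly $N=1$ and $N=p$ respectively.

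For $n\ge2$, I would run the identical argument, but now the polynomial remainders $P_1,\ldots,P_6\in\mathcal{P}_n$ involve only $\k^{(1)},\ldots,\k^{(n-1)}$ and the bounded functionals of $\k$ via the cutoff. The induction hypothesis provides $\|\k^{(i)}_t\|_{L^p}\le C\psi(\e)^{N_i}$ for $i<n$, so each $\|P_i(\k_t)\|_{L^p}^p$ is bounded by $C\psi(\e)^{N'}$ for some $N'=N'(n,p)$ depending only on the degrees and number of terms. The resulting differential inequality
\begin{align*}
\tfrac{d}{dt}\|\k_t^{(n)}\|_{L^p}^p\le-\tfrac{pc}{2}\psi_p^{(n)}(\k_t)+C_0\|\k_t^{(n)}\|_{L^p}^p+C\psi(\e)^{N''}
\end{align*}
has Gronwall constant $C_0$ still independent of $\e$, and the same two-step argument (Gronwall, then integration of the dissipation) yields both \eqref{eq:F 6-46} and \eqref{eq:F 6-47} for some enlarged $N=N(n,p)$.

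The main obstacle I foresee is \emph{not} the calculus itself but keeping precise track of the powers of $\psi(\e)$ as the induction proceeds and, most crucially, verifying that the coefficient of $\|\k_t^{(n)}\|_{L^p}^p$ in the differential inequality remains uniform in $\e$; otherwise the Gronwall exponent would blow up and the resulting estimate would be useless. This uniformity is precisely what Lemma \ref{lem:F lem 6-3} buys us through the factor $\psi(\e)^{-1}$ that cancels the $\psi(\e)$ coming from $|\dot{w}^\e|$, and it is the reason we use Lemma \ref{lem:F lem 6-3} (rather than the rougher Lemma \ref{lem:F lem 6-2}) for the stochastic term when proving the sharper first two bounds.
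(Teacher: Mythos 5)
Your proof is correct and is essentially the argument the paper intends: the proof of this lemma is omitted in the paper (it refers to \cite{F99}, Lemma 6.11 with the divergent factor $\e^{-\ga}$ replaced by $\psi(\e)$), and the mechanism is exactly the one you identify — a crude Gronwall argument for the third bound, and for \eqref{eq:F 6-46}--\eqref{eq:F 6-47} the dissipation $-c\psi_p^{(n)}$ from Lemma \ref{lem:F lem 6-1} combined with Lemma \ref{lem:F lem 6-3}, whose factor $\psi(\e)^{-1}$ cancels the $\psi(\e)$ coming from $|\dot{w}^\e|$ so that the Gronwall exponent stays uniform in $\e$, followed by induction on $n$ (where, to bound $\|P_i(\k_r)\|_{L^p}^p$, one uses the induction hypothesis for the lower derivatives in the higher $L^q$-norms dictated by the multi-degrees $\a$). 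Note only that your computation yields $\|\k_t^{(1)}\|_{L^p}^p\le C\psi(\e)^p$, which is consistent with ``$N=1$ in \eqref{eq:F 6-46}'' as stated; the displayed formula $\|\k_t^{(1)}\|_{L^p}^p\le C\psi(\e)$ in the lemma appears to be missing the exponent $p$.
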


Under the preparation of Lemmas \ref{lem:F lem 6-1} to \ref{lem:F lem 6-11}, 
one can complete the proof of Proposition \ref{prop:1.3.0}.
\begin{proof}[Proof of Proposition \ref{prop:1.3.0}]
The proof of \eqref{eq:uniform-moment-estimate} is completed by induction
in the following three steps as in \cite{F99} Section 6.7.
Step 1: we prove \eqref{eq:uniform-moment-estimate} for $n=1$.
Step 2: we prove
\eqref{eq:uniform-moment-estimate} for $n$ assuming that it holds for $i=1,\cdots,n-1$.
Step 3: we prove 
\eqref{eq:uniform-moment-estimate} for $n=0$.

As for Step 1, using the estimates on $\Phi_1^{(1)}$, $\Phi_2^{(1)}$, $\Phi_3^{(1)}$, 
$\Psi_1^{(1)}$ and $\Psi_2^{(1)}$, a similar method to \cite{F99} shows
\begin{align}  \label{eq:5-Step1}
\sup_{0<\e<1}E\Bigl[\sup_{t\in[0,T]}||\k_t^{(1)}||_{L^p}^p\Bigr]<\infty.
\end{align}
In Step 2, integrating the both sides of \eqref{eq:F 6-6} from $s$ to $t$ and 
taking the conditional expectation, and then, using Lemmas \ref{lem:F lem 6-1} to \ref{lem:F lem 6-11} 
and noting that $|\dot{w}^\e(t)|$ satisfies $|\dot{w}^\e(t)|\leq \frac{M}{A}\psi(\e)$, we have
\begin{align}  \label{eq:F 6-51}
&E\Bigl[||\k_t^{(n)}||_{L^p}^p \Bigl| \mathcal{F}_{0,s}^\e\Bigr]+c^{''}\int_s^tE\Bigl[\psi_p^{(n)}(\k_r)\Bigl| \mathcal{F}_{0,s}^\e \Bigr]dr\\
 &\; \leq (1+C\psi(\e)^{-1})||\k_s^{(n)}||_{L^p}^p+C\psi(\e)^{-1}||P_5(\k_s)||_{L^p}^p\nonumber\\
 &\quad +C\int_s^t E\Bigl[||\k_r^{(n)}||_{L^p}^p \Bigl| \mathcal{F}_{0,s}^\e\Bigr]dr + C\int_s^tE\Bigl[\mathcal{P}(\k_r) \Bigl| \mathcal{F}_{0,s}^\e\Bigr]dr\nonumber \\
 &\quad +C\psi(\e)^{-1}\int_s^t E\Bigl[||\k_r^{(1)}||_{L^2}^4||\k_r^{(n)}||_{L^p}^p \Bigl| \mathcal{F}_{0,s}^\e\Bigr]dr + C\psi(\e)^{-1}\int_s^tE\Bigl[||\k_r^{(1)}||_{L^2}^4||Q(\k_r)||_{L^p}^p\Bigl| \mathcal{F}_{0,s}^\e\Bigr]dr\nonumber \\
 &\quad +C\psi(\e)^{-1}(p-2)\int_s^t E\Bigl[\Bigl\{ 1+ \psi(\e)^\de+\sum_{i=1}^{n-1}||\k_r^{(i)}||_{L^q}^N \Bigr\}\psi_2^{(n)}(\k_r)\Bigl| \mathcal{F}_{0,s}^\e\Bigr]dr, \nonumber
\end{align}
for every $\e\in(0,\e_0)$ with a sufficiently small $\e_0$ in such a way that 
$\frac{C}{c}\psi(\e_0)^{-1}\ll1$, (hence, there exists some $c''>0$),
for every $\de>0$ and some $N=N(\de)$, $q=q(\de)\ge1$,
where $\mathcal{P}(\k_t)=\sum_{1\le i\le 19,i\neq5,6}||P_i(\k_t)||_{L^p}^p$.
Let us define $\tilde{\si}_\e=\inf\{t>s;\,||\k_t^{(1)}||_{L^2}>\psi(\e)^{\frac14}\}$ 
and $\tilde{\si}_\e=\infty$ if the set $\{\cdot\}$ is empty.
Then, the estimate \eqref{eq:5-Step1} implies \eqref{eq:5.tilde-si}.
Note that \eqref{eq:F 6-51} is also true with $t$ replaced by $t\wedge\tilde{\si}_\e$.
Thus, by similar manner to \cite{F99}, Section 6.7
with $\e^{-\ga}$ replaced by $\psi(\e)$, we have
\begin{align*}
\sup_{0<\e<1}E\Bigl[\sup_{t\in[0,T]}||\k_{t\wedge\tilde{\si}_\e}^{(n)}||_{L^p}^p\Bigr]<\infty,
\end{align*}
 see \cite{F99}, Lemmas 6.12 to 6.16.
Finally, Step 3 can be completed by using again Lemmas \ref{lem:F lem 6-1} to
\ref{lem:F lem 6-11} for the estimates on $\Phi_1^{(0)}$, $\Phi_2^{(0)}$, $\Phi_3^{(0)}$, 
$\Psi_1^{(0)}$ and $\Psi_2^{(0)}$.  Indeed, we obtain
\begin{align*}
\sup_{0<\e<1}E\Bigl[\sup_{t\in[0,T]}||\k_{t}||_{L^p}^p\Bigr]<\infty.
\end{align*}
 As a result, we get \eqref{eq:uniform-moment-estimate} for every $n\in \Z_+$
and $p\in 2\N$, so that for every $p\ge 1$.
\end{proof}

We now show the pathwise uniqueness for \eqref{eq:kappa-cut}.
For this, we apply the energy
inequality, cf. Lemma 5.2 in \cite{F99}.  To do this, we need to rewrite 
\eqref{eq:kappa-cut}
into It\^o's form.  The Fr\'echet derivatives toward $\psi \in L^2(S)$ of $h(\th,\k)$
and $|\ga|_L(\k)$ are computed as in \eqref{eq:D-h} and \eqref{eq:D-ga},
respectively.  Thus, we have that
\begin{align*}
h(\th,\k(t))\circ dw_t & = h(\th,\k(t)) dw_t + \frac12 d\{h(\th,\k(t))\}dw_t \\
& = h(\th,\k(t)) dw_t + \frac12 \left(\int_S Dh(\th,\k(t))(\th_1) h(\th_1,\k(t))d\th_1\right) dt\\
& = h(\th,\k(t)) dw_t + g(\th,\k(t))dt,
\end{align*}
where
\begin{align*}
g(\th,\k) = & - \frac{c\a}{|\ga|_L(\k)} \chi_L(\k(\th))\chi_L'(\k(\th))h(\th,\k) \\
& + \frac{c\a}{2|\ga|_L(\k)^2} \chi_L^2(\k(\th)) \int_S 
\frac{\chi_L'(\k(\th_1))h(\th_1,\k)}{\chi_L^2(\k(\th_1))}d\th_1.
\end{align*}
Therefore, \eqref{eq:kappa-cut} can be rewritten into It\^o's form:
\begin{equation} \label{eq:kappa-cut-Ito}
\frac{\partial \k}{\partial t} =  a(\k)\frac{\partial^2 \k}{\partial \th^2} 
   + \tilde b(\cdot,\k) + h(\cdot, \k) \dot{w}_t, \quad t>0, \, \th \in S,
\end{equation}
where
$$
\tilde b(\cdot,\k) = b(\cdot,\k)+ g(\cdot,\k).
$$

\begin{prop}  \label{prop:5.15}
(cf. Lemma 5.2 in \cite{F99})
Let  $\k_i(t), i=1,2,$  be two solutions of \eqref{eq:kappa-cut}
or equivalently \eqref{eq:kappa-cut-Ito}
satisfying  $\k_i(t) \in C([0,T],C^2(S))$ a.s. and
having the same initial data:  $\k_1(0) = \k_2(0) \in C^2(S)$.  
Then, we have
$P( \k_1(t) = \k_2(t) \, \text{ for all } \, t \in [0,T]) = 1.$
\end{prop}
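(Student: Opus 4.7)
The plan is to apply the energy method to the difference $u(t,\th) := \k_1(t,\th) - \k_2(t,\th)$, which starts from $u(0,\cdot) \equiv 0$. Subtracting the two It\^o equations \eqref{eq:kappa-cut-Ito} for $\k_1$ and $\k_2$, $u$ satisfies
\begin{align*}
\partial_t u = a(\k_1)\partial_\th^2 u + \big(a(\k_1)-a(\k_2)\big)\partial_\th^2 \k_2 + \tilde b(\cdot,\k_1) - \tilde b(\cdot,\k_2) + \big(h(\cdot,\k_1)-h(\cdot,\k_2)\big)\dot w_t.
\end{align*}
I would apply It\^o's formula to $\|u(t)\|_{L^2(S)}^2$ after localizing by the stopping times $\t_N := \inf\{t\ge 0 : \|\k_1(t)\|_{C^2(S)} + \|\k_2(t)\|_{C^2(S)} > N\}$; since $\k_i \in C([0,T], C^2(S))$ a.s., $\t_N \uparrow T$ as $N\to\infty$.

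Integration by parts on the principal term gives
\begin{align*}
\lan u, a(\k_1)\partial_\th^2 u\ran = -\int_S a(\k_1)(\partial_\th u)^2 d\th - \int_S a'(\k_1)\partial_\th \k_1 \cdot u \cdot \partial_\th u \, d\th,
\end{align*}
and since $a(\k) = \chi_L^2(\k) \ge (2L)^{-2}$ from \eqref{eq:5.L}, while $a'$ and $|\partial_\th\k_1|$ are bounded (the latter by $N$ up to $\t_N$), Young's inequality lets me absorb the cross term into the negative definite gradient term at the cost of a $C(L,N)\|u\|_{L^2}^2$ contribution. For the second-order ``frozen coefficient difference'' term, the Lipschitz property $|a(\k_1)-a(\k_2)|\le C_L|u|$ inherited from $\chi_L$ yields
\begin{align*}
\big|\lan u, (a(\k_1)-a(\k_2))\partial_\th^2\k_2\ran\big| \le C_L N \|u\|_{L^2}^2.
\end{align*}

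For $\tilde b = b + g$ and for $h$, I would establish that $\k\mapsto \tilde b(\cdot,\k),h(\cdot,\k)$ are Lipschitz from $L^2(S)$ (restricted to functions bounded by $N$ in $C^0$) into $L^2(S)$. This follows by combining two facts: the cutoff versions $\chi_L^j(\k(\th))$ are pointwise Lipschitz in $\k(\th)$, and the nonlocal factor $|\ga|_L(\k)^{-1}$ is Fr\'echet differentiable with bounded derivative by \eqref{eq:D-ga}, since $\chi_L(\k) \ge (2L)^{-1}$ uniformly; similar remarks apply to the correction $g$. Thus
\begin{align*}
|\lan u, \tilde b(\cdot,\k_1)-\tilde b(\cdot,\k_2)\ran| + \|h(\cdot,\k_1)-h(\cdot,\k_2)\|_{L^2}^2 \le C(L,N)\|u\|_{L^2}^2.
\end{align*}
Collecting these bounds and taking expectation to kill the martingale term coming from the stochastic integral against $dw_t$, I would obtain
\begin{align*}
\tfrac{d}{dt}E\big[\|u(t\wedge \t_N)\|_{L^2}^2\big] \le C(L,N)\, E\big[\|u(t\wedge\t_N)\|_{L^2}^2\big],
\end{align*}
and Gronwall together with $u(0)\equiv 0$ forces $u(t\wedge \t_N)\equiv 0$ a.s. Letting $N\to\infty$ yields the claimed pathwise uniqueness.

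The main obstacle will be the careful handling of the term $(a(\k_1)-a(\k_2))\partial_\th^2\k_2$, which genuinely needs the $C^2$-regularity of (at least one of) the solutions rather than just $C^1$, together with verifying uniform-in-$\omega$ Lipschitz bounds for the nonlocal coefficients $b, h, g$ on the event $\{t\le \t_N\}$; once the cutoff is fully exploited via $\chi_L, \chi_L' \in C_b^\infty(\R)$ and $|\ga|_L \ge \pi/L$, all the estimates become routine, but one must track that every constant depends only on $L$ and $N$ and not on $\e$, the noise, or on finer norms of $\k_i$.
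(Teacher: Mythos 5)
Your proposal is correct and follows essentially the same route as the paper's proof: Itô's formula for $\|\k_1-\k_2\|_{L^2}^2$, integration by parts on the principal term with the splitting $a(\k_1)u''+(a(\k_1)-a(\k_2))\k_2''$, Lipschitz continuity of the nonlocal coefficients via the bounded Fr\'echet derivative \eqref{eq:D-ga}, localization by a stopping time controlling $\|\k_1^{(1)}\|_{L^\infty}$ and $\|\k_2^{(2)}\|_{L^\infty}$, and Gronwall after taking expectations. The only cosmetic difference is that the paper keeps a free parameter $\de$ in Young's inequality and chooses it small at the end, whereas you absorb the cross term immediately; the substance is identical.
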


\begin{proof}  By applying It\^o's formula,
$$
\|\k_1(t) - \k_2(t)\|_{L^2}^2 
  = \int_0^t \{I_1 + I_2\} \, ds + \int_0^t I_3 \, dw_s,
$$
where
\begin{align*}
& I_1 = 2 \int (\k_1 - \k_2) \{a(\k_1) \k_1^{(2)} - a(\k_2) \k_2^{(2)} \}, \\
& I_2 = 2 \int (\k_1 - \k_2) \{\tilde{b}(\cdot,\k_1) - \tilde{b}(\cdot,\k_2) \}
    + \|h(\cdot,\k_1) - h(\cdot,\k_2)\|_{L^2}^2,  \\
& I_3 = 2 \int (\k_1 - \k_2) \{h(\cdot,\k_1) - h(\cdot,\k_2) \},
\end{align*}
and the norm  $\|\, \cdot \, \|_{L^p(S)}$  will be simply denoted
by  $\|\, \cdot \, \|_{L^p}$  for  $p \in [1,\infty]$.
Noting that $a(\k)\ge a_0=1/2N >0$, the estimate on $I_1$ is
exactly the same as in \cite{F99}:
\begin{align*}
I_1 & \le -2 a_0 \|\k_1' - \k_2'\|_{L^2}^2 
  + 2 \|a'\|_{L^\infty} \|\k_1'\|_{L^\infty}   
    \{\de^{-1} \|\k_1 - \k_2\|_{L^2}^2  + \de \|\k_1' - \k_2'\|_{L^2}^2 \} \\
& \qquad\qquad + 2 \|a'\|_{L^\infty} \|\k_2^{(2)}\|_{L^\infty}
    \|\k_1 - \k_2\|_{L^2}^2,  
\end{align*}
for every  $\de >0$.  Since $D\{1/|\ga|_L\}(\th_1,\k)= - \frac1{|\ga|_L^2}
D|\ga|_L(\th_1,\k)$, \eqref{eq:D-ga} shows that $\|D\{1/|\ga|_L\}(\cdot,\k)\|_{L^2}$
is bounded in $\k$ and thus the map  $\k\in L^2 \mapsto 1/|\ga|_L(\k)\in \R$
is Lipschitz continuous.  This implies that
$$
I_2 \le C \|\k_1 - \k_2\|_{L^2}^2.
$$
Now introduce a stopping time
$$
\t_M = \inf \left\{ t>0; \max\{ \|\k_1\|_{L^\infty}, \|\k_2\|_{L^\infty}, 
  \|\k_1^{(1)}\|_{L^\infty}, \|\k_2^{(2)}\|_{L^\infty}  \} > M \right\},
    \quad  M>0.
$$
Then, by choosing  $\de >0$  sufficiently small, we have
$$
\|\k_1(t\wedge \t_M) - \k_2(t\wedge \t_M)\|_{L^2}^2 
  \le C_M  \int_0^t \|\k_1(s\wedge \t_M)- \k_2(s\wedge \t_M)\|_{L^2}^2 \,ds
  + \int_0^{t\wedge \t_M}  I_3 \, dw_s, \quad  t>0,
$$
for some  $C_M > 0$.
Therefore, taking the expectation of both sides and applying the
Gronwall's lemma, we obtain
$$
E[ \|\k_1(t\wedge \t_M)-\k_2(t\wedge \t_M)\|_{L^2}^2 ] = 0
$$
for all  $M>0$.  This completes the proof of the proposition.
\end{proof}

The pathwise uniqueness combined with the existence of the solution
in law sense implies the existence of a strong solution.  This is
the well-known Yamada-Watanabe's theorem which is extended
into an infinite-dimensional setting by \cite{Ond}, see also \cite{Ta}.
%---------------------------------------
As a result, we obtain the following theorem.

\begin{thm}  \label{thm:5.6}
Let $D$ be a two-dimensional bounded domain and $\gamma_0$ be a closed convex curve given
such that $\gamma\Subset D$. Let $\k_0(\theta)$, $\theta \in S$ be the curvature of $\gamma_0$.
Then, there exists a unique local solution $\k=\k(t,\theta)$ defined for $0\leq t < \si$
and $\theta \in S$ of the SPDE \eqref{eq:kappa}, where $\si>0$ (a.s.) is a stopping
time.  In particular, the dynamics \eqref{eq:2} has a unique solution for $0\leq t < \si$.
\end{thm}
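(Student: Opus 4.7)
The plan is to combine the tightness and pathwise uniqueness results already proved for the cut-off SPDE \eqref{eq:kappa-cut} with an infinite-dimensional Yamada--Watanabe argument, and then remove the cut-off by a standard localization. Fix $L\in\N$ throughout the first three steps.

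First I would construct solutions of the $\e$-smeared cut-off equation \eqref{eq:kappa-cut-smear}. Since $a_L,b_L,h_L$ are bounded, Lipschitz functionals of $\k$ and $\dot{w}^\e(t)$ is $C^\infty$ in $t$, standard theory for nonlinear parabolic equations on the circle $S$ produces, for every $\om$, a unique global-in-time smooth solution $\k^\e=\k^\e(t,\th)$. Theorem \ref{thm:tight} then gives tightness of the laws $\{P^\e\}$ on $C([0,T],C^m(S))$ for every $m\in\N$. By Skorohod's representation theorem, possibly changing the probability space, one extracts a subsequence $\e_j\downarrow 0$ and a limit $\k=\k(t,\th)$ with $\k^{\e_j}\to\k$ a.s.\ in $C([0,T],C^m(S))$ for each $m$; the joint convergence $(w^{\e_j},\k^{\e_j})\to(w,\k)$ is in a.s.-sense as well, using \eqref{eq:definition 2 of g}.

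Next I would identify the limit as a weak solution of \eqref{eq:kappa-cut}. Rewrite \eqref{eq:kappa-cut-smear} in the equivalent It\^o form \eqref{eq:kappa-cut-Ito}, where the It\^o--Stratonovich correction $g(\th,\k)$ appears; this is legitimate after the $\e_j$-limit because the multiplicative noise $h_L(\th,\k)\dot{w}^{\e_j}(t)\,dt$ converges, in the sense of stochastic integrals against test functions $\fa\in C^\infty(S)$, to $h_L(\th,\k)\circ dw(t)$. The convergence of the corresponding quadratic variation terms is controlled by the uniform moment bounds of Proposition \ref{prop:1.3.0} and the smoothness of $h_L$; passing to the limit in the weak formulation yields that $\k$ solves \eqref{eq:kappa-cut-Ito} on the new probability space, hence \eqref{eq:kappa-cut} in Stratonovich form. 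Pathwise uniqueness for \eqref{eq:kappa-cut} is Proposition \ref{prop:5.15}. The infinite-dimensional Yamada--Watanabe theorem of Ondre\-j\'at (and Taniguchi), cited at the end of the excerpt, then upgrades existence in law plus pathwise uniqueness to existence of a unique strong solution $\k^L(t,\th)$ of \eqref{eq:kappa-cut} on the original filtered probability space carrying $w(t)$, with initial datum $\k_0$.

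Finally I would remove the cut-off. Since $\ga_0\Subset D$ is a smooth strictly convex curve, $\k_0\in C^\infty(S)$ with $\min_S\k_0>0$. Choose $L_0$ so large that $m(\k_0)<L_0$ and $\mathrm{dist}(\ga_0,\partial D)>1/L_0$, and for $L\ge L_0$ set
\[
 \si_L=\inf\bigl\{t>0:\ m(\k^L_t)>L\ \text{ or }\ \mathrm{dist}(\ga^L_t,\partial D)<1/L\bigr\},
\]
where $\ga^L_t$ is recovered from $\k^L$ via \eqref{eq:2.5-2}. On $[0,\si_L]$ the cut-offs $\chi_L$ are inactive, so $\k^L$ solves \eqref{eq:kappa} itself; pathwise uniqueness implies $\k^{L'}=\k^L$ on $[0,\si_L]$ for $L'>L$, so $\si_L$ is nondecreasing and $\si:=\lim_{L\to\infty}\si_L$ is a stopping time with $\si>0$ a.s. Defining $\k(t,\th):=\k^L(t,\th)$ for $t<\si_L$ yields the unique local solution of \eqref{eq:kappa} on $[0,\si)$, and via \eqref{eq:2.5} this produces the unique local solution of \eqref{eq:2}.

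The main obstacle I expect is Step 2: justifying the passage to the limit in the stochastic term under only almost-sure convergence of $\dot{w}^{\e_j}\,dt$ to $\circ dw$. The difficulty is that $\dot{w}^{\e_j}\to\dot w$ only as a distribution in time, and the coefficient $h_L(\th,\k^{\e_j})$ itself depends on $\e_j$ through $\k^{\e_j}$, so one must control an It\^o--Stratonovich correction that is only present in the limit. This is handled by first converting \eqref{eq:kappa-cut-smear} into the It\^o-type form \eqref{eq:kappa-cut-Ito} (absorbing $g$ as a bounded drift via Wong--Zakai arguments for the smooth approximation), and then using the uniform bounds of Proposition \ref{prop:1.3.0} together with a martingale characterization to identify the limiting noise as $h_L(\th,\k)\,dw(t)$ plus the explicit correction $g(\th,\k)\,dt$.
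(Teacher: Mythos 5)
Your proposal is correct and follows essentially the same route as the paper: tightness of the laws of the smeared cut-off solutions (Theorem \ref{thm:tight}), identification of the limit as a law-sense solution of \eqref{eq:kappa-cut} via the martingale method (the paper refers to \cite{F99}, Section 5 for the details you sketch in your Step 2), pathwise uniqueness (Proposition \ref{prop:5.15}), the infinite-dimensional Yamada--Watanabe theorem of \cite{Ond}, \cite{Ta} to obtain a strong solution, and finally removal of the cut-off by localization. Your explicit construction of the stopping times $\si_L$ in the last step merely spells out what the paper leaves implicit.
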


Theorem \ref{thm:tight} together with the remarks made before it
combined with Theorem \ref{thm:5.6}, which implies
the uniqueness in law, shows that Assumption 1.2 holds in law sense
in the setting of this section.

\vskip 5mm
\noindent
{\it Acknowledgement}.  
The authors thank Danielle Hilhorst for leading them to the problem
discussed in this article and stimulating discussions.
%=======================================================
%

\end{document}